\newcommand{\codim}[1]{\mathrm{codim}(#1)}
\newtheorem{definition}{Definition}
\newtheorem{theorem}{Theorem}
\newtheorem{proposition}[theorem]{Proposition}
\newtheorem{lemma}[theorem]{Lemma}
\newtheorem{corollary}[theorem]{Corollary}
\theoremstyle{remark}
\newtheorem{remark}{\textsc{Remark}}
\newtheorem{example}{\textsc{Example}}
 \def\dpol{\mathop{{}^{\textsc{D}}\text{Pol}}}
\newcommand{\CP}{\mathbb{P}}
\newcommand{\C}{\mathbb{C}}
\newcommand{\RR}{\mathbb{R}}
\newcommand{\M}{\overline{\mathcal{M}}}
 \title[Geometric invariants of the configuration space]{Geometric invariants 
 of the  configuration space of $d$ marked points on  the complex plane}
\author{N. Combe}
\address{Max Planck Institute for Mathematics \\ Vivatsgasse 7\\ 53111 Bonn}
\email{combemaths@gmail.com}
\keywords{Configuration space, discriminant variety, Coxeter chambers}
\subjclass{Primary: 14N20; Secondary: 20F36}
\date{}
\thanks{
I wish to thank Norbert A' Campo for many interesting discussions, Leila Schneps and Pierre Lochak for helpful suggestions during my stay at Paris 6 and as well Hans Henrik Rugh for valuable discussions. I thank the Max Planck Institute for Mathematics in Bonn for the hospitality. In particular, I am grateful to Yuri I. Manin for giving me many advice. I thank the referee for constructive comments and suggestions.}
\begin{document}
\begin{abstract}
Interest in Conformal Field Theories and Quantum Field Theory lead physicists to consider configuration spaces of marked points on the complex projective line, $Conf_{0,d}(\CP)$. 

 In this paper, a real semi-algebraic  stratification of $Conf_{0,d}(\C)$, invariant under Coxeter-Weyl  group is constructed, using the natural relation of this configuration space with the space $\dpol_d$ of complex monic degree $d>0$ polynomials in one variable with simple roots. This decomposition relies on subsets of $\dpol_d$ forming a good cover in the sense of \v Cech of $\dpol_{d}$ and such that each piece of the decomposition is a set of polynomials, indexed by a decorated graph reminiscent of {\it Grothendieck's  dessins d'enfant}.
This decomposition in Coxeter-Weyl chambers brings into light a very deep interaction between the {\it real locus} of the moduli space $\M_{0,d}(\RR)$ and {\it the complex} one $\M_{0,d}(\C)$. Using this decomposition, the existence of geometric invariants of those configuration spaces has been shown. Many examples are provided. Applications of these results in braid theory are discussed, namely for the braid operad.
\end{abstract}
\maketitle
\tableofcontents

 \section{Introduction}
Interest in Conformal Field Theories and Quantum Field Theory lead physicists to consider configuration spaces of marked points on the complex projective line, $Conf_{0,d}(\CP)$ and the moduli spaces of marked points on genus $0$ curves. 
The properties of moduli spaces of genus 0 curves with $d$ unordered marked points $\mathcal{M}_{0,d}(\mathbb{C})$ are still an important subject of investigations. Those spaces lie at the heart of challenging problems in relation with the calculation of Gromov-Witten invariants, a particular importance is given to them.

 
In this paper,  a Coxeter-Weyl semi-algebraic stratification of this moduli space is given, highlighting the existence of new topological invariants of the space of configurations of $n$ marked points on the complex plane. We use the natural relation between the moduli space $\mathcal{M}_{0,d}(\mathbb{C})$ and the space $\dpol_d$ of complex, monic, degree $d>0$ polynomials in one variable, having simple roots with sum equal to zero (i.e. Tschirnhausen polynomials)~\cite{C0}:  $\mathcal{M}_{0,[d]}$ is the quotient of the $d$-th {\it unordered} configuration space on the complex plane modulo the group $PSL_{2}(\mathbb{C})$ and the configuration space can be considered as the space $\dpol_d$, due to the fundamental theorem of algebra. 



This approach allows a new insight on $\mathcal{M}_{0,[d]}(\mathbb{C})$ and $\mathcal{M}_{0,[d]}(\mathbb{R})$, since new geometric invariants of configuration of points on $\mathbb{C}$ are given. Moreover, this decomposition being invariant under a Coxeter-Weyl group it brings into light a very deep relation between the approach used to consider the real locus of the moduli space of marked points on the sphere $\mathcal{M}_{0,d}(\mathbb{R})$  and the classical complex one $\mathcal{M}_{0,d}(\mathbb{C})$. 

As an application to braid theory, it gives an alternative way to describe braid generators and to construct the braid operad.   
To prove the existence of new geometric invariants, we introduce a decomposition of $\dpol_{d}$, which leads to the construction of a good cover in the sens of \v Cech. The nerve of this covering provides the new {\it geometric invariant}. It turns out, that those geometric invariants have many symmetries, in particular polyhedral ones. This decomposition is based  on the Tits-Bruhat-Deligne theory of chambers and galleries. In the following, by {\it Weyl-Coxeter chamber}~\cite{De},\cite{Bki}(Chap IX, Sect. 5.2) we mean  a fundamental domain, along with reflections hyperplanes. The method we present has the following advantageous property: namely, it allows to define any braid relation in $B_{d}$, as a path in a {\it Weyl-Coxeter gallery}. 

The study of the configuration spaces with $d$ marked points in the complex plane was initiated in the early times, 1960-1970, by V. Arnold and, further developed by the Russian school: V. Arnold, V. Goryunov, O. Lyashko, A. Vassiliev, D. Fuchs, S. Chmutov, S. Duzhin, J. Mostovoy\quad~~\cite{{Ar70},{AVG1},{AVG2},{CH},{Mo}}. 

During the last 50 years, the compactified moduli space $\overline{\mathcal{M}}_{0,[n]}(\mathbb{C})$ has been essentially  studied from a complex geometry point of view. This point of view was introduced by Deligne-Mumford~\cite{DeM}, developed then by Knudsen~\cite{Knu} and Keel~\cite{Kee}. Other versions were provided: for instance by Kapranov~\cite{Ka93v1}, using Tits-Bruhat buildings and Fulton-MacPherson~\cite{FM}. 
Approaches using real geometry, have only been considered until now in the case of real {\it ordered} points, i.e. for $\overline{\mathcal{M}}_{0,n}(\mathbb{R})$. Namely, in~\cite{Dev}, Devadoss introduces his mosaic tesselation in order to construct the mosaic operad. In~\cite{EHKR}, the real cell decomposition is used to consider the cohomology of the real locus of those moduli spaces. Some other results concerning this real locus are due to Khovanov~\cite{Kho96} and Davis, Januszkiewicz and Scott~\cite{DJS}. 


The decomposition we provide, using the real-algebraic geometry point of view, allows a rich description of the open algebraic variety $\mathcal{M}_{0,[n]}(\mathbb{C})$, which is {\it not} visible in the framework of the previous decompositions.   
However, interesting connections exist with the mosaic tesselation of $\overline{\mathcal{M}}_{0,n}(\mathbb{R})$ and with the one given by Kapranov~\cite{Ka93v1,Ka93}. Namely, the set of generic cells of our decomposition are in bijective correspondence with pairs of all $d$ bracketings and, therefore, related to the associahedron and to the Stasheff polytope~\cite{Sta}. Note that, somehow, the main difference in the combinatorial structure is due to the fact the letters in the bracketings do not matter in our new decomposition, since we consider the unordered points case.
 
The present paper grew out of the previous works~\cite{NAC,C0,C1,Co1}, where we bring into light the existence of a topological real algebraic stratification, obtained through the notion of {\sl drawings} $\mathcal{C}_{P}$ of a polynomial $P$. Such an object is  reminiscent of Grothendieck's {\it dessin's d'enfant}~\cite{Gro84} in the sense that we consider the inverse image of the real and imaginary axes under a complex polynomial. The {\sl drawing} associated to a complex polynomial is, by convention, a system of blue and red curves properly embedded in the complex plane, being the inverse image under a polynomial $P$ of the union of the real axis (colored in blue) and the imaginary axis (colored  in red)~\cite{C1}, $\mathcal{C}_{P}=P^{-1}(\mathbb{R}\cup \imath \mathbb{R})$. For a polynomial $P\in \dpol_{d}$, the drawing contains $d$ blue and $d$ red curves, each blue curve intersecting exactly one red curve.  The entire drawing forms a forest (in terms of graphs), whose leaves (terminal vertices) go to infinity in the asymptotic directions of the angle $\pi/2d$. 

The origins of the idea of considering the configuration space as a space of $\mathbb{C}$-polynomials, was introduced by V. Arnold~\cite{Ar70}. 
In 1992, S. Barannikov~\cite{Ba} restudied problems concerning this space of polynomials, in the light of works of Gauss~\cite{Gauss}. More precisely, he used Gauss's approach to the study of complex polynomials (a polynomial is uniquely determined by taking the inverse image under this polynomial of the real and imaginary axis). This approach was recently used by E. Ghys, see~\cite{{GhysA},{Ghys}} (p. 72) to discuss a question of M. Kontsevitch, in 2009, on the intersection of polynomials, which lead to the following theorem: 

four polynomials, $P_1,P_2,P_3,P_4$ of a real variable $x$ cannot satisfy: 
\begin{itemize} \item $P_1(x)<P_2(x)<P_3(x)<P_4(x)$ for small $x<0$, \item $P_2(x)<P_4(x)<P_1(x)<P_3(x)$ for small $x>0$ .
\end{itemize}

N. A'Campo, introduces in~\cite{NAC}, the idea to use combinatorics of bi-colored forest to construct  a real semi-algebric decomposition of the space of complex polynomials with distinct roots. A'Campo's construction is based on equivalence classes  (in the sens of Barannikov) of Gauss' drawings.  As well,  he shows that a representative of an equivalence class is a polynomial and vice versa and  proves that such an equivalence class forms a contractible set in $\dpol_d$.

In this paper, the  isotopy classes of polynomial's drawings, relatively to the $4d$ asymptotic directions generate our construction. In order to avoid any confusion, we call   {\it elementa}, this isotopy classes. This decomposition in elementa allows us, not only to define a {\it topological stratification} of $\dpol_{d}$ but as well, generates the construction of a good \v Cech cover of $\dpol_{d}$~\cite{C1,Co1}. 

This approach can be applied to braid theory. It is known that the fundamental group of the configuration space of $d$ unordered marked points on the complex plane  is a $d$ strand braid group.  Therefore, any braid relation can be investigated in a new manner, using the decomposition in Weyl-Coxeter chambers. In particular, we bring a new insight on braid towers, which is obtained using the natural inclusions: $B_{d}\to B_{d+1}$. 
In addition, this gives a new insight on the braid operad. 

An operad $O_{*} = \{O_{k},k \geq 1\}$ is a collection of spaces together with some composition maps
$ O_{n} \times O_{i_{1}} \times \dots \times O_{i_{n}} \to O_{i} $(where $i = \sum_{k=1}^{n} i_{k}$) satisfying some axioms, with cabling
\[ B_d \times B_{i_{1}} \times...B_{i_{d}}\to B_{i},\] as composition, with $i=\sum_{k=1}^{d} i_k$. 

Moreover, this decomposition is also good in the sense of \v Cech. So, as a second application, one can explicitly calculate the cohomology of the braid group with values in a sheaf.  
This information turns out to be important, since by~\cite{KSV} it is known that monoidal functors preserve operads; hence the homology of an operad (in spaces) as composition is an operad in graded modules.
The paper is composed of four main points.

In section 2, we introduce the notion of elementa, signatures, strata and as well as their properties. We discuss as well the Whitehead moves on the signatures. 

In section 3, we investigate the classification of generic signatures. We show that there exist four classes of generic signatures: $M,F,S$ and $FS$. 
Two elementa are incident if the signature of one can be obtained from the other one by a so-called {\it half-Whitehead move} (this is a topological operation on the red (or blue) edges of a given signature modifying one signature into the other one). This incidence relation on elementa is deeply connected to the topological closure of each topological stratum: 

In the section 4, we prove that the elementa are endowed with an incidence relation, which forms a partially strictly ordered set.  We introduce the notion of inclusion diagram of the partially strictly ordered set and show that this inclusion diagram is the \v Cech nerve~\cite{Ce,C1}. This gives a geometric invariant.  
The construction is based on properties of generic elementa in such a way that each vertex corresponds to a generic elementum, each edge corresponds to a elementum of codimension 1 and  each 2-face to a elementum  of codimension 2  (etc). The incidence relations between the elementa are preserved on such a diagram. 

The construction of the inclusion diagram is illustrated  for low degrees: $d=2,3,4$. In particular, we pay attention to
the case $d=4$, we have a very rich geometrical structure. 

In the section 5, we prove the main statement: the decomposition is invariant under a Coxeter group and present a method of construction. 
We prove the main theorem:
\begin{theorem}[Main theorem]
Let $d>3$. The decomposition of $\dpol_{d}$ in $4d$ Weyl-Coxeter-chambers, induced by the topological stratification of $\dpol_d$ in elementa, is invariant under the Coxeter group given by the presentation: \[W=\langle r, s_1,...,s_{2d+1} | r^2= Id, (rs_j)^{2p}=Id, p|d\rangle, p\in \mathbb{N}^{*}\]
\end{theorem}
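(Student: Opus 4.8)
The plan is to realize $W$ as a genuine group of symmetries of the stratified space $\dpol_d$, assembled from two elementary operations on the Gauss drawings $\mathcal{C}_P=P^{-1}(\RR\cup\imath\RR)$, and then to recover the presentation from the chamber geometry by way of the Bruhat--Tits--Deligne recognition of reflection groups. First I would exhibit the generators explicitly at the level of polynomials. Complex conjugation $P(z)\mapsto\overline{P(\bar z)}$ carries $\dpol_d$ to itself (it preserves monicity, simplicity of the roots, and the vanishing of their sum), is an involution, and acts on $\mathcal{C}_P$ as the reflection across the real axis; this furnishes $r$ with $r^2=\mathrm{Id}$. Next, for each root of unity $\omega=e^{\imath k\pi/2d}$ the rescaling $P(z)\mapsto\omega^{-d}P(\omega z)$ again preserves $\dpol_d$, and since $\omega^{d}\in\{1,\imath,-1,-\imath\}$ fixes the cross $\RR\cup\imath\RR$, it rotates $\mathcal{C}_P$ by $-k\pi/2d$ while at worst permuting the two colours. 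These rotations act transitively on the $4d$ asymptotic sectors, and the reflections $s_1,\dots,s_{2d+1}$ are recovered as the compositions of $r$ with rotations that fix the walls between adjacent elementa. The point to record here is that both operations send $P^{-1}(\RR\cup\imath\RR)$ to another drawing of the same combinatorial type, up to the fixed isotopy relative to the $4d$ directions; hence they descend to bijections of the set of elementa and carry strata to strata, so that the decomposition is preserved.

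Second, I would identify the $4d$ chambers with the closures of the top-dimensional strata of the decomposition and check that $r$ and the $s_j$ are precisely the reflections in their codimension-$1$ walls, i.e.\ in the codimension-$1$ elementa produced by the half-Whitehead moves of Section~3. Because the decomposition is, by Sections~3--4, a good \v{C}ech cover whose nerve is the inclusion diagram, the walls meeting a fixed fundamental chamber are in bijection with its incident codimension-$1$ elementa, and I would read off the dihedral angle between two such walls from the local incidence pattern recorded by the nerve. Invoking the Bourbaki--Deligne criterion --- a group generated by reflections and admitting a (simplicial) fundamental domain is a Coxeter group with relations $(g_ig_j)^{m_{ij}}=\mathrm{Id}$, where $\pi/m_{ij}$ is the angle between the corresponding walls --- then reduces the theorem to computing these orders for the pairs $(r,s_j)$.

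The hard part will be the relation $(rs_j)^{2p}=\mathrm{Id}$ together with the arithmetic constraint $p\mid d$. The order of the composite $rs_j$ is governed by the interaction of the reflection $r$ with the $4d$-fold rotational symmetry of the asymptotic data, and the admissible periods are exactly those rotations of the cross compatible with the configuration of $d$ roots, namely those whose order divides $d$; this is where $p\mid d$ must be forced, and where one must exclude accidental relations by showing that a minimal gallery between two chambers stabilized by $\langle r,s_j\rangle$ has the expected length, via Deligne's gallery-connectedness. I expect the real obstacle to be the \emph{completeness} of the presentation, i.e.\ that no relations beyond those listed occur: this I would settle by proving that the fundamental chamber is strict (its closure meets each $W$-orbit exactly once), so that Tits' solution of the word problem applies and the stated relations already present $W$.
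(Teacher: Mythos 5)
Your first step is sound and in fact supplies the geometric origin of the symmetries that the paper only manipulates combinatorially: conjugation $P(z)\mapsto\overline{P(\bar z)}$ and the rescalings $P(z)\mapsto\omega^{-d}P(\omega z)$ with $\omega=e^{\imath k\pi/2d}$ do preserve $\dpol_d$, act on drawings by reflecting and rotating the asymptotic data, and hence permute elementa; on signatures the rotation acts exactly as in Lemma~\ref{L:rotateM} (sending a chord $(i,j)$ to $(i+2k,j+2k)$ mod $4d$), and this action is what underlies the copy--paste--rotate bijections between adjacent $Q$-pieces in Definition~\ref{D:Q}. The paper, however, proves the theorem by a different, purely combinatorial route: it establishes an order-$2$ symmetry of each $Q$-piece across the mirror line of its $Q$-diagram (Lemma~\ref{L:O2}), shows the stratification is a Galois cover of a pair of adjacent $Q$-pieces with finite cyclic Galois group (Lemma~\ref{Pr:G}), and then assembles, via the matrix bookkeeping of Lemma~\ref{L:consec} and Figure~\ref{F:Q}, $2d$ reflection hyperplanes between consecutive $Q$-pieces plus one vertical one; invariance is read off from these order-preserving involutive bijections, and no recognition theorem is ever invoked.

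The genuine gap lies in your second and third steps. The Bourbaki--Deligne recognition of reflection groups requires the generators to be reflections in codimension-one walls of a chamber geometry on a simply connected space, and neither hypothesis holds here. Your $r$ and the $s_j$ are antiholomorphic involutions of $\dpol_d\subset\C^{d-1}$, so their fixed loci have real dimension $d-1$, i.e.\ real codimension $d-1$ in the $2(d-1)$-real-dimensional space; for $d>2$ they are therefore \emph{not} reflections across the codimension-$1$ elementa, and the ``dihedral angle between two walls'' you propose to read off the nerve has no meaning for such involutions. Moreover $\pi_1(\dpol_d)=B_d$, so the chamber complex is far from the Coxeter complex that Deligne's gallery-connectedness arguments presuppose. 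Finally, the completeness step (strict fundamental domain plus Tits' word problem) is both unnecessary for the statement --- invariance only requires an \emph{action} of $W$, i.e.\ images of the generators satisfying the listed relations --- and unobtainable: the symmetry group you actually construct is the finite dihedral group of order $8d$, whereas the presented $W$ (which imposes no relation $s_j^2=Id$, so that already $\langle r,s_j\rangle\cong\mathbb{Z}/2*\mathbb{Z}/2p$ is infinite) cannot act faithfully through it, so ``no relations beyond those listed'' is false for your realization. To repair the proposal, keep your first step and replace the recognition machinery by a direct verification, as the paper does, that the chosen involutions satisfy $r^2=Id$ and $(rs_j)^{2p}=Id$ with $p\mid d$ and preserve the incidence relations of the stratification.
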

Note, that this structure is reminiscent of translation surfaces. 

As an application to braid theory, we show that: 
\begin{theorem}
Any braid relations following from the natural inclusion of braids $i^{\star}: B_d  \hookrightarrow B_{d+1}$ can be described using the decomposition in $Q$-pieces. 
\end{theorem}

Braids and the natural inclusions $i^{\star}: B_d  \hookrightarrow B_{d+1}$ can be read from this geometric decomposition in chambers and galleries. More over, this method can be used as an alternative description of the braid operad.

\section{Elementa, signatures and classes of polynomials}

The isotopy classes of drawings of polynomials of $\dpol_{d}$, relatively to their $4d$ asymptotic directions are the basic objects of our decomposition of this space. So, in the following, we call them {\it elementa}. Since to each drawing corresponds a unique polynomial $P\in \dpol_{d}$ to an elementa $A$ correspond a subset of $\dpol_{d}$ which will be denoted by the same symbol.

\begin{definition}
A chord diagram of degree $d$ consists in an oriented circle with  $ 2d$ distinct points   in counterclockwise order and  a distinguished set of $d$ disjoint pairs of points. Every two points belonging to the same pair are joined by a chord (diagonal).

A forest-chord diagram is a chord diagram such that  the intersection of chords cannot form any $k$-gon in the disc, but only a vertex of valency $2k$.
    
\end{definition}
\begin{definition}[Signature]~\label{De:1}
A degree $d$ signature is a forest embedded in a unit disc $\mathbb{D}$, with points (the terminal vertices) $S=\{0,1,2,.., 4d-1\}$ on $\partial\mathbb{D}$  lying on the $4d$ roots of the unity, which is obtained by a superimposition  of two degree $d$ forest-chord diagrams: one on the even points $B=\{0,2,\dots,4d-2\}$  called the blue forest-chord diagram and the other one on the odd points $R=\{1,3,\dots,4d-1\}$ the red forest-chord diagram,
with the constraint that red and blue forest chords intersect at $d$ points, such that one blue chord intersects only one red chord. 
\end{definition}

 Let us notice that a signature of degree $d$ has three types of vertices:
  \[\mathcal{V}=\{v, \bar{v},\bar{\bar{v}}\mid\, v\in\partial{\mathbb{D}},\,\bar{v},\bar{\bar{v}}\in int(\mathbb{D}),\, |v|=4d,\, |\bar{v}|=d,\, 0\leq |\bar{\bar{v}}|\leq d-1\},\]
with $ deg(v)=1,\, deg(\bar v)=4,\, 0\leq deg(\bar{\bar{v}})\leq 2d.$
 
\begin{figure}[h]
\begin{center}
\begin{tikzpicture}[scale=1.1]
\newcommand{\degree}[0]{12}
\newcommand{\last}[0]{23}
\newcommand{\XDOT}[0]{[black](i-1) circle (0.045)}
\newcommand{\B}[1]{#1*180/\degree}
\newcommand{\BDOT}[0]{[blue](i-1) circle (0.045)}
\newcommand{\R}[1]{#1*180/\degree}
\newcommand{\RDOT}[0]{[red](i-1) circle (0.045)}
\draw (0,0) circle (1) ;
\foreach \k in {0,2,4,6,8,10,12,14,16,18,20,22} {
  \draw[blue] (\B{\k}:1.19) node {\Tiny\k} ;
}
\foreach \k in {1,3,5,7,9,11,13,15,17,19,21,23} {
  \draw[red] (\R{\k}:1.19) node {\Tiny \k} ;
}
\draw[blue,  name path=B0B2] (\B{0}:1) .. controls(\B{0}:0.7) and (\B{2}:0.7) .. (\B{2}:1) ;
\draw[blue,  name path=B4B6] (\B{4}:1) .. controls(\B{4}:0.7) and (\B{6}:0.7) .. (\B{6}:1) ;
\draw[blue, name path=B8B18] (\B{8}:1) .. controls(\B{8}:0.15) and (\B{18}:0.15) .. (\B{18}:1) ;
\draw[blue, name path=B10B20] (\B{10}:1) .. controls(\B{10}:0.7) and (\B{20}:0.7) .. (\B{20}:1) ;
\draw[blue,  name path=B12B14] (\B{12}:1) .. controls(\B{12}:0.7) and (\B{14}:0.7) .. (\B{14}:1) ;
\draw[blue,  name path=B16B22] (\B{16}:1) .. controls(\B{16}:0.05) and (\B{22}:0.05) .. (\B{22}:1) ;
\draw[red, name path=R1R7] (\R{1}:1) .. controls(\R{1}:0.7) and (\R{7}:0.7) .. (\R{7}:1) ;
\draw[red, name path=R3R21] (\R{3}:1) .. controls(\R{3}:0.1) and (\R{21}:0.1) .. (\R{21}:1) ;
\draw[red, name path=R5R23] (\R{5}:1) .. controls(\R{5}:0.7) and (\R{23}:0.7) .. (\R{23}:1) ;
\draw[red, name path=R9R15] (\R{9}:1) .. controls(\R{9}:0.7) and (\R{15}:0.7) .. (\R{15}:1) ;
\draw[red, name path=R11R13] (\R{11}:1) .. controls(\R{11}:0.7) and (\R{13}:0.7) .. (\R{13}:1) ;
\draw[red, name path=R17R19] (\R{17}:1) .. controls(\R{17}:0.7) and (\R{19}:0.7) .. (\R{19}:1) ;

\fill[name intersections={of=B10B20 and B8B18,name=i}] \BDOT ;
\fill[name intersections={of=R1R7 and R5R23,name=i}] \RDOT ;
\fill[name intersections={of=B12B14 and R11R13,name=i}] \XDOT ;
\fill[name intersections={of=B10B20 and R9R15,name=i}] \XDOT ;
\fill[name intersections={of=B8B18 and R17R19,name=i}] \XDOT ;
\fill[name intersections={of=B16B22 and R3R21,name=i}] \XDOT ;
\fill[name intersections={of=B4B6 and R5R23,name=i}] \XDOT ;
\fill[name intersections={of=B0B2 and R1R7,name=i}] \XDOT ;
\end{tikzpicture}
\end{center}
\vspace{-10pt}
\caption{An example of signature of degree 6. The vertices: $v$ are indexed by numbers, $\bar v$ by black points and $\bar{\bar{v}}$ by red or blue points along  color of the chords }
\end{figure}
Notice that: drawings are geometrical objects, while signatures are combinatorial ones. Indeed, a drawing $\mathcal{C}_{P}$  characterizes completely the polynomial $P$: coordinates of its roots, of its critical points, singularities of its real and imaginary part. The signatures characterizes only the isotopy classes, to each signature $\sigma$ corresponds an elementum, denoted $A_{\sigma}$. In a signature the blue and red lines (drawn in a curved way) characterize only asympthotic directions of drawing  and the  localisation of the  critical points and values of the classes of polynomials indexed by this signature.
 
\begin{definition}[Short and long chords]~\label{D:diagonalsij} 
Let $\sigma$ be a signature.
A chord (diagonal) connecting a pair of terminal vertices $i,j\in  \{0,...,4d-1\}$ of the same parity, is denoted by $(i,j)$ . 
\begin{itemize}
\item A chord is short if $|i-j|=2$.
\item A chord  is long if it is not short.
\end{itemize}
\end{definition}

\begin{figure}[h]
\begin{center}
\begin{tikzpicture}[scale=1.1]
\newcommand{\degree}[0]{8}
\newcommand{\B}[1]{#1*180/\degree}
\newcommand{\R}[1]{#1*180/\degree }
\draw[black] (0,0) circle (1) ;

\foreach \k in {0,2,4,6,8,10,12,14} {
  \draw[blue] (\B{\k}:1.19) node {\Tiny\k} ;
}
\foreach \k in {1,3,5,7,9,11,13,15} {
  \draw[red] (\R{\k}:1.19) node {\Tiny \k} ;
}
\draw[blue, name path=B2B4](\B{2}:1) .. controls(\B{2}:0.7)and (\B{4}:0.7) .. (\B{4}:1) ;
\draw[blue, name path=B8B6](\B{8}:1) .. controls(\B{8}:0.7)and (\B{6}:0.7) .. (\B{6}:1) ;
\draw[blue, name path=B0B10](\B{0}:1) .. controls(\B{0}:0.7)and (\B{10}:0.7) .. (\B{10}:1) ;
\draw[blue, name path=B12B14](\B{12}:1) .. controls(\B{12}:0.7)and (\B{14}:0.7) .. (\B{14}:1) ;
\draw[red, name path=R3R5](\R{3}:1) .. controls(\R{3}:0.7)and (\R{5}:0.7) .. (\R{5}:1) ;
\draw[red, name path=R7R9](\R{7}:1) .. controls(\R{7}:0.7)and (\R{9}:0.7) .. (\R{9}:1) ;
\draw[red, name path=R11R13](\R{11}:1) .. controls(\R{11}:0.7)and (\R{13}:0.7) .. (\R{13}:1) ;
\draw[red, name path=R15R1](\B{15}:1) .. controls(\B{15}:0.7)and (\B{1}:0.7) .. (\B{1}:1) ;
\end{tikzpicture}
\end{center}
\vspace{-5pt}
\caption{An example of a long blue chord $(0,10)$, intersecting a short red chord $(1,15)$.}
\end{figure}

\begin{definition}
 A signature is  generic if it does not contain any vertex $\bar{\bar{v}}$. An elementum $A_{\sigma} $ is generic if the signature $\sigma$ is generic.
 \end{definition}
 
 \begin{definition}[Codimension]\label{D:cod}
The {\rm  real codimension} of an elementum $A_{\sigma}$ is the sum of the local indices of all the vertices $\bar{\bar{v}}$ of the signature $\sigma$. The local index $Ind_{loc}(\bar{\bar{v}})$ at a vertex $\bar{\bar{v}}$ is the number $Ind_{loc}(\bar{\bar{v}})=2deg(\bar{\bar{v}})-3$. 
\end{definition}

All the generic elementa are of codimension 0.

\begin{definition}\label{D:suc}
Let $\sigma$ be a signature. 
\begin{itemize}
\item A 2-cell in $\mathbb{D}\setminus\sigma$ contains in its boundary a set of its edges of a tree. Such a tree is said to bound this 2-cell. 
\item Two trees are adjoining if they lie in the boundary of the same 2-cell in $\mathbb{D}\setminus \sigma$.
\item Consider a pair of chords of the same color in a generic signature: $(i,j)$ and $(k,l)$. We call them successive if their terminal vertices satisfy one of the following conditions: $|i-k|=2$ or $|i-l|=2$ or $|j-k|=2$ or $|j-l|=2$.
\end{itemize}
\end{definition}

\begin{proposition}[\cite{C0,CoJ}]
For every $d$, the number of elementa is finite.
\end{proposition}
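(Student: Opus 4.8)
The plan is to reduce the statement to a purely combinatorial count. By construction every drawing $\mathcal{C}_{P}$, $P\in\dpol_{d}$, carries a well-defined signature $\sigma(\mathcal{C}_{P})$ obtained by recording its combinatorial type, and two drawings that are isotopic relative to the $4d$ asymptotic directions produce the same signature; conversely, an elementum is by definition a class of the form $A_{\sigma}$. Thus the assignment $\sigma\mapsto A_{\sigma}$ is a surjection from the set of degree-$d$ signatures onto the set of elementa. Since the image of a finite set is finite, it suffices to prove that there are only finitely many signatures of degree $d$.

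First I would fix the boundary data. By Definition~\ref{De:1} the $4d$ terminal vertices are pinned to the $4d$-th roots of unity, and a signature is the superimposition of a blue forest-chord diagram on the $2d$ even points and a red one on the $2d$ odd points. A chord diagram is nothing but a perfect matching of its $2d$ labelled endpoints into $d$ pairs, and the number of such matchings is $(2d-1)!!=(2d-1)(2d-3)\cdots 3\cdot 1$, which is finite. Hence there are at most $(2d-1)!!$ blue diagrams and at most $(2d-1)!!$ red diagrams, giving at most $\big((2d-1)!!\big)^{2}$ choices of endpoint data.

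Next I would account for the interior of the disc. Once the two matchings are fixed, the way the chords are drawn is constrained by bounded discrete data: the forest-chord condition of Definition~\ref{De:1} forbids genuine $k$-gons and permits only crossing vertices, the blue and red chords meet in exactly $d$ points (the degree-$4$ vertices $\bar{v}$), and a signature has at most $d-1$ higher-valency vertices $\bar{\bar{v}}$, each of degree at most $2d$. The total number of vertices is therefore at most $4d+d+(d-1)=6d-1$, and since the underlying graph is a forest the number of edges is likewise bounded. A planar configuration with a bounded number of vertices and edges and with a fixed labelling of the boundary leaves admits only finitely many isotopy types; equivalently, given the two matchings the crossing pattern among the blue, the red and the bicoloured intersections, together with the finitely many admissible coincidences of crossings at the vertices $\bar{\bar{v}}$, can be realised in only finitely many ways. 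Combined with the bound of the previous paragraph this shows the number of signatures is finite, and hence so is the number of elementa.

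The step I expect to be the main obstacle is the last one, namely controlling the interior structure in the non-generic case: a priori one must rule out that coinciding crossings at the vertices $\bar{\bar{v}}$ could generate infinitely many isotopy types. The observation that defeats this is that both the number of such vertices ($\le d-1$) and their valencies ($\le 2d$) are bounded by Definition~\ref{De:1}, so only finitely many degeneration patterns of the finitely many generic crossing schemes can occur, while the generic case itself is completely determined by the two matchings and contributes no ambiguity.
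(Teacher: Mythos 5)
Your argument is correct, but note that the paper itself contains no proof of this proposition: it is imported wholesale from the references \cite{C0,CoJ}, where the finiteness is in fact a by-product of a much stronger result, namely the \emph{exact enumeration} of bi-colored A'Campo forests (signatures) of degree $d$. Your route is therefore genuinely different and more elementary: you factor the problem through the surjection $\sigma\mapsto A_{\sigma}$ from signatures onto elementa, bound the boundary data by the number $\bigl((2d-1)!!\bigr)^{2}$ of pairs of perfect matchings of the labelled even and odd boundary points, and then control the interior by the bounds built into Definition~\ref{De:1} (exactly $d$ bicoloured crossings $\bar{v}$, at most $d-1$ vertices $\bar{\bar{v}}$ of valency at most $2d$, hence at most $6d-1$ vertices in all and a bounded number of edges since the graph is a forest), finishing with the standard fact that an embedded graph in the disc with boundedly many vertices and edges and fixed boundary labels has only finitely many isotopy classes rel boundary. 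Your closing discussion of the non-generic case is exactly the right point to single out: the degenerations at the vertices $\bar{\bar{v}}$ are the only potential source of unbounded multiplicity, and the valency and cardinality bounds kill it. What your approach buys is a short, self-contained finiteness proof with a crude explicit bound; what the cited approach buys is the precise count (and hence also, e.g., the alternating-sum identity the paper records for $d=3$), which your coarse estimate cannot recover. One cosmetic caveat: your parenthetical claim that a \emph{generic} signature is ``completely determined by the two matchings'' needs the small extra observation that the constraints of Definition~\ref{De:1} force each interleaving blue--red pair to cross exactly once and forbid all other crossings, so the crossing pattern is pinned down; but since you only need finiteness, not uniqueness, nothing in the proof depends on it.
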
  

Let us introduce the  topological operations on the signatures: the half-Whitehead moves which allows to define the topological closure of $A_{\sigma}$.
 \begin{definition}[\cite{C0}]~\label{D:2.18}
A half-Whitehead move is a topological operation on the chords of a signature, carried out in the following way: 
 \begin{enumerate}

\item  A {\it  contracting half-Whitehead move} on a signature $\sigma_1$ is a glueing of $m>0$ chords of the same color in one point $\bar{\bar{v}}$ such that the  obtained  diagram is a signature $\sigma$. 

\item A smoothing half-Whitehead move on a signature $\sigma$ is an unsticking of $m> 0$ chords at a point $\bar{\bar{v}}$  such that the  obtained  diagram is a signature $\sigma_{2}$.
\end{enumerate}
\end{definition}

\begin{definition}\label{D:band}
Let $\sigma_1$ and $\sigma_2$ be two different signatures of the same codimension. 
A {\it Whitehead move} $\delta$ is the composition of a contracting and smoothing half-Whitehead moves starting at $\sigma_1$ and ending on $\sigma_2$.Two signatures differing by a Whitehead move are called adjacent signatures and are denoted $\sigma_1\leftrightarrow \sigma_2$.
\end{definition}

Notice that, 
$\leftrightarrow$ is reflexive but not necessarily transitive.

Let us illustrate below a composition of a contracting and a smoothing half-Whitehead move for a pair of chords. 
\begin{figure}[h]
\begin{center}
\begin{tikzpicture}[scale=0.9]
\node (a) at (-2.5,0) {
\begin{tikzpicture}[scale=0.6]
\newcommand{\degree}[0]{4}
\newcommand{\B}[1]{#1*180/\degree}
\newcommand{\R}[1]{#1*180/\degree }
\draw[black,thick, name path=R3R5](\R{3}:1) .. controls(\R{3}:0.3)and (\R{5}:0.3) .. (\R{5}:1) ;
\draw[black, thick,name path=R1R7](\R{1}:1) .. controls(\R{1}:0.3)and (\R{7}:0.3) .. (\R{7}:1) ;
\end{tikzpicture} 
};
\node (b) at (0,0) {
\begin{tikzpicture}[scale=0.6]
\newcommand{\degree}[0]{4}
\newcommand{\B}[1]{#1*180/\degree}
\newcommand{\R}[1]{#1*180/\degree }
\draw[black,  thick, name path=R1R5](\R{1}:1) .. controls(\R{1}:0.3)and (\R{5}:0.3) .. (\R{5}:1) ;
\draw[black,  thick,name path=R3R7](\R{3}:1) .. controls(\R{3}:0.3)and (\R{7}:0.3) .. (\R{7}:1) ;
\end{tikzpicture} 
};
\node (c) at (2.5,0) {
\begin{tikzpicture}[scale=0.6]
\newcommand{\degree}[0]{4}
\newcommand{\B}[1]{#1*180/\degree}
\newcommand{\R}[1]{#1*180/\degree }
\draw[black,  thick, name path=R1R3](\R{1}:1) .. controls(\R{1}:0.3)and (\R{3}:0.3) .. (\R{3}:1) ;
\draw[black, thick,name path=R5R7](\R{5}:1) .. controls(\R{5}:0.3)and (\R{7}:0.3) .. (\R{7}:1) ;
\end{tikzpicture} 
};
\node at (-4,0) {\Large $\delta$ : };
 \draw[black, thick,->] (a) -- (b);
  \draw[black, thick,->]  (b) -- (c);
\end{tikzpicture}
\end{center}

\vspace{-10pt}
\caption{Whitehead move as a contracting half-Whitehead move and a smoothing half-Whitehead move }
\end{figure}
\begin{example} We illustrate a Whitehead move (c.f. definition~\ref{D:band}) on a pair of red chords.
\ 

\begin{itemize}
\item   The two generic signatures, for $d=2$ (on the right and on the left of the figure below) are incident to a codimension 1 signature ( in the middle of the figure)~\ref{F:WHR}.
\begin{figure}[h]
\begin{center}
 \begin{tikzpicture}[scale=1]
\node (a) at (-4,0) {
 \begin{tikzpicture}[scale=.8]
\newcommand{\degree}[0]{4}
\newcommand{\B}[1]{#1*180/\degree}
\newcommand{\R}[1]{#1*180/\degree }
\newcommand{\RDOT}[0]{[red](i-1) circle (0.05)}
\newcommand{\BDOT}[0]{[blue](i-1) circle (0.05)}
\draw[black,thick] (0,0) circle (1) ;
\foreach \k in {0,2,4,6} {
  \draw[blue] (\B{\k}:1.16) node {\Small \k};} ;
  \foreach \k in {1,3,5,7} {
 \draw[ red] (\R{\k}:1.16) node {\Small \k} ;
}
\draw[blue, name path=B0B6](\B{0}:1) .. controls(\B{0}:0.7)and (\B{6}:0.7) .. (\B{6}:1) ;
\draw[blue, name path=B2B4](\B{2}:1) .. controls(\B{2}:0.7)and (\B{4}:0.7) .. (\B{4}:1) ;
\draw[red, name path=R1R3](\R{1}:1) .. controls(\R{1}:0.7)and (\R{3}:0.7) .. (\R{3}:1) ;
\draw[red, name path=R5R7](\R{5}:1) .. controls(\R{5}:0.7)and (\R{7}:0.7) .. (\R{7}:1) ;
\end{tikzpicture} 
};
\node (b) at (0,0) {
\begin{tikzpicture}[scale=.8]
\newcommand{\degree}[0]{4}
\newcommand{\B}[1]{#1*180/\degree}
\newcommand{\R}[1]{#1*180/\degree }
\newcommand{\RDOT}[0]{[red](i-1) circle (0.05)}
\newcommand{\BDOT}[0]{[blue](i-1) circle (0.05)}
\draw[black,thick] (0,0) circle (1) ;
\foreach \k in {0,2,4,6} {
  \draw[blue] (\B{\k}:1.16) node {\Small \k };} ;
  \foreach \k in {1,3,5,7} {
 \draw[ red] (\R{\k}:1.16) node {\Small \k} ;
}
\draw[blue, name path=B0B6](\B{0}:1) .. controls(\B{0}:0.7)and (\B{6}:0.7) .. (\B{6}:1) ;
\draw[blue, name path=B2B4](\B{2}:1) .. controls(\B{2}:0.7)and (\B{4}:0.7) .. (\B{4}:1) ;
\draw[red, name path=R1R5](\R{1}:1) .. controls(\R{1}:0.7)and (\R{5}:0.7) .. (\R{5}:1) ;
\draw[red,,name path=R3R7](\R{3}:1) .. controls(\R{3}:0.7)and (\R{7}:0.7) .. (\R{7}:1) ;
\end{tikzpicture} 
};

\node (c) at (4,0) {
\begin{tikzpicture}[scale=.8]
\newcommand{\degree}[0]{4}
\newcommand{\B}[1]{#1*180/\degree}
\newcommand{\R}[1]{#1*180/\degree }
\newcommand{\RDOT}[0]{[red](i-1) circle (0.05)}
\newcommand{\BDOT}[0]{[blue](i-1) circle (0.05)}
\draw[black,thick] (0,0) circle (1) ;
\foreach \k in {0,2,4,6} {
  \draw[blue] (\B{\k}:1.16) node { \Small \k};} ;
  \foreach \k in {1,3,5,7} {
 \draw[ red] (\R{\k}:1.16) node {\Small \k} ;};
\draw[blue, name path=B0B6](\B{0}:1) .. controls(\B{0}:0.7)and (\B{6}:0.7) .. (\B{6}:1) ;
\draw[blue, name path=B2B4](\B{2}:1) .. controls(\B{2}:0.7)and (\B{4}:0.7) .. (\B{4}:1) ;
\draw[red, name path=R3R5](\R{3}:1) .. controls(\R{3}:0.7)and (\R{5}:0.7) .. (\R{5}:1) ;
\draw[red,name path=R1R7](\R{1}:1) .. controls(\R{1}:0.7)and (\R{7}:0.7) .. (\R{7}:1) ;
\end{tikzpicture} 
};
 \draw[<->,thick] (b) -- (a);
  \draw[<->,thick] (b) -- (c);
\end{tikzpicture} 
\end{center}
\vspace{-15pt}
\caption{Whitehead move on a pair of red chords}\label{F:WHR}
\end{figure}

\item  The Whitehead move has incidence on the deformation of coefficients of given polynomial  of degree 3. In this following example, we consider the deformation of blue curves ($Im(P)=0$) of the drawing of the polynomial $P_{1}$:
\[\begin{aligned}
P_{1}&=(z+0.5-0.5\imath)(z+0.5+0.5\imath)(z-0.2-0.6\imath),\\
 P_{2}&=(z+0.5-0.5\imath)(z+0.5+0.5\imath)(z+0.6-0.4\imath),\\
  P_{3}&=(z+0.5-0.5\imath)(z+0.5+0.5\imath)(z+0.5-0.1\imath)
  \end{aligned}\]

\begin{figure}[h]
\begin{center} 
\includegraphics[scale=0.7]{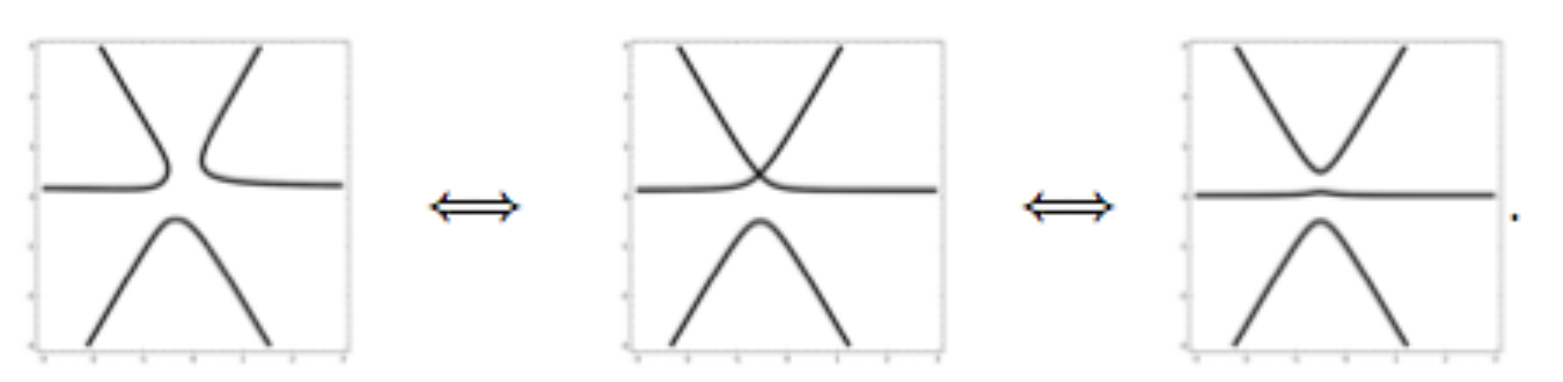}
 \end{center}
\vspace{-10pt}
\caption{Whitehead move of $Im(P)=0$ of monic polynomials}

\end{figure}
\end{itemize}
\end{example}
\section{Classification of signatures and adjacence relations}

\subsection{Matrix notation of signatures and Classification of elementa}
For convenience, we shall represent {\it algebraically} the diagrams and signatures, using some matrix notation. Without loss of generality we focus on generic signatures and matrix notation will be defined only for generic diagrams. Note that in that case, signatures are formed from $d$ trees, having one inner node of valency 4, the four incident edges are colored respectively red, blue, red, blue. The matrix has 2 lines and at most $d$ columns, each column consists of one $(i,j)$, and this indicates the existence of a chord connecting the terminal vertices $i$ and $j$ . Chords having terminal vertices of the same parity lie on the same line in the matrix. 

For instance, the matrix $\left[\begin{smallmatrix}i,j\\k,l\end{smallmatrix}\right]$ indicates that there exists a chord of a given color, connecting the terminal vertices $i$ with $j$ and another chord of the other color, connecting the terminal vertices $k$ with $l$. 

In case we have one short chord in the diagram $(j-1,j+1)$ intersecting a chord $(i,j)$ of the opposite color, we write it as $\left|\begin{smallmatrix}j\\i\end{smallmatrix}\right|$.

\begin{remark}
Each signature is associated to one unique matrix. 
\end{remark}

\subsubsection{Generic elementa}
The generic elementa can be classified by the signatures in the following way~\cite{C0}:
\begin{enumerate}
\item Trees of signatures consisting of two short (red and blue) diagonals are said to be of type $M$. 
An $M$ tree is denoted by $\left|\begin{smallmatrix}i\\i+2\end{smallmatrix}\right|$, where the number on the first line indicates that there exists a short diagonal $(i+1,i+3)$ crossing the diagonal $(i,i+2)$. A signature with only $M$ trees is called an $M$-signature. Note, that  {\it there exist only four} $M$-signatures, for any $d$:
\vspace{4pt}
\begin{enumerate}
\item $M_1\leftrightarrow$ $\left|\begin{smallmatrix}3\\1\end{smallmatrix}\right|\left|\begin{smallmatrix}7\\5\end{smallmatrix}\right|...\left|\begin{smallmatrix}4d-1\\4d-3\end{smallmatrix}\right|$.
\vspace{4pt}
 \item $M_2 \leftrightarrow$ $\left|\begin{smallmatrix}1\\3\end{smallmatrix}\right|\left|\begin{smallmatrix}5\\7\end{smallmatrix}\right|...\left|\begin{smallmatrix}4d-3\\4d-1\end{smallmatrix}\right|$.
\vspace{4pt}
\item $M_3 \leftrightarrow$ $\left|\begin{smallmatrix}3\\5\end{smallmatrix}\right|\left|\begin{smallmatrix}7\\9\end{smallmatrix}\right|...\left|\begin{smallmatrix}1\\4d-1\end{smallmatrix}\right|$.
\vspace{4pt}
 \item $M_4 \leftrightarrow$ $\left|\begin{smallmatrix}5\\3\end{smallmatrix}\right|\left|\begin{smallmatrix}9\\7\end{smallmatrix}\right|...\left|\begin{smallmatrix}4d-1\\1\end{smallmatrix}\right|$.
\end{enumerate} 
\vspace{4pt}
\item Trees consisting of one short and one long diagonal are of type $F$. An $F$ tree is denoted by $\left|\begin{smallmatrix}j\\i\end{smallmatrix}\right|$ where $j$ and $i$ are labels of the terminal vertices of the long diagonal; the number $j$, indicates that there exists a short diagonal of the opposite color joining the vertex $j-1$ to $j+1$. Two $F$ signatures are opposite if they have opposite $F$-trees: i.e. if their indexes are switched. For instance $F^{+}=\left|\begin{smallmatrix}j\\i\end{smallmatrix}\right|$ and $F^{-}=\left|\begin{smallmatrix}i\\j\end{smallmatrix}\right|$ are opposite. A signature with only $M$ and $F$
 trees is called an $F$-signature. A signature of type $F^{\otimes m}$ has exactly $m$ trees of type $F$ (and other of type $M$). 
 \vspace{4pt}
\item Trees consisting of two long diagonals are of type $S$. An $S$ tree is denoted by $\left[\begin{smallmatrix}i,j\\k,l\end{smallmatrix}\right]$, where the first line gives the coordinates of a long diagonal $(i,j)$, the second line gives the coordinates of the second long diagonal $(k,l)$. A signature with only $M$ and $S$ trees is called an $S$-signature. 
As for the previous family of signatures, a signature is of type $S^{\otimes m}$ if there exists $m$ trees of type $S$, the other are $M$ trees. We focus on $S$ trees given by pairs of diagonals $(i,j)$ and $(i+1,j+1)$ or $(i,j)$ and $(i-1,j-1)$ and call the {\it narrow} $S$ trees.  If we have a signature $S^{\otimes d-2}$ then all the $S$ trees are narrow. A pair of $S$-trees are opposite if the matrices are of type $S^{+}=\left[\begin{smallmatrix}i,j\\i+1,j-1\end{smallmatrix}\right]$ and $S^{-}=\left[\begin{smallmatrix}i,j\\i-1,j+1\end{smallmatrix}\right]$.
\vspace{4pt}\item The combination of $F$, $S$ and $M$ trees gives an $FS$-signature.
\end{enumerate}

The figure~\ref{F:MFS} presents  examples of signatures of type $M, F, S$ and $FS$, for $d=4$. Each signature is indexed by its algebraic notation and by its contracted notation. In the contracted notation the parenthesis $\left(\begin{smallmatrix}j\\j\pm1\end{smallmatrix}\right)$  replaces the sequence of repetitive motifs and the orientation.
\begin{figure}[h]
\begin{center}
\begin{tikzpicture}[scale=0.8]
\newcommand{\degree}[0]{8}
\newcommand{\B}[1]{#1*180/\degree}
\newcommand{\R}[1]{#1*180/\degree }
\draw[black,thick] (0,0) circle (1) ;
\foreach \k in {0,2,4,6,8,10,12,14} {
  \draw[blue] (\B{\k}:1.15) node {\tiny\k};} ;
  \foreach \k in {1,3,5,7,9,11,13,15} {
 \draw[ red] (\R{\k}:1.15) node {\tiny\k} ;};
\draw[blue, name path=B0B2](\B{0}:1) .. controls(\B{0}:0.7)and (\B{2}:0.7) .. (\B{2}:1) ;
\draw[blue, name path=B4B6](\B{4}:1) .. controls(\B{4}:0.7)and (\B{6}:0.7) .. (\B{6}:1) ;
\draw[blue, name path=B8B10](\B{8}:1) .. controls(\B{8}:0.7)and (\B{10}:0.7) .. (\B{10}:1) ;
\draw[blue, name path=B12B14](\B{12}:1) .. controls(\B{12}:0.7)and (\B{14}:0.7) .. (\B{14}:1) ;
\draw[red,  name path=R1R3](\R{1}:1) .. controls(\R{1}:0.7)and (\R{3}:0.7) .. (\R{3}:1) ;
\draw[red, name path=R5R7](\R{5}:1) .. controls(\R{5}:0.7)and (\R{7}:0.7) .. (\R{7}:1) ;
\draw[red, name path=R9R11](\R{9}:1) .. controls(\R{9}:0.7)and (\R{11}:0.7) .. (\R{11}:1) ;
\draw[red, name path=R13R15](\R{13}:1) .. controls(\R{13}:0.7)and (\R{15}:0.7) .. (\R{15}:1) ;
\node(M) at (0,1.7) {\large M};
\node(m1) at (0,-1.8) {$\left|\begin{smallmatrix}1\\3\end{smallmatrix}\right|\left|\begin{smallmatrix}5\\7\end{smallmatrix}\right| \left|\begin{smallmatrix}9\\11\end{smallmatrix}\right|\left|\begin{smallmatrix}13\\15\end{smallmatrix}\right|$};
\node(m2) at (0,-2.5) {$\left(\begin{smallmatrix}1\\3\end{smallmatrix}\right)$};
\end{tikzpicture}
\qquad
\begin{tikzpicture}[scale=0.8]
\newcommand{\degree}[0]{8}
\newcommand{\B}[1]{#1*180/\degree}
\newcommand{\R}[1]{#1*180/\degree }
\draw[black,thick] (0,0) circle (1) ;
\foreach \k in {0,2,4,6,8,10,12,14} {
  \draw[blue] (\B{\k}:1.15) node { \tiny\k};} ;
  \foreach \k in {1,3,5,7,9,11,13,15} {
 \draw[ red] (\R{\k}:1.15) node {\tiny\k} ;};
\draw[blue,name path=B0B6](\B{0}:1) .. controls(\B{0}:0.7)and (\B{6}:0.7) .. (\B{6}:1) ;
\draw[blue, name path=B2B4](\B{2}:1) .. controls(\B{2}:0.7)and (\B{4}:0.7) .. (\B{4}:1) ;
\draw[blue, name path=B8B10](\B{8}:1) .. controls(\B{8}:0.7)and (\B{10}:0.7) .. (\B{10}:1) ;
\draw[blue, name path=B12B14](\B{12}:1) .. controls(\B{12}:0.7)and (\B{14}:0.7) .. (\B{14}:1) ;
\draw[red,  name path=R1R3](\R{1}:1) .. controls(\R{1}:0.7)and (\R{3}:0.7) .. (\R{3}:1) ;
\draw[red, name path=R5R7](\R{5}:1) .. controls(\R{5}:0.7)and (\R{7}:0.7) .. (\R{7}:1) ;
\draw[red, name path=R9R11](\R{9}:1) .. controls(\R{9}:0.7)and (\R{11}:0.7) .. (\R{11}:1) ;
\draw[red,name path=R13R15](\R{13}:1) .. controls(\R{13}:0.7)and (\R{15}:0.7) .. (\R{15}:1) ;
\node(F) at (0,1.7) {\large F};
\node(f1) at (0,-1.8) {$\left|\begin{smallmatrix}3\\1\end{smallmatrix}\right|\left|\begin{smallmatrix}6\\0\end{smallmatrix}\right| \left|\begin{smallmatrix}9\\11\end{smallmatrix}\right|\left|\begin{smallmatrix}13\\15\end{smallmatrix}\right|$};
\node(f2) at (0,-2.5) {$\left|\begin{smallmatrix}6\\0\end{smallmatrix}\right| \left(\begin{smallmatrix}9\\11\end{smallmatrix}\right)$};
\end{tikzpicture}
\qquad
\begin{tikzpicture}[scale=0.8]
\newcommand{\degree}[0]{8}
\newcommand{\B}[1]{#1*180/\degree}
\newcommand{\R}[1]{#1*180/\degree }
\draw[black] (0,0) circle (1) ;
\foreach \k in {0,2,4,6,8,10,12,14} {
  \draw[blue] (\B{\k}:1.15) node { \tiny\k};} ;
  \foreach \k in {1,3,5,7,9,11,13,15} {
 \draw[ red] (\R{\k}:1.15) node {\tiny\k} ;
};
\draw[blue,name path=B4B2](\B{4}:1) .. controls(\B{4}:0.7)and (\B{2}:0.7) .. (\B{2}:1) ;
\draw[blue, name path=B16B6](\B{16}:1) .. controls(\B{16}:0.7)and (\B{6}:0.7) .. (\B{6}:1) ;
\draw[blue, name path=B8B10](\B{8}:1) .. controls(\B{8}:0.7)and (\B{10}:0.7) .. (\B{10}:1) ;
\draw[blue, name path=B12B14](\B{12}:1) .. controls(\B{12}:0.7)and (\B{14}:0.7) .. (\B{14}:1) ;
\draw[red, name path=R1R3](\R{1}:1) .. controls(\R{1}:0.7)and (\R{3}:0.7) .. (\R{3}:1) ;
\draw[red, name path=R5R15](\R{5}:1) .. controls(\R{5}:0.7)and (\R{15}:0.7) .. (\R{15}:1) ;
\draw[red,name path=R11R13](\R{11}:1) .. controls(\R{11}:0.7)and (\R{13}:0.7) .. (\R{13}:1) ;
\draw[red,name path=R9R7](\R{9}:1) .. controls(\R{9}:0.7)and (\R{7}:0.7) .. (\R{7}:1) ;
\node(S) at (0,1.7) {\large S};
\node(s1) at (0,-1.8) {$\left|\begin{smallmatrix}3\\1\end{smallmatrix}\right|\left[\begin{smallmatrix}5&15\\6&0\end{smallmatrix}\right] \left|\begin{smallmatrix}9\\7\end{smallmatrix}\right]\left|\begin{smallmatrix}13\\11\end{smallmatrix}\right|$};
\node(s2) at (0,-2.5) {$\left[\begin{smallmatrix}5&15\\6&0\end{smallmatrix}\right] \left(\begin{smallmatrix}9\\7\end{smallmatrix}\right)$};
\end{tikzpicture}
\qquad
\begin{tikzpicture}[scale=0.8]
\newcommand{\degree}[0]{8}
\newcommand{\B}[1]{#1*180/\degree}
\newcommand{\R}[1]{#1*180/\degree }
\draw[black] (0,0) circle (1) ;
\foreach \k in {0,2,4,6,8,10,12,14} {
  \draw[blue] (\B{\k}:1.15) node { \tiny\k};} ;
  \foreach \k in {1,3,5,7,9,11,13,15} {
 \draw[ red] (\R{\k}:1.15) node {\tiny\k} ;
};
\draw[blue,name path=B4B2](\B{4}:1) .. controls(\B{4}:0.7)and (\B{2}:0.7) .. (\B{2}:1) ;
\draw[blue, name path=B16B6](\B{16}:1) .. controls(\B{16}:0.7)and (\B{6}:0.7) .. (\B{6}:1) ;
\draw[blue, name path=B8B14](\B{8}:1) .. controls(\B{8}:0.7)and (\B{14}:0.7) .. (\B{14}:1) ;
\draw[blue, name path=B10B12](\B{10}:1) .. controls(\B{10}:0.7)and (\B{12}:0.7) .. (\B{12}:1) ;
\draw[red, name path=R1R3](\R{1}:1) .. controls(\R{1}:0.7)and (\R{3}:0.7) .. (\R{3}:1) ;
\draw[red, name path=R5R15](\R{5}:1) .. controls(\R{5}:0.7)and (\R{15}:0.7) .. (\R{15}:1) ;
\draw[red,name path=R11R13](\R{11}:1) .. controls(\R{11}:0.7)and (\R{13}:0.7) .. (\R{13}:1) ;
\draw[red,name path=R9R7](\R{9}:1) .. controls(\R{9}:0.7)and (\R{7}:0.7) .. (\R{7}:1) ;
\node(S) at (0,1.7) {\large FS};
\node(s1) at (0,-1.8) {$\left|\begin{smallmatrix}3\\1\end{smallmatrix}\right|\left[\begin{smallmatrix}5&15\\6&0\end{smallmatrix}\right] \left|\begin{smallmatrix}8\\14\end{smallmatrix}\right]\left|\begin{smallmatrix}13\\11\end{smallmatrix}\right|$};
\node(s2) at (0,-2.5) {$\left[\begin{smallmatrix}5&15\\6&0\end{smallmatrix}\right] \left|\begin{smallmatrix}8\\14\end{smallmatrix}\right|$};
\end{tikzpicture}
\end{center}
\vspace{-10pt}
\caption{Examples of $M, F,S$ and $FS$ trees and their matrices for $d=4$}~\label{F:MFS}
\end{figure}

\subsubsection{Classification of the signatures for $d=3$} In this subsection we consider for the case of $d=3$ not only the generic signatures but also the signatures of non zero codimension. For each codimension, we give the number of generic signatures and classified by characteristic paterns.

\vspace{5pt}
\noindent{\bf Signatures of codimension 0:} 

In the decomposition by signatures of $\dpol_3$, there exist 22 generic signatures among which there are four $M$ signatures, six $S$ signatures (containing only one $S$ tree) and twelve $F$ signatures (six with one red long diagonal and six with one long blue diagonal): 

\vspace{10pt}
\hspace{2cm}{
\begin{tikzpicture}[scale=0.8]
\newcommand{\degree}[0]{6}
\newcommand{\last}[0]{5}
\newcommand{\B}[1]{#1*180/\degree}
\newcommand{\R}[1]{#1*180/\degree }
\newcommand{\RDOT}[0]{[  red](i-1) circle (0.05)}
\newcommand{\BDOT}[0]{[ blue](i-1) circle (0.05)}
\newcommand{\XDOT}[0]{[black](i-1) circle (0.045)}
\draw (-6,-0.3) node {\bf $\bullet$ 4  signatures of type M:};
\draw (0,0) circle (1) ;
\draw[blue,  name path=B4B6](\B{4}:1) .. controls(\B{4}:0.7)and (\B{6}:0.7) .. (\B{6}:1) ;
\draw[blue,  name path=B8B10](\B{8}:1) .. controls(\B{8}:0.7)and (\B{10}:0.7) .. (\B{10}:1) ;
\draw[blue,  name path=B0B2](\B{0}:1) .. controls(\B{0}:0.7)and (\B{2}:0.7) .. (\B{2}:1) ;
\draw[red, name path=R1R3](\R{1}:1) .. controls(\R{1}:0.7)and (\R{3}:0.7) .. (\R{3}:1) ;
\draw[red, name path=R5R7](\R{5}:1) .. controls(\R{5}:0.7)and (\R{7}:0.7) .. (\R{7}:1) ;
\draw[red, name path=R9R11](\R{9}:1) .. controls(\R{9}:0.7)and (\R{11}:0.7) .. (\R{11}:1) ;
\fill[name intersections={of=B4B6 and R5R7,name=i}] \XDOT ;
\fill[name intersections={of=B8B10 and R9R11,name=i}] \XDOT ;
\fill[name intersections={of=B0B2 and R1R3,name=i}] \XDOT ;
\end{tikzpicture}
}

\vspace{10pt}
\hspace{2cm}{
\begin{tikzpicture}[scale=0.8]
\newcommand{\degree}[0]{6}
\newcommand{\last}[0]{5}
\newcommand{\B}[1]{#1*180/\degree}
\newcommand{\R}[1]{#1*180/\degree }
\newcommand{\RDOT}[0]{[  red](i-1) circle (0.05)}
\newcommand{\BDOT}[0]{[ blue](i-1) circle (0.05)}
\newcommand{\XDOT}[0]{[black](i-1) circle (0.045)}
\draw (-6,-0.3) node {\bf $\bullet$ 6  signatures of type S:};
\draw (0,0) circle (1) ;
\draw[blue,  name path=B6B8](\B{6}:1) .. controls(\B{6}:0.7)and (\B{8}:0.7) .. (\B{8}:1) ;
\draw[blue,  name path=B4B10](\B{4}:1) .. controls(\B{4}:0.7)and (\B{10}:0.7) .. (\B{10}:1) ;
\draw[blue,  name path=B0B2](\B{0}:1) .. controls(\B{0}:0.7)and (\B{2}:0.7) .. (\B{2}:1) ;
\draw[red, name path=R7R9](\R{7}:1) .. controls(\R{7}:0.7)and (\R{9}:0.7) .. (\R{9}:1) ;
\draw[red, name path=R5R11](\R{5}:1) .. controls(\R{5}:0.7)and (\R{11}:0.7) .. (\R{11}:1) ;
\draw[red, name path=R1R3](\R{1}:1) .. controls(\R{1}:0.7)and (\R{3}:0.7) .. (\R{3}:1) ;
\fill[name intersections={of=B6B8 and R7R9,name=i}] \XDOT ;
\fill[name intersections={of=B4B10 and R5R11,name=i}] \XDOT ;
\fill[name intersections={of=B0B2 and R1R3,name=i}] \XDOT ;
\end{tikzpicture}
}

\vspace{10pt}
\hspace{2cm}{
\begin{tikzpicture}[scale=0.8]
\newcommand{\degree}[0]{6}
\newcommand{\last}[0]{5}
\newcommand{\B}[1]{#1*180/\degree}
\newcommand{\R}[1]{#1*180/\degree }
\newcommand{\RDOT}[0]{[  red](i-1) circle (0.05)}
\newcommand{\BDOT}[0]{[ blue](i-1) circle (0.05)}
\newcommand{\XDOT}[0]{[black](i-1) circle (0.045)}
\draw (-6,-0.3) node {\bf $\bullet$ 12  signatures of type F:};
\draw (0,0) circle (1) ;
\draw[blue,  name path=B8B10](\B{8}:1) .. controls(\B{8}:0.7)and (\B{10}:0.7) .. (\B{10}:1) ;
\draw[blue,  name path=B4B6](\B{4}:1) .. controls(\B{4}:0.7)and (\B{6}:0.7) .. (\B{6}:1) ;
\draw[blue,  name path=B0B2](\B{0}:1) .. controls(\B{0}:0.7)and (\B{2}:0.7) .. (\B{2}:1) ;
\draw[red, name path=R7R9](\R{7}:1) .. controls(\R{7}:0.7)and (\R{9}:0.7) .. (\R{9}:1) ;
\draw[red, name path=R5R11](\R{5}:1) .. controls(\R{11}:0.7)and (\R{5}:0.7) .. (\R{11}:1) ;
\draw[red, name path=R1R3](\R{1}:1) .. controls(\R{1}:0.7)and (\R{3}:0.7) .. (\R{3}:1) ;\
\fill[name intersections={of=B8B10 and R7R9,name=i}] \XDOT ;
\fill[name intersections={of=B4B6 and R5R11,name=i}] \XDOT ;
\fill[name intersections={of=B0B2 and R1R3,name=i}] \XDOT ;
\end{tikzpicture}
}

\vspace{10pt}
In addition to generic classes there exist 
\begin{itemize}
\item 48 signatures of codimension 1, four families of twelve signatures which are equivalent up to rotation;
\item 30 signatures of codimension 2;
\item 4 signatures of codimension 3.
\end{itemize} 

Let us notice that the alternating sum of the number $N_{k}$ of  k-codimemsional signatures  $ \sum_{k=0}^{3} (-1)^{k}N_{k}=0$.

\vspace{10pt}
\noindent{\bf Signatures of codimension 1:} 

\begin{center}
\begin{tikzpicture}[scale=0.8]
\newcommand{\degree}[0]{6}
\newcommand{\last}[0]{5}
\newcommand{\B}[1]{#1*180/\degree}
\newcommand{\R}[1]{#1*180/\degree }
\newcommand{\RDOT}[0]{[  red](i-1) circle (0.05)}
\newcommand{\BDOT}[0]{[ blue](i-1) circle (0.05)}
\newcommand{\XDOT}[0]{[black](i-1) circle (0.045)}
\draw (0,0) circle (1) ;
\draw[blue,  name path=B8B10](\B{8}:1) .. controls(\B{8}:0.7)and (\B{10}:0.7) .. (\B{10}:1) ;
\draw[blue,  name path=B4B6](\B{4}:1) .. controls(\B{4}:0.7)and (\B{6}:0.7) .. (\B{6}:1) ;
\draw[blue,  name path=B0B2](\B{0}:1) .. controls(\B{0}:0.7)and (\B{2}:0.7) .. (\B{2}:1) ;
\draw[red, name path=R7R9](\R{7}:1) .. controls(\R{7}:0.7)and (\R{9}:0.7) .. (\R{9}:1) ;
\draw[red, name path=R3R11](\R{3}:1) .. controls(\R{3}:0.7)and (\R{11}:0.7) .. (\R{11}:1) ;
\draw[red, name path=R1R5](\R{1}:1) .. controls(\R{1}:0.7)and (\R{5}:0.7) .. (\R{5}:1) ;
\draw[name intersections={of=R3R11 and R1R5,name=i}] \RDOT ;
\draw[name intersections={of=R1R5 and R3R11,name=i}] \RDOT ;
\fill[name intersections={of=B8B10 and R7R9,name=i}] \XDOT ;
\fill[name intersections={of=B4B6 and R1R5,name=i}] \XDOT ;
\fill[name intersections={of=B0B2 and R1R5,name=i}] \XDOT ;

\end{tikzpicture}
\qquad
\begin{tikzpicture}[scale=0.8]
\newcommand{\degree}[0]{6}
\newcommand{\last}[0]{5}
\newcommand{\B}[1]{#1*180/\degree}
\newcommand{\R}[1]{#1*180/\degree }
\newcommand{\RDOT}[0]{[  red](i-1) circle (0.05)}
\newcommand{\BDOT}[0]{[ blue](i-1) circle (0.05)}
\newcommand{\XDOT}[0]{[black](i-1) circle (0.045)}
\draw (0,0) circle (1) ;
\draw[blue,  name path=B6B10](\B{6}:1) .. controls(\B{6}:0.7)and (\B{10}:0.7) .. (\B{10}:1) ;
\draw[blue,  name path=B4B8](\B{4}:1) .. controls(\B{4}:0.7)and (\B{8}:0.7) .. (\B{8}:1) ;
\draw[blue,  name path=B0B2](\B{0}:1) .. controls(\B{0}:0.7)and (\B{2}:0.7) .. (\B{2}:1) ;
\draw[red, name path=R7R9](\R{7}:1) .. controls(\R{7}:0.7)and (\R{9}:0.7) .. (\R{9}:1) ;
\draw[red, name path=R5R11](\R{5}:1) .. controls(\R{5}:0.7)and (\R{11}:0.7) .. (\R{11}:1) ;
\draw[red, name path=R1R3](\R{1}:1) .. controls(\R{1}:0.7)and (\R{3}:0.7) .. (\R{3}:1) ;
\draw[name intersections={of=B6B10 and B4B8,name=i}] \BDOT ;
\draw[name intersections={of=B4B8 and B6B10,name=i}] \BDOT ;
\fill[name intersections={of=B4B8 and R7R9,name=i}] \XDOT ;
\fill[name intersections={of=B4B8 and R5R11,name=i}] \XDOT ;
\fill[name intersections={of=B0B2 and R1R3,name=i}] \XDOT ;
\end{tikzpicture}
\qquad
\begin{tikzpicture}[scale=0.8]
\newcommand{\degree}[0]{6}
\newcommand{\last}[0]{5}
\newcommand{\B}[1]{#1*180/\degree}
\newcommand{\R}[1]{#1*180/\degree }
\newcommand{\RDOT}[0]{[  red](i-1) circle (0.05)}
\newcommand{\BDOT}[0]{[ blue](i-1) circle (0.05)}
\newcommand{\XDOT}[0]{[black](i-1) circle (0.045)}
\draw (0,0) circle (1) ;
\draw[blue, name path=B6B10](\B{6}:1) .. controls(\B{6}:0.7)and (\B{10}:0.7) .. (\B{10}:1) ;
\draw[blue, name path=B4B8](\B{4}:1) .. controls(\B{4}:0.7)and (\B{8}:0.7) .. (\B{8}:1) ;
\draw[blue, name path=B0B2](\B{0}:1) .. controls(\B{0}:0.7)and (\B{2}:0.7) .. (\B{2}:1) ;
\draw[red, name path=R5R7](\R{5}:1) .. controls(\R{5}:0.7)and (\R{7}:0.7) .. (\R{7}:1) ;
\draw[red, name path=R3R9](\R{3}:1) .. controls(\R{3}:0.7)and (\R{9}:0.7) .. (\R{9}:1) ;
\draw[red, name path=R1R11](\R{1}:1) .. controls(\R{1}:0.7)and (\R{11}:0.7) .. (\R{11}:1) ;
\draw[name intersections={of=B6B10 and B4B8,name=i}] \BDOT ;
\draw[name intersections={of=B4B8 and B6B10,name=i}] \BDOT ;
\fill[name intersections={of=B6B10 and R5R7,name=i}] \XDOT ;
\fill[name intersections={of=B6B10 and R3R9,name=i}] \XDOT ;
\fill[name intersections={of=B0B2 and R1R11,name=i}] \XDOT ;
\end{tikzpicture}
\qquad
\begin{tikzpicture}[scale=0.8]
\newcommand{\degree}[0]{6}
\newcommand{\last}[0]{5}
\newcommand{\B}[1]{#1*180/\degree}
\newcommand{\R}[1]{#1*180/\degree }
\newcommand{\RDOT}[0]{[  red](i-1) circle (0.05)}
\newcommand{\BDOT}[0]{[ blue](i-1) circle (0.05)}
\newcommand{\XDOT}[0]{[black](i-1) circle (0.045)}
\draw (0,0) circle (1) ;

\draw[blue,  name path=B8B10](\B{8}:1) .. controls(\B{8}:0.7)and (\B{10}:0.7) .. (\B{10}:1) ;
\draw[blue,  name path=B4B6](\B{4}:1) .. controls(\B{4}:0.7)and (\B{6}:0.7) .. (\B{6}:1) ;
\draw[blue, name path=B0B2](\B{0}:1) .. controls(\B{0}:0.7)and (\B{2}:0.7) .. (\B{2}:1) ;
\draw[red, name path=R7R11](\R{7}:1) .. controls(\R{7}:0.7)and (\R{11}:0.7) .. (\R{11}:1) ;
\draw[red, name path=R5R9](\R{5}:1) .. controls(\R{5}:0.7)and (\R{9}:0.7) .. (\R{9}:1) ;
\draw[red, name path=R1R3](\R{1}:1) .. controls(\R{1}:0.7)and (\R{3}:0.7) .. (\R{3}:1) ;
\draw[name intersections={of=R5R9 and R7R11,name=i}] \RDOT ;
\fill[name intersections={of=B8B10 and R5R9,name=i}] \XDOT ;
\fill[name intersections={of=B4B6 and R5R9,name=i}] \XDOT ;
\fill[name intersections={of=B0B2 and R1R3,name=i}] \XDOT ;
\end{tikzpicture}
\end{center}

\vspace{10pt}
\noindent {\bf Diagrams of codimension 2:} 

\hspace{2cm}{
\begin{tikzpicture}[scale=0.8]
\newcommand{\degree}[0]{6}
\newcommand{\last}[0]{5}
\newcommand{\B}[1]{#1*180/\degree}
\newcommand{\R}[1]{#1*180/\degree }
\newcommand{\RDOT}[0]{[  red](i-1) circle (0.05)}
\newcommand{\BDOT}[0]{[ blue](i-1) circle (0.05)}
\newcommand{\XDOT}[0]{[black](i-1) circle (0.045)}
\draw (-6,-0.3) node {\bf $\bullet$ 1 family of order 6};
\draw (0,0) circle (1) ;
\draw[blue, name path=B6B8](\B{6}:1) .. controls(\B{6}:0.7)and (\B{8}:0.7) .. (\B{8}:1) ;
\draw[blue, name path=B4B10](\B{4}:1) .. controls(\B{4}:0.7)and (\B{10}:0.7) .. (\B{10}:1) ;
\draw[blue, name path=B0B2](\B{0}:1) .. controls(\B{0}:0.7)and (\B{2}:0.7) .. (\B{2}:1) ;
\draw[red, name path=R5R9](\R{5}:1) .. controls(\R{5}:0.7)and (\R{9}:0.7) .. (\R{9}:1) ;
\draw[red, name path=R3R11](\R{3}:1) .. controls(\R{3}:0.7)and (\R{11}:0.7) .. (\R{11}:1);
\draw[red, name path=R1R7](\R{1}:1) .. controls(\R{1}:0.7)and (\R{7}:0.7) .. (\R{7}:1) ;
\draw[name intersections={of=R5R9 and R1R7,name=i}] \RDOT ;
\draw[name intersections={of=R3R11 and R1R7,name=i}] \RDOT ;
\draw[name intersections={of=R1R7 and R5R9,name=i}] \RDOT ;
\fill[name intersections={of=B6B8 and R1R7,name=i}] \XDOT ;
\fill[name intersections={of=B4B10 and R1R7,name=i}] \XDOT ;
\fill[name intersections={of=B0B2 and R1R7,name=i}] \XDOT ;
\end{tikzpicture}
}

\hspace{1cm}

\hspace{2cm}{
\begin{tikzpicture}[scale=0.8]
\newcommand{\degree}[0]{6}
\newcommand{\last}[0]{5}
\newcommand{\B}[1]{#1*180/\degree}
\newcommand{\R}[1]{#1*180/\degree }
\newcommand{\RDOT}[0]{[  red](i-1) circle (0.05)}
\newcommand{\BDOT}[0]{[ blue](i-1) circle (0.05)}
\newcommand{\XDOT}[0]{[black](i-1) circle (0.045)}
\draw (-6,-0.3) node {\bf $\bullet$ 2 families of order 12};
\draw (0,0) circle (1) ;
\draw[blue, name path=B6B10](\B{6}:1) .. controls(\B{6}:0.7)and (\B{10}:0.7) .. (\B{10}:1) ;
 \draw[blue, name path=B4B8](\B{4}:1) .. controls(\B{4}:0.7)and (\B{8}:0.7) .. (\B{8}:1) ;
 \draw[blue,  name path=B0B2](\B{0}:1) .. controls(\B{0}:0.7)and (\B{2}:0.7) .. (\B{2}:1) ;
 \draw[red, name path=R7R9](\R{7}:1) .. controls(\R{7}:0.7)and (\R{9}:0.7) .. (\R{9}:1) ;
 \draw[red, name path=R3R11](\R{3}:1) .. controls(\R{3}:0.7)and (\R{11}:0.7) .. (\R{11}:1) ;
 \draw[red, name path=R1R5](\R{1}:1) .. controls(\R{1}:0.7)and (\R{5}:0.7) .. (\R{5}:1) ;
 \draw[name intersections={of=B6B10 and B4B8,name=i}] \BDOT ;
\draw[name intersections={of=B4B8 and B6B10,name=i}] \BDOT ;
\draw[name intersections={of=R3R11 and R1R5,name=i}] \RDOT ;
\draw[name intersections={of=R1R5 and R3R11,name=i}] \RDOT ;
\fill[name intersections={of=B4B8 and R7R9,name=i}] \XDOT ;
\fill[name intersections={of=B4B8 and R1R5,name=i}] \XDOT ;
\fill[name intersections={of=B0B2 and R1R5,name=i}] \XDOT ;
\end{tikzpicture}
\qquad
\begin{tikzpicture}[scale=0.8]
\newcommand{\degree}[0]{6}
\newcommand{\last}[0]{5}
\newcommand{\B}[1]{#1*180/\degree}
\newcommand{\R}[1]{#1*180/\degree }
\newcommand{\RDOT}[0]{[  red](i-1) circle (0.05)}
\newcommand{\BDOT}[0]{[ blue](i-1) circle (0.05)}
\newcommand{\XDOT}[0]{[black](i-1) circle (0.045)}
\draw (0,0) circle (1) ;
\draw[blue,  name path=B8B10](\B{8}:1) .. controls(\B{8}:0.7)and (\B{10}:0.7) .. (\B{10}:1) ;
\draw[blue,  name path=B4B6](\B{4}:1) .. controls(\B{4}:0.7)and (\B{6}:0.7) .. (\B{6}:1) ;
\draw[blue,name path=B0B2](\B{0}:1) .. controls(\B{0}:0.7)and (\B{2}:0.7) .. (\B{2}:1) ;
\draw[red, name path=R7R11](\R{7}:1) .. controls(\R{7}:0.7)and (\R{11}:0.7) .. (\R{11}:1) ;
\draw[red, name path=R3R9](\R{3}:1) .. controls(\R{3}:0.7)and (\R{9}:0.7) .. (\R{9}:1) ;
\draw[red, name path=R1R5](\R{1}:1) .. controls(\R{1}:0.7)and (\R{5}:0.7) .. (\R{5}:1) ;
\draw[name intersections={of=R7R11 and R3R9,name=i}] \RDOT ;
\draw[name intersections={of=R3R9 and R7R11,name=i}] \RDOT ;
\draw[name intersections={of=R3R9 and R1R5,name=i}] \RDOT ;
\draw[name intersections={of=R1R5 and R3R9,name=i}] \RDOT ;
\fill[name intersections={of=B8B10 and R3R9,name=i}] \XDOT ;
\fill[name intersections={of=B4B6 and R1R5,name=i}] \XDOT ;
\fill[name intersections={of=B0B2 and R1R5,name=i}] \XDOT ;
\end{tikzpicture}
}

\vspace{10pt}
\noindent {\bf Signatures of codimension 3:} 

\hspace{2cm}{
\begin{tikzpicture}[scale=0.8]
\newcommand{\degree}[0]{6}
\newcommand{\last}[0]{5}
\newcommand{\B}[1]{#1*180/\degree}
\newcommand{\R}[1]{#1*180/\degree }
\newcommand{\RDOT}[0]{[  red](i-1) circle (0.05)}
\newcommand{\BDOT}[0]{[ blue](i-1) circle (0.05)}
\newcommand{\XDOT}[0]{[black](i-1) circle (0.045)}
\draw (-6,-0.3) node {\bf $\bullet$ 1 familiy of order 4};
\draw (0,0) circle (1) ;
\draw[blue,  name path=B8B10](\B{8}:1) .. controls(\B{8}:0.7)and (\B{10}:0.7) .. (\B{10}:1) ;
\draw[blue,  name path=B4B6](\B{4}:1) .. controls(\B{4}:0.7)and (\B{6}:0.7) .. (\B{6}:1) ;
\draw[blue,  name path=B0B2](\B{0}:1) .. controls(\B{0}:0.7)and (\B{2}:0.7) .. (\B{2}:1) ;
\draw[red, name path=R5R11](\R{5}:1) .. controls(\R{5}:0.7)and (\R{11}:0.7) .. (\R{11}:1) ;
\draw[red, name path=R3R9](\R{3}:1) .. controls(\R{3}:0.7)and (\R{9}:0.7) .. (\R{9}:1) ;
\draw[red, name path=R1R7](\R{1}:1) .. controls(\R{1}:0.7)and (\R{7}:0.7) .. (\R{7}:1) ;
\draw[name intersections={of=R5R11 and R3R9,name=i}] \RDOT ;
\draw[name intersections={of=R5R11 and R1R7,name=i}] \RDOT ;
\draw[name intersections={of=R3R9 and R5R11,name=i}] \RDOT ;
\fill[name intersections={of=B8B10 and R3R9,name=i}] \XDOT ;
\fill[name intersections={of=B4B6 and R5R11,name=i}] \XDOT ;
\fill[name intersections={of=B0B2 and R1R7,name=i}] \XDOT ;
\end{tikzpicture}
}

\begin{lemma}\label{L:Diago1}
Let $\sigma$ be a generic signature, having all red (resp. blue) diagonals short.
Consider a pair of adjoining $F$ trees, of long blue (resp. red) diagonals $(i,j)(j+2,k)$ and deform them by a Whitehead move.  If $k= i-2 \mod 4d$, then the number of blue (resp. red) short diagonals is increased by two.
Otherwise, the number is increased by one.
\end{lemma}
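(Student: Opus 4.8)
The plan is to reduce everything to a local analysis of the four blue (resp.\ red) terminal vertices $i,j,j+2,k$ involved in the move, and to track which of the two resulting chords is short. First I would record the only structural facts that are needed: an $F$ tree carries exactly one long and one short diagonal, the short one being of the opposite colour, so the two long diagonals $(i,j)$ and $(j+2,k)$ are blue (resp.\ red) and contribute nothing to the current count of short blue (resp.\ red) diagonals. Since the Whitehead move $\delta$ acts only on chords of a single colour (Definitions~\ref{D:2.18} and~\ref{D:band}), the red (resp.\ blue) short diagonals are left untouched, so the change in the number of short blue (resp.\ red) diagonals equals the number of short chords among the two chords produced by $\delta$.

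Next I would pin down the cyclic order of the four endpoints. Because $j$ and $j+2$ are consecutive even (resp.\ odd) vertices, no other even (resp.\ odd) vertex can lie strictly between them; hence among the four endpoints $\{i,j,j+2,k\}$ the vertices $j$ and $j+2$ are consecutive. Together with the fact that $(i,j)$ and $(j+2,k)$ do not cross, this forces the cyclic order $i,\,j,\,j+2,\,k$ up to reflection, and the original pairing $\{(i,j),(j+2,k)\}$ is the non-crossing matching of these four points that joins consecutive pairs. The Whitehead move contracts the two chords to the codimension-one vertex $\bar{\bar v}$ (the crossing matching $\{(i,j+2),(j,k)\}$) and then smooths to the \emph{other} non-crossing matching $\{(i,k),(j,j+2)\}$; this is the only admissible outcome, since $\{(i,j+2),(j,k)\}$ is precisely the crossing intermediate state and is not a generic signature.

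It then remains to decide which of $(j,j+2)$ and $(i,k)$ are short. The chord $(j,j+2)$ joins two consecutive even (resp.\ odd) vertices, so it is short by Definition~\ref{D:diagonalsij}, and it is created in every case. For $(i,k)$, in the cyclic order $i,\,j,\,j+2,\,k$ the vertex $i$ is the even (resp.\ odd) vertex immediately following $k$ around the complementary arc, so $(i,k)$ is short precisely when $i=k+2$, i.e. $k=i-2 \pmod{4d}$, and is long otherwise. Combining with the first paragraph, the count of short blue (resp.\ red) diagonals increases by two when $k=i-2 \pmod{4d}$ (both new chords short) and by one otherwise (only $(j,j+2)$ short). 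I would close by checking that the resulting diagram is genuinely a signature: the new short chord $(j,j+2)$ crosses the pre-existing opposite-colour short chord $(j-1,j+1)$, and when $k=i-2$ the new short chord $(i-2,i)$ crosses $(i-3,i-1)$, so each new short tree is again of type $M$.

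The step I expect to be the main obstacle is the second one: justifying rigorously that $\delta$ lands on the specific non-crossing matching $\{(i,k),(j,j+2)\}$ rather than remaining in the contracted crossing state or re-smoothing to the original pairing. This is where the planarity of the embedding and the adjoining hypothesis (the two $F$ trees bounding a common $2$-cell along the arc through $j+1$) must be invoked to fix the orientation of the smoothing; once that is settled, everything else is a direct reading of Definition~\ref{D:diagonalsij} and the parity of the labels.
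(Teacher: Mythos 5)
Your proof is correct and follows essentially the same route as the paper's: a direct local computation showing that the Whitehead move replaces the two long blue (resp.\ red) chords by $(j,j+2)$, which is always short, and $(i,k)$, which is short exactly when $k=i-2 \pmod{4d}$. The only difference is that you additionally justify the cyclic order $i,j,j+2,k$ and why the smoothing is forced onto the other non-crossing matching $\{(i,k),(j,j+2)\}$ --- steps the paper simply asserts (its second case is even stated with mismatched labels $(i,j)(i-2,k)$) --- so your write-up is a more careful rendering of the same argument.
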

\begin{proof}
Consider the first case. Applying a Whitehead move onto this pair of diagonals induces a new signature, where the new pair of diagonals is $(i,i-2)(j, j+2)$, which are both short in the sense of the definition~\ref{D:diagonalsij}. So, the number of short blue (resp. red) diagonals is increased by two.
Concerning the second case, the Whitehead move applied to the pair of diagonals $(i,j)(i-2,k)$ induces $(i,i-2)(j,k)$, where $(i,i-2)$ is a short diagonal. So, the number of short blue (resp. red) diagonals is increased by one.
\end{proof}

\begin{corollary}
Consider a generic signature $\sigma$ having only short red (resp. blue) diagonals. The  repetitive application of Whitehead moves onto pairs of blue (resp. red) diagonals induces an $M$-signature, in a finite number of Whitehead moves.
\end{corollary}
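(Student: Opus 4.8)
The plan is to prove this by exhibiting a strictly monotone integer quantity (a monovariant) that is changed in a controlled way by each Whitehead move. I would take the monovariant to be $N(\sigma)$, the number of \emph{short} blue diagonals of $\sigma$ (in the sense of Definition~\ref{D:diagonalsij}), which always satisfies $0\le N(\sigma)\le d$ since there are exactly $d$ blue chords. The first observation I would record is that a Whitehead move on a pair of blue diagonals (Definition~\ref{D:band}) acts only on the blue forest-chord diagram and leaves every red chord untouched; hence the hypothesis that all red diagonals are short is preserved throughout the process. Moreover, since all red chords being short means the red matching is one of the two ``adjacent-only'' matchings of the odd vertices, \emph{any} non-crossing blue matching automatically meets each red short chord in exactly one point, so every intermediate diagram remains a valid generic signature. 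With this in place, Lemma~\ref{L:Diago1} says that a move on a pair of adjoining $F$-trees with long blue diagonals increases $N(\sigma)$ by $1$ or $2$; combined with the upper bound $N\le d$, this forces the procedure to terminate after at most $d$ moves, and the terminal signature — having no long blue diagonal and (by preservation) no long red diagonal — can only be an $M$-signature.

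The real content, and the step I would carry out most carefully, is the \emph{no-stuck-state} claim: as long as a long blue diagonal remains, one can always select a pair of adjoining blue chords whose Whitehead move strictly raises $N$. Here I would exploit that, in a generic signature, the blue chords form a non-crossing perfect matching of the even vertices (there is no interior vertex $\bar{\bar v}$ to let blue chords cross). I would pick an \emph{innermost} long blue chord $(i,j)$, i.e.\ one minimizing the number of even vertices enclosed on its smaller side. Because a non-crossing matching forces that side to contain an even number of vertices, and because any long chord inside would contradict minimality, the smallest long chord encloses exactly two vertices $i+2,i+4$, matched by the short chord $(i+2,i+4)$; more generally the chords strictly inside it are all short and pair up consecutive vertices. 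This is precisely the local configuration needed to feed Lemma~\ref{L:Diago1}: when the neighbour of $(i,j)$ across a shared $2$-cell (Definition~\ref{D:suc}) is itself a long blue diagonal, the two adjoining $F$-trees fall directly under the lemma and the move yields $+1$ or $+2$.

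The main obstacle will be the boundary cases where Lemma~\ref{L:Diago1}'s hypothesis of \emph{two} adjoining long diagonals is not met — most notably when only a single long blue diagonal $(i,i+6)$ survives (with the nested short chord $(i+2,i+4)$ inside it), a configuration that genuinely occurs already for $d=3$. For these I would apply the Whitehead move to the long chord together with the short chord nested immediately inside it: on the four endpoints $i,i+2,i+4,i+6$ this reconnection produces the two short chords $(i,i+2)$ and $(i+4,i+6)$, raising $N$ by one. Checking that this pair is adjoining and that the move is the promised composition of a contracting and a smoothing half-Whitehead move (Definition~\ref{D:2.18}) is the delicate point, since it requires verifying the $2$-cell incidence in the presence of the interleaved short red chords; I expect this to be routine but is where the argument must be made rigorous rather than merely asserted.

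Assembling these pieces, the procedure never halts while a long blue diagonal is present, so it halts only at $N(\sigma)=d$; at that point every blue diagonal is short and, since the all-short-red condition has been maintained, the resulting signature consists entirely of $M$-trees, i.e.\ it is an $M$-signature, and the whole reduction used finitely many (at most $d$) Whitehead moves. The statement for signatures with all blue diagonals short and moves performed on red diagonals follows verbatim by interchanging the roles of the two colors.
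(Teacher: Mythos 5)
Your overall plan --- the monovariant $N(\sigma)$ counting short blue diagonals, preservation of the all-red-short hypothesis, and iteration of Lemma~\ref{L:Diago1} --- is the same route the paper takes (the corollary is stated there as an immediate consequence of Lemma~\ref{L:Diago1}, with the iteration spelled out in the base case of Theorem~\ref{Th:Path}). However, your ``no-stuck-state'' step contains a genuine error: minimality of an innermost long blue chord does \emph{not} force it to enclose exactly two even vertices; it only forces the chords inside it to be short, and there may be several of them. Concretely, take $d=3$ with blue chords $(0,10),(2,4),(6,8)$ and red chords $(1,3),(5,7),(9,11)$: this is a legitimate generic signature with all red diagonals short, whose unique (hence innermost) long blue chord $(0,10)$ encloses the four even vertices $2,4,6,8$. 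In this signature \emph{no} pairwise Whitehead move strictly raises $N$. The move on $(0,10),(2,4)$ can only produce $(0,2),(4,10)$ --- the alternative resolution $(0,4),(2,10)$ joins vertices of equal residue mod $4$, so $(0,4)$ would cross no red chord and the result would not be a signature, which Definition~\ref{D:2.18} forbids --- and this trades the short chord $(2,4)$ for the short chord $(0,2)$, leaving $N$ unchanged while the long chord merely migrates; the move on $(0,10),(6,8)$ behaves symmetrically, and the move on $(2,4),(6,8)$ yields $(2,8),(4,6)$ and \emph{decreases} $N$. So your claim that a strictly $N$-increasing move always exists while a long blue diagonal survives is false, and with it your bound of at most $d$ moves. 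Incidentally, your auxiliary assertion that ``any non-crossing blue matching automatically meets each red short chord in exactly one point'' is also wrong: a blue chord joining two vertices of the same residue mod $4$ meets zero or two red short chords; what actually keeps intermediate diagrams valid is the requirement built into Definition~\ref{D:2.18} that the output of a half-Whitehead move be a signature --- and that same requirement is what kills the resolutions your strict-increase claim would need.

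The repair is a lexicographic monovariant rather than $N$ alone: when two adjoining long blue diagonals exist, Lemma~\ref{L:Diago1} raises $N$ by $1$ or $2$; when the innermost long chord $(i,j)$ encloses $2m>2$ even vertices, the move pairing it with an adjoining nested short chord keeps $N$ fixed but strictly decreases $m$ (the long chord shortens as it migrates); and when $m=1$, i.e.\ $j=i+6$, your resolution $(i,i+2),(i+4,i+6)$ finally gives $N\mapsto N+1$. Since $m$ cannot decrease forever and $N\le d$, the process terminates at $N=d$, yielding an $M$-signature as you argue --- but in $O(d)$ moves per long diagonal rather than $d$ moves overall. It is worth noting that the paper itself is loose at exactly this spot (the base case of Theorem~\ref{Th:Path} asserts that each move of the long diagonal against a short one ``increases by one'' the number of short blue diagonals, which the example above contradicts), so the two-level argument is genuinely needed and is not merely a pedantic refinement of your proposal.
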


\begin{lemma}\label{L:Diago2}
Let $\sigma$ be a generic signature having all red (resp. blue) short diagonals. Consider a pair of non-successive, adjoining blue diagonals in $\sigma$. 
Then, applying a Whitehead move onto this pair of diagonals we have one of the following situation:
\begin{enumerate}

\item  the number of long blue (resp. red) diagonals increases by two, if both diagonals are short,

\item the number of long blue (resp. red) diagonals increases by one, if one of the diagonals is short,  
  
\item  if both diagonals are long  a new pair of long diagonals appears.
\end{enumerate}
\end{lemma}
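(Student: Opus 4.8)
The plan is to reduce the whole statement to a single observation about how a Whitehead move re-pairs the four terminal vertices of the two blue chords, and then to read off the three cases by counting. First I would record the structural fact underlying everything: since $\sigma$ is generic it has no vertex $\bar{\bar v}$, and as every interior vertex $\bar v$ has valency $4$ with its incident edges colored red, blue, red, blue, two blue chords can never cross in $\sigma$. Hence the two adjoining blue chords $(i,j)$ and $(k,l)$ realize one of the two \emph{non-crossing} pairings of their four endpoints; writing these endpoints in counterclockwise order on $\partial\mathbb{D}$, among the three ways to match them exactly one is crossing and the remaining two are non-crossing.

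Next I would determine the outcome of the move. By Definition~\ref{D:band} a Whitehead move carries $\sigma$ to a signature of the same codimension, so the result is again generic (codimension $0$) and therefore again free of blue--blue crossings. Since the move only re-matches the four endpoints $\{i,j,k,l\}$ and leaves the rest of $\sigma$ untouched, and since it must produce a signature different from $\sigma$, the new blue chords form exactly the \emph{other} non-crossing pairing. The key point, which I would isolate as a lemma-internal claim, is that the two non-crossing pairings of four cyclically ordered points always pair their vertices \emph{across}: in cyclic order $a,b,c,d$ they are $\{a,b\},\{c,d\}$ and $\{a,d\},\{b,c\}$, so each chord of one pairing joins one endpoint of $(i,j)$ to one endpoint of $(k,l)$. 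Consequently the length of each new chord is one of the cross-distances $|i-k|$, $|i-l|$, $|j-k|$, $|j-l|$, every one of which differs from $2$ precisely because the pair is non-successive (Definition~\ref{D:suc}). Thus \emph{both} new blue chords are long in the sense of Definition~\ref{D:diagonalsij}.

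The three cases are then pure bookkeeping of the number of long blue chords within the deformed pair. If both original chords were short, this number rises from $0$ to $2$, proving (1); if exactly one was short it rises from $1$ to $2$, proving (2); and if both were already long it stays equal to $2$, but since the matching has genuinely changed, the two long chords form a new pair, proving (3).

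The step I expect to be the main obstacle is making the ``pairing across'' claim airtight rather than merely visual: one must verify, uniformly in the cyclic arrangement of the four endpoints and for each relative position of the chords (two short side by side, a short chord parallel to or nested inside a long one, and two long chords), that the non-crossing pairing distinct from the original is always the cross-pairing. Phrasing it once through the cyclic order $a<b<c<d$, where the non-crossing pairings are visibly $\{a,b\},\{c,d\}$ and $\{a,d\},\{b,c\}$, dispatches all of these at once and is exactly what makes the non-successiveness hypothesis bite.
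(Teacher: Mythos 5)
Your proof is correct, and it reorganizes the argument in a genuinely more uniform way than the paper. The paper's own proof is a three-case computation: in each case it simply writes down the pair of diagonals that the Whitehead move produces (the cross-pairing of the four endpoints) and checks, using Definitions~\ref{D:diagonalsij} and~\ref{D:suc}, that the relevant index differences are $\neq 2$; in case (3) it invokes the congruences $i\equiv k\equiv 1$, $j\equiv l\equiv 3 \pmod 4$ from Definition~\ref{De:1} to argue that the smoothing outcome is unique. You instead isolate a single structural claim --- genericity forbids monochromatic crossings, a Whitehead move preserves codimension (Definition~\ref{D:band}) and changes the signature, so the move necessarily replaces one non-crossing pairing of $\{i,j,k,l\}$ by the unique other non-crossing pairing, which always pairs across --- and then all three cases follow from non-successiveness by counting. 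This buys two things the paper leaves implicit: a justification of \emph{why} the resulting pairing is the cross-pairing rather than a bare assertion of it (the paper's bookkeeping here is in fact slightly garbled --- in its case (1) it announces the new pair $(i,k)(j,l)$ but then verifies $|i-l|\neq 2$ and $|l-j|\neq 2$), and the stronger conclusion in case (3) that both diagonals of the new pair are long, which the paper's case (3) never checks. What the paper's explicit per-case computation buys in return is the precise labels of the resulting diagonals, which are reused downstream (e.g., in Theorem~\ref{T:Adj} and the $Q$-piece constructions). One small point to tighten in your write-up: ``short'' must be read modulo $4d$ (the chord $(0,4d-2)$ is short), so the distances $|i-k|$, etc., should be cyclic distances; the paper has the same informality, and it affects nothing of substance.
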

\begin{proof}

Let Let $(i,j)$ and $(l,k)$ be the pair of blue (resp. red) diagonals.
 \begin{itemize}
\item If both diagonals are short then $|i-j|=|l-k|=2$, where $k\neq j+2 \mod 4d$, and $l\neq i-2\mod 4d$ (using definition~\ref{D:diagonalsij} and definition~\ref{D:suc}). So, applying the Whitehead move onto the pair $(i,j)$ and $(k,l)$ induces the new pair of diagonals $(i,k)(j,l)$, where $|i-l|\neq 2$ and $|l-j|\neq 2$. 

\item Let us suppose, without loss of generality, that $(k,l)$ is short. Applying the Whitehead move onto $(i,j)$ and $(k,k+2)$ gives  the pair of diagonals $(i,k+2)(j,k)$. Since $(i,j)$ and $(k,l)$ are non-successive, then $l\neq i-2$ and $k\neq j+2$. Therefore, $|i-k-2|\neq 2$ and $|j-k|\neq 2$. Since both diagonals are long, the number of long blue (resp. red) diagonals, is increased by one. 

\item Let us apply  a Whitehead move onto the pair of diagonals $(i,j)$ and $(k,l)$. 
Since $(i,j)$ and $(k,l)$ are disjoint and belong to a given signature $\sigma$, their terminal vertices verify $k\equiv i\equiv  1\mod 4$ and  $j\equiv  l\equiv  3 \mod 4$ (see definition~\ref{De:1}). This pair is first modified by a half-Whitehead move into a pair of diagonals meeting at one point: $(i,k)(j,l)$. This pair is then modified by a smoothing Whitehead move, which induces the unique possible pair of diagonals $(i,l)(j,k)$. 
\end{itemize}
\end{proof}

\subsection{Adjacence relations}
We recall a few results from~\cite{C0}.
\begin{theorem}[Adjacence theorem]\label{T:Adj}
Let $d>3$. The relations between generic signatures obtained in one Whitehead-moves are the following: 
\begin{enumerate}
\item $M$-signatures are connected only to $d(d-1)$ $F$-signatures  ($\binom{d}{2}$ red and $\binom{d}{2}$ blue);
\item $F$-signatures are connected to $M$-, $S$-signatures;
\item $S$-signatures are connected only  to $FS$-signatures or $S$-signatures .
\end{enumerate}
\end{theorem}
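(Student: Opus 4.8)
The plan is to prove the Adjacence theorem by analyzing, in the matrix notation, what happens when a single Whitehead move is applied to a generic signature of each type $M$, $F$, $S$. The essential tool is Lemma~\ref{L:Diago1} and Lemma~\ref{L:Diago2}, which already classify completely how the number of short versus long diagonals of a fixed color changes under a single Whitehead move applied to a pair of adjoining diagonals of that color. Since a Whitehead move by Definition~\ref{D:band} acts on a pair of chords of one fixed color (red or blue) and leaves all other trees untouched, the effect on the \emph{type} of the signature is local: only the trees meeting the two modified diagonals can change type, and the global type of $\sigma$ after the move is read off from the resulting local configuration. I would first set up this locality observation carefully, noting that in a generic signature each tree is a single $4$-valent node with colors red, blue, red, blue, so that modifying a blue pair can only turn $M$ into $F$ (a short blue diagonal becoming long, as in Lemma~\ref{L:Diago2}(1)--(2)) or turn $F$ into $S$ (a long blue diagonal interacting with another long one), and symmetrically for red.

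For part (1), I would start from an $M$-signature, where by definition every diagonal (red and blue) is short. By Lemma~\ref{L:Diago2}, a Whitehead move applied to an adjoining pair of blue short diagonals necessarily produces long diagonals; since both were short and non-successive pairs give two long diagonals while successive pairs are the degenerate case, one checks that the only outcome yielding a valid generic signature of the same codimension is an $F$-signature, never another $M$ or an $S$. The counting $d(d-1)$ then comes from enumerating the admissible pairs: on each color there are $\binom{d}{2}$ adjoining pairs of short diagonals producing a red (resp.\ blue) long diagonal, giving $\binom{d}{2}$ red $F$-neighbors and $\binom{d}{2}$ blue $F$-neighbors, hence $2\binom{d}{2}=d(d-1)$ in total. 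The main care here is to verify that no Whitehead move out of an $M$-signature can land on another $M$-signature or jump directly to an $S$-signature, which follows because an $M$-signature has \emph{no} long diagonals to begin with, so Lemma~\ref{L:Diago2}(3) (both diagonals long) cannot apply.

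For parts (2) and (3), I would run the same local analysis in reverse and forward. An $F$-signature contains one long diagonal (of one color) together with $M$-trees; applying a Whitehead move to the color carrying the long diagonal can either \emph{decrease} the number of long diagonals, returning an $M$-signature (the reverse of the $M\to F$ move, corresponding to smoothing via Lemma~\ref{L:Diago1}), or \emph{create} a second long diagonal, producing an $S$-signature (Lemma~\ref{L:Diago2}(2), one short one long, yielding a pair of long diagonals). This gives the two-sided adjacency $F \leftrightarrow M$ and $F \leftrightarrow S$ of part (2). For part (3), an $S$-signature has two long diagonals of one color; by Lemma~\ref{L:Diago2}(3) a Whitehead move on a pair of long diagonals produces a new pair of long diagonals, so the $S$-character (or an additional long diagonal in the other color, i.e.\ an $FS$-signature) is preserved — it can never drop back to $F$ in one move, because both diagonals remain long. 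This yields exactly $S \leftrightarrow S$ or $S \leftrightarrow FS$.

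The hard part will be the exhaustiveness and the exclusion claims rather than the existence of each transition: I must show that no \emph{other} type can be reached in a single move, and in particular establish the precise count $d(d-1)$ in part (1) without overcounting. The delicate point is handling the boundary/wrap-around cases modulo $4d$ (the successive-pair exception $k=i-2 \bmod 4d$ of Lemma~\ref{L:Diago1}, which produces two short diagonals and corresponds to the reverse direction), and confirming that every admissible adjoining pair genuinely gives a distinct $F$-signature so that the enumeration is a bijection onto the claimed set of neighbors. I would organize this as a finite case check on the local $2\times 2$ matrix blocks $\left[\begin{smallmatrix}i,j\\k,l\end{smallmatrix}\right]$, using the parity constraints $i\equiv k\equiv 1$, $j\equiv l\equiv 3 \bmod 4$ from Definition~\ref{De:1} to rule out the spurious configurations, and invoke Definition~\ref{D:suc} to separate successive from non-successive pairs throughout.
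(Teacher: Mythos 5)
Your overall strategy---a local case analysis per tree type driven by Lemmas~\ref{L:Diago1} and~\ref{L:Diago2}, together with the count $2\binom{d}{2}=d(d-1)$ for part (1)---is the same as the paper's, and your treatment of part (1) essentially reproduces the paper's proof. But parts (2) and (3) contain a genuine structural error: you track only the lengths of the diagonals of the single color being moved, whereas the \emph{type} of a tree is determined by the red--blue crossing pattern. An $S$-tree is a long diagonal of one color crossing a long diagonal of the \emph{other} color; two long diagonals of the \emph{same} color, each crossed by a short diagonal of the opposite color, form $F^{\otimes 2}$, which by the paper's classification is still an $F$-signature. Consequently your proposed mechanism for the transition $F\to S$---applying Lemma~\ref{L:Diago2}(2) to a (short, long) pair of the color carrying the long diagonal---does not produce an $S$-signature at all; it produces $F^{\otimes 2}$, i.e.\ an $F$--$F$ adjacency. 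The paper's proof of (2) instead deforms a pair of \emph{short} diagonals of the color \emph{opposite} to the long ones, in a pair of adjoining $F$-trees, so that a newly created long red diagonal comes to cross an existing long blue diagonal and an $S$-tree appears.

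The same confusion breaks your part (3): you assert that ``an $S$-signature has two long diagonals of one color'' and invoke Lemma~\ref{L:Diago2}(3) on that pair, but an $S$-tree consists of one long diagonal of each color, and a same-color pair of long diagonals need not exist in an $S$-signature at all (e.g.\ a single $S$-tree with all other trees of type $M$). Since a Whitehead move acts only on same-color pairs, your analysis cannot even be started in this basic case. The paper handles (3) by considering an $S$-tree together with an \emph{adjoining} tree: if that tree is an $M$-tree, the move on the corresponding same-color pair yields an $FS$-signature, and otherwise one is in the situation of adjoining $F$-trees and stays among $S$-signatures. To repair your plan you would need to supplement the length bookkeeping of Lemmas~\ref{L:Diago1}--\ref{L:Diago2} with an explicit record of which red chord crosses which blue chord before and after the move; that crossing information is precisely what the lemmas do not carry, and it is the missing ingredient in your exclusion and exhaustiveness arguments for (2) and (3).
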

\begin{proof}
\ 

\begin{itemize}

\item Consider an $M$ tree. By lemma~\ref{L:Diago2}, we know that in one Whitehead move applied onto a pair of the short diagonals, we obtain a signature having a pair of long diagonals. This is an $F$-signature. There exist $d$ blue or red diagonals. Choosing a pair of blue (or red) diagonals gives $\binom{d}{2}$ possibilities. Therefore we have $d(d-1)$ adjacent $F$-signatures to an $M$-signature.

\item Consider an $F$ signature. One Whitehead move applied onto a pair of short red diagonals in a pair of adjoining $F$ trees gives $S$ trees. If there exist no other long diagonals in the signature, then this is an $S$-signature. Otherwise, it is a $FS$-signature. From (1) we know that $F$-signatures are connected to $M$ signatures.

\item Consider two adjoining trees, one  of those trees being an $S$ tree. If the other one is an $M$ tree then applying a Whitehead move to a pair of adjoining diagonals gives an $FS$-signature (this follows form lemma~\ref{L:Diago2}). Otherwise, we have a couple of adjoining $F$ trees. 
\end{itemize}
\end{proof}


\begin{center}
\begin{tikzpicture}[scale=0.5]
\node (M2) at (0,0) {$\scriptstyle \left(\begin{smallmatrix}1\\3\end{smallmatrix}\right)$};
\node (F17) at (2.5,2.5) {$\scriptstyle\left|\begin{smallmatrix}1\\7\end{smallmatrix}\right|$};
 \node (F11193) at (3.6,1.1)  {$\scriptstyle\left|\begin{smallmatrix}1\\11\end{smallmatrix}\right|\left|\begin{smallmatrix}9\\3\end{smallmatrix}\right|$};
\node (F133) at (1.0,3.2) {$\scriptstyle\left|\begin{smallmatrix}13\\3\end{smallmatrix}\right|$}; 
\node (F515137)at (-3.6,1.1)  {$\scriptstyle \left|\begin{smallmatrix}5\\15\end{smallmatrix}\right| \left|\begin{smallmatrix}13\\7\end{smallmatrix}\right|$};
\node (F511) at (-2.5,2.5) {$\scriptstyle\left|\begin{smallmatrix}5\\11\end{smallmatrix}\right|$} ;
\node (F915) at (-1.0,3.2) {$\scriptstyle\left|\begin{smallmatrix}9\\15\end{smallmatrix}\right|$} ;
\node (F60) at (2.5,-2.5){$\scriptstyle\left|\begin{smallmatrix}6\\0\end{smallmatrix}\right|$};
\node (F28100) at (3.6,-1.2) {$\scriptstyle \left|\begin{smallmatrix}2\\8\end{smallmatrix}\right| \left|\begin{smallmatrix}10\\0\end{smallmatrix}\right|$};
\node (F212) at (1.0,-3.2) {$\scriptstyle\left|\begin{smallmatrix}2\\12\end{smallmatrix}\right|$};
\node (F144612) at (-3.6,-1.1)  {$\scriptstyle \left|\begin{smallmatrix}14\\4\end{smallmatrix}\right| \left|\begin{smallmatrix}6\\12\end{smallmatrix}\right|$};
\node (F104) at (-2.5,-2.5) {$\displaystyle\left|\begin{smallmatrix}10\\4\end{smallmatrix}\right|$};
\node (F148) at (-1.0,-3.2) {$\displaystyle\left|\begin{smallmatrix}14\\8\end{smallmatrix}\right|$};
\draw[->, very thick] (M2) -- (F17);
\draw[->,very thick] (M2) -- (F11193);
\draw[->,very thick] (M2) -- (F133);
\draw[->,very thick] (M2) -- (F515137);
\draw[->,very thick] (M2) -- (F511);
\draw[->,very thick] (M2) -- (F915);
\draw[->,very thick] (M2)--(F60);
\draw[->,very thick] (M2)--(F28100);
\draw[->,very thick] (M2)--(F212);
\draw[->,very thick] (M2)--(F144612);
\draw[->,very thick] (M2)--(F104);
\draw[->,very thick] (M2)--(F148);
\draw (0,-5) node { Deformation of  the $M_{2}$-signature $\scriptstyle\left(\begin{smallmatrix}1\\3\end{smallmatrix}\right)$} ;
\end{tikzpicture}
\end{center}

\begin{center}
\begin{tikzpicture}[scale=0.5]
\node (F0-10) at (0,0) { $\scriptstyle\left|\begin{smallmatrix}0\\10\end{smallmatrix}\right|$};
\node (M3) at (0,3) { $\scriptstyle M_{3}$} ;
\node (S1) at (0,-3) { $\scriptstyle\left|\begin{smallmatrix}1,11\\0,10\end{smallmatrix}\right|$} ;
\node (F0-6) at (-3,0) {$\scriptstyle\left|\begin{smallmatrix}0\\6\end{smallmatrix}\right|$} ;
\node (F4-10) at (3,0) { $\scriptstyle\left|\begin{smallmatrix}4\\10\end{smallmatrix}\right|$} ;
\node (FF2) at (2.5,2.5) {$\scriptstyle\left|\begin{smallmatrix}0\\10\end{smallmatrix}\right|\left|\begin{smallmatrix}8\\2\end{smallmatrix}\right|$} ;
\node (FF3) at (-2.5,2.5) { $\scriptstyle\left|\begin{smallmatrix}0\\10\end{smallmatrix}\right|\left|\begin{smallmatrix}3\\9\end{smallmatrix}\right|$} ;
\node (FS1) at (-2.5,-2.5) {$\left|\begin{smallmatrix}15,9\\0,10\end{smallmatrix}\right|\left|\begin{smallmatrix}7\\1\end{smallmatrix}\right|$} ;
\node (S2) at (2.5,-2.5) {$\scriptstyle\left|\begin{smallmatrix}15,5\\0,10\end{smallmatrix}\right|$} ;
\draw[->,very thick] (F0-10) -- (M3);
\draw[->,very thick] (F0-10) -- (F0-6);
\draw[->,very thick] (F0-10) -- (F4-10);
\draw[->,very thick] (F0-10) -- (FS1);
\draw[->,very thick] (F0-10) -- (FF2);
\draw[->,very thick] (F0-10)-- (FF3);
\draw[->,very thick] (F0-10) -- (S1);
\draw[->,very thick] (F0-10) -- (S2);
\draw (0,-5) node { Deformation of  the $F$-signature $\scriptstyle\left|\begin{smallmatrix}0\\10\end{smallmatrix}\right|$} ;
\end{tikzpicture}
\begin{tikzpicture}[scale=0.5]
\node (a) at (0,0) {$\scriptstyle \left[\begin{smallmatrix}1,11\\10,0\end{smallmatrix}\right]$} ;
\node (c) at (2.7,2.2) {$\scriptstyle\left|\begin{smallmatrix}3\\9\end{smallmatrix}\right|\left[\begin{smallmatrix}1,11\\10,0\end{smallmatrix}\right]$} ;
\node (i) at (2.7,-2.2) {$\scriptstyle\left|\begin{smallmatrix}8\\2\end{smallmatrix}\right|\left[\begin{smallmatrix}1,11\\10,0\end{smallmatrix}\right]$} ;
\node (e) at (-2.7,2.2) {$\scriptstyle\left|\begin{smallmatrix}7\\1\end{smallmatrix}\right|\left|\begin{smallmatrix}10\\0\end{smallmatrix}\right|$} ;
\node (g) at (-2.7,-2.2) { $\scriptstyle\left|\begin{smallmatrix}4\\10\end{smallmatrix}\right|\left|\begin{smallmatrix}1\\11\end{smallmatrix}\right|$} ;
\node (d) at (0,3) {$\scriptstyle\left|\begin{smallmatrix}11\\1\end{smallmatrix}\right|$} ;
\node (h) at (0,-3) { $\scriptstyle\left|\begin{smallmatrix}0\\10\end{smallmatrix}\right|$} ;
\node (b) at (4,0) {$\scriptstyle\left|\begin{smallmatrix}1,11\\0,6\end{smallmatrix}\right|$} ;
\node (f) at (-4,0) { $\scriptstyle\left|\begin{smallmatrix}0,10\\11,5\end{smallmatrix}\right|$} ;

\draw[->,very thick] (a) -- (d);
\draw[->,very thick] (a) -- (b);
\draw[->,very thick] (a) -- (c);
\draw[->,very thick] (a) -- (d);
\draw[->,very thick] (a) -- (e);
\draw[->,very thick] (a) -- (f);
\draw[->,very thick] (a) -- (g);
\draw[->,very thick] (a) -- (h);
\draw[->,very thick] (a) -- (i);
\draw (0,-5) node { Deformation of  the $S$-signature $\scriptstyle \left[\begin{smallmatrix}1,11\\10,0\end{smallmatrix}\right]$} ;
\end{tikzpicture}
\end{center}
\begin{figure}[h]
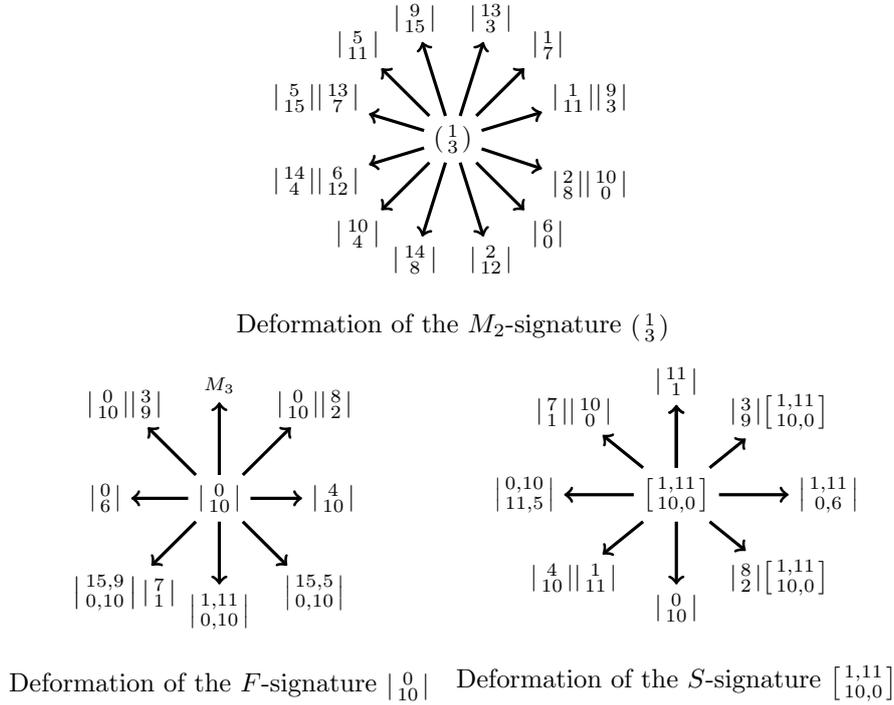

\vspace{-10pt}
\caption{ Example of deformations of an $M,F$ and $S$-signatures} 
\end{figure}

\begin{theorem}~\cite{C0}\label{Th:Path}
Let $\sigma\in \Sigma_{d}$ be a generic signature. Then, for every $\sigma$ there exists a sequence of Whitehead moves starting at $\sigma$ and ending on an $M$-signature. 
\end{theorem}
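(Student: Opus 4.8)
The plan is to exhibit, for every generic signature that is not already of type $M$, a single Whitehead move that strictly lowers a suitable complexity, and then to conclude by finite descent. As complexity I would use $\ell(\sigma)$, the total number of long diagonals of $\sigma$ (red plus blue) in the sense of Definition~\ref{D:diagonalsij}. Since each tree of a generic signature carries exactly one red and one blue diagonal, and since $M$-trees are precisely those with two short diagonals, one has $\ell(\sigma)=0$ if and only if $\sigma$ is an $M$-signature. As $\ell$ takes values in $\mathbb{N}$, it suffices to prove that whenever $\ell(\sigma)>0$ there is a Whitehead move producing $\sigma'$ with $\ell(\sigma')<\ell(\sigma)$.

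I would organise the descent in two stages. \textbf{Stage 2} is the situation already covered by the material preceding Theorem~\ref{Th:Path}: if one colour, say red, has all its diagonals short, then by the Corollary following Lemma~\ref{L:Diago1} a repeated application of Whitehead moves to adjoining blue $F$-trees increases the number of short blue diagonals, hence strictly lowers $\ell$, and terminates on an $M$-signature. \textbf{Stage 1} is therefore the reduction to this configuration: starting from an arbitrary generic $\sigma$, I would first bring all red diagonals to short length. Here I use that a Whitehead move performed on a pair of red chords leaves every blue chord untouched (Definition~\ref{D:2.18}), so the red chords may be simplified on their own as a non-crossing matching on the odd vertices. Concretely, taking a long red diagonal together with an adjoining red diagonal (Definition~\ref{D:suc}) and recombining them as in the proof of Lemma~\ref{L:Diago2}, I would select the reconnection $(i,k)(j,l)$ or $(i,l)(j,k)$ that shortens the pair, thereby strictly decreasing the number of long red diagonals; iterating drives all red diagonals to short length in finitely many steps.

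The hard part will be Stage 1, because of the coupling constraint built into Definition~\ref{De:1}: each red chord must cross exactly one blue chord, so a red recombination is admissible only if the resulting diagram is again a signature with a one-to-one red–blue incidence. I would resolve this by working with an innermost long red chord: among the chords it separates from the boundary there is an adjoining red chord, and the uncrossing recombination of this adjoining pair preserves the non-crossing property of the red matching while reducing red length. The parity relation $i\equiv 1,\ j\equiv 3\pmod 4$ for the endpoints of a red diagonal, noted in the proof of Lemma~\ref{L:Diago2}, is exactly what guarantees that each shortened red chord still meets precisely one blue chord, so the intermediate diagrams remain valid generic signatures and the blue partners are simply reassigned to the enclosed odd vertices.

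Combining the two stages then completes the argument: Stage 1 produces, in finitely many Whitehead moves, a signature all of whose red diagonals are short, and Stage 2 produces an $M$-signature in finitely many further moves. Since $\ell$ strictly decreases at each step and is bounded below by $0$, the whole sequence is finite, which is precisely the assertion of Theorem~\ref{Th:Path}.
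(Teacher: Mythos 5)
Your Stage 2 is sound and coincides with case (a) of the paper's own proof (repeated use of Lemma~\ref{L:Diago1} once one colour is entirely short), but Stage 1 contains two genuine gaps. First, a Whitehead move on a fixed pair of disjoint chords does not offer you a choice of reconnection: as the proof of Lemma~\ref{L:Diago2}(3) states, after the contracting half-move the smoothing produces the \emph{unique} other non-crossing resolution, so you cannot ``select the reconnection $(i,k)(j,l)$ or $(i,l)(j,k)$ that shortens the pair''. As a consequence your measure $\ell$ need not strictly decrease at an admissible move: recombining a long red diagonal $(i,j)$ with an adjoining short one $(j+2,j+4)$ yields $(j,j+2)$ and $(i,j+4)$, again one short plus one long diagonal, so $\ell$ is unchanged. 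This is exactly what happens in the paper's base case, where a long diagonal is marched along short ones and the number of long diagonals drops only at the final step; your finite-descent argument on $\ell$ therefore fails as stated and would need a finer measure (e.g.\ the number of boundary vertices enclosed by long diagonals) or a per-diagonal termination argument.

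Second, and more seriously, the parity relation $i\equiv 1$, $j\equiv 3 \pmod 4$ does not do what you claim. It shows only that the arc cut off by a red chord contains an odd number of blue endpoints, hence that the chord crosses an \emph{odd} number of blue chords---possibly $3,5,\dots$, not exactly one. So your assertion that the intermediate diagrams of Stage 1 remain valid signatures is unsubstantiated precisely at the crux: by Definition~\ref{D:2.18} a half-Whitehead move is only admissible if the result is again a signature in the sense of Definition~\ref{De:1}, and when long blue diagonals are present a red--red recombination can produce a red chord meeting several blue chords. Your innermost-chord remark protects the wrong invariant (non-crossing of same-colour chords, which is automatic for generic signatures), not the red--blue incidence. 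This is the point the paper resolves with an idea missing from your proposal: it inducts on the number of long blue diagonals, and in the base case the long blue diagonal together with the long red diagonals cuts the disc into compartments, each interpreted as a local $M$-signature of smaller degree, inside which Lemma~\ref{L:Diago1} applies safely; the red diagonals are only shortened afterwards, in a configuration already covered by the lemmas. To repair Stage 1 you would need an analogous compartment (or innermost-region) analysis certifying that each red move preserves the one-to-one red--blue crossing condition.
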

\begin{proof}
The proof is by induction on the number of long blue diagonals.  
Let $\sigma$ be generic signature such that on the left side of a long blue diagonal there exist only short diagonals of red and blue color.
\begin{enumerate}

\item {\it Base case}. 
Let $\sigma$ have only one long blue diagonal. Then two cases are discussed:
\begin{enumerate}
\item The red diagonals are all short. 
\item Not all red diagonals are short. 
\end{enumerate}
Consider the first case. Let us apply one of the lemma~\ref{L:Diago1} onto the long blue diagonal and the blue short diagonals on its right side.
Each such Whitehead move step increases by one the number of short blue diagonals. So, we proceed using this method until there are $d$ short diagonals in the signature: this is an $M$ signature.
Consider the second case, where we have $k$ red long diagonals. Then, the union of the long blue diagonal and of these $k$ long red diagonals compartment the signature into $q$ disjoint adjacent  2-cells lying in $\mathbb{C}\setminus\sigma$. If we consider each of these regions independently from the signature, we can interpret them as local $M$-signatures of smaller degree than $d$. 

Apply lemma~\ref{L:Diago1} to the long blue diagonal lying in one compartment a finite number of times (Whitehead moves on the long blue diagonal and short blue diagonals in the compartment). This compartment is thus, locally, an $M$-signature. Let $L$ be the new long blue diagonal, obtained from this procedure. This diagonal $L$ intersects now an adjacent compartment. 

As previously, we apply lemma~\ref{L:Diago1} a finite number of times onto $L$ and the short blue diagonals in the adjacent compartment, in order to have a local $M$-signature. 

Proceeding in this way on the blue long diagonal for all adjacent compartments, gives in final only short blue diagonals in the signature, which defines an F-signature. The remaining step is to apply the same procedure done for the blue long diagonals onto the red long diagonals: after a finite number of Whitehead moves all the red diagonals are short and this defines an $M$ signature.\\

\item {\it Induction case.}
 Suppose that for a signature with $m$ long blue diagonals there exists a path from the signature to an $M$ signature. Let us show that for $m+1$ long diagonals this statement is also true. Take a block of adjacent $m$ long diagonals and apply the induction hypothesis to it. Then there exists a finite number of deformations such that these $m$ long blue and red diagonals are all short, leaving only one long blue diagonal in the signature. We can thus apply the case (1) from the discussion above. 
\end{enumerate}
\end{proof}

\begin{remark}

The theorem~\ref{Th:Path} above can be interpreted as the path connectedness of $\dpol_{d}$, from which we recover the fact that $\dpol_{d}$ is connected since, as the complement of a hyperplane arrangement it is a open subset of $\mathbb{C}^{d}$. 
\end{remark}

\section{Inclusion diagrams}~\label{S:incl}

In this section we show the existence of geometric invariants of configuration spaces. These geometric invariants (that we call {\it inclusion diagrams}) are obtained from the topological stratification. These objects are constructed by studying the incidence and adjacence relations between strata.  As a corollary from previous works~\cite{Co1}, we show that these geometric objects are in bijection with the nerve of the \v Cech cover. For simplicity, we define those geometric invariants directly as the nerve of this cover. 

\subsection{Combinatorial closure of a signature}

A contracting half Whitehead move  defines a partial strict order ($\prec$) on  the set  $\Sigma_{d}$ of all signatures. 
Let $\sigma, \tau \in\Sigma_{d} $ in such a subset,  we say that $\sigma \prec \tau$ if there exists a sequence of signatures $\sigma=\sigma_{1} \prec \sigma_{2} ... \prec\sigma_{n}=\tau$. The symbol $\prec$ is an incidence relation between those signatures.

\vskip.1cm
\begin{lemma}
The incidence relation on the set $\Sigma_{d}$ forms a partial strict order. 
\end{lemma}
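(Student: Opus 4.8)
The plan is to verify the three defining properties of a partial strict order—irreflexivity, antisymmetry (or rather the absence of $2$-cycles), and transitivity—for the relation $\prec$ as defined just above the lemma. The relation is built from the contracting half-Whitehead move, which by Definition~\ref{D:2.18} glues $m>0$ chords of the same color at a point $\bar{\bar v}$, together with the transitive-closure definition $\sigma \prec \tau$ iff there is a chain $\sigma = \sigma_1 \prec \cdots \prec \sigma_n = \tau$. The natural tool is the codimension (Definition~\ref{D:cod}), which assigns to each signature the sum of local indices $Ind_{loc}(\bar{\bar v}) = 2\deg(\bar{\bar v}) - 3$ over its degenerate vertices.

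\emph{First} I would establish that a single contracting half-Whitehead move strictly increases the codimension. Gluing $m>0$ chords of one color at a point creates (or augments) a vertex $\bar{\bar v}$; since $\deg(\bar{\bar v}) \geq 2$ for any genuine gluing, each such vertex contributes $Ind_{loc}(\bar{\bar v}) = 2\deg(\bar{\bar v}) - 3 \geq 1 > 0$ to the codimension, and one checks that merging strictly raises the total index (merging two chords already meeting at a point of degree $k$ into one of degree $k+1$ raises the local index by $2$). Thus there is a strictly monotone function $\mathrm{codim}\colon \Sigma_d \to \mathbb{Z}_{\geq 0}$ with $\sigma \prec \tau \Rightarrow \mathrm{codim}(\sigma) < \mathrm{codim}(\tau)$ for a single move, and hence, by summing along a chain, for the full relation $\prec$ as well.

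\emph{Then} the three order axioms follow formally from this monotonicity. Irreflexivity: if $\sigma \prec \sigma$ held we would get $\mathrm{codim}(\sigma) < \mathrm{codim}(\sigma)$, a contradiction; so $\sigma \not\prec \sigma$. Asymmetry: if both $\sigma \prec \tau$ and $\tau \prec \sigma$ held, concatenating the chains would give $\sigma \prec \sigma$, again contradicting strict monotonicity. Transitivity is immediate from the definition of $\prec$ via chains: concatenating a chain from $\sigma$ to $\tau$ with one from $\tau$ to $\rho$ yields a chain from $\sigma$ to $\rho$, so $\sigma \prec \tau$ and $\tau \prec \rho$ give $\sigma \prec \rho$. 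This confirms $\prec$ is a strict partial order.

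\emph{The main obstacle} I anticipate is the monotonicity step: one must confirm that \emph{every} admissible contracting half-Whitehead move strictly raises the codimension, which requires ruling out degenerate ``moves'' that leave the index unchanged and checking the index bookkeeping $Ind_{loc} = 2\deg - 3$ behaves additively under successive gluings. Since the definition of the move requires $m>0$ chords and produces a genuine interior vertex $\bar{\bar v}$ with $\deg(\bar{\bar v})\geq 2$, the minimum index increment is positive, so the bound $\mathrm{codim}$ strictly increases is safe; but this is where the real content lies, and I would spell out the local index computation carefully rather than wave it through.
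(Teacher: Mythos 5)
Your proof is correct and takes essentially the same route as the paper: the paper likewise treats irreflexivity and transitivity as immediate and settles the remaining axiom by the single observation that $\sigma \prec \tau$ forces $\codim{\sigma} < \codim{\tau}$. The only difference is one of completeness — you actually justify the codimension monotonicity by the local-index bookkeeping $Ind_{loc}(\bar{\bar{v}}) = 2\deg(\bar{\bar{v}}) - 3$ and then derive irreflexivity and asymmetry from it, whereas the paper asserts the monotonicity without computation and declares irreflexivity clear.
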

\begin{proof}
Clearly, we have an irreflexive relation: $\sigma\prec \sigma$ does not hold for any $\sigma$ in $\Sigma_{d}$, and a transitive relation. We prove that the relation is antisymmetric. 
If one has $\sigma\prec\tau$ then $codim(\tau) > codim(\sigma)$. This strict order $\prec$ induces the partial partial order $\preceq$ by\[\sigma \preceq \tau = \begin{cases}\sigma \prec \tau &\text{if\  } codim(\tau) > codim(\sigma),\\ \sigma =\tau &\text{if } codim(\tau)= codim(\sigma). \end{cases}\]

\end{proof}
 \begin{definition}
 The set of $\sigma$ and all signatures incident to the signature $\sigma$ will be denoted by $\overline{\sigma}$.
 By abuse of notation we call $\overline{\sigma}$ the  combinatorial closure of $\sigma$.
\end{definition}

\vskip.1cm 

From~\cite{C0,C1} we know that the combinatorial closure of an elementa is equivalent to its topological closure, i.e.: \[A_{\overline{\sigma}}= \bigcup_{\tau\in \bar \sigma}A_{\tau}=\overline{A_{\sigma}}.\]
 
\subsection{Properties of inclusion diagram}

Let us consider the union of topological closure of $\overline{A_{\sigma}}$ and its tubular $Tub$ neighborhood~\cite{C0,C1}, we call  $A^{+}_{\sigma}=\overline{A_{\sigma}}\cup Tub$ the thickened elementa. The set of thickened elementa  $A^{+}_{\sigma_{g}}$ for generic $\sigma_{g}$  forms a good cover of $\dpol_{d}$, in the sense of \v Cech (i.e. multiple intersections are either empty or contractible).  The {\it nerve} $\mathcal{N}$ of this \v Cech covering
is the cell complex on the vertex set $\Sigma_{G}$ of generic signatures, consisting of those finite non-empty subsets $ I$ of  $\Sigma_{G}$ such that $\cap_{i\in I}$ $A^{+}_{\sigma_{i}}\neq \emptyset$.

\begin{definition}\label{D:dual}
We call inclusion diagram the cell complex $(\mathcal{W},\subset)$, where the set of its $k$-faces is in bijection with the set of codimension $k$-elementa and which is order-preserving.
 \end{definition}

So, if a couple $(W_{\sigma}, W_{\tau})$ in the inclusion diagram verifies $W_{\sigma}\subset \overline{W_{\tau}}$, then $\sigma\prec \tau$.

As well, suppose that  an element $A_{\mu}$ of codimension $k$ is incident to a collection of elementa $A_{\sigma_{1}},...,A_{\sigma_{m}}$ of smaller codimensions,  such that $A_{\mu} \subset  \bigcap_{i=1}^{m} \overline{A_{\sigma_{i}}}$. Then, in the inclusion diagram, the collection of faces $W_{\sigma_{1}},...,W_{\sigma_{m}}$ lies in the boundary of a $k$-dimensional face $W_{\mu}$. 

\begin{corollary}
The inclusion diagram is isomorphic to the nerve $\mathcal{N}$ of the cover $(A^{+}_{\sigma_{g}})_{\sigma_{g}\in\Sigma_{G}}$.
\end{corollary}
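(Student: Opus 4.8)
The plan is to exhibit an explicit order-preserving bijection $\Phi$ between the face poset of the inclusion diagram $(\mathcal{W},\subset)$ and the face poset of the nerve $\mathcal{N}$, and to verify that it is an isomorphism of cell complexes. By Definition~\ref{D:dual}, the $k$-faces $W_{\mu}$ of $\mathcal{W}$ are in order-preserving bijection with the codimension-$k$ elementa $A_{\mu}$, while the $k$-simplices of $\mathcal{N}$ are the sets $I=\{\sigma_{0},\dots,\sigma_{k}\}\subseteq\Sigma_{G}$ of generic signatures with $\bigcap_{i\in I}A^{+}_{\sigma_{i}}\neq\emptyset$. I would define $\Phi$ on cells by sending the $k$-face $W_{\mu}$ to the set of generic signatures lying below $\mu$ in the incidence order, $\Phi(W_{\mu})=\{\sigma_{g}\in\Sigma_{G}\mid\sigma_{g}\preceq\mu\}$. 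Using the identity $A_{\overline{\sigma}}=\overline{A_{\sigma}}$ from~\cite{C0,C1}, together with the fact that $\prec$ raises codimension, one has $\sigma_{g}\preceq\mu$ exactly when $A_{\mu}\subseteq\overline{A_{\sigma_{g}}}$; thus $\Phi(W_{\mu})$ is the set of generic elementa whose topological closure contains $A_{\mu}$.

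\textbf{Well-definedness and dimension count.}
First I would check that $\Phi(W_{\mu})$ is genuinely a simplex of $\mathcal{N}$. Since $\sigma_{g}\preceq\mu$ gives $A_{\mu}\subseteq\overline{A_{\sigma_{g}}}\subseteq A^{+}_{\sigma_{g}}$, the whole elementum $A_{\mu}$ lies in $\bigcap_{\sigma_{g}\in\Phi(W_{\mu})}A^{+}_{\sigma_{g}}$, which is therefore nonempty, so $\Phi(W_{\mu})\in\mathcal{N}$. The substantial point is the dimension count: I must show that a codimension-$k$ elementum is incident to exactly $k+1$ generic elementa, so that $\Phi(W_{\mu})$ is precisely a $k$-simplex. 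This follows from the local structure of a signature of codimension $k$: its $\bar{\bar{v}}$-vertices carry local indices summing to $k$ (Definition~\ref{D:cod}), and the smoothing half-Whitehead moves (Definition~\ref{D:2.18}) resolve these singular vertices; analysing how the sheets of $\dpol_{d}$ meet transversally along $A_{\mu}$ shows that exactly $k+1$ generic strata have $A_{\mu}$ in their closure. This local-simplex property is the crux of the argument.

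\textbf{Bijectivity and order preservation.}
Next I would establish that $\Phi$ is a bijection. Surjectivity uses the good-cover property: by construction of the tubular neighborhoods, a nonempty intersection $\bigcap_{i\in I}A^{+}_{\sigma_{i}}$ arises precisely when the closures $\overline{A_{\sigma_{i}}}$ share a common lowest stratum $A_{\mu}$, necessarily of codimension $|I|-1$, whence $I=\Phi(W_{\mu})$; here the contractibility of multiple intersections rules out spurious or reducible simplices, and since sub-simplices of $\Phi(W_{\mu})$ are themselves simplices of $\mathcal{N}$ they too come from intermediate strata. Injectivity reduces to the regularity statement $\overline{A_{\nu}}=\bigcap_{\sigma_{g}\preceq\nu}\overline{A_{\sigma_{g}}}$, so that distinct elementa determine distinct vertex sets. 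Finally, to see that $\Phi$ is order-preserving in both directions I would argue as follows: if $\nu\preceq\mu$ then transitivity of $\preceq$ gives $\Phi(W_{\nu})\subseteq\Phi(W_{\mu})$ at once; conversely, if $\{\sigma_{g}\preceq\nu\}\subseteq\{\sigma_{g}\preceq\mu\}$, then intersecting closures over the larger index set yields $\overline{A_{\mu}}\subseteq\overline{A_{\nu}}$ by the same regularity identity, hence $\nu\preceq\mu$. Combining these, $\Phi$ is an isomorphism of face posets, that is, of cell complexes, which is exactly the asserted identification of the inclusion diagram with the nerve $\mathcal{N}$.

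\textbf{Main obstacle.}
The hard part will be the two geometric regularity inputs: that a codimension-$k$ elementum bounds exactly $k+1$ generic ones, and that each stratum equals the intersection of the closures of the generic strata above it. Both are statements about the transverse local structure of the stratification of $\dpol_{d}$ rather than formal consequences of the poset definitions; the first pins down the dimension of the cells of $\mathcal{N}$ and the second secures the faithfulness of $\Phi$. I expect to lean on the detailed description of Whitehead moves and on the good-cover construction of~\cite{C0,C1} to establish them.
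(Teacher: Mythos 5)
Your overall architecture (the map $\Phi$ sending a face $W_{\mu}$ to the set of generic signatures below $\mu$, the use of $\overline{A_{\sigma}}=A_{\overline{\sigma}}$ to translate incidence into containment of closures, and order preservation via transitivity of $\preceq$) is sound and consistent with how the paper sets things up in Section~\ref{S:incl}. But the step you yourself flag as the crux — that a codimension-$k$ elementum is incident to \emph{exactly $k+1$} generic elementa, so that $\Phi(W_{\mu})$ is a $k$-simplex — is false, and it contradicts the paper's own Lemma~\ref{Lem:quadra}. There, a codimension-2 signature $\omega$ has two singular vertices, each of which can be smoothed in two independent ways, producing \emph{four} generic signatures $\{\sigma_{0},\sigma_{1},\sigma_{2},\sigma_{3}\}\prec\omega$ arranged in a quadrangle, not three arranged in a triangle. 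The local structure of the stratification is cubical rather than simplicial: $k$ independent simple crossings admit $2^{k}$ simultaneous smoothings, and vertices $\bar{\bar{v}}$ of higher local index (Definition~\ref{D:cod}) give still other counts, but never $k+1$ in general. This also breaks your surjectivity argument: around a codimension-2 stratum, all four thickened generic elementa $A^{+}_{\sigma_{i}}$ have nonempty fourfold intersection (a neighborhood of $A_{\omega}$), so a nonempty intersection over $I$ does \emph{not} force a common lowest stratum of codimension $|I|-1$ — here $|I|-1=3$ while $\codim{\omega}=2$. If the nerve were read as a simplicial complex in the naive sense, this configuration would contribute a tetrahedron where the inclusion diagram has a quadrangular 2-face, and no isomorphism of the kind you describe could exist.

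The repair is to abandon the simplex model and work with the cell-complex reading the paper actually uses: $\mathcal{N}$ is defined on the vertex set $\Sigma_{G}$ with a cell for each subset $I$ having $\bigcap_{i\in I}A^{+}_{\sigma_{i}}\neq\emptyset$, and the dimension of the cell attached to such an intersection is the codimension of the unique minimal stratum $A_{\mu}$ it retracts onto, not $|I|-1$. With that convention the identification becomes essentially definitional — Definition~\ref{D:dual} builds $(\mathcal{W},\subset)$ precisely so that its $k$-faces are indexed by codimension-$k$ elementa with incidence preserved, and the equality of combinatorial and topological closure then matches $\sigma\prec\tau$ with the nonempty-intersection data of the thickened cover, which is why the paper states this as a corollary (leaning on~\cite{Co1}) rather than proving a vertex-count. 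Your injectivity ingredient, the regularity statement $\overline{A_{\nu}}=\bigcap_{\sigma_{g}\preceq\nu}\overline{A_{\sigma_{g}}}$, remains the right thing to ask for, but it must be verified against the quadrangular (Whitehead-move) combinatorics, e.g. via the uniqueness corollary following Lemma~\ref{Lem:quadra}, not deduced from a transverse-sheet picture with $k+1$ sheets.
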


\begin{lemma}[Edges and 2-faces of the inclusion diagram]~\label{Lem:quadra}
Let $(\mathcal{W},\subset)$ be the inclusion diagram associated to $(A_{\sigma})_{\sigma\in {\rm \Sigma}_d}$. 
Then:
\begin{enumerate}
\item each 1-dimensional face in $\mathcal{W}$ is bounded by 2 vertices. 
\item each 2-dimensional face in $\mathcal{W}$ is bounded by 4 vertices, 4 edges and forms a quadrangle. 
\end{enumerate}
\end{lemma}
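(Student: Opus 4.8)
The plan is to reduce both statements to the local combinatorics of resolving chord crossings, controlling the number of crossings a low-codimension signature can carry by means of the index formula of Definition~\ref{D:cod}. Recall that the real codimension of $A_\sigma$ is $\sum_{\bar{\bar v}}\bigl(2\deg(\bar{\bar v})-3\bigr)$, and that a genuine interior vertex $\bar{\bar v}$ is the concurrence of $k\ge 2$ like-coloured chords, so its local index $2k-3$ is an \emph{odd} integer $\ge 1$. Hence a codimension-$1$ signature carries exactly one $\bar{\bar v}$, necessarily with $k=2$: a single transverse crossing of two like-coloured chords (the intermediate term of a Whitehead move, Definition~\ref{D:band}). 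Since $2=1+1$ is the only way to write $2$ as a sum of odd positive integers, a codimension-$2$ signature carries exactly two such simple crossings, and never a single vertex of three concurrent chords (which would have odd index $3$). Throughout I would use the cited equivalence $A_{\overline\sigma}=\overline{A_\sigma}$: the boundary faces of the cell $W_\mu$ are exactly the $W_\tau$ with $\tau\prec\mu$ of strictly smaller codimension; since contracting raises codimension and smoothing lowers it, $\tau\prec\mu$ with $\codim{\tau}$ one less means precisely that $\tau$ is obtained from $\mu$ by smoothing one crossing.

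For (1), let $\sigma_1$ be codimension $1$, with its unique crossing formed by the two interleaving chords $(i,k)$, $(j,l)$ whose endpoints occur in cyclic order $i,j,k,l$ on $\partial\mathbb{D}$. A transverse crossing admits exactly two planar resolutions, obtained by pairing cyclically adjacent rays, namely $(i,j)(k,l)$ and $(j,k)(l,i)$. Each removes the vertex $\bar{\bar v}$ and therefore produces a generic signature, and these are the only generic signatures $\prec\sigma_1$, because to produce the crossing of $(i,k)$ and $(j,l)$ by a contracting move one must start from two non-crossing chords on $\{i,j,k,l\}$, i.e.\ from one of the two resolutions. Thus the edge $W_{\sigma_1}$ is bounded by exactly the two vertices corresponding to the two chambers meeting along this codimension-$1$ wall.

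For (2), let $\mu$ be codimension $2$ with its two crossings $X$ and $Y$ at distinct interior points. The decisive point I would stress is that a smoothing is a \emph{local} operation: it reconnects only the four rays inside a small disc about the crossing point and leaves the signature unchanged elsewhere. Choosing disjoint discs about $X$ and $Y$, the two resolutions commute and are mutually independent, \emph{even when $X$ and $Y$ lie on a shared chord}, since the arc of that chord between $X$ and $Y$ survives intact and keeps the other crossing transverse. Consequently smoothing both crossings gives $2\times 2=4$ pairwise distinct generic signatures $\sigma_{ab}$ ($a,b\in\{1,2\}$ the resolution choices at $X$, resp.\ $Y$), and smoothing exactly one gives $2+2=4$ codimension-$1$ signatures $\rho_a$ (resolve $X$ by $a$, keep $Y$) and $\lambda_b$ (keep $X$, resolve $Y$ by $b$). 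These are exactly the cells $\prec\mu$, so $W_\mu$ is bounded by four vertices and four edges. Reading off incidences, $\rho_a$ smooths to $\sigma_{a1},\sigma_{a2}$ and $\lambda_b$ to $\sigma_{1b},\sigma_{2b}$, so the four edges join the four vertices in the single $4$-cycle
\[
\sigma_{11}\;\overset{\rho_1}{-}\;\sigma_{12}\;\overset{\lambda_2}{-}\;\sigma_{22}\;\overset{\rho_2}{-}\;\sigma_{21}\;\overset{\lambda_1}{-}\;\sigma_{11},
\]
which is a quadrangle, as claimed.

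The step I expect to be the main obstacle is precisely the \emph{independence of the two resolutions} in part (2), and above all the case in which $X$ and $Y$ share a chord; this genuinely occurs already for $d=3$ (e.g.\ the red configuration $(1,7),(3,11),(5,9)$, with both crossings carried by $(1,7)$). There one must verify that resolving one crossing neither destroys the other nor yields an invalid diagram. I would settle this by the locality argument above together with the established identity of combinatorial and topological closure: each individual smoothing is a bona fide half-Whitehead move landing in a genuine adjacent stratum, so all four generic signatures and all four codimension-$1$ signatures are valid. A secondary point to confirm is that the four full resolutions are distinct, which again follows from locality, since different choices at a fixed crossing already differ inside its disc.
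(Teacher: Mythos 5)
Your proof is correct and follows essentially the same route as the paper: smoothing half-Whitehead moves resolve each like-coloured crossing in exactly two ways, which gives the two boundary vertices of an edge in (1) and, applied independently to the two crossings of a codimension-$2$ signature, the four generic vertices, four codimension-$1$ edges and the quadrangle incidence cycle in (2), exactly as in the paper's argument. Your two additions --- the odd-index observation that $2k-3$ forces a codimension-$1$ signature to carry a single double point and a codimension-$2$ signature to carry two simple crossings rather than one triple point, and the locality check that the two resolutions remain independent even when both crossings lie on a shared chord --- merely make explicit steps the paper's proof leaves implicit, and strengthen rather than alter it.
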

\begin{proof}
Statement (1):
Consider a signature $\beta$, of codimension 1. Then, by definition~\ref{D:cod} there exists a pair of intersecting chords of the same color. 
Suppose that the set of indexes of terminal vertices of those diagonals is $\{i,j,k,l\}$ where $i<j<k<l$. Those numbers $i,j,k,l$ are of the same parity and by definition~\ref{De:1} verify: $i\equiv k\equiv 1\mod 4$ and $j\equiv l\equiv 3\mod 4$ (resp. $i\equiv k\equiv 2\mod 4$ and $j\equiv l\equiv 0\mod 4$). Again, from definition ~\ref{De:1} we know that in a generic signature each terminal vertex congruent to 1 $\mod 4$ (resp. 2 $\mod 4$) is attached by an edge to a terminal vertex which is congruent to $3 \mod 4$ (resp. 0 $\mod 4$ ). So, using a smoothing half-Whitehead move 
the intersection point is smoothed and we obtain two different possible pairs of diagonals: $(i,j)(k,l)$ or $(i,l)(j,k)$, with all the other diagonals of the signature remaining invariant. 
So, applying the definition~\ref{D:dual} to construct the inclusion diagram, we have that each 1-dimensional face (corresponding to a codimension 1 signature) in $\mathcal{W}$ is bounded by exactly 2 vertices (corresponding to the signatures obtained by smoothing the meeting point in $\beta$).

\vspace{.2cm}

Statement (2):
Consider a signature of codimension 2, denoted by $\omega$. By definition~\ref{D:cod}, there exist two critical points. Applying the smoothing half-Whitehead move onto one of the critical points gives two different possible signatures of codimension 1 (this last statement follows from the first point above). So, applying the same arguments to the second critical point, implies that there exist four signatures of codimension 1, incident to $\omega$. In other words: there exist $\{\beta_{0},\beta_{1},\beta_{2},\beta_{3}\}\prec\omega$, where $codim(\beta_{i})=1$ and $i\in \{0,...,3\}$. The smoothing modification applied simultaneously to both critical points, gives four codimension 0 signatures, all incident to $\omega$: $\{\sigma_{0},\sigma_{1},\sigma_{2},\sigma_{3}\}\prec \omega$.  Applying (1) to every diagram of codimension 1  implies that there exist two signatures of codimension 0 , which are incident to each signature of codimension 1.  So, we obtain the following relations: 
 \[\{\sigma_{0},\sigma_{1}\}\prec \beta_{0},\]
 \[\{ \sigma_{1},\sigma_{2}\}\prec \beta_{1},\]
 \[ \{\sigma_{2},\sigma_{3}\}\prec \beta_{2},\]
 \[ \{\sigma_{3},\sigma_{0}\}\prec \beta_{3}\]
The construction of the inclusion diagram $\mathcal{W}$ from definition~\ref{D:dual}, implies that we have a quadrangle. 
 \end{proof} 
\begin{example}
An explicit construction of the inclusion diagrams is given below, for $d=2$. 
We illustrate the relations between the diagrams.

There exist 4 generic signatures and 4 signatures  of codimension 1, illustrated on the figure below.

\begin{center}~\label{F:d=2}
\begin{tikzpicture}[scale=.8]
\node (a) at (-4,0) {
 \begin{tikzpicture}[scale=.6]
\newcommand{\degree}[0]{4}
\newcommand{\B}[1]{#1*180/\degree}
\newcommand{\R}[1]{#1*180/\degree }
\draw[black,thick] (0,0) circle (1) ;
\foreach \k in {0,2,4,6} {
  \draw[blue] (\B{\k}:1.15) node { \tiny \k};} ;
  \foreach \k in {1,3,5,7} {
 \draw[ red] (\R{\k}:1.15) node { \tiny \k} ;
};
\draw[blue, name path=B0B6](\B{0}:1) .. controls(\B{0}:0.7)and (\B{6}:0.7) .. (\B{6}:1) ;
\draw[blue, name path=B2B4](\B{2}:1) .. controls(\B{2}:0.7)and (\B{4}:0.7) .. (\B{4}:1) ;
\draw[red, name path=R1R3](\R{1}:1) .. controls(\R{1}:0.7)and (\R{3}:0.7) .. (\R{3}:1) ;
\draw[red, name path=R5R7](\R{5}:1) .. controls(\R{5}:0.7)and (\R{7}:0.7) .. (\R{7}:1) ;
\end{tikzpicture} 
};
\node (b) at (0,0) {
\begin{tikzpicture}[scale=.6]
\newcommand{\degree}[0]{4}
\newcommand{\B}[1]{#1*180/\degree}
\newcommand{\R}[1]{#1*180/\degree }
\newcommand{\RDOT}[0]{[red](i-1) circle (0.05)}
\newcommand{\BDOT}[0]{[blue](i-1) circle (0.05)}
\draw[black,thick] (0,0) circle (1) ;
\foreach \k in {0,2,4,6} {
  \draw[blue] (\B{\k}:1.15) node {\tiny \k};} ;
  \foreach \k in {1,3,5,7} {
 \draw[ red] (\R{\k}:1.15) node {\tiny\k} ;
}
\draw[blue, name path=B0B4](\B{0}:1) .. controls(\B{0}:0.7)and (\B{4}:0.7) .. (\B{4}:1) ;
\draw[blue, name path=B2B6](\B{2}:1) .. controls(\B{2}:0.7)and (\B{6}:0.7) .. (\B{6}:1) ;
\draw[red, name path=R1R3](\R{1}:1) .. controls(\R{1}:0.7)and (\R{3}:0.7) .. (\R{3}:1) ;
\draw[red,,name path=R5R7](\R{5}:1) .. controls(\R{5}:0.7)and (\R{7}:0.7) .. (\R{7}:1) ;
\end{tikzpicture} 
};
\node (c) at (4,0) {
\begin{tikzpicture}[scale=.6]
\newcommand{\degree}[0]{4}
\newcommand{\B}[1]{#1*180/\degree}
\newcommand{\R}[1]{#1*180/\degree }
\newcommand{\RDOT}[0]{[red](i-1) circle (0.05)}
\newcommand{\BDOT}[0]{[blue](i-1) circle (0.05)}
\draw[black,thick] (0,0) circle (1) ;
\foreach \k in {0,2,4,6} {
  \draw[blue] (\B{\k}:1.15) node {\tiny \k};} ;
  \foreach \k in {1,3,5,7} {
 \draw[ red] (\R{\k}:1.15) node {\tiny \k} ;
};
\draw[blue, name path=B0B2](\B{0}:1) .. controls(\B{0}:0.7)and (\B{2}:0.7) .. (\B{2}:1) ;
\draw[blue, name path=B4B6](\B{4}:1) .. controls(\B{4}:0.7)and (\B{6}:0.7) .. (\B{6}:1) ;
\draw[red, name path=R5R7](\R{5}:1) .. controls(\R{5}:0.7)and (\R{7}:0.7) .. (\R{7}:1) ;
\draw[red,name path=R1R3](\R{1}:1) .. controls(\R{1}:0.7)and (\R{3}:0.7) .. (\R{3}:1) ;
\end{tikzpicture} 
};
 \draw[<->,thick] (b) -- (a);
  \draw[<->,thick] (b) -- (c);
\end{tikzpicture}

\begin{tikzpicture}[scale=.8]
\node (a) at (-4,0) {
 \begin{tikzpicture}[scale=.6]
\newcommand{\degree}[0]{4}
\newcommand{\B}[1]{#1*180/\degree}
\newcommand{\R}[1]{#1*180/\degree }
\newcommand{\RDOT}[0]{[red](i-1) circle (0.05)}
\newcommand{\BDOT}[0]{[blue](i-1) circle (0.05)}
\draw[black,thick] (0,0) circle (1) ;
\foreach \k in {0,2,4,6} {
  \draw[blue] (\B{\k}:1.15) node { \tiny\k};} ;
  \foreach \k in {1,3,5,7} {
 \draw[ red] (\R{\k}:1.15) node {\tiny\k} ;
}
\draw[blue, name path=B0B6](\B{0}:1) .. controls(\B{0}:0.7)and (\B{6}:0.7) .. (\B{6}:1) ;
\draw[blue, name path=B2B4](\B{2}:1) .. controls(\B{2}:0.7)and (\B{4}:0.7) .. (\B{4}:1) ;
\draw[red, name path=R1R7](\R{1}:1) .. controls(\R{1}:0.7)and (\R{7}:0.7) .. (\R{7}:1) ;
\draw[red, name path=R3R5](\R{3}:1) .. controls(\R{3}:0.7)and (\R{5}:0.7) .. (\R{5}:1) ;
\end{tikzpicture} 
};
\node (b) at (0,0) {
\begin{tikzpicture}[scale=.6]
\newcommand{\degree}[0]{4}
\newcommand{\B}[1]{#1*180/\degree}
\newcommand{\R}[1]{#1*180/\degree }
\newcommand{\RDOT}[0]{[red](i-1) circle (0.05)}
\newcommand{\BDOT}[0]{[blue](i-1) circle (0.05)}
\draw[black,thick] (0,0) circle (1) ;
\foreach \k in {0,2,4,6} {
  \draw[blue] (\B{\k}:1.15) node {\tiny \k};} ;
  \foreach \k in {1,3,5,7} {
 \draw[ red] (\R{\k}:1.15) node {\tiny \k} ;
};
\draw[blue, name path=B0B4](\B{0}:1) .. controls(\B{0}:0.7)and (\B{4}:0.7) .. (\B{4}:1) ;
\draw[blue, name path=B2B6](\B{2}:1) .. controls(\B{2}:0.7)and (\B{6}:0.7) .. (\B{6}:1) ;
\draw[red, name path=R1R7](\R{1}:1) .. controls(\R{1}:0.7)and (\R{7}:0.7) .. (\R{7}:1) ;
\draw[red, name path=R3R5](\R{3}:1) .. controls(\R{3}:0.7)and (\R{5}:0.7) .. (\R{5}:1) ;
\end{tikzpicture} 
};
\node (c) at (4,0) {
\begin{tikzpicture}[scale=.6]
\newcommand{\degree}[0]{4}
\newcommand{\B}[1]{#1*180/\degree}
\newcommand{\R}[1]{#1*180/\degree }
\newcommand{\RDOT}[0]{[red](i-1) circle (0.05)}
\newcommand{\BDOT}[0]{[blue](i-1) circle (0.05)}
\draw[black,thick] (0,0) circle (1) ;
\foreach \k in {0,2,4,6} {
  \draw[blue] (\B{\k}:1.15) node {\tiny \k};} ;
  \foreach \k in {1,3,5,7} {
 \draw[ red] (\R{\k}:1.15) node {\tiny\k} ;
};
\draw[blue, name path=B0B2](\B{0}:1) .. controls(\B{0}:0.7)and (\B{2}:0.7) .. (\B{2}:1) ;
\draw[blue, name path=B4B6](\B{4}:1) .. controls(\B{4}:0.7)and (\B{6}:0.7) .. (\B{6}:1) ;
\draw[red, name path=R1R7](\R{1}:1) .. controls(\R{1}:0.7)and (\R{7}:0.7) .. (\R{7}:1) ;
\draw[red, name path=R3R5](\R{3}:1) .. controls(\R{3}:0.7)and (\R{5}:0.7) .. (\R{5}:1) ;
\end{tikzpicture} 
};
 \draw[<->,thick] (b) -- (a);
  \draw[<->,thick] (b) -- (c);
\end{tikzpicture}

\begin{tikzpicture}[scale=.8]
\node (a) at (-4,0) {
 \begin{tikzpicture}[scale=.6]
\newcommand{\degree}[0]{4}
\newcommand{\B}[1]{#1*180/\degree}
\newcommand{\R}[1]{#1*180/\degree }
\newcommand{\RDOT}[0]{[red](i-1) circle (0.05)}
\newcommand{\BDOT}[0]{[blue](i-1) circle (0.05)}
\draw[black,thick] (0,0) circle (1) ;
\foreach \k in {0,2,4,6} {
  \draw[blue] (\B{\k}:1.15) node {\tiny\k};} ;
  \foreach \k in {1,3,5,7} {
 \draw[ red] (\R{\k}:1.15) node {\tiny\k} ;
};
\draw[blue, name path=B0B6](\B{0}:1) .. controls(\B{0}:0.7)and (\B{6}:0.7) .. (\B{6}:1) ;
\draw[blue, name path=B2B4](\B{2}:1) .. controls(\B{2}:0.7)and (\B{4}:0.7) .. (\B{4}:1) ;
\draw[red, name path=R1R3](\R{1}:1) .. controls(\R{1}:0.7)and (\R{3}:0.7) .. (\R{3}:1) ;
\draw[red, name path=R5R7](\R{5}:1) .. controls(\R{5}:0.7)and (\R{7}:0.7) .. (\R{7}:1) ;
\end{tikzpicture} 
};
\node (b) at (0,0) {
\begin{tikzpicture}[scale=.6]
\newcommand{\degree}[0]{4}
\newcommand{\B}[1]{#1*180/\degree}
\newcommand{\R}[1]{#1*180/\degree }
\newcommand{\RDOT}[0]{[red](i-1) circle (0.05)}
\newcommand{\BDOT}[0]{[blue](i-1) circle (0.05)}
\draw[black,thick] (0,0) circle (1) ;
\foreach \k in {0,2,4,6} {
  \draw[blue] (\B{\k}:1.15) node {\tiny\k};} ;
  \foreach \k in {1,3,5,7} {
 \draw[ red] (\R{\k}:1.15) node {\tiny\k} ;
};
\draw[blue, name path=B0B6](\B{0}:1) .. controls(\B{0}:0.7)and (\B{6}:0.7) .. (\B{6}:1) ;
\draw[blue, name path=B2B4](\B{2}:1) .. controls(\B{2}:0.7)and (\B{4}:0.7) .. (\B{4}:1) ;
\draw[red, name path=R1R5](\R{1}:1) .. controls(\R{1}:0.7)and (\R{5}:0.7) .. (\R{5}:1) ;
\draw[red,,name path=R3R7](\R{3}:1) .. controls(\R{3}:0.7)and (\R{7}:0.7) .. (\R{7}:1) ;
\end{tikzpicture} 
};
\node (c) at (4,0) {
\begin{tikzpicture}[scale=.6]
\newcommand{\degree}[0]{4}
\newcommand{\B}[1]{#1*180/\degree}
\newcommand{\R}[1]{#1*180/\degree }
\newcommand{\RDOT}[0]{[red](i-1) circle (0.05)}
\newcommand{\BDOT}[0]{[blue](i-1) circle (0.05)}
\draw[black,thick] (0,0) circle (1) ;
\foreach \k in {0,2,4,6} {
  \draw[blue] (\B{\k}:1.15) node {\tiny \k};} ;
  \foreach \k in {1,3,5,7} {
 \draw[ red] (\R{\k}:1.15) node {\tiny\k} ;
};
\draw[blue, name path=B0B6](\B{0}:1) .. controls(\B{0}:0.7)and (\B{6}:0.7) .. (\B{6}:1) ;
\draw[blue, name path=B2B4](\B{2}:1) .. controls(\B{2}:0.7)and (\B{4}:0.7) .. (\B{4}:1) ;
\draw[red, name path=R5R7](\R{5}:1) .. controls(\R{5}:0.7)and (\R{7}:0.7) .. (\R{7}:1) ;
\draw[red,name path=R1R3](\R{1}:1) .. controls(\R{1}:0.7)and (\R{3}:0.7) .. (\R{3}:1) ;
\end{tikzpicture} 
};
 \draw[<->,thick] (b) -- (a);
  \draw[<->,thick] (b) -- (c);
\end{tikzpicture} 


\begin{tikzpicture}[scale=.8]
\node (a) at (-4,0) {
 \begin{tikzpicture}[scale=.6]
\newcommand{\degree}[0]{4}
\newcommand{\B}[1]{#1*180/\degree}
\newcommand{\R}[1]{#1*180/\degree }
\newcommand{\RDOT}[0]{[red](i-1) circle (0.05)}
\newcommand{\BDOT}[0]{[blue](i-1) circle (0.05)}
\draw[black,thick] (0,0) circle (1) ;
\foreach \k in {0,2,4,6} {
  \draw[blue] (\B{\k}:1.15) node { \tiny\k};} ;
  \foreach \k in {1,3,5,7} {
 \draw[ red] (\R{\k}:1.15) node {\tiny\k} ;
};
\draw[blue, name path=B0B2](\B{0}:1) .. controls(\B{0}:0.7)and (\B{2}:0.7) .. (\B{2}:1) ;
\draw[blue, name path=B4B6](\B{4}:1) .. controls(\B{4}:0.7)and (\B{6}:0.7) .. (\B{6}:1) ;
\draw[red, name path=R1R3](\R{1}:1) .. controls(\R{1}:0.7)and (\R{3}:0.7) .. (\R{3}:1) ;
\draw[red, name path=R5R7](\R{5}:1) .. controls(\R{5}:0.7)and (\R{7}:0.7) .. (\R{7}:1) ;
\end{tikzpicture} 
};
\node (b) at (0,0) {
\begin{tikzpicture}[scale=.6]
\newcommand{\degree}[0]{4}
\newcommand{\B}[1]{#1*180/\degree}
\newcommand{\R}[1]{#1*180/\degree }
\newcommand{\RDOT}[0]{[red](i-1) circle (0.05)}
\newcommand{\BDOT}[0]{[blue](i-1) circle (0.05)}
\draw[black,thick] (0,0) circle (1) ;
\foreach \k in {0,2,4,6} {
  \draw[blue] (\B{\k}:1.15) node {\tiny \k};} ;
  \foreach \k in {1,3,5,7} {
 \draw[ red] (\R{\k}:1.15) node {\tiny \k} ;
};
\draw[blue, name path=B0B2](\B{0}:1) .. controls(\B{0}:0.7)and (\B{2}:0.7) .. (\B{2}:1) ;
\draw[blue, name path=B4B6](\B{4}:1) .. controls(\B{4}:0.7)and (\B{6}:0.7) .. (\B{6}:1) ;
\draw[red, name path=R1R5](\R{1}:1) .. controls(\R{1}:0.7)and (\R{5}:0.7) .. (\R{5}:1) ;
\draw[red,,name path=R3R7](\R{3}:1) .. controls(\R{3}:0.7)and (\R{7}:0.7) .. (\R{7}:1) ;
\end{tikzpicture} 
};
\node (c) at (4,0) {
\begin{tikzpicture}[scale=.6]
\newcommand{\degree}[0]{4}
\newcommand{\B}[1]{#1*180/\degree}
\newcommand{\R}[1]{#1*180/\degree }
\newcommand{\RDOT}[0]{[red](i-1) circle (0.05)}
\newcommand{\BDOT}[0]{[blue](i-1) circle (0.05)}
\draw[black,thick] (0,0) circle (1) ;
\foreach \k in {0,2,4,6} {
  \draw[blue] (\B{\k}:1.15) node {\tiny \k};} ;
  \foreach \k in {1,3,5,7} {
 \draw[ red] (\R{\k}:1.15) node {\tiny \k} ;
};
\draw[blue, name path=B0B2](\B{0}:1) .. controls(\B{0}:0.7)and (\B{2}:0.7) .. (\B{2}:1) ;
\draw[blue, name path=B4B6](\B{4}:1) .. controls(\B{4}:0.7)and (\B{6}:0.7) .. (\B{6}:1) ;
\draw[red, name path=R3R5](\R{3}:1) .. controls(\R{3}:0.7)and (\R{5}:0.7) .. (\R{5}:1) ;
\draw[red,name path=R1R7](\R{1}:1) .. controls(\R{1}:0.7)and (\R{7}:0.7) .. (\R{7}:1) ;
\end{tikzpicture} 
};
 \draw[<->,thick] (b) -- (a);
  \draw[<->,thick] (b) -- (c);
\end{tikzpicture} 
\end{center}
\vspace{-10pt}
\begin{figure}[h]
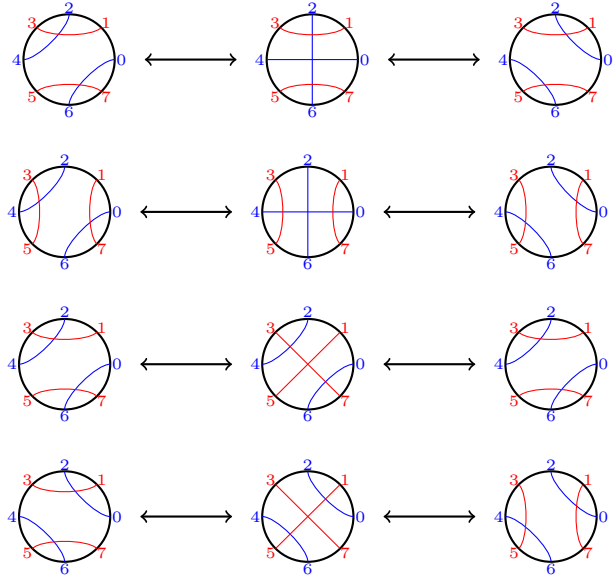

\caption{Relations between diagrams for $d=2$}
\end{figure}

Note, that in this case ($d=2$), there are {\it no} 2-faces. Indeed, composing by two Whitehead moves on the pairs of chords of red and blue color, gives a a superimposition of chord diagrams which is not compatible with the definition~\ref{De:1}. 

Therefore, the inclusion diagram  $\mathcal{W}$  for this $d=2$ case, is a quadrangle (see Fig~\ref{F:2-face}) constituted from:
\begin{enumerate}
\item four vertices, corresponding to the generic signatures,
\item four edges, corresponding to the codimension 1 signatures
\end{enumerate}

\begin{figure}[h]
\begin{center}
 \begin{tikzpicture}[scale=0.7]
\node (M1) at (2,2) {
 \begin{tikzpicture}[scale=0.6]
\newcommand{\degree}[0]{8}
\newcommand{\last}[0]{7}
\newcommand{\B}[1]{#1*360/\degree}
\draw[black] (0,0) circle (1) ;
\draw[blue, thick, name path=B0B6](\B{0}:1) .. controls(\B{0}:0.3)and (\B{6}:0.3) .. (\B{6}:1) ;
\draw[blue, thick, name path=B2B4](\B{2}:1) .. controls(\B{2}:0.3)and (\B{4}:0.3) .. (\B{4}:1) ;
\draw[red,thick, name path=B1B3](\B{1}:1) .. controls(\B{1}:0.3)and (\B{3}:0.3) .. (\B{3}:1) ;
\draw[red,thick, name path=B5B7](\B{5}:1) .. controls(\B{5}:0.3)and (\B{7}:0.3) .. (\B{7}:1) ;
\end{tikzpicture} 
};
\node (M2) at (2,-2) {
\begin{tikzpicture}[scale=0.6]
\newcommand{\degree}[0]{8}
\newcommand{\last}[0]{7}
\newcommand{\B}[1]{#1*360/\degree}
\draw[black] (0,0) circle (1) ;
\draw[blue, thick, name path=B0B2](\B{0}:1) .. controls(\B{0}:0.3)and (\B{2}:0.3) .. (\B{2}:1) ;
\draw[blue, thick, name path=B4B6](\B{4}:1) .. controls(\B{4}:0.3)and (\B{6}:0.3) .. (\B{6}:1) ;
\draw[red, thick, name path=B5B7](\B{5}:1) .. controls(\B{5}:0.3)and (\B{7}:0.3) .. (\B{7}:1) ;
\draw[red, thick, name path=B1B3](\B{1}:1) .. controls(\B{1}:0.3)and (\B{3}:0.3) .. (\B{3}:1) ;
\end{tikzpicture} 
};
\node (M3) at (-2,2) {
\begin{tikzpicture}[scale=0.6]
\newcommand{\degree}[0]{8}
\newcommand{\last}[0]{7}
\newcommand{\B}[1]{#1*360/\degree}
\draw[black] (0,0) circle (1) ;
\draw[blue, thick, name path=B0B6](\B{0}:1) .. controls(\B{0}:0.3)and (\B{6}:0.3) .. (\B{6}:1) ;
\draw[blue, thick, name path=B2B4](\B{2}:1) .. controls(\B{2}:0.0)and (\B{4}:0.3) .. (\B{4}:1) ;
\draw[red, thick,  name path=B1B7](\B{1}:1) .. controls(\B{1}:0.3)and (\B{7}:0.3) .. (\B{7}:1) ;
\draw[red, thick, ,name path=B3B5](\B{3}:1) .. controls(\B{3}:0.3)and (\B{5}:0.3) .. (\B{5}:1) ;
\end{tikzpicture} 
};
\node (M4) at (-2,-2) {
\begin{tikzpicture}[scale=0.6]
\newcommand{\degree}[0]{8}
\newcommand{\last}[0]{7}
\newcommand{\B}[1]{#1*360/\degree}
\draw[black] (0,0) circle (1) ;
\draw[blue, thick, name path=B0B2](\B{0}:1) .. controls(\B{0}:0.3)and (\B{2}:0.3) .. (\B{2}:1) ;
\draw[blue, thick, name path=B4B6](\B{4}:1) .. controls(\B{4}:0.3)and (\B{6}:0.3) .. (\B{6}:1) ;
\draw[red, thick, name path=B3B5](\B{3}:1) .. controls(\B{3}:0.3)and (\B{5}:0.3) .. (\B{5}:1) ;
\draw[red, thick,name path=B1B7](\B{1}:1) .. controls(\B{1}:0.3)and (\B{7}:0.3) .. (\B{7}:1) ;
\end{tikzpicture} 
};

 \draw[-, black,thick] (M1) -- (M2);
 \draw[-,black,thick] (M1) -- (M3);
 \draw[-,black,thick] (M2) -- (M4);
 \draw[-,black,thick] (M4) -- (M3);
 \draw (2,2) node {$\scriptstyle M_{1}$};     
  \draw (2,-2) node { $\scriptstyle M_{2}$}; 
   \draw (-2,2) node { $\scriptstyle M_{3}$};     
  \draw (-2,-2) node {$\scriptstyle M_{4}$}; 
\end{tikzpicture} 
\end{center}
\vspace{-10pt}
\caption{Inclusion diagram for $d=2$}\label{F:2-face}
\end{figure}
\end{example}
\begin{corollary}
\ 

\begin{itemize}
\item Let $\sigma_0$ and $\sigma_1$ be two generic signatures.
If  $\sigma_0$ and $\sigma_1$ are both incident to a signature of codimension 1, then this signature of codimension 1 is unique. 
\item Let $\sigma_0,\sigma_1,\sigma_2,\sigma_3$ be four generic signatures. If these signatures are incident to a signature of codimension 2 , then this signature of codimension 2 is unique. 
\end{itemize}
\end{corollary}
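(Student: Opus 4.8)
The plan is to derive both uniqueness statements directly from Lemma~\ref{Lem:quadra}, together with the remark that each signature is determined by its matrix. For the first item I would take two codimension $1$ signatures $\beta$ and $\beta'$, each incident to both $\sigma_0$ and $\sigma_1$, and show that $\beta=\beta'$.

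First I would unpack what a codimension $1$ signature looks like. By Definition~\ref{D:cod} such a signature $\beta$ carries a single crossing vertex $\bar{\bar v}$ of degree $2$, i.e. exactly one pair of same-coloured chords meeting at one interior point, on four terminal vertices $i<j<k<l$ (of a single colour, as prescribed by Definition~\ref{De:1}). The proof of Lemma~\ref{Lem:quadra}(1) shows that smoothing this crossing produces \emph{exactly} the two generic signatures $(i,j)(k,l)$ and $(i,l)(j,k)$, every other chord being left untouched. Hence the hypothesis $\sigma_0,\sigma_1\prec\beta$ forces $\{\sigma_0,\sigma_1\}$ to be precisely these two resolutions. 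In particular $\sigma_0$ and $\sigma_1$ agree on every chord except one same-coloured pair, and the unordered set $\{i,j,k,l\}$ is read off intrinsically as the endpoints of the two chords on which $\sigma_0$ and $\sigma_1$ differ.

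The key observation is then that $\beta$ is reconstructed from this intrinsic data: its crossing chords form the unique crossing pair $(i,k)(j,l)$ on $\{i,j,k,l\}$, and its remaining chords are the common chords of $\sigma_0$ and $\sigma_1$. Since $\beta'$ satisfies the same description (its two smoothings are again $\sigma_0,\sigma_1$ by Lemma~\ref{Lem:quadra}(1)), it has the same chord data, hence the same matrix, whence $\beta=\beta'$ by the remark that each signature has a unique matrix. This settles the first item. For the second item I would run the analogous argument on the quadrangle of Lemma~\ref{Lem:quadra}(2): a codimension $2$ signature $\omega$ has two crossing vertices, and the lemma shows its four incident generic signatures $\{\sigma_0,\sigma_1,\sigma_2,\sigma_3\}$ arise by smoothing the two crossings independently, forming a quadrangle whose opposite edges resolve one fixed crossing in the two possible ways. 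Thus each crossing, together with its four terminal vertices, is recovered from one of the two pairs of chords on which the four signatures disagree, exactly as above, and the shared chords furnish the rest of $\omega$; this determines $\omega$ uniquely.

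The step I expect to be the main obstacle is, in the codimension $2$ case, checking that the four signatures really do reconstruct \emph{both} crossings unambiguously — that one can correctly separate the varying chords into the two independent crossing-pairs, and that the two crossings cannot be conflated. This is precisely what the quadrangle structure of Lemma~\ref{Lem:quadra}(2) guarantees: the two pairs of opposite edges encode the two crossings separately, so no ambiguity arises and the matrix of $\omega$ is pinned down by the tuple $(\sigma_0,\ldots,\sigma_3)$.
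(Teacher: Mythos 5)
Your first item is sound and is essentially the derivation the paper intends (the corollary is stated without proof, as a direct consequence of Lemma~\ref{Lem:quadra}): a codimension~$1$ signature has a single vertex $\bar{\bar v}$ with $\deg(\bar{\bar v})=2$, so exactly one pair of same-coloured chords crosses; its two smoothings are the only generic signatures incident to it, they agree outside the four endpoints $\{i,j,k,l\}$, and since $(i,k)(j,l)$ is the unique crossing pairing of four points of one colour, the matrix of $\beta$ is pinned down by $\{\sigma_0,\sigma_1\}$, whence $\beta=\beta'$.

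The second item, however, has a genuine gap: your reconstruction assumes the two double points of $\omega$ lie on four pairwise distinct chords, so that the chords varying among $\sigma_0,\dots,\sigma_3$ split into two independent crossing-pairs on two disjoint sets of four terminal vertices. But a codimension~$2$ signature may have a single chord passing through \emph{both} double points, and the paper's own list of codimension~$2$ signatures for $d=3$ exhibits exactly this (both monochromatic families shown there have a middle chord crossing two others; e.g.\ the red chord $(3,9)$ crossing $(1,5)$ at one point and $(7,11)$ at another). In that configuration the four simultaneous smoothings are $\{(1,3),(5,7),(9,11)\}$, $\{(1,3),(5,11),(7,9)\}$, $\{(3,5),(1,7),(9,11)\}$ and $\{(3,5),(1,11),(7,9)\}$: \emph{every} red chord varies, six endpoints are involved rather than $4+4$, and the chords of $\omega$ itself (such as $(3,9)$) do not arise as the crossing pairing of any four of these endpoints. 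So the claim that ``the two pairs of opposite edges encode the two crossings separately'' fails as stated, and the matrix of $\omega$ is not recovered by your procedure in this case. The conclusion of the corollary still holds, but closing the gap needs an extra step: for instance, apply your (correct) first item to the adjacent pairs of the quadrangle to recover uniquely the four codimension~$1$ signatures $\beta_0,\dots,\beta_3$ incident to $\omega$ — this works even in the shared-chord case, since each $\beta_i$ has its double point on two distinct chords — and then show that $\omega$ is the contraction of the unique remaining crossing in any $\beta_i$, checking that the result is independent of $i$; alternatively, prove directly that the full set of four simultaneous resolutions determines the chord system with its two double points, treating the shared-chord configuration as a separate case.
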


\subsection{Structure of inclusion diagrams}

\begin{theorem}
Let $d>2$. The inclusion diagram is a cell-complex. 
\end{theorem}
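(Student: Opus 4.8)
The plan is to check that $(\mathcal{W},\subset)$, where the cell $W_\mu$ is declared to have dimension $\codim{\mu}$ (Definition~\ref{D:dual}), satisfies the axioms of a regular cell-complex: that each $W_\mu$ is an open ball of dimension $\codim{\mu}$, that its closure is a finite union of cells, and that its topological boundary is attached to the lower cells along a sphere. First I would dispose of the topological bookkeeping. Since the number of elementa is finite (the Proposition), $\mathcal{W}$ has finitely many cells, so closure-finiteness and the weak topology are automatic. Moreover the identification of combinatorial and topological closure recalled from~\cite{C0,C1}, namely $\overline{A_\sigma}=\bigcup_{\tau\in\bar\sigma}A_\tau$, transports to $\overline{W_\mu}=\bigcup_{\tau\preceq\mu}W_\tau$; thus every closed cell is already a union of cells and the only remaining task is the local ball-and-sphere structure of each $W_\mu$.

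I would prove this by induction on $k=\codim{\mu}$. The cases $k=0,1,2$ are precisely Lemma~\ref{Lem:quadra}: a $0$-cell is a point, a $1$-cell is an interval whose boundary is its two vertices ($S^0$), and a $2$-cell is a quadrangle whose boundary is four edges and four vertices ($S^1$). For the inductive step, fix a signature $\mu$ with interior vertices $\bar{\bar v}_1,\dots,\bar{\bar v}_r$ of local indices $k_1,\dots,k_r$, so that $\sum_i k_i=k$ by Definition~\ref{D:cod}. A structural point I would establish is that a smoothing half-Whitehead move at $\bar{\bar v}_i$ modifies only the chords incident to $\bar{\bar v}_i$ and leaves the remaining interior vertices intact; consequently the resolutions at distinct critical points can be carried out independently, the interval $\{\tau:\tau\preceq\mu\}$ factors as a product of the local resolution posets, and $W_\mu$ is correspondingly a product $W_{\mu_1}\times\cdots\times W_{\mu_r}$ of the cells attached to the individual critical points. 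This reduces everything to the single-critical-point case.

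The crux — and the step I expect to be the main obstacle — is therefore to show that a signature whose only interior vertex is a single $\bar{\bar v}$ of degree $m$, where $m$ chords of one colour meet (local index $2m-3$), gives a cell that is a genuine $(2m-3)$-ball with boundary sphere $S^{2m-4}$. Concretely, I would show that the isotopy classes of generic resolutions of such an $m$-fold crossing, partially ordered by the contracting half-Whitehead move, form the face poset of a polytope of the associahedral type alluded to in the introduction (the non-crossing resolutions being governed by bracketings), so that this local cell is a ball and not merely a pseudomanifold. Granting this, the boundary of $W_\mu$ is the join $S^{k_1-1}*\cdots*S^{k_r-1}=S^{k-1}$ of the local boundary spheres, whence $W_\mu$ is a $k$-ball attached along $S^{k-1}$. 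Finally, the uniqueness statements of the preceding Corollary guarantee that no two faces of $\partial W_\mu$ are identified, so the characteristic maps are embeddings and $(\mathcal{W},\subset)$ is a regular cell-complex. The two substantive inputs are thus the local product structure (independence of distinct critical points) and the polytopality of the single-vertex resolution; the latter, tying the local cells to the associahedral combinatorics, is the deepest point, while the base cases follow directly from Lemma~\ref{Lem:quadra}.
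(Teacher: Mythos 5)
Your scaffolding is genuinely different from the paper's proof, which is top-down: the paper takes the ball structure of the faces for granted by citing~\cite{C0,C1} (the elementa indexed by codimension-$n$ signatures are topological balls) and then merely records the attaching rule, namely that $\tau\prec\tau'$ with $\codim{\tau'}=\codim{\tau}+1$ forces the face $f_{\tau}$ to lie in the boundary of $f_{\tau'}$. You instead try to build a regular CW structure bottom-up from the combinatorics of half-Whitehead moves. Your bookkeeping steps are fine: finiteness, the transported closure formula, the base cases from Lemma~\ref{Lem:quadra}, and the join computation $\partial(B^{k_1}\times\cdots\times B^{k_r})\cong S^{k_1-1}\ast\cdots\ast S^{k_r-1}=S^{k-1}$. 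But the step you yourself flag as the crux is a genuine gap, and the specific claim you make there is false.

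You assert that the resolutions of a single vertex $\bar{\bar v}$ through which $m$ chords of one colour pass form the face poset of a polytope of associahedral type. Test this on the first nontrivial case, which occurs already at $d=3$: let $\mu$ be the upper bound of an $NC_{3}$ structure, i.e.\ three red chords through one vertex of valency $6$, so $m=3$ and $\codim{\mu}=2m-3=3$. By the paper's description of the black ($NC_{3}$) substructures of the $d=3$ inclusion diagram, the interval below $\mu$ contains exactly five generic signatures (two $M$-signatures and three $F$-signatures) and three codimension-$2$ signatures, each giving a quadrangle by Lemma~\ref{Lem:quadra}; the only configuration consistent with this and with the Euler characteristic of a $2$-sphere is three quadrangles $M_{1}F_{i}M_{2}F_{j}$ sharing the six edges $M_{a}F_{i}$ ($5$ vertices, $6$ edges, three quadrangular $2$-faces). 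In this boundary complex every $F$-vertex lies on only two edges; since the graph of a $3$-polytope is $3$-connected (Steinitz), no $3$-polytope has a vertex of degree $2$, so this poset is \emph{not} the face poset of any polytope, associahedral or otherwise. The ball conclusion itself survives — the complex is a regular CW $2$-sphere of ``three-lune'' type and $W_{\mu}$ is its cone — but your proposed justification for it does not, and you offer no substitute; since your entire inductive step funnels through this point, the proof is incomplete. The quickest repair is the paper's own move, citing~\cite{C0,C1} for the ball structure of the strata; if you want to stay combinatorial, you must instead prove directly (say, by exhibiting a shelling of the non-crossing resolution complex) that each single-vertex boundary poset is a regular CW sphere. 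Note also that your product decomposition over distinct critical points, while very plausible, still requires the verification that half-Whitehead moves at distinct vertices commute and always yield admissible signatures.
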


\begin{proof}
Let $X^{n}$ be the set set of faces of dimension $n$. 
We know that each $n$-face in $X^{n}$ is a topological ball of dimension $n$.
Indeed, for $n=0$: the set $X^{0}$ is the set of vertices of the inclusion diagram. 
Now, for any $n$, the set $X^{n}$ is in bijection with the set of codimension $n$ signatures, which are the indices of elementa of codimension $n$. Those elementa are known to be topological balls of codimension $n$, from~\cite{C0,C1}. So, the face of $X^{n}$ are $n$-balls.
The glueing between faces is done by the following criterion: 
if two signatures verify $\tau\prec \tau'$ such that $k=codim(\tau)<codim(\tau')=k+1$, then $A_{\tau'}\subset A_{\bar{\tau}}$. In particular, the $k$-face $f_{\tau}$ in $X^{k}$ (corresponding to $\tau$) lies in the boundary of the face $f_{\tau'}$ (corresponding to $\tau'$) where $dim(f_{\tau})+1=dim(f_{\tau'})$.          
\end{proof}

For any $d>2$, the inclusion diagram contains two main parts: a part that we call {\it exterior} part and a part to which we refer as an {\it interior} part.

\begin{enumerate}
\item  The \underline{ exterior part} is formed from a necklace of four beads. Those beads, denoted $NC_{d}$, are obtained from a union of cells of the cell complex, forming a connected component.
A structure $NC_{d}$ is a linearly ordered subset of  $(\Sigma_{d},\prec)$ with one upper bound $\sigma$ having $d$ blue (resp. red) chords intersecting at one point. The notation $NC_{d}$ (or $NC$ in short) is due to the relation with the non-crossing partition of the set of $d$ elements $\{1,2,...,d\}$~\cite{Kre72}.  
Indeed, there is a bijection between the set of vertices in a $NC_{d}$ structure and the set of non-crossing partitions of $\{1,2,...,d\}$. 
Consider the upper bound of an $NC$ structure: it is a signature with $d$ short diagonals of a given color and $d$ long diagonals intersecting in one vertex $\bar{\bar{v}}$ of the other color. The vertex is of valency $2d$. Two $NC$ structures are said to be of the same color if the intersecting chords of the upper bound signature are of the same color. In each $NC$ there exist two $M$-signatures. Those $NC$ structures are connected one to another by the $M$-signatures (see an example of one of $NC_{4}$ structures, figures~\ref{F:NC} and~\ref{F:NC12}). 

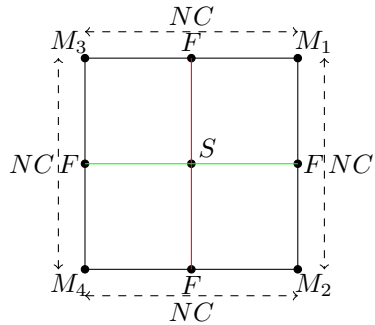
\begin{figure}[h]
\begin{center}
\begin{tikzpicture}[scale=0.7]
\draw[fill] (-2, 2) circle (.07cm); \node (-2, 2) at (-2.3,2.3) {$M_{3}$};
\draw[fill] (0, 2) circle (.07cm);  \node (0, 2) at (0,2.3) {$F$};
\draw[fill] (2, 2) circle (.07cm); \node(2,2) at (2.3,2.3) {$M_{1}$};
\draw[fill] (2, 0) circle (.07cm);  \node (2, 0) at (2.3,0) {$F$};
\draw[fill] (2, -2) circle (.07cm);\node (2,-2) at (2.3,-2.3) {$M_{2}$};
\draw[fill] (0, -2) circle (.07cm);  \node (0, -2) at (0,-2.3) {$F$};
\draw[fill] (-2, -2) circle (.07cm);  \node (-2, -2) at (-2.3,-2.3) {$M_{4}$};
\draw[fill] (-2, 0) circle (.07cm);  \node (-2, 0) at (-2.3,0) {$F$};
\draw[fill] (0, 0) circle (.07cm);  \node (0, 0) at (0.3,0.3) {$S$};
\draw (-2, 2) -- (2, 2) -- (2,-2) -- (-2,-2) -- (-2,2);
\draw[red] (0,2) -- (0,-2);
 \draw[green] (-2,0) -- (2,0);
\node  at (0,2.8) {$NC$};
\node  at (0,-2.8) {$NC$};
\node  at (-3,0) {$NC$};
\node  at (3,0) {$NC$};
 \draw[dashed,<->](-2,2.5) -- (2,2.5);
 \draw[dashed,<->](2.5,2) -- (2.5,-2);
 \draw[dashed,<->](-2,-2.5) -- (2,-2.5);
 \draw[dashed,<->](-2.5,-2) -- (-2.5,2);
\end{tikzpicture}
\vspace{-10pt}
\caption{Exterior structure: $NC_{d}$ are drawn as black lines between the $M$-signatures. Horizontal, vertical bridge structures}\label{F:NC}
\end{center}
\end{figure}

In the case of $d=4$, the connections between $M_{1}$ and $M_{3}$ are represented on figure~\ref{F:NC12}. The other connections in figure~\ref{F:NC} are of the same type.
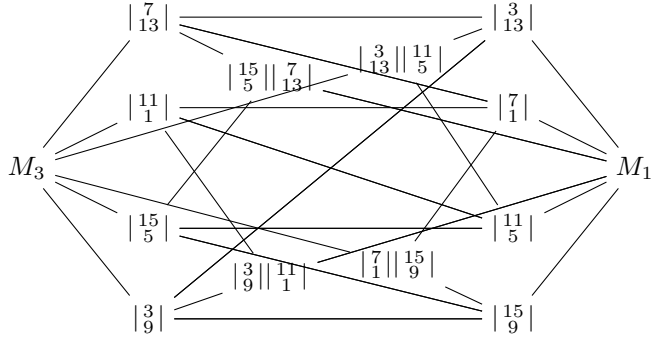
\begin{figure}[h]
\begin{center} 
     \begin{tikzpicture}[scale=0.4]
  \node  (a) at (10,0){$M_{1}$};
  \node  (b1) at (6,5) {$\scriptstyle\left|\begin{smallmatrix}3\\13\end{smallmatrix}\right|$};
\node  (b2) at (6,-5){$\scriptstyle\left|\begin{smallmatrix}15\\9\end{smallmatrix}\right|$}; 
\node  (b3) at (6,-2){$\scriptstyle\left|\begin{smallmatrix}11\\5\end{smallmatrix}\right|$}; 
\node  (b4) at (6,2){$\scriptstyle\left|\begin{smallmatrix}7\\1\end{smallmatrix}\right|$}; 
\node  (c1) at (2.3,3.6){$\scriptstyle\left|\begin{smallmatrix}3\\13\end{smallmatrix}\right|\left|\begin{smallmatrix}11\\5\end{smallmatrix}\right|$}; 
\node  (c2) at (2.2,-3.2){$\scriptstyle\left|\begin{smallmatrix}7\\1\end{smallmatrix}\right|\left|\begin{smallmatrix}15\\9\end{smallmatrix}\right|$}; 
\node  (d1) at (-2,3){$\scriptstyle\left|\begin{smallmatrix}15\\5\end{smallmatrix}\right|\left|\begin{smallmatrix}7\\13\end{smallmatrix}\right|$}; 
\node  (d2) at (-2,-3.6){$\scriptstyle\left|\begin{smallmatrix}3\\9\end{smallmatrix}\right|\left|\begin{smallmatrix}11\\1\end{smallmatrix}\right|$}; 
 \node  (e1) at (-6,5){$\scriptstyle\left|\begin{smallmatrix}7\\13\end{smallmatrix}\right|$}; 
\node  (e2) at (-6,2){$\scriptstyle\left|\begin{smallmatrix}11\\1\end{smallmatrix}\right|$}; 
\node  (e3) at (-6,-2){$\scriptstyle\left|\begin{smallmatrix}15\\5\end{smallmatrix}\right|$}; 
\node  (e4) at (-6,-5){$\scriptstyle\left|\begin{smallmatrix}3\\9\end{smallmatrix}\right|$}; 
\node  (f) at (-10,0){$M_{3}$}; 

\draw (a)--(b1);
\draw (a)--(b2);
\draw (a)--(b3);
\draw (a)--(b4);
\draw (a)--(d1);
\draw (a)--(d2);
\draw (b1)--(c1);
\draw (b1)--(e1);
\draw (b1)--(e4);
\draw (b2)--(c2);
\draw (b2)--(e3);
\draw (b2)--(e4);
\draw (b3)--(c1);
\draw (b3)--(e2);
\draw (b3)--(e3);
\draw (b4)--(c2);
\draw (b4)--(e1);
\draw (b4)--(e2);
\draw (c1)--(f);
\draw (c2)--(f);
\draw (d1)--(a);
\draw (d1)--(e1);
\draw (d1)--(e3);
\draw (d2)--(a);
\draw (d2)--(e2);
\draw (d2)--(e4);
\draw (e1)--(b4);
\draw (e1)--(f);
\draw (e2)--(b3);
\draw (e2)--(f);
\draw (e3)--(b2);
\draw (e3)--(b3);
\draw (e3)--(f);
\draw (e4)--(b1);
\draw (e4)--(b2);
\draw (e4)--(f);
 \end{tikzpicture}
    \end{center}
 \caption{One of $NC_{4}$ structures.}~\label{F:NC12}
\end{figure}
The edges in figure~\ref{F:NC12} correspond to signatures of codimension 1. The signatures of codimensions  3, 4 and 5 corresponding to the faces of higher dimensions are depicted below. 
\begin{figure}
\begin{center}
\begin{tikzpicture}[scale=0.7]
\newcommand{\degree}[0]{8}
\newcommand{\last}[0]{7}
\newcommand{\B}[1]{#1*180/\degree}
\newcommand{\R}[1]{#1*180/\degree }
\newcommand{\RDOT}[0]{[  red](i-1) circle (0.05)}
\newcommand{\XDOT}[0]{[black](i-1) circle (0.045)}
\draw (0,-1.5) node {\footnotesize $\codim 3$} ;
\draw (0,0) circle (1) ;
\foreach \k in {0,2,4,6,8,10,12,14} {
  \draw[blue] (\B{\k}:1.15) node {\tiny \k};} ;
  \foreach \k in {1,3,5,7,9,11,13,15} {
 \draw[red] (\R{\k}:1.15) node {\tiny \k} ;
};
\draw[blue,  name path=B12B14](\B{12}:1) .. controls(\B{12}:0.7)and (\B{14}:0.7) .. (\B{14}:1) ;
\draw[blue,  name path=B8B10](\B{8}:1) .. controls(\B{8}:0.7)and (\B{10}:0.7) .. (\B{10}:1) ;
\draw[blue, name path=B4B6](\B{4}:1) .. controls(\B{4}:0.7)and (\B{6}:0.7) .. (\B{6}:1) ;
\draw[blue, name path=B0B2](\B{0}:1) .. controls(\B{0}:0.7)and (\B{2}:0.7) .. (\B{2}:1) ;
\draw[red, name path=R11R15](\R{11}:1) .. controls(\R{11}:0.7)and (\R{15}:0.7) .. (\R{15}:1) ;
\draw[red, name path=R5R13](\R{5}:1) .. controls(\R{5}:0.7)and (\R{13}:0.7) .. (\R{13}:1) ;
\draw[red, name path=R3R7](\R{3}:1) .. controls(\R{3}:0.7)and (\R{7}:0.7) .. (\R{7}:1) ;
\draw[red, name path=R1R9](\R{1}:1) .. controls(\R{1}:0.7)and (\R{9}:0.7) .. (\R{9}:1) ;
\draw[name intersections={of=R5R13 and R11R15,name=i}] \RDOT ;
\draw[name intersections={of=R5R13 and R1R9,name=i}] \RDOT ;
\draw[name intersections={of=R5R13 and R3R7,name=i}] \RDOT ;
\fill[name intersections={of=B12B14 and R5R13,name=i}] \XDOT ;
\fill[name intersections={of=B8B10 and R1R9,name=i}] \XDOT ;
\fill[name intersections={of=B4B6 and R5R13,name=i}] \XDOT ;
\fill[name intersections={of=B0B2 and R1R9,name=i}] \XDOT ;
\end{tikzpicture}
\begin{tikzpicture}[scale=0.7]
\newcommand{\degree}[0]{8}
\newcommand{\last}[0]{7}
\newcommand{\B}[1]{#1*180/\degree}
\newcommand{\R}[1]{#1*180/\degree }
\newcommand{\RDOT}[0]{[  red](i-1) circle (0.05)}
\newcommand{\XDOT}[0]{[black](i-1) circle (0.045)}
\draw (0,-1.5) node {\footnotesize $\codim 3$} ;
\draw (0,0) circle (1) ;
\foreach \k in {0,2,4,6,8,10,12,14} {
  \draw[blue] (\B{\k}:1.15) node {\tiny \k};} ;
  \foreach \k in {1,3,5,7,9,11,13,15} {
 \draw[red] (\R{\k}:1.15) node {\tiny \k} ;
};
\draw[blue,  name path=B12B14](\B{12}:1) .. controls(\B{12}:0.7)and (\B{14}:0.7) .. (\B{14}:1) ;
\draw[blue,  name path=B8B10](\B{8}:1) .. controls(\B{8}:0.7)and (\B{10}:0.7) .. (\B{10}:1) ;
\draw[blue,  name path=B4B6](\B{4}:1) .. controls(\B{4}:0.7)and (\B{6}:0.7) .. (\B{6}:1) ;
\draw[blue,  name path=B0B2](\B{0}:1) .. controls(\B{0}:0.7)and (\B{2}:0.7) .. (\B{2}:1) ;
\draw[red, name path=R9R13](\R{9}:1) .. controls(\R{9}:0.7)and (\R{13}:0.7) .. (\R{13}:1) ;
\draw[red, name path=R7R15](\R{7}:1) .. controls(\R{7}:0.7)and (\R{15}:0.7) .. (\R{15}:1) ;
\draw[red, name path=R3R11](\R{3}:1) .. controls(\R{3}:0.7)and (\R{11}:0.7) .. (\R{11}:1) ;
\draw[red, name path=R1R5](\R{1}:1) .. controls(\R{1}:0.7)and (\R{5}:0.7) .. (\R{5}:1) ;
\draw[name intersections={of=R9R13 and R3R11,name=i}] \RDOT ;
\draw[name intersections={of=R7R15 and R3R11,name=i}] \RDOT ;
\draw[name intersections={of=R3R11 and R1R5,name=i}] \RDOT ;
\fill[name intersections={of=B12B14 and R9R13,name=i}] \XDOT ;
\fill[name intersections={of=B8B10 and R9R13,name=i}] \XDOT ;
\fill[name intersections={of=B4B6 and R1R5,name=i}] \XDOT ;
\fill[name intersections={of=B0B2 and R1R5,name=i}] \XDOT ;
\end{tikzpicture}  
\begin{tikzpicture}[scale=0.7]
\newcommand{\degree}[0]{8}
\newcommand{\last}[0]{7}
\newcommand{\B}[1]{#1*180/\degree}
\newcommand{\R}[1]{#1*180/\degree }
\newcommand{\RDOT}[0]{[  red](i-1) circle (0.05)}
\newcommand{\XDOT}[0]{[black](i-1) circle (0.045)}
\draw (0,-1.5) node {\footnotesize $\codim 4$} ;
\draw (0,0) circle (1) ;
\foreach \k in {0,2,4,6,8,10,12,14} {
  \draw[blue] (\B{\k}:1.15) node {\tiny \k};} ;
  \foreach \k in {1,3,5,7,9,11,13,15} {
 \draw[red] (\R{\k}:1.15) node {\tiny \k} ;
};
\draw[blue,  name path=B10B12](\B{10}:1) .. controls(\B{10}:0.7)and (\B{12}:0.7) .. (\B{12}:1) ;
\draw[blue,  name path=B8B14](\B{8}:1) .. controls(\B{8}:0.7)and (\B{14}:0.7) .. (\B{14}:1) ;
\draw[blue,  name path=B4B6](\B{4}:1) .. controls(\B{4}:0.7)and (\B{6}:0.7) .. (\B{6}:1) ;
\draw[blue,  name path=B0B2](\B{0}:1) .. controls(\B{0}:0.7)and (\B{2}:0.7) .. (\B{2}:1) ;
\draw[red, name path=R9R13](\R{9}:1) .. controls(\R{9}:0.7)and (\R{13}:0.7) .. (\R{13}:1) ;
\draw[red, name path=R5R15](\R{5}:1) .. controls(\R{5}:0.7)and (\R{15}:0.7) .. (\R{15}:1) ;
\draw[red, name path=R3R11](\R{3}:1) .. controls(\R{3}:0.7)and (\R{11}:0.7) .. (\R{11}:1) ;
\draw[red, name path=R1R7](\R{1}:1) .. controls(\R{1}:0.7)and (\R{7}:0.7) .. (\R{7}:1) ;
\draw[name intersections={of=R3R11 and R9R13,name=i}] \RDOT ;
\draw[name intersections={of=R3R11 and R5R15,name=i}] \RDOT ;
\fill[name intersections={of=B10B12 and R3R11,name=i}] \XDOT ;
\fill[name intersections={of=B8B14 and R3R11,name=i}] \XDOT ;
\fill[name intersections={of=B4B6 and R5R15,name=i}] \XDOT ;
\fill[name intersections={of=B0B2 and R1R7,name=i}] \XDOT ;
\end{tikzpicture}
\begin{tikzpicture}[scale=0.7]
\newcommand{\degree}[0]{8}
\newcommand{\last}[0]{7}
\newcommand{\B}[1]{#1*180/\degree}
\newcommand{\R}[1]{#1*180/\degree }
\newcommand{\RDOT}[0]{[  red](i-1) circle (0.05)}
\newcommand{\XDOT}[0]{[black](i-1) circle (0.045)}
\draw (0,-1.5) node {\footnotesize $\codim 4$} ;
\draw (0,0) circle (1) ;
\foreach \k in {0,2,4,6,8,10,12,14} {
  \draw[blue] (\B{\k}:1.15) node {\tiny \k};} ;
  \foreach \k in {1,3,5,7,9,11,13,15} {
 \draw[red] (\R{\k}:1.15) node {\tiny \k} ;
};
\draw[blue,  name path=B12B14](\B{12}:1) .. controls(\B{12}:0.7)and (\B{14}:0.7) .. (\B{14}:1) ;
\draw[blue,  name path=B8B10](\B{8}:1) .. controls(\B{8}:0.7)and (\B{10}:0.7) .. (\B{10}:1) ;
\draw[blue,  name path=B4B6](\B{4}:1) .. controls(\B{4}:0.7)and (\B{6}:0.7) .. (\B{6}:1) ;
\draw[blue,  name path=B0B2](\B{0}:1) .. controls(\B{0}:0.7)and (\B{2}:0.7) .. (\B{2}:1) ;
\draw[red, name path=R9R15](\R{9}:1) .. controls(\R{9}:0.7)and (\R{15}:0.7) .. (\R{15}:1) ;
\draw[red, name path=R7R13](\R{7}:1) .. controls(\R{7}:0.7)and (\R{13}:0.7) .. (\R{13}:1) ;
\draw[red, name path=R3R11](\R{3}:1) .. controls(\R{3}:0.7)and (\R{11}:0.7) .. (\R{11}:1) ;
\draw[red, name path=R1R5](\R{1}:1) .. controls(\R{1}:0.7)and (\R{5}:0.7) .. (\R{5}:1) ;
\draw[name intersections={of=R3R11 and R9R15,name=i}] \RDOT ;
\draw[name intersections={of=R3R11 and R7R13,name=i}] \RDOT ;
\fill[name intersections={of=B12B14 and R7R13,name=i}] \XDOT ;
\fill[name intersections={of=B8B10 and R9R15,name=i}] \XDOT ;
\fill[name intersections={of=B4B6 and R1R5,name=i}] \XDOT ;
\fill[name intersections={of=B0B2 and R1R5,name=i}] \XDOT ;
\end{tikzpicture}
\begin{tikzpicture}[scale=0.7]
\newcommand{\degree}[0]{8}
\newcommand{\last}[0]{7}
\newcommand{\B}[1]{#1*180/\degree}
\newcommand{\R}[1]{#1*180/\degree }
\newcommand{\RDOT}[0]{[  red](i-1) circle (0.05)}
\newcommand{\XDOT}[0]{[black](i-1) circle (0.045)}
\draw (0,-1.5) node {\footnotesize $\codim 4$} ;
\draw (0,0) circle (1) ;
\foreach \k in {0,2,4,6,8,10,12,14} {
  \draw[blue] (\B{\k}:1.15) node {\tiny \k};} ;
  \foreach \k in {1,3,5,7,9,11,13,15} {
 \draw[red] (\R{\k}:1.15) node {\tiny \k} ;
};
\draw[blue,  name path=B12B14](\B{12}:1) .. controls(\B{12}:0.7)and (\B{14}:0.7) .. (\B{14}:1) ;
\draw[blue,  name path=B8B10](\B{8}:1) .. controls(\B{8}:0.7)and (\B{10}:0.7) .. (\B{10}:1) ;
\draw[blue,  name path=B4B6](\B{4}:1) .. controls(\B{4}:0.7)and (\B{6}:0.7) .. (\B{6}:1) ;
\draw[blue,  name path=B0B2](\B{0}:1) .. controls(\B{0}:0.7)and (\B{2}:0.7) .. (\B{2}:1) ;
\draw[red, name path=R1R7](\R{1}:1) .. controls(\R{1}:0.7)and (\R{7}:0.7) .. (\R{7}:1) ;
\draw[red, name path=R5R13](\R{5}:1) .. controls(\R{5}:0.7)and (\R{13}:0.7) .. (\R{13}:1) ;
\draw[red, name path=R3R9](\R{3}:1) .. controls(\R{3}:0.7)and (\R{9}:0.7) .. (\R{9}:1) ;
\draw[red, name path=R11R15](\R{11}:1) .. controls(\R{11}:0.7)and (\R{15}:0.7) .. (\R{15}:1) ;
\draw[name intersections={of=R5R13 and R11R15,name=i}] \RDOT ;
\draw[name intersections={of=R5R13 and R1R7,name=i}] \RDOT ;
\fill[name intersections={of=B12B14 and R5R13,name=i}] \XDOT ;
\fill[name intersections={of=B8B10 and R3R9,name=i}] \XDOT ;
\fill[name intersections={of=B4B6 and R5R13,name=i}] \XDOT ;
\fill[name intersections={of=B0B2 and R1R7,name=i}] \XDOT ;
\end{tikzpicture}
\begin{tikzpicture}[scale=0.7]
\newcommand{\degree}[0]{8}
\newcommand{\last}[0]{7}
\newcommand{\B}[1]{#1*180/\degree}
\newcommand{\R}[1]{#1*180/\degree }
\newcommand{\RDOT}[0]{[  red](i-1) circle (0.05)}
\newcommand{\XDOT}[0]{[black](i-1) circle (0.045)}
\draw (0,-1.5) node {\footnotesize $\codim 5$} ;
\draw (0,0) circle (1) ;
\foreach \k in {0,2,4,6,8,10,12,14} {
  \draw[blue] (\B{\k}:1.15) node {\tiny \k};} ;
  \foreach \k in {1,3,5,7,9,11,13,15} {
 \draw[red] (\R{\k}:1.15) node {\tiny \k} ;
};
\draw[blue,  name path=B12B14](\B{12}:1) .. controls(\B{12}:0.7)and (\B{14}:0.7) .. (\B{14}:1) ;
\draw[blue,  name path=B8B10](\B{8}:1) .. controls(\B{8}:0.7)and (\B{10}:0.7) .. (\B{10}:1) ;
\draw[blue,  name path=B4B6](\B{4}:1) .. controls(\B{4}:0.7)and (\B{6}:0.7) .. (\B{6}:1) ;
\draw[blue,  name path=B0B2](\B{0}:1) .. controls(\B{0}:0.7)and (\B{2}:0.7) .. (\B{2}:1) ;
\draw[red, name path=R1R9] (\R{1}:1) .. controls(\R{1}:0.7) and (\R{9}:0.7) .. (\R{9}:1) ;
\draw[red, name path=R3R11] (\R{3}:1) .. controls(\R{3}:0.7) and (\R{11}:0.7) .. (\R{11}:1) ;
\draw[red, name path=R5R13] (\R{5}:1) .. controls(\R{5}:0.7) and (\R{13}:0.7) .. (\R{13}:1) ;
\draw[red, name path=R7R15] (\R{7}:1) .. controls(\R{7}:0.7) and (\R{15}:0.7) .. (\R{15}:1) ;
\fill[name intersections={of=B0B2 and R1R9,name=i}] \XDOT ;
\fill[name intersections={of=B4B6 and R5R13,name=i}] \XDOT ;
\fill[name intersections={of=B8B10 and R1R9,name=i}] \XDOT ;
\fill[name intersections={of=B12B14 and R5R13,name=i}] \XDOT ;
\draw[name intersections={of=R3R11 and R7R15,name=i}] \RDOT ;
\end{tikzpicture}
\end{center}
\vspace{-10pt}
\caption{Diagrams of codimensions 3,4, and 5 between $M_{1}$ and $M_{3}$}\label{F:NC13}
\end{figure}

\vspace{1cm}
 
\item The \underline{ interior part} is divided into two structures: bridges $B$ and open-book $O$. 
\begin{itemize}
\item  A {\it bridge} $B$ is a linearly ordered subset with one upper bound. This upper bound is incident to a couple of opposite $F$-signatures, respectively lying in $NC$ structures of the same color. In figure~\ref{F:NC}, one vertical (resp. horizontal) bridge structures is drawn as a red (resp. green) line. 
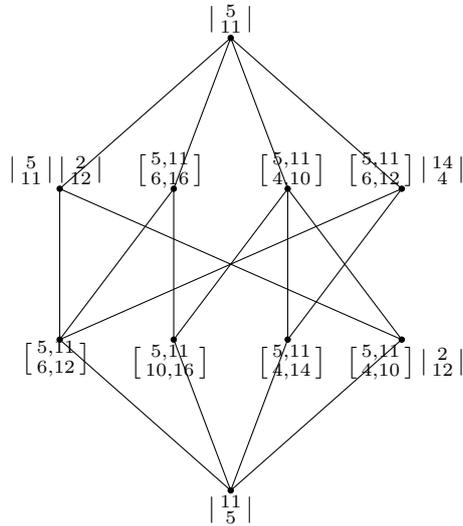
\begin{figure}[h]
\begin{center}
\begin{tikzpicture}[scale=0.5]
\draw[fill] (0, 6) circle (.07cm); \node (0, 6) at (0,6.5) {$\left|\begin{smallmatrix}5\\11\end{smallmatrix}\right|$};
 \draw[fill] (-4.5, 2) circle (.07cm);  \node (-4.5, 2) at (-4.6,2.5) {$\left|\begin{smallmatrix}5\\11\end{smallmatrix}\right|\left|\begin{smallmatrix}2\\12\end{smallmatrix}\right|$};
 \draw[fill] (-1.5, 2) circle (.07cm); \node(-1.5,2) at (-1.6,2.5) {$\left[\begin{smallmatrix}5,11\\6,16\end{smallmatrix}\right]$};
 \draw[fill] (1.5, 2) circle (.07cm);  \node (1.5, 2) at (1.6,2.5) {$\left[\begin{smallmatrix}5,11\\4,10\end{smallmatrix}\right]$};
 \draw[fill] (4.5, 2) circle (.07cm);\node (4.5,2) at (4.6,2.5) {$\left[\begin{smallmatrix}5,11\\6,12\end{smallmatrix}\right]\left|\begin{smallmatrix}14\\4\end{smallmatrix}\right|$};
 \draw[fill] (-4.5, -2) circle (.07cm);  \node (-4.5, -2) at (-4.6,-2.5) {$\left[\begin{smallmatrix}5,11\\6,12\end{smallmatrix}\right]$};
 \draw[fill] (-1.5, -2) circle (.07cm); \node(-1.5,-2) at (-1.6,-2.6) {$\left[\begin{smallmatrix}5,11\\10,16\end{smallmatrix}\right]$};
 \draw[fill] (1.5, -2) circle (.07cm);  \node (1.5, -2) at (1.6,-2.6) {$\left[\begin{smallmatrix}5,11\\4,14\end{smallmatrix}\right]$};
 \draw[fill] (4.5, -2) circle (.07cm);\node (4.5,-2) at (4.6,-2.6) {$\left[\begin{smallmatrix}5,11\\4,10\end{smallmatrix}\right]\left|\begin{smallmatrix}2\\12\end{smallmatrix}\right|$};
 \draw[fill] (0, -6) circle (.07cm);  \node (0, -6) at (0,-6.5) {$\left|\begin{smallmatrix}11\\5\end{smallmatrix}\right|$};
\draw (0, 6) -- (-4.5, 2) -- (-4.5,-2) -- (0,-6);
\draw (0, 6) -- (-1.5, 2) -- (-1.5,-2) -- (0,-6);
\draw (0, 6) -- (4.5, 2);
\draw  (-1.5, 2) -- (-4.5,-2);
\draw  (1.5, 2) -- (-1.5,-2);
\draw  (1.5, 2) -- (4.5,-2);
\draw  (-4.5, 2) -- (4.5,-2);
\draw  (4.5, 2) -- (-4.5,-2);
\draw  (4.5, -2) -- (0,-6);
\draw  (4.5, 2) -- (1.5,-2);
\draw (0, 6) -- (1.5, 2) -- (1.5,-2) -- (0,-6);
\end{tikzpicture}
\end{center}
\vspace{-10pt}
\caption{Example of bridge structures between two opposite $NC_{4}$ substructures}~\label{F:B4}
\end{figure}

\item An {\it open book} $O$ is  a linearly ordered subset having one upper bound incident to generic signatures lying in two adjacent $NC$ structures of the opposite color. This upper bound is a signature of codimension $2d-4$ having at least two intersection points of different colors. Notice that  $O$ substructures exist only for $d>3$.

\end{itemize}

\end{enumerate}

\begin{proposition}
For any $d>1$ there exist four structures $NC_{d}$ which are glued one to another by their $M$-signatures, so that each $M$-signature is incident to only two $NC_{d}$ structures of the opposite colors and a pair of $NC_{d}$ structures have at most  one $M$-signature in common.
\end{proposition}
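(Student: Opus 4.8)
The plan is to reduce the whole statement to a $2\times 2$ bookkeeping of short-chord matchings. First I would record the combinatorial type of the two families of objects involved. An $M$-signature is built entirely from short chords; its blue (even-vertex) chords form a perfect matching of the $2d$ even vertices by nearest-neighbour pairs, and likewise its red (odd-vertex) chords. Since a nearest-neighbour perfect matching of a cyclically ordered set of $2d$ points is a dimer covering of the $2d$-cycle, there are exactly two of them for each colour; call the blue ones $A,B$ and the red ones $C,D$. Checking that every blue short chord $(2m,2m+2)$ crosses exactly the red short chord enclosing the vertex $2m+1$ shows that all four combinations are compatible superimpositions in the sense of Definition~\ref{De:1}, which recovers the four signatures $M_1,\dots,M_4$; for a suitable identification (matching the labelling of Figure~\ref{F:NC}) I would take $M_1=(B,C)$, $M_2=(A,C)$, $M_3=(B,D)$, $M_4=(A,D)$.

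Next I would enumerate the maximal $NC_d$ structures through their upper bounds. By the description recalled above, the upper bound $\sigma$ of an $NC_d$ has $d$ short chords of one colour and $d$ long chords of the other colour meeting at a single vertex $\bar{\bar v}$ of valency $2d$. The $d$ long chords are pairwise crossing, and a pairwise-crossing perfect matching of $2d$ cyclically ordered same-colour vertices is unique, namely the antipodal (diameter) matching $i\leftrightarrow i+2d$, so $\sigma$ is determined by (i) the colour carried by $\bar{\bar v}$ and (ii) the nearest-neighbour matching of the complementary short chords, which is again one of the two dimerizations of the $2d$-cycle. Both dimerizations are compatible with the diameters --- each short chord crosses exactly the one diameter through the vertex it encloses, and these are the only red-blue crossings, so the crossing count is $d$ --- whence there are exactly $2\times 2=4$ upper bounds, hence exactly four $NC_d$ structures: two ``blue'' ones indexed by their fixed red matching $C$ or $D$, and two ``red'' ones indexed by their fixed blue matching $A$ or $B$.

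The incidences then follow. Smoothing the central vertex $\bar{\bar v}$ of a blue upper bound (red matching $X$) turns the blue diameters into short chords, and the only generic signatures so obtained are $(A,X)$ and $(B,X)$, its two $M$-signatures; symmetrically a red upper bound (blue matching $Y$) lies above $(Y,C)$ and $(Y,D)$. Conversely, for $M=(\mu_B,\mu_R)$ I would argue that contracting its $\mu_B$-chords to a point is a sequence of contracting half-Whitehead moves (Definition~\ref{D:2.18}) leaving the red chords untouched, so $M\prec\sigma$ for the blue upper bound with red matching $\mu_R$; moreover this is the only blue $NC_d$ above $M$, since a contracting move never alters the complementary, still-short matching. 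The symmetric statement gives exactly one red $NC_d$ above $M$, so $M$ is incident to precisely two $NC_d$ structures, of opposite colours. This is exactly the gluing that arranges the four beads into the necklace $NC^{\mathrm{blue}}_C\!-\!M_1\!-\!NC^{\mathrm{red}}_B\!-\!M_3\!-\!NC^{\mathrm{blue}}_D\!-\!M_4\!-\!NC^{\mathrm{red}}_A\!-\!M_2\!-\!NC^{\mathrm{blue}}_C$.

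Finally, for the last clause I would compare $M$-sets directly: two $NC_d$'s of the same colour carry different complementary matchings while the two $M$-signatures inside each one share that matching, so same-colour structures have disjoint $M$-sets; and a blue structure with red matching $X$ together with a red structure with blue matching $Y$ share precisely the signatures having red matching $X$ and blue matching $Y$, i.e. the single signature $(Y,X)$. Hence any pair of $NC_d$ structures meets in at most one $M$-signature. I expect the main obstacle to be making the second paragraph airtight: both the uniqueness of the diameter matching and the verification that the complementary short chords are forced into exactly one of the two dimerizations (with the crossing count coming out to $d$) need care, and one must also confirm that no contracting path can carry an $M$-signature to an upper bound of the ``wrong'' colour, which is precisely what keeps the number of incident $NC_d$ structures equal to two rather than more.
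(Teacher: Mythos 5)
Your argument is correct, and it reaches the proposition by a genuinely different, more self-contained route than the paper. The paper's proof is top-down: it takes as already established that there are four $NC_{d}$ structures, each containing a pair of $M$-signatures, invokes the valency $2\binom{d}{2}$ of an $M$-signature from Theorem~\ref{T:Adj} to place each $M$-signature at the junction of one structure of each colour, and then argues informally that the two remaining $M$-signatures differ because one structure modifies only red diagonals while the other modifies only blue ones. You instead rebuild both sides of the incidence from scratch: the $2\times 2$ dimer-covering classification yields the four $M$-signatures; identifying an upper bound as a monochromatic star on $2d$ legs together with one of the two complementary dimerizations yields exactly four upper bounds, hence four $NC_{d}$ structures; and the crucial uniqueness --- exactly one blue and one red structure above each $M$ --- follows from your colour-invariance observation that a chain of contracting half-Whitehead moves (which is what generates $\prec$) can never alter the short matching of the untouched colour. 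That invariance argument is cleaner and sharper than the paper's valency count: it gives \emph{exactly} one common $M$-signature for opposite-colour pairs and none for same-colour pairs, not merely ``at most one''. Two small repairs are worth making. First, once all $d$ long chords pass through the single vertex $\bar{\bar{v}}$, the pairing of the $2d$ legs is no longer part of the signature data, so the (correct but superfluous) uniqueness of the pairwise-crossing antipodal matching is not actually needed --- the long-chord side of the upper bound is the full star, with no residual choice. Second, the sentence asserting that smoothing the central vertex yields only $(A,X)$ and $(B,X)$ is literally false: full smoothings of a $2d$-valent vertex produce Catalan-many generic signatures (this is precisely the paper's bijection between the vertices of an $NC_{d}$ structure and the non-crossing partitions of $\{1,\dots,d\}$); what you need, and what is true, is that the only ones among them with all chords short are the two dimerization pairs $(A,X)$ and $(B,X)$. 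Neither repair affects the downstream logic, including the necklace ordering you exhibit.
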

\begin{proof}
We know that there exists four $NC_{d}$ structures in the inclusion diagram, where each $NC_{d}$ structure containing among its set of vertices a pair of $M$-signatures, for any $d>1$. The vertices along which the $NC_{d}$  are glued to each other are the  $M$-signatures. An $M$-signature is a vertex of valency  2$\binom{d}{2}$, since there exist $\binom{d}{2}$  possibilities to make one Whitehead move starting form an $M$-signature for the red (resp. blue) diagonals (theorem~\ref{T:Adj}).
So, this argument shows that an $M$-signature is the intersection of a pair of $NC_{d}$ structures of each color. Suppose, that this common $M$-signature is denoted by $M_1$. Note, that there still remain two $M$-signatures in this pair of $NC_{d}$ structures. We will show that the two remaining $M$-signatures are different. Indeed, in one $NC_{d}$ structure only the red diagonals were modified; in the other $NC_{d}$ structure only the blue diagonals were modified. Therefore, in the first $NC_{d}$ structure, the ending $M$-signature has different blue diagonals than in $M_1$ and in the second $NC_{d}$ structure the $M$-signature has different red diagonals than in $M _1$.
Now, since there exist four $M$-signatures, the four $NC_{d}$ structures are glued to each other by their $M$-signatures, and a pair of $NC_{d}$ structures have at most  one $M$-signature in common.  
\end{proof}

\begin{example}{Inclusion diagram for $d=3$}
The figure~\ref{F:graph3} illustrates the inclusion diagram for $d=3$, using the matricial notation. 
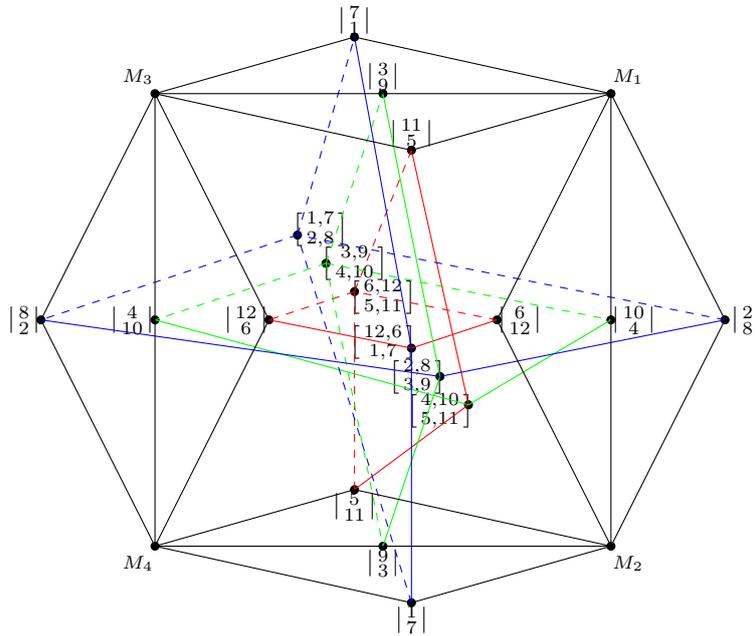
\begin{figure}[ht] 
\begin{center} 
     \begin{tikzpicture}[scale=0.75]
    \draw[fill] (-4, 4) circle (.07cm);  \node (-4,4) at (-4.3,4.3) {$\scriptstyle M_{3}$};
    \draw[fill] (4, 4) circle (.07cm);\node (4,4) at (4.3,4.3) {$\scriptstyle M_{1}$};
         \draw[fill] (-0.5, 5) circle (.07cm); ;\node (-0,5,5) at (-0.5,5.3) {$\scriptstyle \left|\begin{smallmatrix}7\\1\end{smallmatrix}\right|$};
      \draw[fill] (0, 4) circle (.07cm);\node (0,4) at (0,4.3) {$\scriptstyle\left|\begin{smallmatrix}3\\9\end{smallmatrix}\right|$};%
       \draw[fill] (0.5, 3) circle (.07cm) ;\node (0.5,3) at (0.5,3.3) {$\scriptstyle\left|\begin{smallmatrix}11\\5\end{smallmatrix}\right|$};
           \draw[fill] (-0.5, -3) circle (.07cm) ;\node (-0.5,-3) at (-0.5,-3.3) {$\scriptstyle\left|\begin{smallmatrix}5\\11\end{smallmatrix}\right|$};
              \draw[fill] (0, -4) circle (.07cm); \node (0,-4) at (0,-4.3) {$\scriptstyle\left|\begin{smallmatrix}9\\3\end{smallmatrix}\right|$};
                \draw[fill] (0.5, -5) circle (.07cm); \node (0.5,-5) at (0.5,-5.3) {$\scriptstyle\left|\begin{smallmatrix}1\\7\end{smallmatrix}\right|$};
                
    \draw[fill] (-4, -4) circle (.07cm); \node (-4,-4) at (-4.3,-4.3) {$\scriptstyle M_{4}$}; 
   \draw[fill] (4, -4) circle (.07cm); \node (4,-4) at (4.3,-4.3) {$\scriptstyle M_{2}$};
     \draw[fill] (-6, 0) circle (.07cm); \node (-6,0) at (-6.3,0) {$\scriptstyle\left|\begin{smallmatrix}8\\2\end{smallmatrix}\right|$};
         \draw[fill] (-4, 0) circle (.07cm);\node (-4,0) at (-4.4,0) {$\scriptstyle\left|\begin{smallmatrix}4\\10\end{smallmatrix}\right|$};%
             \draw[fill] (-2, 0) circle (.07cm);\node (-2,0) at (-2.4,0) {$\scriptstyle\left|\begin{smallmatrix}12\\6\end{smallmatrix}\right|$};
                 \draw[fill] (2, 0) circle (.07cm);\node (2,0) at (2.4,0) {$\scriptstyle\left|\begin{smallmatrix}6\\12\end{smallmatrix}\right|$};
                     \draw[fill] (4, 0) circle (.07cm);\node (4,0) at (4.4,0) {$\scriptstyle\left|\begin{smallmatrix}10\\4\end{smallmatrix}\right|$};
                         \draw[fill] (6, 0) circle (.07cm);\node (6,0) at (6.4,0) {$\scriptstyle\left|\begin{smallmatrix}2\\8\end{smallmatrix}\right|$};
                         
           \draw[fill] (-0.5, 0.5) circle (.07cm); \node (-0.5,0.5) at (-0,0.4) {$\scriptstyle\left[\begin{smallmatrix}6,12\\5,11\end{smallmatrix}\right]$};
            \draw[fill] (-1, 1) circle (.07cm);  \node (-1,1) at (-0.5,1) {$\scriptstyle\left[\begin{smallmatrix}3,9\\4,10\end{smallmatrix}\right]$};               
        \draw[fill] (-1.5, 1.5) circle (.07cm); \node (-1.5,1.5) at (-1.1,1.6) {$\scriptstyle\left[\begin{smallmatrix}1,7\\2,8\end{smallmatrix}\right]$};
        \draw[fill] (1.5,- 1.5) circle (.07cm); \node (1.5,-1.5) at (1,-1.6) {$\scriptstyle\left[\begin{smallmatrix}4,10\\5,11\end{smallmatrix}\right]$};
            \draw[fill] (1, -1) circle (.07cm);   \node (1,-1) at (0.6,-1) {$\scriptstyle\left[\begin{smallmatrix}2,8\\3,9\end{smallmatrix}\right]$};              
        \draw[fill] (0.5, -0.5) circle (.07cm); \node (0.5,-0.5) at (0,-0.4){$\scriptstyle\left[\begin{smallmatrix}12,6\\1,7\end{smallmatrix}\right]$};

 \draw[red,dashed] (-0.5,-3) -- (-0.5,0.5);
   \draw[red,dashed] (0.5,3) -- (-0.5,0.5);
      \draw[red] (0.5,3) -- (1.5,-1.5);
       \draw[red] (1.5,-1.5) --  (-0.5,-3);
         \draw[red,dashed] (-0.5,0.5) -- (-2,0);
           \draw[red] (-2,0) -- (0.5,-0.5);
             \draw[red] (0.5,-0.5) -- (2,0);
               \draw[red,dashed] (2,0) -- (-0.5,0.5);
           
   \draw[green,dashed] (-1,1) -- (0,-4);
     \draw[green,dashed] (-1,1) -- (-4,0);
       \draw[green,dashed] (-1,1) -- (4,0);
         \draw[green,dashed] (-1,1) -- (0,4);
           \draw[green] (-4,0) -- (1.5,-1.5);
             \draw[green] (4,0) -- (1.5,-1.5);
              \draw[green] (0,4) -- (1,-1);
                \draw[green] (1,-1) -- (0,-4);

 \draw[blue,dashed] (-1.5,1.5) -- (-0.5,5);
   \draw[blue,dashed] (-1.5,1.5) -- (-6,0);
     \draw[blue] (-6,-0) -- (1,-1);
       \draw[blue] (1,-1) -- (6,0);
         \draw[blue,dashed] (6,0) -- (-1.5,1.5);
           \draw[blue ](-0.5,5) -- (0.5,-0.5);
         \draw[blue] (0.5,-0.5) -- (0.5,-5);         
             \draw[blue,dashed ]  (0.5,-5)--(-1.5,1.5);

 \draw (-4,4) -- (-0.5,5);
               \draw (-4,4) -- (0,4);    
                   \draw (-4,4) -- (0.5,3);
                    \draw (-4,4) -- (-6,0);
               \draw (-4,4) -- (-4,0);    
                   \draw (-4,4) -- (-2,0);
                   \draw (4,4) -- (-0.5,5);
               \draw (4,4) -- (0,4);    
                   \draw (4,4) -- (0.5,3);
                    \draw (4,4) -- (6,0);
               \draw (4,4) -- (4,0);    
                   \draw (4,4) -- (2,0);
                    \draw (-4,-4) -- (-6,0);
               \draw (-4,-4) -- (-4,0);    
                   \draw (-4,-4) -- (-2,0);
                   \draw (-4,-4) -- (-0.5,-3);
               \draw (-4,-4) -- (0,-4);    
                   \draw (-4,-4) -- (0.5,-5);
                    \draw (4,-4) -- (-0.5,-3);
               \draw (4,-4) -- (0,-4);    
                   \draw (4,-4) -- (0.5,-5);
                    \draw (4,-4) -- (2,0);
               \draw (4,-4) -- (4,0);    
                   \draw (4,-4) -- (6,0);
        
                  \end{tikzpicture}
    \end{center}
\caption{Inclusion diagram for $d=3$}\label{F:graph3} 
\end{figure}

For any The inclusion diagram contains two distinct parts:
\begin{enumerate}
\item The four substructures in black are the $NC_{3}$ substructures.They connect pairs of $M$-signatures, having the same short diagonals of a given color.
Except from $M$-signatures, these black substructures contain three vertices corresponding to $F$-signatures. The 3-face of the black substructure corresponds to the  codimension 3 signature  and it is incident to three 2-faces which correspond to the  codimension 2 signatures. Those signatures have two inner vertices, incident to four edges of the same color and three inner vertices which are incident to four edges of alternating colors. \\

\item The substructure with colored edges. This colored part of the inclusion diagram corresponds to the parts which appear in the construction given in figure~\ref{F:Q3}. 
In particular, the vertices in this colored part are the $S$-signatures. The 2-faces in the interior part correspond to codimension 2 signatures. Those signatures of codimension 2, in addition to the three inner nodes with incident edges of alternating color, have one inner vertex incident to 4 blue edges and the other one incident to 4 red edges.
\end{enumerate}

\vspace{5pt}
This inclusion diagram is resumed by the construction in Appendix~\ref{A: Incldiag3}: Figure~\ref{F:2Fincldiag} and Figure~\ref{F: Incldiag3}. 

 \begin{itemize}
\item The first quadrangle 2-face connecting the following two $F$-signatures and two $S$-signatures $\left|\begin{smallmatrix}2\\8\end{smallmatrix}\right|$,$\left|\begin{smallmatrix}8\\2\end{smallmatrix}\right|$, $\left[\begin{smallmatrix}1,7\\2,8\end{smallmatrix}\right]$ and$\left[\begin{smallmatrix}2,8\\3,9\end{smallmatrix}\right]$ corresponds in figure~\ref{F:graph3} to the blue vertical cycle. The $F$-signatures have one long red diagonal. \\

\item The second quadrangle 2-face connecting the following two $F$-signatures and two $S$-signatures  $\left|\begin{smallmatrix}9\\3\end{smallmatrix}\right|$,  $\left|\begin{smallmatrix}3\\9\end{smallmatrix}\right|$, $\left[\begin{smallmatrix}2,8\\3,9\end{smallmatrix}\right]$, $\left[\begin{smallmatrix}3,9\\4,10\end{smallmatrix}\right]$ corresponds in figure~\ref{F:graph3} to the blue horizontal cycle. The $F$-signatures have two long blue diagonals. 
\end{itemize}
\end{example}

\subsection{$Q$-diagrams and $Q$-pieces}
In the following, for $d\geq 3$ we introduce the notions of $Q$-diagrams and $Q$-pieces.
\begin{definition}\label{D:Q}

\begin{enumerate}
\item  A {\bf $Q$-diagram} is a signature in $\Sigma_d$, having $(d-2)$ $S$-trees. This $Q$-diagram is a superimposition of two monochromatic diagrams (one blue, one red) both having $d-2$ long diagonals; the red (resp.blue) diagram is turned through $\frac{\pi}{4d}$ relatively to the center of the diagram and to the blue (resp. red) diagram. 
By $\mathcal{L}$ we denote the reflection axis of the blue (resp. red) diagram. It is parallel to the long diagonals.

\item A {\bf $Q$-piece} is the union of elementa $A_{\sigma}$, indexed by those signatures which are adjacent to a $Q$-diagram, and having at least one long diagonal parallel to the ones of the $Q$-diagram. To this set of signatures, we add those $M$-signatures, being adjacent to them, in a minimal number of Whitehead moves.

\item  A pair of {\bf adjacent $Q$-pieces} denoted $(Q_{i},Q_{i+1})$ verify the following properties:

\begin{enumerate} 

\item \underline{(Copy and Paste)}: There exists an identity map from the set of  blue (resp. red) monochromatic diagrams in the $Q_{i}$-piece to the set of blue (resp. red) monochromatic diagrams in the $Q_{i+1}$-piece. 

\item \underline{(Copy/Paste and Rotate)}: There exists an identity map from the set of red (resp. blue) monochromatic diagrams in the $Q_{i}$-piece to the set of red (resp. blue) monochromatic diagrams in the $Q_{i+1}$-piece, composed with a rotation of $-\frac{\pi}{2d}$ about the center of the polygon. 

 \item One signature in a $Q_i$-piece is bijectively mapped to another one in the $Q_{i+1}$-piece, if their blue (resp. red) monochromatic diagrams are identical and if their red (resp. blue) monochromatic diagrams are symmetric to each other, about the reflection axis $\mathcal{L}$ of the blue (resp. red) monochromatic $Q$-diagram. 
Any pair of such signatures are said to be {\bf consecutive}. 
 \end{enumerate}
\item A {\bf Connection piece} is the union of elementa which glue a pair of adjacent $Q$-pieces together.
\end{enumerate}
\end{definition}

\vskip.1cm 

A $Q$-diagram is associated to a matrix $[\begin{smallmatrix}L_{0} \\L_{1} \end{smallmatrix}]$ where $L_{0} $ (resp. $L_{1}$) describes the monochromatic blue (resp. red) diagram in terms of pairs of indices of terminal vertices. A pairing of integers $i, j$, denoted by $(i,j)$, corresponds, in the signature, to a diagonal connecting the terminal vertices labeled respectively by $i$ and $j$; The integers in $L_{0}$ and $L_{1}$ belong respectively to the sets $\{0,2....,4d-2\}$ and $\{1,3,...,4d-1\}$. 

\begin{definition}
The matrix of the $Q$-diagram satisfies the following conditions:
   
- The matrix is of size $(d-2)\times 2$, where each column of the matrix contains a pairing of integers of the same parity;

- in one column: there is a couple of pairs of integers, which correspond to intersecting diagonals of opposite colors; 

- the first and the last columns of the matrix contains the pair of numbers on the terminal vertices of the shortest long diagonals;

-  the paired integers, lying in adjacent columns, correspond to a pair of adjoining diagonals (i.e. lying in the boundary of a 2-cell in $\mathbb{D}\setminus \sigma$). 
\end{definition}
\begin{lemma}~\label{L:rotateM}
Consider the $2d$ regular polygon formed from the blue (resp. red) terminal vertices in a signature.  Let $r$ be the rotation through $\frac{\pi}{2d}$  about the center of the polygon. Let $(i,j)$  be a diagonal of the polygon. So,  $r^{k}$ : $(i,j)\mapsto (i+2k,j+2k)\mod 4d$. \end{lemma}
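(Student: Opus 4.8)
The plan is to reduce the assertion to a bookkeeping identity for the cyclic symmetry of a regular $2d$-gon, exploiting the fact that by Definition~\ref{De:1} the $4d$ terminal vertices are pinned to the $4d$-th roots of unity. First I would fix coordinates by placing the vertex labelled $m$ at $\zeta^{m}$, where $\zeta=e^{2\pi\imath/4d}$, so that the labelling $m\mapsto\zeta^{m}$ of the boundary points is literally the cyclic labelling $\{0,1,\dots,4d-1\}$. Under this identification the blue (even-labelled) vertices are exactly $\{\zeta^{2t}:0\le t\le 2d-1\}$, i.e. the $2d$-th roots of unity, hence the vertices of a regular $2d$-gon; the red case is identical after a global multiplication by $\zeta$, so it suffices to treat one colour.

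Next I would compute the action of one rotation on labels, which is the only step that deserves care. The rotation $r$ carries the monochromatic $2d$-gon to itself by sending each vertex to the adjacent one of the same colour; since consecutive same-colour vertices differ by $2$ in the ambient $4d$-indexing, $r$ acts on labels by $m\mapsto m+2\pmod{4d}$. Equivalently, in coordinates $r$ is multiplication by $\zeta^{2}$, and it is precisely this factor of $2$—the cost of labelling the $2d$-gon inside the full $4d$-gon—that produces the shift $+2k$ rather than $+k$ in the conclusion. Note that $m\mapsto m+2$ is a well-defined parity-preserving bijection of $\mathbb{Z}/4d\mathbb{Z}$, so $r$ genuinely permutes the blue vertices and hence permutes the unordered pairs that index the blue diagonals.

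With the one-step action in hand the remainder is a one-line induction on $k$. A diagonal is an unordered pair $(i,j)$ of same-colour vertices, and a rotation acts on a pair componentwise, so $r(i,j)=(i+2,j+2)\pmod{4d}$, which is again a diagonal of the same polygon by $r$-invariance and parity. Taking $r^{k}=r\circ r^{k-1}$ and substituting the inductive hypothesis $r^{k-1}(i,j)=(i+2(k-1),\,j+2(k-1))$ yields $r^{k}(i,j)=(i+2k,\,j+2k)\pmod{4d}$; the base case $k=0$ is the identity map. The reductions modulo $4d$ never interfere because each $r$ is an honest element of the rotation group $\mathbb{Z}/2d\mathbb{Z}$ of the $2d$-gon.

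I do not anticipate a real obstacle: the statement is the standard description of how the rotational part of the dihedral symmetry of a regular $2d$-gon permutes its diagonals, transported into the $4d$-labelling of the signature. The single subtle point—and the only place where the reader should slow down—is the translation between the geometric rotation step and the arithmetic index shift, namely that advancing the monochromatic $2d$-gon by one notch advances the $4d$-label by $2$; granting that, the formula for $r^{k}$ is immediate.
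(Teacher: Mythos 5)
Your proof is correct and follows essentially the same route as the paper: establish that one rotation by $\frac{\pi}{2d}$ shifts each label by $2$ modulo $4d$ (the paper's base case), then induct on $k$ via $r^{k}=r\circ r^{k-1}$. Your explicit root-of-unity coordinatization merely makes the paper's base-case observation more formal; nothing substantive differs.
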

\begin{proof}
Le us proceed by induction on the angle $\frac{k\pi}{2d}$.
\begin{enumerate}
\item {\sl Base case.} Let $k=1$. Let us rotate by $r=\frac{\pi}{2d}$ the diagonal $(i,j)$. Since $i,j\in\{1,3,..,4d-1\}$ (resp.\{2,4,..,4d\} ) are the vertices of a regular $2d$-gon, then the rotation maps the diagonal $(i,j)$ to the diagonal $(i+2,j+2)$. 
\item {\it Induction case.} Suppose that for a given $k$ in $\{1,...,2d\}$ the statement is true: the diagonal $(i,j)$ rotated by an angle $r^{k}=\frac{k\pi}{2d}$ is mapped onto the diagonal $(i+2k,j+2k)$. Let us show that for $k+1$ the statement is true. We rotate about an angle $\frac{(k+1)\pi}{2d}$ the diagonal $(i,j)$. By induction hypothesis rotating by $\frac{k\pi}{2d}$ maps $(i,j)$ onto $(i+2k,j+2k)$. Rotating the diagonal $(i+2k,j+2k)$ by an angle of $\frac{\pi}{2d}$ maps it by (1)  onto the diagonal $(i+2k+2,j+2k+2)$ {\it i.e.} $(i+2(k+1),j+2(k+1))$.\end{enumerate}
\end{proof}

\begin{example}{Adjacent $Q$-pieces of the inclusion diagram for $d=4$}
In this example, we detail the construction by induction of the inclusion diagram for $d=4$ .
\begin{itemize}
\item There are eight $Q$-pieces. Each $Q$-piece satisfies the relations in figure~\ref{F:2Qp}.
\item Each pair of adjacent $Q$-pieces are connected by a connection piece. 

\item Two consecutive $Q$-diagrams are related by the following commutative diagram:

\begin{center} 
     \begin{tikzpicture}[scale=0.6]
  \node(a) at (0,2){$\scriptstyle\left[\begin{smallmatrix}i,i+6\\i+15,i+5\end{smallmatrix}\right]\left[\begin{smallmatrix}i+14,i+8\\i+13,i+7\end{smallmatrix}\right]$};
 
  \node(b1) at (-6,1) {$\scriptstyle\left|\begin{smallmatrix}i+14\\i+8\end{smallmatrix}\right|\scriptstyle\left|\begin{smallmatrix}i+6\\i\end{smallmatrix}\right|$};

\node(b2) at (6,1){$\scriptstyle\left|\begin{smallmatrix}i+8\\i+14\end{smallmatrix}\right|\left[\begin{smallmatrix}i,i+6\\i+5,i+5\end{smallmatrix}\right]$}; 

\node(c1) at (-6,-1){$\scriptstyle\left|\begin{smallmatrix}i+4\\i+8\end{smallmatrix}\right|\left[\begin{smallmatrix}i+1,i+7\\i,i+6\end{smallmatrix}\right]$}; 

\node(c2) at (6,-1){$\scriptstyle\left|\begin{smallmatrix}i\\i+6\end{smallmatrix}\right|\left|\begin{smallmatrix}i+8\\i+14\end{smallmatrix}\right|$}; 

\node(d) at (0,-2){$\scriptstyle\left|\begin{smallmatrix}i,i+6\\i+1,i+7\end{smallmatrix}\right|\left|\begin{smallmatrix}i+8,1+14\\i+9,i+15\end{smallmatrix}\right|$};
\draw (a)--(b1);
\draw (a)--(b2);
\draw (b1)--(c1);
\draw (b2)--(c2);
\draw (d)--(c1);
\draw (d)--(c2);
 \end{tikzpicture}
    \end{center}
\end{itemize}


In the following paragraph, the explicit construction of the first two $Q$-pieces is done. An illustration of this construction is in Figure~\ref{F:2Qp}.
\begin{enumerate} 
\item Let us start with the first $Q$-piece, having the $Q$ diagram $\left[\begin{smallmatrix}1,11&3,9 \\0,10 &2,8 \end{smallmatrix}\right]$. 
\vspace{3pt}
\begin{enumerate}
\item Apply a Whitehead-move to the pair of blue diagonals $(2,8),(4,6)$, where $(2,8)$ belongs to the $Q$-diagram and $(4,6)$ is a short adjacent diagonal,  in order to obtain $\left[\begin{smallmatrix}1,11 \\0,10 \end{smallmatrix}\right]\left|\begin{smallmatrix}9\\3\end{smallmatrix}\right|$. So, one obtains an $FS$-signature. Let us deform the long red diagonal in the $F$-tree $\left|\begin{smallmatrix}9\\3\end{smallmatrix}\right|$ with the short red diagonal in the $M$ tree $\left|\begin{smallmatrix}5\\7\end{smallmatrix}\right|$: it gives an $M$ tree. So, there remains only one $S$ diagram: $\left[\begin{smallmatrix}1,11 \\0,10 \end{smallmatrix}\right]$.
The $F$-signatures, obtained from this $S$-signature by a minimal number of Whitehead moves are: $\left|\begin{smallmatrix}7\\1\end{smallmatrix}\right|,\left|\begin{smallmatrix}0\\10\end{smallmatrix}\right|, \left|\begin{smallmatrix}10\\0\end{smallmatrix}\right|,\left|\begin{smallmatrix}11\\1\end{smallmatrix}\right|,\left|\begin{smallmatrix}1\\11\end{smallmatrix}\right|,\left|\begin{smallmatrix}4\\10\end{smallmatrix}\right|$. 
\vspace{3pt}
\item Deform the pair of blue diagonals $(0,10)$,$(12,14)$: this gives an $SF$-signature $\left|\begin{smallmatrix}11\\1\end{smallmatrix}\right|\left[\begin{smallmatrix}3,9 \\2,8 \end{smallmatrix}\right]$. Deform the long red diagonal in the $F$-tree $\left|\begin{smallmatrix}11\\1\end{smallmatrix}\right|$ with the short red one, in the $M$ tree $\left|\begin{smallmatrix}13\\15\end{smallmatrix}\right|$.
This gives also an $M$ tree and so, there remains one $S$ diagram: $\left[\begin{smallmatrix}3,9 \\2,8 \end{smallmatrix}\right]$. The $F$-signatures which are obtained from this $S$ diagram by a minimal number of deformation operations are $\left|\begin{smallmatrix}15\\9\end{smallmatrix}\right|,\left|\begin{smallmatrix}2\\8\end{smallmatrix}\right|, \left|\begin{smallmatrix}8\\2\end{smallmatrix}\right|,\left|\begin{smallmatrix}12\\2\end{smallmatrix}\right|,\left|\begin{smallmatrix}3\\9\end{smallmatrix}\right|,\left|\begin{smallmatrix}9\\3\end{smallmatrix}\right|$. 
\end{enumerate}

\vspace{5pt}
\item The generic signatures of an adjacent $Q$-piece to the previous one are described below. This construction is done in two steps. 
\vspace{3pt}
 \begin{enumerate}
\item The adjacent $Q$-piece contains the following $Q$-diagram $\left[\begin{smallmatrix}0,10 &2,8 \\ 15,9&1,7 \end{smallmatrix}\right]$ and the $S$-signature $\left[\begin{smallmatrix}1,7\\2,8\end{smallmatrix}\right]$, which is obtained in one Whitehead moves, vie two possible ways. The first possibility is to deform a pair of blue diagonals giving the $FS$-signature $\left|\begin{smallmatrix}10\\0\end{smallmatrix}\right|\left[\begin{smallmatrix}2,8 \\1,7 \end{smallmatrix}\right]$. The second possibility is to deform the pair of red diagonals giving an $FS$-signature $\left|\begin{smallmatrix}15\\9\end{smallmatrix}\right|\left[\begin{smallmatrix}2,8 \\1,7 \end{smallmatrix}\right]$. 
The $S$ signature $\left[\begin{smallmatrix}2,8 \\1,7 \end{smallmatrix}\right]$ is adjacent after one Whitehead move to the following $F$-signatures: 
$\left|\begin{smallmatrix}15\\9\end{smallmatrix}\right|$, $\left|\begin{smallmatrix}9\\15\end{smallmatrix}\right|$, $\left|\begin{smallmatrix}11\\1\end{smallmatrix}\right|$, $\left|\begin{smallmatrix}14\\8\end{smallmatrix}\right|$, $\left|\begin{smallmatrix}2\\8\end{smallmatrix}\right|$, $\left|\begin{smallmatrix}8\\2\end{smallmatrix}\right|$. 

\vspace{3pt}
\item The $Q$-diagram $\left[\begin{smallmatrix}0,10 &2,8 \\ 15,9&1,7 \end{smallmatrix}\right]$ is also adjacent to the $S$-signature $\left[\begin{smallmatrix}15,9\\0,10\end{smallmatrix}\right]$. This is obtained by deforming a pair of red diagonals giving the $FS$-signature $\left|\begin{smallmatrix}10\\0\end{smallmatrix}\right|\left[\begin{smallmatrix}15,9 \\0,10 \end{smallmatrix}\right]$, or a pair of blue diagonals giving an $FS$-signature $\left|\begin{smallmatrix}8\\2\end{smallmatrix}\right|\left[\begin{smallmatrix}2,8 \\1,7 \end{smallmatrix}\right]$ . 
The $S$ signature $\left[\begin{smallmatrix}15,9 \\0,10 \end{smallmatrix}\right]$ is adjacent after one Whitehead move to the following $F$-signatures 
$\left|\begin{smallmatrix}7\\1\end{smallmatrix}\right|,\left|\begin{smallmatrix}1\\7\end{smallmatrix}\right|, \left|\begin{smallmatrix}6\\0\end{smallmatrix}\right|,\left|\begin{smallmatrix}3\\9\end{smallmatrix}\right|,$ $ \left|\begin{smallmatrix}0\\10\end{smallmatrix}\right|$, $\left|\begin{smallmatrix}10\\0\end{smallmatrix}\right|$. 
\end{enumerate}
\end{enumerate}

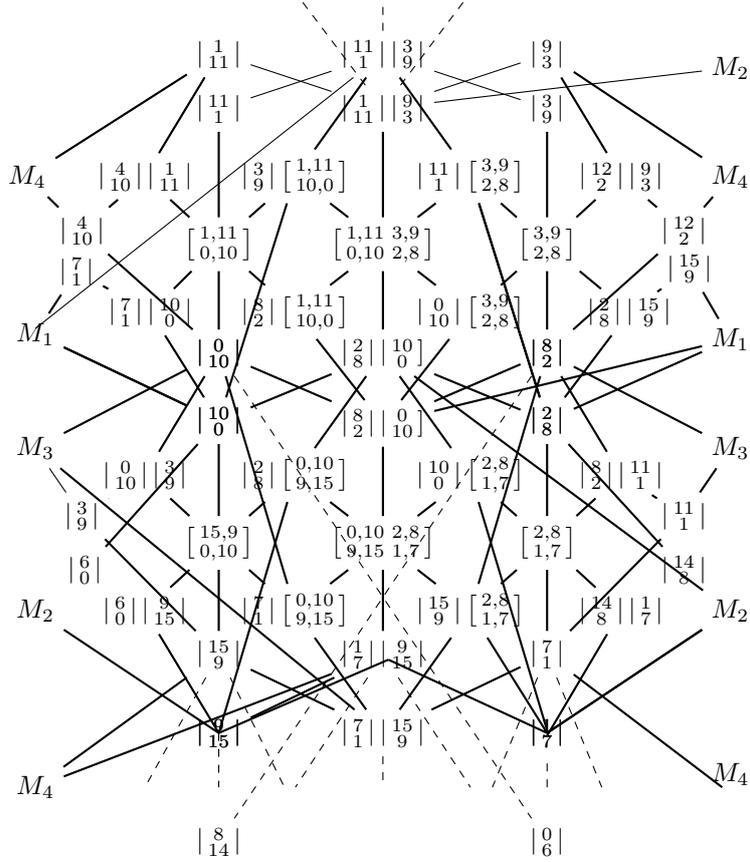
\begin{figure}[h]
\begin{center}
 \begin{tikzpicture}[scale=.36]
 \node (a2u) at (-6,23) {$\left|\begin{smallmatrix}1\\11\end{smallmatrix}\right|$};
    \node(a4u) at (0,23) {$\left|\begin{smallmatrix}11\\1\end{smallmatrix}\right|\left|\begin{smallmatrix}3\\9\end{smallmatrix}\right|$};
       \node(a6u) at (6,23) {$\left|\begin{smallmatrix}9\\3\end{smallmatrix}\right|$};
  \node (a2d) at (-6,21) {$\left|\begin{smallmatrix}11\\1\end{smallmatrix}\right|$};
    \node(a4d) at (0,21) {$\left|\begin{smallmatrix}1\\11\end{smallmatrix}\right|\left|\begin{smallmatrix}9\\3\end{smallmatrix}\right|$};
       \node(a6d) at (6,21) {$\left|\begin{smallmatrix}3\\9\end{smallmatrix}\right|$};

\node (b0) at (-13,18.5){$ M_{4}$}; 
 \node (b1) at (-8.7,18.5) {$\left|\begin{smallmatrix}4\\10\end{smallmatrix}\right|\left|\begin{smallmatrix}1\\11\end{smallmatrix}\right|$};
\node (b1d) at (-11,16.5) {$\left|\begin{smallmatrix}4\\10\end{smallmatrix}\right|$}; 
\node (b3) at (-3.2,18.5) {$\left|\begin{smallmatrix}3\\9\end{smallmatrix}\right|\left[\begin{smallmatrix}1,11\\10,0\end{smallmatrix}\right]$};
 \node (b5) at (3.2,18.5) {$\left|\begin{smallmatrix}11\\1\end{smallmatrix}\right|\left[\begin{smallmatrix}3,9\\2,8\end{smallmatrix}\right]$};
  \node (b7) at (8.7,18.5) {$\left|\begin{smallmatrix}12\\2\end{smallmatrix}\right|\left|\begin{smallmatrix}9\\3\end{smallmatrix}\right|$};
  \node (b7d) at (11,16.5) {$\left|\begin{smallmatrix}12\\2\end{smallmatrix}\right|$};
\node (b8) at (12.7,18.5){$ M_{4}$};
 
  \node (c2) at (-6,16) {$\left[\begin{smallmatrix}1,11\\0,10\end{smallmatrix}\right]$}; 
  \node (c4) at (0,16) {$\left[\begin{smallmatrix}1,11&3,9\\0,10&2,8\end{smallmatrix}\right]$};   
    \node (c6) at (6,16) {$\left[\begin{smallmatrix}3,9\\2,8\end{smallmatrix}\right]$};
\node (d0) at (-12.7,12.7){$ M_{1}$};
 \node (d1) at (-8.5,13.5) {$\left|\begin{smallmatrix}7\\1\end{smallmatrix}\right|\left|\begin{smallmatrix}10\\0\end{smallmatrix}\right|$};
\node (d1u) at (-11.2,15) {$\left|\begin{smallmatrix}7\\1\end{smallmatrix}\right|$};
 \node (d1d) at (-6,9.5) {$\left|\begin{smallmatrix}10\\0\end{smallmatrix}\right|$};
 \node (d3) at (-3.2,13.5) {$\left|\begin{smallmatrix}8\\2\end{smallmatrix}\right|\left[\begin{smallmatrix}1,11\\10,0\end{smallmatrix}\right]$}; 
   \node (d5) at (3.2,13.5) {$\left|\begin{smallmatrix}0\\10\end{smallmatrix}\right|\left[\begin{smallmatrix}3,9\\2,8\end{smallmatrix}\right]$};
   \node (d7) at (9,13.5) {$\left|\begin{smallmatrix}2\\8\end{smallmatrix}\right|\left|\begin{smallmatrix}15\\9\end{smallmatrix}\right|$};
    \node (d7u) at (11.2,15.1) {$\left|\begin{smallmatrix}15\\9\end{smallmatrix}\right|$};
    \node (d7d) at (6,9.5) {$\left|\begin{smallmatrix}2\\8\end{smallmatrix}\right|$};
     \node (d8) at (12.7,12.5){$ M_{1}$}; 
   
 \node (e2u) at (-6,12) {$\left|\begin{smallmatrix}0\\10\end{smallmatrix}\right|$};
 \node (e2d) at (-6,9.5) {$\left|\begin{smallmatrix}10\\0\end{smallmatrix}\right|$};
 \node (e4u) at (0,12) {$\left|\begin{smallmatrix}2\\8\end{smallmatrix}\right|\left|\begin{smallmatrix}10\\0\end{smallmatrix}\right]$}; 
  \node (e4d) at (0,9.4) {$\left|\begin{smallmatrix}8\\2\end{smallmatrix}\right|\left|\begin{smallmatrix}0\\10\end{smallmatrix}\right]$}; 
  \node (e6u) at (6,12) {$\left|\begin{smallmatrix}8\\2\end{smallmatrix}\right|$};
    \node (e6d) at (6,9.5) {$\left|\begin{smallmatrix}2\\8\end{smallmatrix}\right|$};
   \node (f1u) at (-6,12) {$\left|\begin{smallmatrix}0\\10\end{smallmatrix}\right|$};
 \node (f1) at (-8.7,7.5) {$\left|\begin{smallmatrix}0\\10\end{smallmatrix}\right|\left|\begin{smallmatrix}3\\9\end{smallmatrix}\right|$};
    \node (f0) at (-12.7,8.5){$ M_{3}$};
     \node (f1d) at (-11,6) {$\left|\begin{smallmatrix}3\\9\end{smallmatrix}\right|$}; 
  \node (f3) at (-3.2,7.5) {$\left|\begin{smallmatrix}2\\8\end{smallmatrix}\right|\left[\begin{smallmatrix}0,10\\9,15\end{smallmatrix}\right]$}; 
   \node (f5) at (3.2,7.5) {$\left|\begin{smallmatrix}10\\0\end{smallmatrix}\right|\left[\begin{smallmatrix}2,8\\1,7\end{smallmatrix}\right]$};
  \node (f7) at (8.7,7.5) {$\left|\begin{smallmatrix}8\\2 \end{smallmatrix}\right|\left|\begin{smallmatrix}11\\1\end{smallmatrix}\right|$};
 \node (f7d) at (11,6) {$\left|\begin{smallmatrix}11\\1\end{smallmatrix}\right|$};
   \node (f7u) at (6,12) {$\left|\begin{smallmatrix}8\\2\end{smallmatrix}\right|$};
  \node (f8) at (12.7,8.5){$ M_{3}$};

\node (g2) at (-6,5) {$\left[\begin{smallmatrix}15,9\\0,10\end{smallmatrix}\right]$}; 
  \node (g4) at (0,5) {$\left[\begin{smallmatrix}0,10&2,8\\9,15&1,7\end{smallmatrix}\right]$};   
    \node (g6) at (6,5) {$\left[\begin{smallmatrix}2,8\\1,7\end{smallmatrix}\right]$};

    \node (h1u) at (-10.9,4) {$\left|\begin{smallmatrix}6\\0\end{smallmatrix}\right|$};
   \node (h1) at (-8.7,2.5) {$\left|\begin{smallmatrix}6\\0\end{smallmatrix}\right|\left|\begin{smallmatrix}9\\15\end{smallmatrix}\right|$};
    \node (h0) at (-12.7,2.5){$ M_{2}$};
     \node (h1d) at (-6,-2) {$\left|\begin{smallmatrix}9\\15\end{smallmatrix}\right|$};
   \node (h3) at (-3.2,2.5) {$\left|\begin{smallmatrix}7\\1\end{smallmatrix}\right|\left[\begin{smallmatrix}0,10\\9,15\end{smallmatrix}\right]$};
    \node (h5) at (3.2,2.5) {$\left|\begin{smallmatrix}15\\9\end{smallmatrix}\right|\left[\begin{smallmatrix}2,8\\1,7\end{smallmatrix}\right]$};
   \node (h7) at (8.7,2.5) {$\left|\begin{smallmatrix}14\\8\end{smallmatrix}\right|\left|\begin{smallmatrix}1\\7\end{smallmatrix}\right|$};
    \node (h7u) at (11,4) {$\left|\begin{smallmatrix}14\\8\end{smallmatrix}\right|$};
      \node (h7d) at (6,-2) {$\left|\begin{smallmatrix}1\\7\end{smallmatrix}\right|$};
     \node (h8) at (12.7,2.5){$ M_{2}$};

\node (i2u) at (-6,0.9) {$\left|\begin{smallmatrix}15\\9\end{smallmatrix}\right|$}; 
 \node (i4u) at (0,0.9) {$\left|\begin{smallmatrix}1\\7\end{smallmatrix}\right|\left|\begin{smallmatrix}9\\15\end{smallmatrix}\right|$};   
 \node (i6u) at (6,0.9) {$\left|\begin{smallmatrix}7\\1\end{smallmatrix}\right|$};
  \node (i2d) at (-6,-2) {$\left|\begin{smallmatrix}9\\15\end{smallmatrix}\right|$}; 
 \node (i4d) at (0,-2) {$\left|\begin{smallmatrix}7\\1\end{smallmatrix}\right|\left|\begin{smallmatrix}15\\9\end{smallmatrix}\right|$};   
 \node (i6d) at (6,-2) {$\left|\begin{smallmatrix}1\\7\end{smallmatrix}\right|$};
\node (j0) at (-12.7,-4){$ M_{4}$}; 
\node (j8) at (12.7,-3.5){$ M_{4}$};
    \node (l2u) at (-6,-6) {$\left|\begin{smallmatrix}8\\14\end{smallmatrix}\right|$}; 
     \node (l6u) at (6,-6) {$\left|\begin{smallmatrix}0\\6\end{smallmatrix}\right|$}; 
\node(x8) at(12.7, 22.5){$M_{2}$}; 
 
   \draw[black] (b0) -- (a2u);
    \draw[black] (b1) -- (a2u);
  \draw[black] (a4d) -- (a2u);
  \draw[black,thick] (b0) -- (b1d); 
  \draw[black,thick] (b0) -- (a2u);
 \draw[black,thick] (b1) -- (b1d);
  \draw[black,thick] (b1) -- (a2u); 
     \draw[black,thick] (b8) -- (a6u);
     \draw[black,thick] (b7) -- (a6u); 
\draw[black,thick] (b8) -- (b7d); 
\draw[black,thick] (b7) -- (b7d); 
    \draw[black,thick] (d0) -- (d1u); 
        \draw[black,thick] (d1) -- (d1u); 
\draw[black,thick] (d7u) -- (d8); 
\draw[black,thick] (d7u) -- (d7); 
 \draw [black,thick](h7) -- (6,-2.1);
 \draw[black,thick] (h8) -- (6,-2);
 \draw[black,thick] (f5) -- (6,-1.9); 
 \draw [black,thick](6,-2)-- (h5);
 \draw [black,thick](6,-2)-- (0.2,0.7);
 \draw[black,thick] (-6,-2)--(f3);
 \draw[black,thick] (-6,-2)--(h1);
  \draw[black,thick] (-6,-2)--(h0);
  \draw[black,thick] (-6,-2)--(0.2,0.7);
  \draw [black,dashed](-6,-2) -- (-6,-4);
 \draw [black,dashed](e6u)--(l2u);
 \draw [black,dashed](e2u)--(l6u);
  \draw[black,thick] (i6u)--(f7d);
   \draw[black,thick] (f7) -- (f7d);%
     \draw [black,thick](f8) -- (f7d);
  \draw[black,dashed] (a4u) -- (0,25); %
  \draw[black] (a4u) -- (-12.7,12.9);																					
 \draw[black] (a4u) -- (a2d); 
 \draw[black] (a4u) -- (a6d); 
  \draw[black] (a4d) -- (x8);
  \draw[black,dashed] (a4d) -- (-3,25); 
 \draw[black,dashed] (a4d) -- (3,25); 
  
 \draw[black] (a4d) -- (a6u); 
   \draw[black,thick] (a4u) -- (b3); 
  \draw[black,thick] (a4u) -- (b5); 
  \draw[black,thick] (a4d) -- (c4);  
  \draw[black,thick] (c4) -- (b3);
  \draw[black,thick] (c4) --(b5);
   \draw[black,thick] (c4) -- (d3);
  \draw[black,thick] (c4) -- (d5);
  \draw[black,thick] (c4) -- (e4u); 
 \draw [black,thick] (d3) -- (e4d);
  \draw[black,thick] (d5) -- (e4d);
     \draw[black,thick] (e4u) -- (f5);  
   \draw[black,thick] (e4u) -- (e2d);
    \draw[black,thick] (e4u) -- (f3);  
    \draw[black,thick ] (e4u) -- (e6d);
     \draw[black,thick] (e4u) -- (h8); 
     \draw[black,thick] (e4d) -- (e2u); 
      \draw[black,thick] (e4d) -- (e6u); 
   \draw [black,thick] (e4d) -- (g4);
   \draw[black,thick](e4d) -- (d8);
   \draw [black,thick] (g4) -- (i4u);
      \draw[black,thick] (f3) -- (g4);
   \draw [black,thick](f5) -- (g4);
    \draw [black,thick] (h3) -- (g4);
   \draw [black,thick](g4) -- (h5); 
    \draw [black,thick] (h3) -- (i4d);
   \draw [black,thick](h5) -- (i4d); 
   \draw [black,thick](i4d)--(i2u); 
      \draw [black,thick](i4d)--(i6u);
      \draw [black,thick](i4u)--(i2d); 
       \draw [black,thick](i4u)--(j0); 
       \draw [black,dashed](i4u)--(-3.2,-4); 
        \draw [black,dashed](i4u)--(3.2,-4); 
  \draw [black,thick](i4d)--(f0); 

 \draw [black,dashed](i4d)--(0,-4);
  \draw [black,dashed](i6u)--(4,-4);
\draw [black,dashed](i6u)--(8,-4);
\draw [black,thick](i6u)--(12.4,-4); 
  \draw[black,thick] (h8) -- (6,-2);
 
   \draw [black,thick](e2u)--(h3);
  \draw [black,thick](e2u)--(b1d);
\draw [black,thick](e2u)--(f0);
\draw [black,thick](e2u)--(f1);

\draw [black,thick](e6u)--(f8);
\draw [black,thick](e6u)--(f7);
\draw [black,thick](e6u)--(h5);
\draw [black,thick](e6u)--(b7d);

 \draw [black,thick](i2u)--(f1d);
  \draw [black,thick](i2u)--(j0);
\draw [black,dashed](i2u)--(-8.7,-4);
\draw [black,dashed](i2u)--(-3.6,-4);

\draw [black,thick](e2d)--(d0);
\draw [black,thick](e2d)--(d1);
\draw [black,thick](e2d)--(b3);
\draw [black,thick](e2d)--(h1u);
\draw [black,thick](e2d)--(d0);

\draw [black,thick](e6d)--(d8);
\draw [black,thick](e6d)--(d7);
\draw [black,thick](e6d)--(b5);
\draw [black,thick](e6d)--(b5);
\draw [black,thick](e6d)--(h7u);

\draw [black,thick](c2) -- (b1);
  \draw [black,thick](c2) -- (b3);
 \draw [black,thick](c2) -- (d1);
 \draw [black,thick](c2) -- (d3);												
  \draw [black,thick](c2)--(a2d); 
   \draw [black,thick](c2)--(e2u);
  \draw [black,thick](g2) -- (h1);
 \draw [black,thick](g2) -- (h3);
 \draw [black,thick](g2) -- (f1);
 \draw  [black,thick](g2)--(f3);
  \draw [black,thick](g2)--(e2d); 
  \draw [black,thick](g2)--(i2u);

 \draw [black,thick](c6) -- (b5);
  \draw [black,thick](c6) -- (b7);
 \draw [black,thick](c6) -- (d5);
 \draw [black,thick](c6) -- (d7);
  \draw [black,thick](c6)--(a6d); 
   \draw [black,thick](c6)--(e6u);
 \draw [black,thick](g6) -- (f5);
  \draw [black,thick](g6) -- (f7);
 \draw [black,thick](g6) -- (h5);
 \draw [black,thick](g6) -- (h7);
  \draw [black,thick](g6)--(e6d); 
   \draw [black,thick](g6)--(i6u);
 \draw [black,dashed](6,-4) -- (i6d);
 

  \draw (b0) -- (b1d);  

   \draw (b1) -- (b1d);
      
\draw (d0) -- (d1u);
  \draw (d1) -- (d1u);
    \draw (d1) -- (d1u);
%
\draw (f0) -- (f1d);  %
 ;%
       \draw (f1d) -- (f1);

 \end{tikzpicture}
\end{center}
\caption{The first two adjacent $Q$-pieces  for $d=4$}~\label{F:2Qp}
\end{figure}




\begin{lemma}
Let $d=4$ and $i\in\{0,..,2d-1\}$. 
Any pair of adjacent $Q$-pieces $(Q_i,Q_{i+1})$ can be constructed from the first two $(Q_1,Q_2)$.
\end{lemma}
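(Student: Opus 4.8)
The plan is to realise every adjacent pair $(Q_i,Q_{i+1})$ as the image of the first pair $(Q_1,Q_2)$ under a single rotational symmetry of the whole stratification, and then to invoke the fact that this symmetry is an automorphism of the inclusion diagram. Concretely, let $\rho$ denote the rotation of the disc $\mathbb{D}$ through $-\pi/(2d)$ about its centre. On the $4d$ terminal vertices, which sit at the $4d$-th roots of unity, $\rho$ is the shift $k\mapsto k-1 \bmod 4d$; in particular it interchanges the even (blue) vertices with the odd (red) ones. Since the $4d$ asymptotic directions, and hence the drawings $\mathcal{C}_P$, the signatures and the half-Whitehead moves of Definition~\ref{D:2.18}, are all equivariant under this rotation, $\rho$ sends signatures to signatures, preserves the local index and therefore the codimension (Definition~\ref{D:cod}), and commutes with the contracting and smoothing moves. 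Thus $\rho$ is an automorphism of $(\Sigma_d,\prec)$ and of the inclusion diagram, and it carries $Q$-diagrams to $Q$-diagrams and $Q$-pieces to $Q$-pieces.

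The next step is to check that $\rho$ sends $Q_1$ to $Q_2$. Recall from Definition~\ref{D:Q} that in a $Q$-diagram the red monochromatic diagram is the blue one turned through the elementary angle; on labels this is the shift by $+1$. For $d=4$ this is visible on the $Q$-diagram of $Q_1$, whose blue part is $\{(0,10),(2,8)\}$ and whose red part is $\{(1,11),(3,9)\}$. Applying $\rho$ sends the blue part to $\{(15,9),(1,7)\}$, which is now odd, and the red part to $\{(0,10),(2,8)\}$, which is now even. Hence $\rho(Q_1)$ has blue part $\{(0,10),(2,8)\}$ and red part $\{(15,9),(1,7)\}$, that is, exactly the $Q$-diagram of $Q_2$. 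Moreover, because of the built-in parallelism (red $=$ blue shifted by $+1$), the colour-swap performed by $\rho$ is converted into precisely the relations of Definition~\ref{D:Q}(3): the new blue equals the old blue (Copy and Paste), while the new red is the old red shifted by $-2$ (Copy/Paste and Rotate). Tracking the generic and codimension-one signatures of Figure~\ref{F:2Qp} through the same shift then identifies $\rho(Q_1)$ with $Q_2$ as full $Q$-pieces, and likewise carries the connection piece of $(Q_1,Q_2)$ onto that of $(Q_2,Q_3)$.

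The final step is periodicity and conclusion. The power $\rho^{2d}$ is rotation through $-\pi$, i.e.\ the shift $k\mapsto k-2d$; on diagonals this is $(a,b)\mapsto(a+2d,b+2d)$, which for $d=4$ permutes each monochromatic diagram within itself (e.g.\ $(0,10)\mapsto(8,2)=(2,8)$), so $\rho^{2d}$ fixes every $Q$-piece setwise. The intermediate images $\rho^{k}(Q_1)$ for $k=0,\dots,2d-1$ are pairwise distinct, since their blue monochromatic diagrams are pairwise distinct shifts of that of $Q_1$; hence the $2d$ $Q$-pieces form a single $\rho$-orbit, $Q_i=\rho^{\,i-1}(Q_1)$ with $i$ read modulo $2d$, and every adjacent pair satisfies $(Q_i,Q_{i+1})=\rho^{\,i-1}(Q_1,Q_2)$. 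Since $\rho^{\,i-1}$ is an automorphism of the inclusion diagram, it preserves the whole internal combinatorics of the pair (its $S$-, $F$-, $M$- and $FS$-signatures, the incidence order $\prec$, and the connecting piece), so $(Q_i,Q_{i+1})$ is literally the construction of $(Q_1,Q_2)$ transported by $\rho^{\,i-1}$. This is the assertion of the lemma.

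The main obstacle is the first paragraph: one must be sure that $\rho$ preserves not merely the combinatorics of signatures but all the extra data entering the definition of a $Q$-piece — the presence of exactly $(d-2)$ $S$-trees, the requirement that each adjacent signature carry a long diagonal parallel to those of the $Q$-diagram, and the minimal-Whitehead-move closure by $M$-signatures of Definition~\ref{D:Q}(2) — together with the data of a connection piece. Because $\rho$ exchanges the two colours while the Copy-and-Paste/Rotate description treats them asymmetrically, the delicate point is exactly the reconciliation noted above: it is the $+1$ parallelism of the two monochromatic diagrams of a $Q$-diagram that turns the colour-swapping rotation into the asymmetric pattern of one colour fixed and the other rotated by $-2$. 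Verifying that this reconciliation holds uniformly for every signature making up a $Q$-piece, and not merely for the $Q$-diagram itself, is where the real work lies.
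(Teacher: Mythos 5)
Your proof is correct in substance and takes a genuinely different route from the paper's. The paper proves this lemma by explicit re-derivation: invoking Lemma~\ref{L:rotateM}, it shifts every label by $2k$ (a \emph{colour-preserving} rotation through $\frac{k\pi}{2d}$) and then repeats, verbatim with shifted indices, the four construction steps that produced $(Q_1,Q_2)$, listing all the resulting $S$-, $FS$- and $F$-signatures; this yields explicit matrices for every signature of every piece (which the paper reuses later, e.g.\ in the table of inclusions for the braid towers), but offers no structural explanation. You instead exhibit a single colour-swapping symmetry $\rho$ (label shift by $-1$), check once that it is an automorphism of $(\Sigma_d,\prec)$ commuting with the half-Whitehead moves, verify $\rho(Q_1)=Q_2$ on the $Q$-diagram alone --- which is legitimate, since by Definition~\ref{D:Q}(2) a $Q$-piece is functorially determined by its $Q$-diagram (adjacency, parallelism of long diagonals, and minimality of Whitehead distance are all $\rho$-equivariant) --- and conclude by transport of structure. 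Your reconciliation of the colour swap with the asymmetric Copy-Paste/Rotate description of Definition~\ref{D:Q}(3), via the built-in $+1$ parallelism of the two monochromatic halves of a $Q$-diagram, is exactly the right observation, and it is genuinely absent from the paper. One gloss worth noting: realizing $\rho$ on actual drawings requires composing the rotation $z\mapsto e^{-i\pi/2d}z$ with multiplication of the polynomial by a suitable power of $\imath$ so that real and imaginary preimages trade places; but since the lemma, like the paper's own proof, is purely combinatorial, working at the level of signatures suffices.

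One stated justification is wrong, although your conclusion survives: you claim the images $\rho^{k}(Q_1)$, $k=0,\dots,2d-1$, are pairwise distinct ``since their blue monochromatic diagrams are pairwise distinct shifts.'' They are not: by your own Copy-and-Paste identification, \emph{consecutive} pieces share the same blue diagram --- indeed you computed that $\rho(Q_1)$ has blue part $\{(0,10),(2,8)\}$, identical to that of $Q_1$. Distinctness must be read off the full coloured pair: with $B=\{(0,10),(2,8)\}$, the pair for $\rho^{k}(Q_1)$ is $(B-k,\,B-k+1)$ for $k$ even and $(B-k+1,\,B-k)$ for $k$ odd; the shift stabilizer of $B$ is $\{0,2d\}$ and no odd shift can fix both colours simultaneously, so the stabilizer of the coloured pair is $\{0,2d\}$ and the orbit has exactly $4d/2=2d$ elements. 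With this one-line repair --- or, alternatively, by quoting the paper's count of $2d$ $Q$-pieces together with the fact that $\rho$ sends each piece to an adjacent one, so the orbit walks once around the cycle --- your argument is complete.
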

\begin{proof}
Any $Q$-piece in $\dpol_4$ can be constructed by the following method.
Let $k\in\{0,1,...,4d-1\}$. By lemma~\ref{L:rotateM}, adding $2k$ to all the pairs of integers of the matrix is equivalent to rotating the diagram by $\frac{k\pi}{2d}$ about the center of the disc. 
\begin{enumerate}
\item Consider the $Q$-diagram $\left[\begin{smallmatrix}1+2k,11+2k&3+2k,9+2k \\0+2k,10+2k &2+2k,8+2k \end{smallmatrix}\right]$. Deform the pair of  diagonals blue diagonals $(2+2k,8+2k),(4+2k,6+2k)$. This gives $\left[\begin{smallmatrix}1+2k,11+2k \\0+2k,10+2k \end{smallmatrix}\right]\left|\begin{smallmatrix}9+2k\\3+2k\end{smallmatrix}\right|$. This is an $FS$-signature. Reciprocally, deform the long red diagonal in $\left|\begin{smallmatrix}9+2k\\3+2k\end{smallmatrix}\right|$ with the short red diagonal in the $M$ tree $\left|\begin{smallmatrix}5+2k\\7+2k\end{smallmatrix}\right|$. It gives an $M$ tree, leaving the diagram to be an $S$-signature $\left[\begin{smallmatrix}1+2k,11+2k \\0+2k,10+2k \end{smallmatrix}\right]$.
It is easy to obtain the $F$ signatures, obtained from the $S$-signature in a minimal number of Whitehead moves. Those signatures are:
  
$\left|\begin{smallmatrix}7+2k\\1+2k\end{smallmatrix}\right|$, $\left|\begin{smallmatrix}0+2k\\10+2k\end{smallmatrix}\right|,$$\left|\begin{smallmatrix}10+2k\\0+2k\end{smallmatrix}\right|,$
 $\left|\begin{smallmatrix}11+2k\\1+2k\end{smallmatrix}\right|$,$\left|\begin{smallmatrix}1+2k\\11+2k\end{smallmatrix}\right|,\left|\begin{smallmatrix}4+2k\\10+2k\end{smallmatrix}\right|$. 
\vspace{3pt}
\item Deform the pair of blue diagonals $(0+2k,10+2k),(12+2k,14+2k)$ in order to obtain $\left|\begin{smallmatrix}11+2k\\1+2k\end{smallmatrix}\right|\left[\begin{smallmatrix}3+2k,9+2k \\2+2k,8+2k \end{smallmatrix}\right]$. So, one obtains an $SF$-signature. Let us deform the long blue diagonal in the tree $\left|\begin{smallmatrix}11+2k\\1+2k\end{smallmatrix}\right|$ with the short blue diagonal in the $M$ tree $\left|\begin{smallmatrix}13+2i\\15+2i\end{smallmatrix}\right|$. This turns it into an $M$ tree and so, we have the $S$-signature $\left[\begin{smallmatrix}3+2k,9+2k \\2+2k,8+2k \end{smallmatrix}\right]$. The $F$-signatures which are obtained from this $S$-signature by a minimal number of deformation operations are $\left|\begin{smallmatrix}15+2k\\9+2k\end{smallmatrix}\right|,\left|\begin{smallmatrix}2+2k\\8+2k\end{smallmatrix}\right|, \left|\begin{smallmatrix}8+2k\\2+2k\end{smallmatrix}\right|,\left|\begin{smallmatrix}12+2k\\2+2k\end{smallmatrix}\right|,\left|\begin{smallmatrix}3+2k\\9+2k\end{smallmatrix}\right|,\left|\begin{smallmatrix}9+2k\\3+2k\end{smallmatrix}\right|$. 

\item The adjacent $Q$-piece to the previous one, contains the following $Q$-diagram: 

$\left[\begin{smallmatrix}0+2k,10+2k &2+2k,8+2k \\ 15+2k,9+2k &1+2k ,7+2k  \end{smallmatrix}\right]$.

The adjacent $S$-signature, via Whitehead move, is $\left[\begin{smallmatrix}1+2k,7+2k \\2+2k ,8+2k \end{smallmatrix}\right]$. It is obtained by deforming a pair of red diagonals giving the $FS$ diagram $\left|\begin{smallmatrix}10+2k \\0+2k \end{smallmatrix}\right|\left[\begin{smallmatrix}2+2k ,8+2k  \\1+2k ,7+2k  \end{smallmatrix}\right]$, or a pair of blue diagonals giving an $FS$-signature $\left|\begin{smallmatrix}15+2k \\9+2k \end{smallmatrix}\right|\left[\begin{smallmatrix}2+2k ,8+2k  \\1+2k ,7+2k \end{smallmatrix}\right]$ . 
The $S$-signature $\left[\begin{smallmatrix}2+2k ,8+2k  \\1+2k ,7+2k  \end{smallmatrix}\right]$ is adjacent after one deformation operation to the following $F$-signatures

$\left|\begin{smallmatrix}15+2k \\9+2k \end{smallmatrix}\right|,$$\left|\begin{smallmatrix}9+2k \\15+2k \end{smallmatrix}\right|$, $\left|\begin{smallmatrix}11+2k \\1+2k \end{smallmatrix}\right|,\left|\begin{smallmatrix}14+2k \\8+2k \end{smallmatrix}\right|$$\left|\begin{smallmatrix}2+2k \\8+2k \end{smallmatrix}\right|\left|\begin{smallmatrix}8+2k \\2+2k \end{smallmatrix}\right|$. 

\vspace{3pt}
\item The $Q$-piece contains the $Q$-diagram $\left[\begin{smallmatrix}0+2k ,10+2k  &2+2k ,8+2k  \\ 15+2k ,9+2k &1+2k ,7+2k  \end{smallmatrix}\right]$ and an adjacent $S$-signature $\left[\begin{smallmatrix}15+2k ,9+2k \\0+2k ,10+2k \end{smallmatrix}\right]$ obtained by deforming a pair of blue diagonals giving the $FS$ diagram $\left|\begin{smallmatrix}10+2k \\0+2k \end{smallmatrix}\right|\left[\begin{smallmatrix}15+2k ,9+2k  \\0+2k ,10+2k  \end{smallmatrix}\right]$, or a pair of red diagonals giving an $FS$-signature $\left|\begin{smallmatrix}8+2k \\2+2k \end{smallmatrix}\right|\left[\begin{smallmatrix}2+2k,8+2k  \\1+2k ,7+2k  \end{smallmatrix}\right]$. 
The $S$-signature $\left[\begin{smallmatrix}15+2k ,9+2k  \\0+2k ,10+2k  \end{smallmatrix}\right]$ is adjacent after one deformation operation to the following $F$-signatures 
$\left|\begin{smallmatrix}7+2k \\1+2k \end{smallmatrix}\right|,\left|\begin{smallmatrix}1+2k \\7+2k \end{smallmatrix}\right|, \left|\begin{smallmatrix}6+2k \\0+2k \end{smallmatrix}\right|,\left|\begin{smallmatrix}3+2k \\9+2k \end{smallmatrix}\right|\left|\begin{smallmatrix}0+2k \\10+2k \end{smallmatrix}\right|\left|\begin{smallmatrix}10+2k \\0+2k \end{smallmatrix}\right|$. 
\end{enumerate}
Note that here we only use the generic signatures and the codimension 1 signatures. To have the higher codimension signatures, we determine among the decomposition in $Q$-pieces the $B, O $ and $NC$ structures.   
\end{proof}
\end{example}
\section{Decomposition invariant under Coxeter groups}

\subsection{Adjacence, chambers and galleries }
In this section, we show that the decomposition in elementa (discussed previously) is invariant under a Coxeter group.  

In order to prove the main statement,
we use the geometric properties of the $Q$-piece decomposition 
and the fact that signatures are invariant under polyhedral groups since they are the superimposition of diagrams, having a dihedral symmetry. 

Via this method of construction, we show explicitly the existence of chambers and galleries in the decomposition.

\begin{lemma}
The group of rotations acting on the $Q$-diagram is of order $2d$.
\end{lemma}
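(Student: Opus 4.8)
The plan is to identify the group in question with the cyclic group generated by the elementary rotation $r$ of Lemma~\ref{L:rotateM}, and then to read off its order directly from the index action $(i,j)\mapsto(i+2k,j+2k)\bmod 4d$. So first I would pin down \emph{which} rotations of the $4d$ marked points may be said to act on a $Q$-diagram, and then verify that such a rotation carries a $Q$-diagram to a $Q$-diagram; the order count is then a one-line congruence.

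First I would observe that a rotation acting on a $Q$-diagram must respect its bicoloured structure. By Definition~\ref{De:1} the blue diagonals join terminal vertices of even index and the red diagonals join vertices of odd index. A rotation of the $4d$ marked points sending each point to its neighbour shifts every index by $1$ and hence exchanges the two parity classes, i.e.\ it swaps the two colours; consequently only the rotations shifting indices by an \emph{even} amount preserve the colouring. The smallest such colour-preserving rotation is exactly the map $r$ of Lemma~\ref{L:rotateM}, which acts on every diagonal by $(i,j)\mapsto(i+2,j+2)\bmod 4d$, and the colour-preserving rotations are precisely its powers. I would then check that $r$ genuinely sends the set of $Q$-diagrams to itself: being an isometry of the disc fixing the centre, $r$ preserves the circular span $|i-j|\bmod 4d$ of a chord, so it carries long diagonals to long diagonals and short to short (Definition~\ref{D:diagonalsij}) and preserves the whole incidence pattern of the chords; hence it carries an $S$-tree to an $S$-tree and a signature with $(d-2)$ $S$-trees to another such signature. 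Thus $\langle r\rangle$ acts on the collection of $Q$-diagrams.

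It then remains to compute the order of $\langle r\rangle$. By Lemma~\ref{L:rotateM} the $k$-th power acts as $(i,j)\mapsto(i+2k,j+2k)\bmod 4d$, so $r^{k}$ fixes every diagonal if and only if $2k\equiv 0\pmod{4d}$, that is, if and only if $k\equiv 0\pmod{2d}$. Hence the smallest positive power of $r$ equal to the identity is $r^{2d}$, and $\langle r\rangle\cong\mathbb{Z}/2d\mathbb{Z}$ has order $2d$, as claimed. The main obstacle I anticipate is not this computation but the second step: one must make sure that no rotation \emph{outside} $\langle r\rangle$ — in particular a colour-swapping odd shift — can map a $Q$-diagram to a $Q$-diagram, so that the group has order exactly $2d$ rather than $4d$. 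This is settled by recalling that in a $Q$-diagram the blue and red monochromatic sub-diagrams are related by the reflection $\mathcal{L}$ and by the prescribed offset of $\tfrac{\pi}{4d}$, rather than by a rotation; an index shift by an odd amount swaps the roles of the two colours and so cannot reproduce this prescribed relative position. Therefore the rotations acting on the $Q$-diagram are exactly the elements of $\langle r\rangle$, a cyclic group of order $2d$.
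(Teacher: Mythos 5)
Your computation lands on the right number, but one step of your argument is genuinely false, and the mechanism by which you reach $2d$ is not the paper's. The flaw is the faithfulness claim: you assert that $r^{k}$ acts trivially on the $Q$-diagram only when $2k\equiv 0\pmod{4d}$, having checked when $r^{k}$ fixes each diagonal \emph{individually}. But acting trivially on a diagram means fixing it \emph{setwise}, and the rotation through $\pi$ (the index shift by $2d$, i.e.\ $r^{d}$ in your notation) fixes every $Q$-diagram setwise by permuting its chords: for $d=4$, with $Q$-diagram $\left[\begin{smallmatrix}1,11&3,9\\0,10&2,8\end{smallmatrix}\right]$, the shift by $8$ swaps $(0,10)\leftrightarrow(2,8)$, $(1,11)\leftrightarrow(3,9)$, and the corresponding short chords. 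A $Q$-diagram is always centrally symmetric (its long diagonals of each colour are parallel and symmetrically placed about $\mathcal{L}$), so your colour-preserving group $\langle r\rangle$ acts on the $Q$-diagram with a kernel of order $2$ and induces a rotation symmetry group of order $d$, not $2d$; your conclusion only counts the abstract group $\langle r\rangle$, not its action on the diagram.

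This matters because the paper obtains $2d$ in essentially the opposite way: its generator is the elementary rotation through $\frac{\pi}{2d}$ --- the smallest rotation of the $4d$-gon, precisely the colour-interchanging shift you excluded --- and the relation $r^{2d}=Id$ holds \emph{because} rotating through $\pi$ returns the $Q$-diagram to itself; that is, $2d=4d/2$, the full rotation group modulo the central symmetry that your faithfulness claim denies. Your closing paragraph, arguing that no odd shift can carry a $Q$-diagram to a $Q$-diagram, is also at odds with the surrounding text: by Definition~\ref{D:Q}(3)(b) and Lemma~\ref{L:consec}, the rotation through $\frac{\pi}{2d}$ carries each $Q$-diagram to the consecutive one in the adjacent $Q$-piece (it does not stabilize the diagram, but the lemma concerns the group acting on it, not its stabilizer), and the very next lemma counts ``$2d$ rotations of a $Q$-diagram about an angle of $\frac{\pi}{2d}$'' to produce the $2d$ $Q$-pieces --- an orbit of size $2d$, which again requires the order-$2$ stabilizer you ruled out. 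So while your structural checks (that a rotation preserves long versus short chords in the sense of Definition~\ref{D:diagonalsij} and the incidence pattern of $S$-trees) are sound, the group you exhibit is the wrong subquotient of the order-$4d$ rotation group, and the coincidence of cardinalities hides the gap.
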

\begin{proof}
Consider in the $Q$-diagram separately the the blue/red monochromatic diagrams. The blue (resp. red) diagram has a mirror line $\mathcal{L}$ being parallel to the long diagonals. 
Indexing the long diagonals from 1 to $d-2$, note that if $d$ is even then this mirror line lies between the diagonals $\frac{d-2}{2}$ and $\frac{d}{2}$. If $d$ is odd then, the mirror lies on the diagonal $\frac{d-1}{2}$.  
Therefore, there exists a finite group acting independently on both monochromatic diagrams, which is of order of $2d$ defined by $\langle r | r^{2d}=Id\rangle$, $r=\frac{\pi}{2d}$. The order of the group of rotations acting on the $Q$-diagram is also of order $2d$. Thus, rotating the $Q$-diagram by an angle $\pi$ (i.e. $2d$ rotations by an angle $\frac{\pi}{2d}$) gives the identity. 
\end{proof}

 \begin{lemma}
For any $d>3$, there exist $2d$ $Q$-pieces in the decomposition.
 \end{lemma}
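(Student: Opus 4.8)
The plan is to reduce the statement to a count of $Q$-diagrams. By Definition~\ref{D:Q}, every $Q$-piece is assembled around a single $Q$-diagram, namely the unique signature in the piece carrying all $d-2$ $S$-trees (every other indexing signature has strictly fewer long diagonals of the prescribed direction). Thus the $Q$-diagram is recovered from the $Q$-piece as its unique maximal member, distinct $Q$-diagrams yield distinct $Q$-pieces, and it suffices to show that there are exactly $2d$ $Q$-diagrams. I would count these as a product: the number of admissible blue monochromatic parts times the number of red completions of each.

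First I would count the admissible blue parts. Relabel the $2d$ blue terminal vertices as $0,1,\dots,2d-1$; then a chord $\{a,b\}$ is short in the sense of Definition~\ref{D:diagonalsij} exactly when $b-a\equiv\pm1\pmod{2d}$ (an edge of the $2d$-gon), and two chords are parallel exactly when their index sums agree modulo $2d$. This partitions the chords into $2d$ parallel classes indexed by $s=a+b\bmod 2d$. A short bookkeeping then shows that a class with $s$ even contains $d-1$ long diagonals and no edges, while a class with $s$ odd contains $d$ chords of which exactly two are edges, hence $d-2$ long diagonals. Since Definition~\ref{D:Q} demands precisely $d-2$ parallel long diagonals, the admissible blue parts are in bijection with the $d$ odd classes, giving $d$ choices.

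Next I would show each admissible blue part admits exactly two red completions. Because the $S$-trees of a $Q$-diagram are narrow, each long blue diagonal $(i,j)$ must cross a single long red diagonal of the form $(i\pm1,j\pm1)$, and parallelism of the whole red family forces this half-step shift to be uniform; both the $+1$ and the $-1$ translate interlock correctly with the blue family (the crossings are guaranteed by the parity pattern of Definition~\ref{De:1}) and produce genuinely distinct red parallel classes. This gives $2$ red completions per blue part, hence $d\cdot2=2d$ $Q$-diagrams and $2d$ $Q$-pieces. The main obstacle I anticipate is precisely the rigidity claim in this step: verifying that the red family \emph{must} be a uniform half-step translate of the blue one, so that no further completions exist and exactly two do.

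Finally, for coherence with the preceding lemma I would record the rotational picture. The fundamental half-step rotation $\tau$ interchanges the two colours, and its square acts on indices by $(i,j)\mapsto(i+2,j+2)$ as in Lemma~\ref{L:rotateM}, so $\tau$ permutes the set of $Q$-diagrams. The orbit of any single $Q$-diagram has exactly $2d$ elements: the map $\tau^{2d}$ is the antipodal map $(i,j)\mapsto(i+2d,j+2d)$, which fixes every $Q$-diagram setwise since it merely reverses the order of the parallel long diagonals, whereas no smaller power fixes it. Indeed an odd power $\tau^k$ flips the chirality (the sign of the half-step separating the red family from the blue one) and so cannot fix a diagram, while an even power $\tau^{2m}$ with $0<m<d$ rotates the blue parallel class off itself. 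This single free orbit of size $2d$ agrees with the order-$2d$ rotation group of the previous lemma and supplies the cyclic labelling $Q_1,\dots,Q_{2d}$ used in the examples.
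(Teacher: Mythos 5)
Your proof is correct, but it takes a genuinely different route from the paper's. The paper's proof is essentially a one-line appeal to the preceding lemma: the $2d$ rotations of a $Q$-diagram through the angle $\frac{\pi}{2d}$ produce $2d$ distinct $Q$-diagrams, hence $2d$ $Q$-pieces. What the paper leaves implicit --- that every $Q$-piece arises this way (completeness of the rotation orbit) and that distinct rotations yield genuinely distinct pieces --- is exactly what your argument supplies: you first pin each $Q$-piece to its unique maximal signature, reducing the count to a count of $Q$-diagrams; you then enumerate $Q$-diagrams directly as (odd parallel class of blue long diagonals) times (chirality of the red half-step shift), giving $d\cdot 2=2d$; and you finally reconcile this with the rotation picture by exhibiting a single free orbit of size $2d$ under the colour-swapping half-step rotation $\tau$, which is consistent with the paper's order-$2d$ rotation group (your chirality-flip argument for odd powers and class-displacement argument for even powers are both sound). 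So your version buys an actual classification of $Q$-diagrams where the paper only counts symmetries of one; the cost is the parallel-class bookkeeping. One step you should tighten: to exclude even classes you observe that they contain no edges, but an even class $s$ does contain $d-1$ parallel long diagonals, so you must also rule out a blue part made of $d-2$ of these together with two short chords lying \emph{outside} the class. This is easy but not automatic: omitting one chord of an even class leaves uncovered the two poles $s/2$ and $s/2+d$ together with the endpoints of the omitted chord, and for $d>3$ these four vertices cannot be matched by two short chords (a short chord through a pole forces the remaining pair to sit at distance $d\pm 1$, which is not $1$). With that sentence added, your enumeration --- and the agreement of your cyclic labelling $Q_1,\dots,Q_{2d}$ with the paper's rotation argument --- is complete.
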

 \begin{proof}
Apply Whitehead moves onto the $Q$-diagram such that in each of the new generic signatures there remains at least one long blue (or red) diagonal parallel to the mirror line. In one Whitehead move, one obtains the $M$, diagrams from an $F$ signature. The union of all the strata indexed by those signatures defines a $Q$-piece of the stratification. There are $2d$ such $Q$-pieces, since there exist $2d$ rotations of a $Q$-diagram about an angle of $\frac{\pi}{2d}$. 
\end{proof}

\begin{example} Detail of a $Q$-piece for $d=4$ is given in figure~\ref{detailLego4}
\begin{figure}[h]
\begin{center}
\input{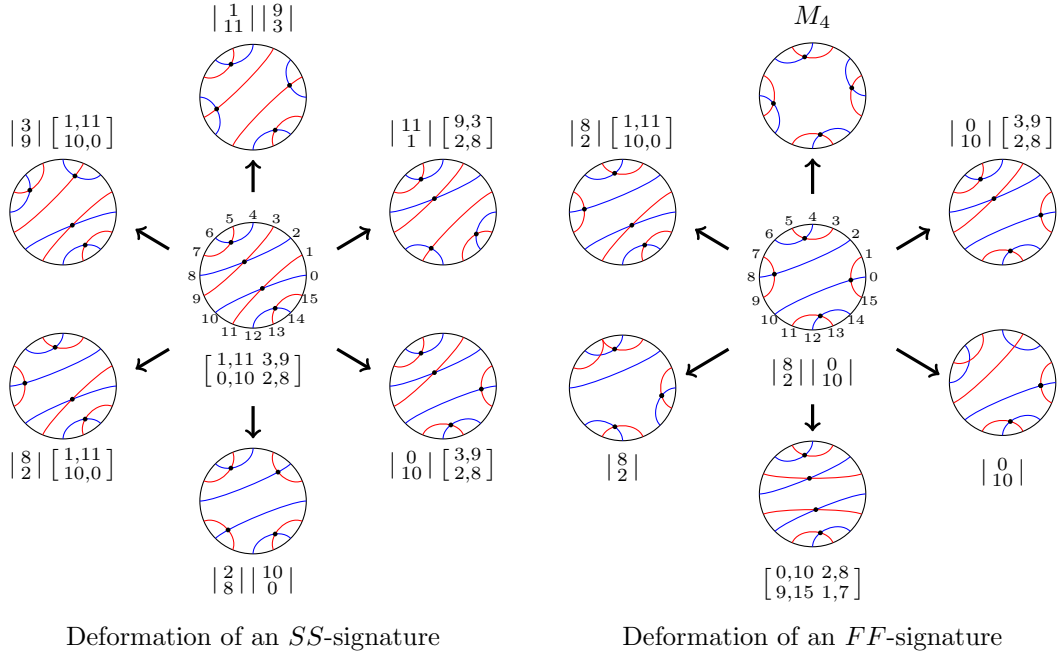}
\end{center}
\caption{Some $Q$-piece details for $d=4$}\label{detailLego4}
\end{figure}
\end{example}

To prove that the stratification is invariant under a Coxeter group, we describe the construction using an inductive procedure. 





\begin{lemma}\label{L:consec}
Let $Q_1$ and $Q_2$ be two $``Q"$-diagrams, lying in a pair of adjacent $Q$-pieces. Then, their associated matrices verify \[Q_1=\left[\begin{smallmatrix}L_{0} \\L_{1} \end{smallmatrix}\right ],Q_2=\left[\begin{smallmatrix}L_{1} \\L_{2}:=L_{1}-1\end{smallmatrix}\right],\] where the symbol $L_{1}-1$ means that 1 is substracted from each integer in $L_{1}$ modulo $4d$. 
\end{lemma}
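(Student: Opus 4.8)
The plan is to reduce the identity $Q_2=\left[\begin{smallmatrix}L_1\\L_1-1\end{smallmatrix}\right]$ to a short bookkeeping computation on the vertex indices, by combining two geometric inputs that are already available: the \emph{internal} $\frac{\pi}{4d}$-offset between the two monochromatic diagrams of a single $Q$-diagram, and the \emph{external} rotation through $-\frac{\pi}{2d}$ that relates two adjacent $Q$-pieces. The whole argument is then an additivity statement modulo $4d$.

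First I would record the internal structure of a single $Q$-diagram. By Definition~\ref{D:Q} a $Q$-diagram is built from $(d-2)$ narrow $S$-trees, so in each column the blue long diagonal and the red long diagonal are of the form $(i,j)$ and $(i+1,j+1)$; equivalently, the $\frac{\pi}{4d}$ turn of one color relative to the other is, in index terms, a coordinatewise shift by $1 \bmod 4d$. Writing $Q_1=\left[\begin{smallmatrix}L_0\\L_1\end{smallmatrix}\right]$, this says precisely that $L_0=L_1+1$, the addition being performed on every integer modulo $4d$. This is the only structural fact I need to extract from the $Q$-diagram itself, and it is the reason the net shift below will be $-1$ and not $-2$.

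Next I would invoke adjacency. By Definition~\ref{D:Q}(3)(a)--(b) the passage $Q_1\rightsquigarrow Q_2$ copies one monochromatic diagram verbatim and rotates the other through $-\frac{\pi}{2d}$; by the indexing convention the bottom row $L_1$ is the diagram persisting to the next piece, so it reappears unchanged as the top row of $Q_2$. For the remaining diagram I would apply Lemma~\ref{L:rotateM} with the inverse rotation $r^{-1}$: each diagonal $(i,j)$ is sent to $(i-2,j-2)\bmod 4d$, i.e.\ the rotation subtracts $2$ from every index. Since the rotated diagram is $L_0=L_1+1$ before rotation, afterwards it equals $(L_1+1)-2=L_1-1\bmod 4d$, which is exactly the bottom row $L_2$ of $Q_2$. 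Hence $Q_2=\left[\begin{smallmatrix}L_1\\L_1-1\end{smallmatrix}\right]$, as claimed.

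The one point demanding care — and the main obstacle — is the interaction between the two shifts together with the accompanying exchange of rows. The internal offset $+1$ cancels part of the external $-2$, producing the net $-1$; simultaneously the copied diagram migrates from the bottom of $Q_1$ to the top of $Q_2$, so the two parities (and hence the two colors) swap rows. I would close by checking that this bookkeeping is self-consistent: in $Q_2$ the top row $L_1$ and bottom row $L_1-1$ again differ by a unit shift, so every column of $Q_2$ is once more a narrow $S$-tree and $Q_2$ is a legitimate $Q$-diagram matrix. Testing the formula against the worked case $Q_1=\left[\begin{smallmatrix}1,11 & 3,9\\0,10 & 2,8\end{smallmatrix}\right]$, $Q_2=\left[\begin{smallmatrix}0,10 & 2,8\\15,9 & 1,7\end{smallmatrix}\right]$ for $d=4$ (where $L_1-1=\{(-1,9),(1,7)\}=\{(15,9),(1,7)\}\bmod 16$) confirms that the index arithmetic and the row exchange are correctly tracked.
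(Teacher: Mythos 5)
Your proposal is correct and follows essentially the same route as the paper's proof: both combine the internal unit offset $L_0=L_1+1$ between the two monochromatic rows of a $Q$-diagram (coming from the $\frac{\pi}{4d}$ relative turn, i.e.\ the narrow $S$-tree structure) with Lemma~\ref{L:rotateM}, which converts the $-\frac{\pi}{2d}$ rotation between adjacent $Q$-pieces into a coordinatewise shift by $-2$ modulo $4d$, yielding the net shift $L_2=L_0-2=L_1-1$. Your added consistency check and the worked $d=4$ verification are welcome but do not change the argument; if anything, your write-up tracks the row exchange more explicitly than the paper, whose final line contains an apparent typo ($L_2=L_0-1$ where $L_2=L_0-2$ is meant).
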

\begin{proof}
Suppose that $Q_1=\left[\begin{smallmatrix}L_{0} \\L_{1} \end{smallmatrix}\right ]$. 
By hypothesis, since $Q_{2}$ belongs to an adjacent $Q$-piece, it is the superimposition of the blue (resp. red) monochromatic diagram of $Q_{1}$, and of a red monochromatic diagram, which is a rotation of $\frac{-\pi}{2d}$ about the red one of $Q_1$, relatively to the center of the polygon.
 
Suppose, that $L_{1}$ is the pairing of indexes of terminal vertices (a diagonal is an edge connecting a pair of labelled endverticess) of the monochromatic diagram, common to $Q_{1}$ and $Q_{2}$. From the lemma~\ref{L:rotateM} we have that $\left[\begin{smallmatrix}L_{1} \\L_{0}-2 \end{smallmatrix}\right ]$ in $Q_{2}$. Now, since the monochromatic diagrams differ by a rotation about $\frac{-\pi}{4d}$ we have that $L_{1}=L_{0}-1$ and thus $L_{2}=L_{0}-1$, modulo $4d$.   
\end{proof}
\begin{corollary}\label{C:cons}
Any pair of consecutive diagrams lying in adjacent $Q$-pieces have respectively the matrices:
 \[Q_1=\left[\begin{smallmatrix}L_{0} \\L_{1} \end{smallmatrix}\right ],Q_2=\left[\begin{smallmatrix}L_{1} \\L_{2}:=L_{1}-1\end{smallmatrix}\right],\] where the symbol $L_{1}-1$ means that 1 is substracted from each integer in $L_{1}$ modulo $4d$. 
\end{corollary}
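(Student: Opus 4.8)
The plan is to obtain Corollary~\ref{C:cons} as an immediate consequence of Lemma~\ref{L:consec}, once one observes that a pair of \emph{consecutive} diagrams lying in adjacent $Q$-pieces (in the sense of Definition~\ref{D:Q}(3)(c)) is precisely a pair of $Q$-diagrams $Q_1,Q_2$ sitting in adjacent $Q$-pieces. Thus the only thing left to check is that the defining properties of consecutiveness---identical fixed-colour monochromatic diagram, and mirror-symmetric (about $\mathcal{L}$) rotated-colour diagram---coincide with the \emph{Copy and Paste} and \emph{Copy/Paste and Rotate} relations of Definition~\ref{D:Q}(3)(a)--(b) that underlie the hypothesis of Lemma~\ref{L:consec}.

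First I would fix $Q_1=\left[\begin{smallmatrix}L_0\\L_1\end{smallmatrix}\right]$ and record that, by Definition~\ref{D:Q}(3)(a)--(b), the monochromatic diagram shared by the two adjacent pieces is the one encoded by the row $L_1$, so that $L_1$ must appear as the first row of $Q_2$. To identify the second row I would invoke Lemma~\ref{L:rotateM}: the passage to the adjacent piece rotates the remaining monochromatic diagram by a single elementary step, and Lemma~\ref{L:rotateM} converts that rotation into the label operation $L\mapsto L-1\pmod{4d}$ applied to each diagonal of the diagram. Carrying this out column by column turns $L_1$ into $L_1-1$, giving $Q_2=\left[\begin{smallmatrix}L_1\\L_1-1\end{smallmatrix}\right]$. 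As neither step depends on the particular signature chosen inside the piece, the same pair of matrices describes any consecutive pair.

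The hard part will be the bookkeeping of the half-step rotation. Lemma~\ref{L:rotateM}, as stated, yields the even shift $(i,j)\mapsto(i+2k,j+2k)$ on a single-colour $2d$-gon, whereas the transition between adjacent $Q$-pieces advances the two colours against one another by the \emph{unit} shift subtracting $1$ modulo $4d$, which interchanges the two parities of boundary vertices. To make this precise I would work on the full $4d$-gon of all boundary points, whose rotation generator is $\tfrac{\pi}{4d}$, and verify that the reflection symmetry across $\mathcal{L}$ demanded in Definition~\ref{D:Q}(3)(c) is exactly compatible with---indeed forces---this unit shift. Once the unit shift is justified on the full $4d$-gon, the assertion follows by applying it to every entry of $L_1$.
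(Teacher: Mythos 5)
Your derivation is essentially the paper's own: Corollary~\ref{C:cons} is stated there with no separate proof, as an immediate consequence of Lemma~\ref{L:consec}, whose argument is exactly your reduction --- the shared monochromatic diagram of the adjacent $Q$-pieces supplies the row $L_1$, and Lemma~\ref{L:rotateM} together with the half-step offset between the two colours yields the unit shift $L_1-1 \pmod{4d}$. Your explicit bookkeeping of the parity-swapping unit shift on the full $4d$-gon in fact fills in a step the paper glosses over (its proof of Lemma~\ref{L:consec} simply asserts $L_1=L_0-1$ from the $-\frac{\pi}{4d}$ rotation, amid inconsistent angle conventions), so the proposal is sound and follows the same route.
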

\begin{example}
In the case of $d=3$, the adjacent $Q$-diagrams forming a spine are illustrated in Appendix A .

 The adjacent $Q$-diagrams for  $d=4$ is given in appendix, Fig~\ref{F:lego4}.
 \end{example}
 
 In the following part, one Whitehead move is an operation on only one pair of diagonals of the same color. This corresponds to the modification of one generic signature into another one. 

\begin{lemma}
Let $Q_1$ and $Q_2$ be two $Q$-diagrams, lying in adjacent $Q$-pieces. Then, these diagrams are glued to each other via a $F^{\otimes{d-2}}$ diagram.
\end{lemma}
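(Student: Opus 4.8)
The plan is to read the two $Q$-diagrams off Corollary~\ref{C:cons}, isolate the monochromatic diagram they share, and produce the connecting signature as the generic signature supported on exactly the long diagonals common to $Q_1$ and $Q_2$. By Corollary~\ref{C:cons} I may write
\[
 Q_1=\left[\begin{smallmatrix}L_{0}\\L_{1}\end{smallmatrix}\right],\qquad
 Q_2=\left[\begin{smallmatrix}L_{1}\\L_{2}\end{smallmatrix}\right],\qquad
 L_{2}=L_{1}-1\pmod{4d}.
\]
Since $L_{1}$ is at once the second row of $Q_1$ and the first row of $Q_2$, the whole monochromatic diagram of colour $L_{1}$ --- its $d-2$ mutually parallel long diagonals together with the two short diagonals of that colour sitting in the two $M$-trees --- belongs to both signatures. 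The diagonals $L_{0}=L_{1}+1$ and $L_{2}=L_{1}-1$ are of the colour opposite to $L_{1}$, and $L_{0}=r(L_{2})$ differ by the rotation $r$ of Lemma~\ref{L:rotateM}, hence are distinct; consequently the long diagonals common to $Q_1$ and $Q_2$ are exactly the $d-2$ diagonals recorded by $L_{1}$. I therefore take as connecting diagram the signature $\Phi$ whose long diagonals are precisely those of $L_{1}$ and all of whose diagonals of the opposite colour are short.

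First I would verify that $\Phi$ is of type $F^{\otimes(d-2)}$. Its colour-$L_{1}$ diagram is that of $Q_1$: $d-2$ long diagonals and two short ones. By the incidence constraint of Definition~\ref{De:1} each chord meets exactly one chord of the opposite colour, so each long $L_{1}$-diagonal is crossed by a single, now short, opposite chord; by the classification this crossing is an $F$-tree, giving $d-2$ trees of type $F$. The two short $L_{1}$-diagonals meet short opposite chords, producing two $M$-trees. Hence $\Phi$ has exactly $d-2$ trees of type $F$ and two of type $M$, i.e. $\Phi$ is an $F^{\otimes(d-2)}$ signature, and the parity bookkeeping of Definitions~\ref{De:1} and~\ref{D:diagonalsij} confirms it is a legitimate generic signature.

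Next I would exhibit $\Phi$ inside both $Q$-pieces. Starting from $Q_1=\left[\begin{smallmatrix}L_{0}\\L_{1}\end{smallmatrix}\right]$, the $d-2$ long diagonals of colour $L_{0}$ form a nested, mutually parallel family. A half-Whitehead smoothing applied to an innermost successive pair $(i,j),(i+2,j-2)$ replaces it by the only non-crossing alternative $(i,i+2),(j-2,j)$, two short diagonals, decreasing the number of long $L_{0}$-diagonals by two while leaving the colour-$L_{1}$ diagram untouched. Iterating (and, when $d$ is odd, ending with one move pairing the last long $L_{0}$-diagonal with an adjacent short one) shortens all of $L_{0}$; since $L_{1}$ stays long throughout, the terminal signature is an $F^{\otimes(d-2)}$ diagram carrying the common long diagonals $L_{1}$, which by Definition~\ref{D:Q}(2) lies in the $Q_1$-piece. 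The identical reduction applied to $Q_2$, collapsing the long diagonals of $L_{2}$, lands on an $F^{\otimes(d-2)}$ diagram in the $Q_2$-piece that again carries the long diagonals $L_{1}$. Both diagrams have the same long diagonals $L_{1}$ and opposite-colour short diagrams that are symmetric about the mirror $\mathcal{L}$ of the $L_{1}$-diagram; by Definition~\ref{D:Q}(3)(c) they are \emph{consecutive}, so a single $F^{\otimes(d-2)}$-type diagram sits in the closure of both $Q$-pieces and realises the gluing.

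The step I expect to be the genuine obstacle is the phase matching in the last paragraph. Collapsing $L_{0}$ inside $Q_1$ and $L_{2}$ inside $Q_2$ leaves the rotated colour in the two opposite ``phases'' of short diagonals (those with endpoints $\equiv 1$ and $\equiv 3 \pmod 4$), so the two reductions do not reach literally the same signature; one must check that these two $F^{\otimes(d-2)}$ signatures lie in one connection piece. This is exactly where the relation $L_{2}=L_{1}-1$ and the narrowness (mutual parallelism) of the $S$-trees enter: the half-step shift $L_{1}\mapsto L_{1}-1$ is the reflection in the mirror $\mathcal{L}$, which exchanges the two phases, so the two reductions land on consecutive diagrams glued across $\mathcal{L}$. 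Carrying out this symmetry verification --- rather than the routine tree-count of the second paragraph --- is the crux, and it is what upgrades ``each $Q_i$ reduces to some $F^{\otimes(d-2)}$ diagram'' to ``$Q_1$ and $Q_2$ are glued via a single $F^{\otimes(d-2)}$-type diagram.''
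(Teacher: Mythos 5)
Your proposal is correct, but it reaches the conclusion by a genuinely different route than the paper. The paper proceeds by induction on $d$: in the base case $d=4$ it builds a single directed chain of Whitehead moves $Q_1\rightarrow F^{\otimes 2}\rightarrow Q_2$, first collapsing the non-shared family $L_0$ (which yields the $F^{\otimes(d-2)}$ signature carrying the common row $L_1$), then acting on pairs of adjoining short diagonals of the rotated colour to create the family $L_2=L_1-1$, landing exactly on $\bigl[\begin{smallmatrix}L_1\\L_1-1\end{smallmatrix}\bigr]$; the induction step then absorbs one extra $S$-tree through an $F^{\otimes(d-1)}$ stage. You instead give a direct, uniform-in-$d$ reduction (pairwise collapse of the nested parallel family, with the span-$6$ innermost diagonal absorbed by one extra move when $d-2$ is odd), and you make explicit the phase phenomenon the paper never mentions: collapsing $L_0$ inside $Q_1$ and $L_2$ inside $Q_2$ really does produce the two opposite short-diagonal phases, and these are exchanged by the reflection fixing the $L_1$ diagram (the same reflection carries $L_0$ to $L_2$), so the two reduced signatures are consecutive in the sense of Definition~\ref{D:Q}(3)(c). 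What your route buys is a cleaner, non-inductive argument whose delicate point is stated openly, avoiding the paper's rather loosely justified induction step; what the paper's route buys is slightly more than you prove: its intermediate $F^{\otimes(d-2)}$ signature is Whitehead-connected to \emph{both} $Q$-diagrams, whereas yours produces two mirror-image signatures identified only through the formal gluing bijection of the definition. The gap is easy to close and worth closing: from your reduced signature $\Phi$ with long family $L_1$ and phase-A shorts, Whitehead moves on pairs of adjoining short diagonals of the rotated colour regenerate the long diagonals of $L_2=L_1-1$ one pair at a time (for $d=4$: $(1,3),(5,7)\mapsto(1,7),(3,5)$ and $(9,11),(13,15)\mapsto(9,15),(11,13)$, recovering $Q_2$), so the same $\Phi$ continues forward to $Q_2$; with this one extra step your proof subsumes the paper's construction and no longer leans on the consecutive identification.
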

\begin{proof}
We induct on $d>3$ (lower degrees are irrelevant since the $Q$ diagrams do not exist).
\begin{itemize}
\item {\it Base case} $d=4$. The $Q$ diagram $[\begin{smallmatrix}L_{0} \\L_{1} \end{smallmatrix}]$ has two long blue diagonals and two long red diagonals. The pair of red and blue long diagonals bound, the same 2-face in $\mathbb{D}$. Let us modify by a Whitehead move the red diagonals: this operation leaves the (blue) long diagonals fixed and thus gives a signature of type $F^{\otimes{2}}$. Therefore, a $Q$-diagram is connected to $F^{\otimes{2}}$, in one Whitehead move. 
Consider in the signature $F^{\otimes{2}}$ two pairs of adjoining short red diagonals. In each of those pairs, only one short diagonal intersect a long blue diagonal. Deform each pair simultaneously by a Whitehead move. One obtains a signature of type $SS$, with matrix $[\begin{smallmatrix}L_{1} \\L_{1}-1\end{smallmatrix}]$.
\\

Before we consider the general induction case, we enumerate the types of generic signatures obtained in one Whitehead move from a $Q$-diagram.
One deformation of the $Q$-diagram gives an $SF$ signature belong to one of the three following types: 
\begin{enumerate}
\item $S^{\otimes{d-2}}$ is modified in one Whitehead move into a signature $S^{\otimes{d-3}}F$ (or to $FS^{\otimes{d-3}}$). This is obtained by modifying a pair of diagonals (one in $M$ and one in adjoining $S$, both being of the same color).
\\
\item $S^{\otimes{d-2}}$  is deformed in one deformation step to $S^{\otimes{d-4}}FF$  (or to $FFS^{\otimes{d-4}}$). This is obtained by modifying a pair of long diagonals: one being the shortest long diagonal of the signature, the other one being adjoining to it. 
\\ 
\item $S^{\otimes{d-2}}$  is deformed in one deformation step to $S...SFFS...S$ where the number of $S$ is $d-4$. This is obtained by deforming a pair of adjoining long diagonals (which are not the shortest long diagonals of the signature).
\end{enumerate} 

\item {\it Induction case}. Consider a $Q$-diagram in $\Sigma_{d+1}$. Assume that the statement is true for a pair of adjacent $Q$-pieces where the $Q$-diagrams belong to $\Sigma_{d}$ and are of type $S^{\otimes{d-2}}$. We will prove that for two signatures of type $S^{\otimes{d-1}}$ signatures with $d-1$ pairs of $S$ trees lying in adjacent $Q$-pieces, there exists a sequence of deformations containing a signature of type $F^{\otimes{d-1}}$. Consider the signature $S^{\otimes{d-1}}$: it contains one $S$-tree added at the right (or left) of $S^{\otimes{d-2}}$, in the signature.  Using the induction hypothesis for $S^{\otimes{d-2}}$, it is known that there exists a sequence of deformations from $S^{\otimes{d-1}}$ to a signature $SF^{\otimes{d-2}}$ (resp. $F^{\otimes{d-2}}S$), where $S$ is a tree given by the crossing of the shortest long diagonals in the signature. It remains to deform a pair of diagonals lying in the $S$ tree and in its adjoining $M$ tree, so as to obtain a signature $F^{\otimes{d-1}}$. Now, starting from $F^{\otimes{d-1}}$, there exists again, by induction hypothesis, a sequence of Whitehead moves giving the signature $S^{\otimes{d-2}}F$ (resp. $FS^{\otimes{d-2}}$). Finally, one more Whitehead move applied to the pair 
of short diagonals, lying in the adjoining trees $F$ and $M$ induces the $S^{\otimes{d-1}}$ signature. 
\end{itemize}
\end{proof}

 
\subsection{Main theorems and their proofs}
The next result allows a decomposition in chambers and galleries of the configuration space. 
\begin{lemma}\label{L:O2}
The group of symmetries of a $Q$-piece is $\langle r | r^{2}= Id \rangle$.
\end{lemma}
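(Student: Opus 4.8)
The plan is to identify the symmetries of a $Q$-piece with the color-preserving isometries of the marked disc, that is, with the elements of the dihedral group $D_{4d}$ acting on the $4d$ boundary vertices (the $4d$-th roots of unity) that respect the bipartition of the vertices into even (blue) and odd (red) given by Definition~\ref{De:1}. The first step is to observe that any such symmetry must fix the central $Q$-diagram $\sigma_Q$ of the piece. By Definition~\ref{D:Q} the piece is built around a single $Q$-diagram, and $\sigma_Q$ is recovered intrinsically from the piece as its unique signature carrying the maximal number $d-2$ of $S$-trees (equivalently, the maximal number of crossing pairs of long diagonals): every other signature of the piece has strictly fewer, since they arise by deforming the $S$-trees of $\sigma_Q$ through Whitehead moves. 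As a symmetry permutes the elementa while preserving the combinatorial type of each signature, it must send $\sigma_Q$ to itself. Hence the symmetry group embeds into the color-preserving stabilizer of $\sigma_Q$ in $D_{4d}$, and it suffices to compute the latter and then confirm that each of its elements preserves the whole piece.

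The second step is this stabilizer computation. A color-preserving rotation has, by Lemma~\ref{L:rotateM}, the form $(i,j)\mapsto(i+2k,j+2k)\bmod 4d$ for $k\in\mathbb{Z}/2d$, which is the rotation through the angle $\tfrac{k\pi}{d}$. Since it must carry the bundle of $d-2$ mutually parallel long blue diagonals of $\sigma_Q$ onto itself, it has to preserve their common direction as a line, forcing $\tfrac{k\pi}{d}\in\pi\mathbb{Z}$, i.e. $d\mid k$ and therefore $k\equiv 0$ or $k\equiv d\pmod{2d}$. Thus the only nontrivial color-preserving rotation fixing $\sigma_Q$ is the half-turn $r\colon (i,j)\mapsto(i+2d,j+2d)$ through $\pi$, which interchanges the two long diagonals of each color and the two short chords of each color, so that indeed $r(\sigma_Q)=\sigma_Q$ and $r^2=\mathrm{Id}$. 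For reflections I would use that the blue and red long-diagonal bundles are neither parallel nor perpendicular: they make the angle $\tfrac{\pi}{2d}\in(0,\tfrac{\pi}{2})$, which is not a multiple of $\tfrac{\pi}{2}$ for $d>3$. A color-preserving reflection fixing the blue diagram must have its axis parallel or perpendicular to the blue diagonals (the two directions $\mathcal{L}$ and $\mathcal{L}^{\perp}$), while fixing the red diagram forces its axis to be parallel or perpendicular to the red diagonals; these four directions are pairwise distinct, so no single axis serves both monochromatic diagrams. Consequently the color-preserving stabilizer of $\sigma_Q$ equals $\{\mathrm{Id},r\}\cong\mathbb{Z}/2$.

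The final step is to check that the half-turn $r$ is a symmetry of the entire $Q$-piece and not merely of $\sigma_Q$. For this I would note that $r$ is a combinatorial automorphism of the whole superimposition of chord diagrams: it preserves the adjacency relation and the Whitehead moves, sends long diagonals to long diagonals and short to short, fixes (as a line) each direction $\mathcal{L}$ to which the defining long diagonals are parallel, and fixes each $M$-signature setwise. Hence $r$ preserves all three ingredients in the construction of Definition~\ref{D:Q}: the one-move neighbours of $\sigma_Q$ carrying a long diagonal parallel to those of $\sigma_Q$, together with the $M$-signatures reached in a minimal number of moves; the symmetric branching displayed in Figure~\ref{detailLego4} is precisely the orbit pattern of this $r$. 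I expect the reflection exclusion to be the main obstacle: one must make the non-alignment of the two color bundles uniform in $d$, so that no color-preserving reflection can survive for any degree. The clean way to do this is exactly the angle argument above — a common reflection axis would have to be simultaneously an axis of two families of parallel chords whose directions differ by $\tfrac{\pi}{2d}\notin\tfrac{\pi}{2}\mathbb{Z}$, which is impossible — and combining it with the rotation bound of the second step yields the stated group $\langle r\mid r^{2}=\mathrm{Id}\rangle$.
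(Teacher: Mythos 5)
Your proof is correct under the natural reading of ``symmetries'' as colour-preserving isometries of the marked disc, but it takes a genuinely different, and in fact sharper, route than the paper. The paper's own proof is a one-paragraph symmetry-of-construction argument: each monochromatic diagram of the $Q$-diagram has a mirror line $\mathcal{L}$ parallel to its long diagonals, the Whitehead moves available on the right of $\mathcal{L}$ correspond bijectively to those on the left, hence the piece is invariant under $\langle r \mid r^{2}=Id\rangle$. It only \emph{exhibits} an order-$2$ invariance and never argues that no further symmetries exist, and it tacitly realises $r$ as the reflection in $\mathcal{L}$ --- which, as your own axis-misalignment argument shows, fixes the blue monochromatic diagram but moves the red one, so it is not literally a symmetry of the coloured $Q$-diagram; the involution the paper describes is exactly realised by your half-turn $(i,j)\mapsto(i+2d,j+2d)$, which does exchange the two sides of $\mathcal{L}$. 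What your approach buys is the missing upper bound: the intrinsic characterisation of $\sigma_{Q}$ as the unique signature of the piece with $d-2$ $S$-trees, the stabiliser computation in $D_{4d}$ ruling out all rotations except the half-turn, and the exclusion of reflections via the offset $\pi/(2d)\not\equiv 0,\tfrac{\pi}{2} \pmod{\pi}$ between the two colour bundles (the paper's Definition of a $Q$-diagram states this offset as $\pi/(4d)$, but either value serves your argument). One small repair: your claim that $r$ ``fixes each $M$-signature setwise'' holds only for $d$ even; for $d$ odd one has $2d\equiv 2 \pmod 4$ and the half-turn carries, e.g., $M_{1}$ to a different $M$-signature. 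This is harmless, since your argument needs only that $r$ \emph{permutes} the minimally attached $M$-signatures, which follows because $r$ fixes the core of the piece setwise and preserves Whitehead-move distance; similarly, for general $d$ the half-turn reverses the bundle of $d-2$ parallel long diagonals rather than interchanging just two of them.
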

\begin{proof}
Let us describe the construction of a $Q$-piece.  
Recall, from the previous lemmas, that the monochromatic diagrams in a $Q$-diagram have a mirror line, parallel to the long diagonals. 

The set of possible Whitehead moves applied to the set of pairs of diagonals on the right hand side of the reflection line of the $Q$-diagram, is the isomorphic to the set of possible Whitehead moves applied to the set of pairs of diagonals in the left hand side of the reflection line. Therefore, the set of adjacent signatures to the $Q$-diagram, via Whitehead moves, forming a $Q$-piece is invariant under the group $\langle r | r^{2}= Id \rangle$.






\end{proof}
 \begin{lemma}
 If $d$ is even, then the $Q$-piece has two reflections, in the sense of Coxeter groups. 
 \end{lemma}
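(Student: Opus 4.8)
The plan is to add, for even $d$, a second reflection to the one already produced in Lemma~\ref{L:O2}, and to trace both the existence of this second reflection and its failure in the odd case to the location of the mirror line $\mathcal{L}$. Recall that the $Q$-diagram is the superimposition of a blue and a red monochromatic diagram, that each is invariant under the reflection about its own mirror line, and that these mirror lines run parallel to the respective long diagonals; by the lemma computing the rotation group, when $d$ is even the mirror line lies strictly between the two central long diagonals $\frac{d-2}{2}$ and $\frac{d}{2}$, so it meets the bounding circle at midpoints of edges of the $4d$-gon and passes through no terminal vertex.

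First I would recall the reflection $r$ of Lemma~\ref{L:O2}: it is the reflection about the axis $\mathcal{L}$ bisecting the two monochromatic mirror lines, which interchanges the even (blue) and odd (red) terminal vertices and carries the blue monochromatic diagram onto the red one and conversely. Because $r$ matches the $\frac{\pi}{4d}$ angular offset between the two colours, it maps the superimposed $Q$-diagram to itself, and by the balance of Whitehead moves on the two sides of $\mathcal{L}$ it preserves the whole $Q$-piece.

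The new ingredient is the perpendicular axis $\mathcal{L}^{\perp}$ through the centre of the disc. For even $d$ the $d-2$ long diagonals of each colour are even in number and pair up symmetrically about $\mathcal{L}$ with none lying on it; the same combinatorics applied to $\mathcal{L}^{\perp}$ shows that the reflection $r'$ about $\mathcal{L}^{\perp}$ again interchanges the blue and red vertices and sends each blue long diagonal to a red one in a way compatible with the colour offset, and likewise permutes the short diagonals. Hence $r'$ preserves the superimposed $Q$-diagram, and the Whitehead-move argument of Lemma~\ref{L:O2}, applied now across $\mathcal{L}^{\perp}$, upgrades this to an invariance of the entire $Q$-piece. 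Since $\mathcal{L}$ and $\mathcal{L}^{\perp}$ are orthogonal, hence distinct, we have $r\neq r'$, so the $Q$-piece carries the two announced reflections, their product being the half-turn of the disc.

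The step I expect to be the main obstacle is verifying that $r'$ preserves the full $Q$-piece, short diagonals and all Whitehead-move-adjacent $F$-, $S$- and $FS$-signatures included, rather than merely the $Q$-diagram; this is exactly where the parity of $d$ intervenes. For even $d$ the central position of $\mathcal{L}$ strictly between two long diagonals makes $\mathcal{L}^{\perp}$ an edge-axis compatible with the paired long diagonals, so $r'$ is a genuine symmetry. For odd $d$ the mirror runs along the central diagonal $\frac{d-1}{2}$, the reflection about $\mathcal{L}^{\perp}$ no longer respects the structure incident to that fixed diagonal, and only the single reflection $r$ of Lemma~\ref{L:O2} survives, which is the contrast the present lemma records.
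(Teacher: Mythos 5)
You correctly locate the second reflection: your $r'$ about $\mathcal{L}^{\perp}$ is, up to composition with the reflection $r$ of Lemma~\ref{L:O2}, the half-turn $k\mapsto k+2d \mod 4d$, and this is precisely the symmetry the paper exploits when it pairs an $F$-tree with long diagonal $(i,j)$, $j=i+6 \mod 4d$, with its image $(i+2d,j+2d)$. But your justification of the decisive step --- and with it your explanation of why evenness of $d$ is needed --- does not hold up. You attribute the even/odd dichotomy to the position of the mirror line $\mathcal{L}$: strictly between the long diagonals $\frac{d-2}{2}$ and $\frac{d}{2}$ when $d$ is even, along the central diagonal $\frac{d-1}{2}$ when $d$ is odd, and you claim that in the odd case $r'$ ``no longer respects the structure incident to that fixed diagonal.'' This is false at the level at which you argue it: for $d=3$, with $Q$-diagram given by the blue chords $(0,6),(2,4),(8,10)$ and red chords $(1,7),(3,5),(9,11)$ on $4d=12$ vertices, the reflection $k\mapsto 1-k \mod 12$ about the perpendicular axis carries the superimposed diagram exactly onto itself (with colours exchanged, just as your $r$ does), central long diagonal included. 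So invariance of the $Q$-diagram under $r'$ holds for all $d$ and cannot distinguish the parities; your argument never actually produces the dichotomy the lemma asserts.

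The parity enters one level up, at the $M$-signatures belonging to the $Q$-piece, which is exactly the step you flag as ``the main obstacle'' and then dispose of with the incorrect mechanism. An $M$-signature consists of short diagonals $(i,i+2)$ with $i$ running over a fixed residue class modulo $4$; the half-turn shifts these to $(i+2d,i+2+2d)$, which lies in the same class iff $2d\equiv 0 \mod 4$, i.e. iff $d$ is even (for odd $d$ it interchanges the two $M$-configurations of each colour). This is what the paper means by ``the short diagonals of the opposite color are the same if $d$ is even,'' and it checks the fixing of the candidate mirror's strata concretely: the paired $F$-trees with long diagonals $(i,i+6)$ and $(i+2d,i+6+2d)$ are reduced by mirrored Whitehead moves to $(i,i+2)(i+4,i+6)$ and $(i+2d,i+2+2d)(i+4+2d,i+6+2d)$, and these constitute the \emph{same} $M$-signature precisely when $d$ is even, so that the second hyperplane genuinely passes across fixed $M$-strata --- as a wall of a Coxeter reflection must be fixed. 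Your proposal instead asserts that ``the Whitehead-move argument of Lemma~\ref{L:O2}, applied now across $\mathcal{L}^{\perp}$, upgrades'' diagram-invariance to $Q$-piece-invariance; without the computation on the short diagonals this is exactly the missing content. The gap is genuine, and the parity mechanism offered in its place is wrong.
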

 \begin{proof}
 From lemma~\ref{L:O2}, it is known that in a $Q$-piece there exists one reflection hyperplane, coming across the $Q$-diagram.
 Now, consider  $F$ signatures in the $Q$-piece such that there is one $F$ tree of long diagonal $(i,j)$ where $j=i+6\mod 4d$. Its reflection gives an $F$ signature with one $F$ tree of long diagonal $(i+2d,j+2d) \mod 4d$. Notice that the short diagonals of the opposite color are the same if $d$ is even. We modify $(i,j)$ by a Whitehead move with the diagonal $(i+2,i+4)\mod 4d$, so that it becomes $(i,i+2)$ $(i+4,i+6)$. By symmetry, we proceed on 
   $ (i+2d,j+2d)$ and $(i+2+2d,i+4+2d)$ so, that the Whitehead operation gives the pair $(i+2d,i+2+2d) (i+4+2d,i+6+2d) \mod 4d$. Hence, we obtain the same $M$ signatures. Therefore, there is a second reflection hyperplane coming across the $M$ signatures in the $Q$-piece.  
\end{proof}

\begin{lemma}\label{Pr:G}
Let $Z$ be a couple of adjacent $Q$-pieces and let $Y$ be the stratification. 
Then ${\bf p}:Y\to Z$ is a Galois covering, with Galois group of order $d$. 
\end{lemma}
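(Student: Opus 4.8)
The plan is to produce the Galois group as an explicit cyclic group of rotations and to recognise $Z$ as a fundamental domain for its action on $Y$. Write $\tau$ for the elementary one--notch rotation of the $4d$ boundary points, that is, the operation carrying a diagonal $(i,j)$ to $(i-1,j-1)\bmod 4d$; by Lemma~\ref{L:consec} and Corollary~\ref{C:cons} this is precisely the move relating the matrices $\left[\begin{smallmatrix}L_{0}\\L_{1}\end{smallmatrix}\right]$ and $\left[\begin{smallmatrix}L_{1}\\L_{1}-1\end{smallmatrix}\right]$ of two $Q$--diagrams in adjacent $Q$--pieces (the subtraction of $1$ rotates, and the exchange of rows records the attendant colour swap). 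Hence $\tau$ sends each $Q$--piece to the next one. Since there are $2d$ $Q$--pieces and the rotation group of a $Q$--diagram has order $2d$ (rotation by $\pi$ acting trivially on the diagram), $\tau$ generates a cyclic group of order $2d$ acting simply transitively on the set of $Q$--pieces; its index--two subgroup $G:=\langle\tau^{2}\rangle$, the group of \emph{colour--preserving} rotations, then has order $d$. By Lemma~\ref{L:rotateM} the elements of $G$ carry signatures to signatures, hence act on the whole stratification $Y$ by permuting strata, and $\mathbf{p}$ is taken to be the associated quotient $Y\to Y/G$, so that $G\cong\mathbb{Z}/d$.

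To identify $Y/G$ with $Z$, I would trace the $G$--orbits on the $2d$ $Q$--pieces. As $\tau^{2}$ advances the index by two, its orbits are exactly the even-- and the odd--indexed $Q$--pieces, i.e. two orbits of size $d$. Choosing the adjacent representatives $Q_{0},Q_{1}$ yields a fundamental domain, so $Y/G\cong Q_{0}\cup Q_{1}=Z$ and $\mathbf{p}$ has degree $d$. I would also record that the $\mathbb{Z}/2$ reflection symmetry of a $Q$--piece established in Lemma~\ref{L:O2} survives inside $Z$: it is the reflection part of the ambient Coxeter group and is \emph{not} folded by $\mathbf{p}$, which unfolds only the rotational symmetry. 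This is exactly why the Galois group is the cyclic rotation group $\mathbb{Z}/d$ and not a dihedral group.

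It then remains to show that $\mathbf{p}$ is a regular (Galois) covering. Regularity is automatic once $\mathbf{p}$ is seen as the quotient map of a free action, because $G$ is abelian and its generator is a deck transformation commuting with $\mathbf{p}$; equivalently $\mathbf{p}_{*}\pi_{1}(Y)\trianglelefteq\pi_{1}(Z)$ with quotient $G$. For the covering property I would check freeness of $G$ on the union of the generic strata that are \emph{not} of type $M$: a generic signature fixed by a nontrivial power $\tau^{2k}$ would have its chord system invariant under rotation by $\tfrac{k\pi}{d}$, and by the classification of generic signatures the only such rotation--invariant configurations are the four $M$--signatures. Away from these, every generic orbit has full size $d$, so over the corresponding open locus $\mathbf{p}$ is a genuine $d$--sheeted regular covering.

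The main obstacle, and the delicate point of the whole argument, is the behaviour at the $M$--signatures. The powers $\tau^{2k}$ do fix the $M$--configurations (on the polynomial side these are the rotationally symmetric elements such as $z^{d}-c$), so $\mathbf{p}$ is genuinely ramified there and is an honest covering only after this fixed locus is removed. What must be done carefully is to prove that the $M$--strata are precisely the walls along which the four $NC_{d}$ structures, and hence the $Q$--pieces, are glued, so that they lie in the \emph{boundary} of the fundamental domain $Z$ and are identified with the reflection hyperplanes $\mathcal{L}$ of Lemma~\ref{L:O2}, rather than sitting in the interior of any chamber. Once the fixed locus is pinned down in this way and shown to be of positive codimension, the quotient $Y\to Z$ restricts to a free $\mathbb{Z}/d$--action on its complement, which yields the asserted Galois covering; the rest is bookkeeping with the rotation action on the $Q$--diagram matrices.
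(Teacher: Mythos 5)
Your identification of the deck group is where the argument breaks. The subgroup $G=\langle\tau^{2}\rangle$ does not have order $d$ as a transformation group of $Y$: the fact you invoke, that rotation by $\pi$ acts trivially, holds only on the $Q$-\emph{diagram} itself (this is exactly the content of the lemma asserting that the group of rotations acting on a $Q$-diagram has order $2d$), not on the stratification. On $Y$ the half-turn $\tau^{2d}=(\tau^{2})^{d}$ is nontrivial --- for $d=4$ it carries the stratum $\left|\begin{smallmatrix}0\\10\end{smallmatrix}\right|$ to the distinct stratum $\left|\begin{smallmatrix}8\\2\end{smallmatrix}\right|$ --- and the paper's own remark after the decomposition in $Q$-pieces records that the identity is reached only after $4d$ iterations of the one-notch rotation, not $2d$. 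Hence $\langle\tau^{2}\rangle$ has order $2d$ on $Y$, and its quotient is not $Z=Q_{0}\cup Q_{1}$ but $Z$ folded by the involution $\tau^{2d}$, which is precisely the symmetry of Lemma~\ref{L:O2} and realizes the twist visible in Figure~\ref{F:Q}, where traversing all $2d$ pieces returns you to the start with left and right halves exchanged. Passing instead to the honest order-$d$ rotation subgroup $\langle\tau^{4}\rangle$ does not repair this: its translates of $Z$ cover only the pieces with indices congruent to $0,1$ modulo $4$ and do not tile $Y$. So the fundamental-domain structure your proof rests on does not exist for the group you wrote down; you have conflated the order of $\tau^{2}$ as a permutation of the set of $Q$-pieces (which is $d$) with its order as a homeomorphism of $Y$ (which is $2d$).

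Your freeness analysis fails as well, in a way that undermines the ramification discussion in your final paragraph. It is not true that the only generic signatures invariant under a nontrivial rotation are the four $M$-signatures: every $Q$-diagram is itself a \emph{generic} signature fixed by the half-turn, since each of its monochromatic diagrams consists of the $d$ mutually parallel chords on the $2d$ points of one colour and is therefore preserved by $i\mapsto i+2d \bmod 4d$ (check $\left[\begin{smallmatrix}1,11&3,9\\0,10&2,8\end{smallmatrix}\right]$ for $d=4$); likewise the $M$-signatures are fixed not by isolated elements but by the whole order-$d$ subgroup $\langle\tau^{4}\rangle$, since subtracting $4$ permutes the chords $(4m,4m+2)$. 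Thus the fixed locus of your rotations meets the \emph{interior} of every $Q$-piece at its central $Q$-diagram, not merely the $M$-walls, so your plan of excising a wall locus and covering the rest cannot go through; and even if it did, you would have established a covering only over the complement of a positive-codimension set, which is strictly weaker than Lemma~\ref{Pr:G} as stated. For contrast, the paper's proof takes none of this route: it sets $Y=\bigcup_{\rho\in G}Z^{\rho}$ for an abstract cyclic $G$ of order $d$, observes that fibres of $\mathbf{p}$ consist of rotation-equivalent signatures, and verifies the torsor criterion directly, namely that $(g,z)\mapsto(z,gz)$ is a homeomorphism $G\times Y\to Y\times_{Z}Y$, with continuity coming from continuity of the action --- a formal verification with no fixed-point analysis (one may object that its injectivity step silently presumes the action is free, but that is the paper's argument, whereas yours both misidentifies the group and miscounts its fixed strata).
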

\begin{proof}

Let us define ${\bf p}:Y\to Z$ where $Y=\cup_{\rho \in G}Z^{\rho}$ and $G$ is a cyclic group of finite order $d$, where $Y$ is connected. 
Each inverse image ${\bf p}^{-1}(A_{\sigma})$ of $A_{\sigma}\in Z$ is constituted from classes, having signatures which are equivalent up to a rotation of $\sigma$. So, any action $\rho \in Aut_{Z}(Y)$ on $\sigma$ gives a signature $\sigma'$ which belongs to ${\bf p}^{-1}(A_{\sigma})$.
We show that for $Y\times_{Z} Y =\{(z,z')\in Y\times Y, {\bf p}(z)={\bf p}(z')\}$, the map 
\[\phi: G\times Y \to Y\times_{Z} Y, \]
\[(g,z)\mapsto (z,gz)\] is a homeomorphism. 
\begin{itemize}
\item The map is a bijection. 
First let us show that the map is injective. Consider $\phi(g,z)=\phi(g',z')$ i.e. $(z,gz)=(z',g'z')\in Y\times_{Z} Y$. Then, $z=z'$ and $g'=g$ and in particular $(g,z)=(g',z')$ in $G\times Y$, so the map is injective. The map is surjective since for every $(z,gz)\in Y\times_{Z} Y$, there exists at least one element in $(g,z)\in G\times Y$ such that $\phi((g,z))=(z,gz)$. 
\item The map is bicontinuous because the group $G$ continuously acts on $Y$.
\end{itemize}
So, the map ${\bf p}:Y\to Z$ satisfies the definition of a Galois covering for an order $d$ Galois group.
\end{proof}

\begin{lemma}\label{L:rotate}
Let $S_1=[\begin{smallmatrix}L_{0}+2d \\L_{0}+2d-1 \end{smallmatrix}]$ and $S_2=[\begin{smallmatrix}L_{0}-2d \\L_{0}-2d-1 \end{smallmatrix}]$ be two $S$ signatures.
 Then, $S_1=S_2$. 
\end{lemma}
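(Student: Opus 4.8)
The plan is to reduce the claimed identity to an elementary congruence modulo $4d$, exploiting the fact that every vertex label of a signature lives in $\mathbb{Z}/4d\mathbb{Z}$. Recall from Lemma~\ref{L:rotateM} that, for the $2d$-gon of blue (resp. red) terminal vertices, adding an even integer $2k$ to every label occurring in a monochromatic diagram realizes the rotation $r^{k}$ through the angle $\frac{k\pi}{2d}$, all labels being understood modulo $4d$. In the matrix notation this means that a matrix entry is really an element of $\{0,\dots,4d-1\}\cong\mathbb{Z}/4d\mathbb{Z}$, so two matrices whose entries agree after reduction modulo $4d$ are literally the same matrix, and hence, by the Remark asserting that each signature is associated to one unique matrix, index the same $S$-signature.

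First I would record the decisive congruence. By Lemma~\ref{L:rotateM} the rotation $r^{2d}$ through the angle $\pi$ sends a diagonal $(i,j)$ to $(i+4d,j+4d)\equiv(i,j)\pmod{4d}$, so $r^{2d}=\mathrm{Id}$ at the level of labels. In purely arithmetic terms this is the statement
\[
2d \equiv -2d \pmod{4d},
\]
which holds because $(2d)-(-2d)=4d\equiv 0\pmod{4d}$.

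Next I would apply this congruence entrywise to both rows of the two matrices. The top row of $S_1$ is $L_{0}+2d$ and the top row of $S_2$ is $L_{0}-2d$; by the displayed congruence these two lists of (even) labels coincide modulo $4d$, hence describe the very same blue monochromatic diagram. Subtracting $1$ from each entry preserves the congruence, so the bottom rows $L_{0}+2d-1$ and $L_{0}-2d-1$ likewise agree modulo $4d$ and describe the same red monochromatic diagram. Therefore the matrices of $S_1$ and $S_2$ are identical, and by uniqueness of the matrix associated to a signature we conclude $S_1=S_2$.

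There is essentially no obstacle here: the only point that needs checking is that reduction modulo $4d$ is compatible with the passage from matrices to signatures, which is immediate from Definition~\ref{De:1} together with the uniqueness Remark. The whole content of the lemma is that a rotation by $\pi$ of the superimposed monochromatic diagrams is invisible at the level of labels in $\mathbb{Z}/4d\mathbb{Z}$, precisely because the labelling is $4d$-periodic and $2d$ and $-2d$ are congruent modulo $4d$.
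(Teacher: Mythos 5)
Your proof is correct and follows essentially the same route as the paper: both arguments reduce the claim to the congruence $2d\equiv -2d \pmod{4d}$ applied entrywise to the labels of each diagonal, so that the matrices of $S_1$ and $S_2$ coincide modulo $4d$ and hence index the same signature. Your write-up merely makes explicit two points the paper leaves implicit, namely the appeal to Lemma~\ref{L:rotateM} for the rotation interpretation and to the uniqueness of the matrix associated to a signature.
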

\begin{proof}
Consider any diagonal $(i,j)\in L_0$, where $i,j$ are integers of the same parity modulo $4d$. We have $i\equiv i \mod 4d\iff i+4d\equiv i\mod 4d \iff i+2d\equiv i-2d\mod 4d$.
Proceeding similarly for $j$ we have that $S_1=S_2$.
\end{proof}

\noindent \underline{{\bf Decomposition in $Q$-pieces}}

Consider a $Q$-piece. Let LHS (resp. RHS) denote the left (resp. right) hand side of the $Q$-piece about its reflection axis; let $S_1$ (resp. $S_2$) be a signature in the LHS  (resp. RHS) of the $Q-$piece, such that $S_1=[\begin{smallmatrix}L_{0} \\L_{0}-1 \end{smallmatrix}],S_2=[\begin{smallmatrix}L_{0}+2d \\L_{0}-1+2d \end{smallmatrix}]$. 
Let us discuss the relations between each pair of adjacent $Q$-piece. 
\begin{enumerate}
\item Consider a $Q_1$-piece. Any signature $\sigma_1$ in the LHS is associated to $[\begin{smallmatrix}L_0\\L_0-1\end{smallmatrix}]$. Its reflection $r(\sigma_1)$ in the RHS is $\sigma_1$ rotated by an angle $\pi$ and is given by $[\begin{smallmatrix} L_0+2d\\ L_0-1+2d \end{smallmatrix}]$. 
Consider the adjacent $Q_2$-piece. Applying lemma~\ref{L:consec}: any signature $\sigma_2$ of the $Q_2$-piece is obtained by the copy-paste and turn step. Therefore, if $\sigma_2$ lies in the LHS part of the $Q_2$-piece then, its matrix is $[\begin{smallmatrix}L_0-1\\L_0-2\end{smallmatrix}]$. Its reflection in the RHS part is $[\begin{smallmatrix} L_0+2d-1\\ L_0-1+2d-2 \end{smallmatrix}].$
\\
\item More generally, take any signature in the LHS of the $i$-th $Q$-piece. Its matrix is $[\begin{smallmatrix}L_0-i+1\\L_0-i\end{smallmatrix}]$ and its reflection in RHS has matrix $[\begin{smallmatrix} L_0-i+1+2d\\ L_0-i+2d \end{smallmatrix}]$. Applying lemma~\ref{L:consec}, the consecutive signatures are respectively of type $[\begin{smallmatrix}L_0-i\\L_0-(i+1)\end{smallmatrix}]$ and $[\begin{smallmatrix} L_0-i+2d\\L_0-(i+1)+2d \end{smallmatrix}]$.
\\
\item For the $(2d-1)$-th $Q_{2d-1}$-piece, we have $[\begin{smallmatrix}L_0-(2d-1)\\L_0-2d\end{smallmatrix}]$ and $[\begin{smallmatrix} L_0+2d-(2d-2)\\ L_0+2d-1-(2d-1) \end{smallmatrix}]$.
The consecutive signatures are respectively $[\begin{smallmatrix}L_0-(2d-1)\\L_0-2d \end{smallmatrix}]$ and $[\begin{smallmatrix} L_0+2d-2d\\ L_0+2d-1-2d \end{smallmatrix}]$.
  
\begin{remark}
Note that, after $4d$ iterations of this procedure the identity is obtained.  
\end{remark}


\end{enumerate}
Below we present a part of the collection of glued $Q$-pieces, the connections are represented by thin vertical double arrow, deformations by a thick horizontal double arrow. The final structure is obtained by glueing the top and the bottom of Fig~\ref{F:Q}, along the long and thin horizontal arrows.
\begin{figure}[h]
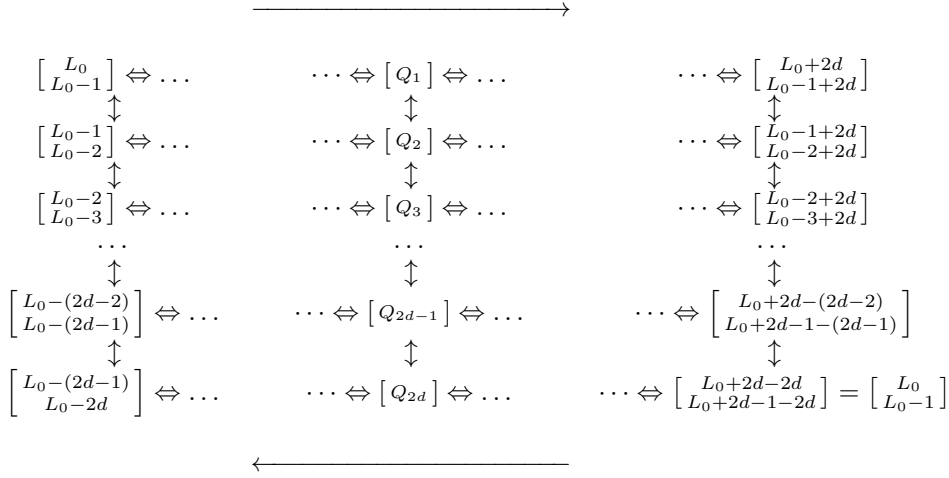

\begin{center}
\begin{tabular}{c c c }
&$\xrightarrow{\hspace*{4cm}} $&\\
& \\
$\left[\begin{smallmatrix}L_0\\L_0-1\end{smallmatrix}\right]\Leftrightarrow \dots$& $\dots\Leftrightarrow \left[\begin{smallmatrix}Q_{1}\end{smallmatrix}\right]\Leftrightarrow\dots$&$\dots\Leftrightarrow\left[\begin{smallmatrix} L_0+2d\\ L_0-1+2d \end{smallmatrix}\right]$\\
$\updownarrow$&  $\updownarrow$&$\updownarrow$ \\
 $\left[\begin{smallmatrix}L_0-1\\L_0-2\end{smallmatrix}\right] \Leftrightarrow\dots $ & $\dots\Leftrightarrow\left[\begin{smallmatrix}Q_{2}\end{smallmatrix}\right]\Leftrightarrow\dots $&$\dots \Leftrightarrow\left[\begin{smallmatrix} L_0-1+2d\\ L_0-2+2d \end{smallmatrix}\right]$\\
 $\updownarrow$&  $\updownarrow$&$\updownarrow$ \\
$\left[\begin{smallmatrix}L_0-2\\L_0-3\end{smallmatrix}\right] \Leftrightarrow\dots $ & $\dots \Leftrightarrow\left[\begin{smallmatrix}Q_{3}\end{smallmatrix}\right]\Leftrightarrow\dots $&$\dots \Leftrightarrow\left[\begin{smallmatrix} L_0-2+2d\\ L_0-3+2d \end{smallmatrix}\right]$\\
$\dots$&$\dots$ & $\dots$\\
$\updownarrow$&  $\updownarrow$&$\updownarrow$ \\
$\left[\begin{smallmatrix}L_0-(2d-2)\\L_0-(2d-1)\end{smallmatrix}\right] \Leftrightarrow\dots $ & $\dots \Leftrightarrow\left[\begin{smallmatrix}Q_{2d-1}\end{smallmatrix}\right]\Leftrightarrow\dots $&$\dots \Leftrightarrow\left[\begin{smallmatrix} L_0+2d-(2d-2)\\ L_0+2d-1-(2d-1) \end{smallmatrix}\right]$\\
$\updownarrow$&  $\updownarrow$&$\updownarrow$ \\
$\left[\begin{smallmatrix}L_0-(2d-1)\\L_0-2d\end{smallmatrix}\right] \Leftrightarrow\dots $ & $\dots \Leftrightarrow\left[\begin{smallmatrix}Q_{2d}\end{smallmatrix}\right]\Leftrightarrow\dots $&$\dots \Leftrightarrow\left[\begin{smallmatrix} L_0+2d-2d\\ L_0+2d-1-2d \end{smallmatrix}\right]=\left[\begin{smallmatrix} L_0\\ L_0-1\end{smallmatrix}\right]$\\
 & \\
 &$\xleftarrow{\hspace*{4cm}} $&\\
\end{tabular}
\end{center}
\caption{Detail of the collection of glued $Q$-pieces.}\label{F:Q}
\end{figure}

\begin{theorem}\label{Th:mob}
The decomposition of $\dpol_{d}$ in $4d$ ($d>2$) Weyl-Coxeter-chambers, is induced by the topological stratification of $\dpol_{d}$ in elementa and is invariant under the the Coxeter group given by the presentation:
\[W=\langle r,s_1,...,s_{2d} | r^2=Id, (rs_j)^{2p}=Id, p|d\rangle, p\in \mathbb{N}^{*}.\]
\end{theorem}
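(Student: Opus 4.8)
The strategy is to assemble the global Coxeter symmetry of the stratification from the three local/structural facts already established: the order-$2d$ rotational symmetry of a single $Q$-diagram, the reflection symmetry $\langle r\mid r^2=\mathrm{Id}\rangle$ of each $Q$-piece (Lemma~\ref{L:O2}), and the consecutive-matrix relation linking adjacent $Q$-pieces (Lemma~\ref{L:consec} and Corollary~\ref{C:cons}). The generators $r,s_1,\dots,s_{2d}$ will be realized geometrically: $r$ is the reflection about the mirror line $\mathcal{L}$ of a fixed $Q$-piece, and each $s_j$ is the reflection hyperplane separating the $j$-th pair of adjacent $Q$-pieces, i.e.\ the reflection realizing the copy-paste-and-rotate passage of Corollary~\ref{C:cons} followed by the reflection. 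First I would verify $r^2=\mathrm{Id}$, which is immediate from Lemma~\ref{L:O2}. Then I would identify each product $rs_j$ with the elementary rotation by $\tfrac{\pi}{2d}$ carrying one $Q$-diagram to the consecutive one, using Lemma~\ref{L:rotateM} to write this rotation on matrices as the shift $(i,j)\mapsto(i+2,j+2)\bmod 4d$.

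The core computation is the order of $rs_j$. By Lemma~\ref{L:consec}, passing through a pair of adjacent $Q$-pieces and composing with the reflection corresponds on the matrix $[\begin{smallmatrix}L_0\\L_0-1\end{smallmatrix}]$ to subtracting $1$ from each entry modulo $4d$; iterating the decomposition displayed in Figure~\ref{F:Q}, after traversing all $2d$ $Q$-pieces one returns to the starting matrix (the remark after item (3) of the decomposition records that $4d$ iterations yield the identity). Lemma~\ref{L:rotate} is precisely the statement that the half-turn by $\pi$, i.e.\ the shift by $2d$, acts trivially on an $S$-signature because $i+2d\equiv i-2d\bmod 4d$; this collapses the apparent order and pins down the relation $(rs_j)^{2p}=\mathrm{Id}$. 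The divisibility constraint $p\mid d$ arises from Lemma~\ref{Pr:G}: the Galois covering ${\bf p}:Y\to Z$ over a pair of adjacent $Q$-pieces has cyclic Galois group of order $d$, so the periodicity of the generic signatures under the rotation subgroup must be a divisor of $d$, forcing $p\mid d$. I would therefore argue that for each admissible $p$ dividing $d$ there is an orbit of signatures whose $rs_j$-stabilizer has order $2p$, and that these exhaust the relations.

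Concretely, the steps in order are: (1) fix a reference $Q$-piece and its mirror $\mathcal{L}$, set $r$ to be reflection in $\mathcal{L}$ and invoke Lemma~\ref{L:O2} for $r^2=\mathrm{Id}$; (2) define $s_j$ as the reflection gluing the $j$-th and $(j{+}1)$-th adjacent $Q$-pieces and, via Corollary~\ref{C:cons}, identify $rs_j$ with the matrix shift $L\mapsto L-1\bmod 4d$ composed appropriately, which by Lemma~\ref{L:rotateM} is the rotation by $\tfrac{\pi}{2d}$; (3) compute the order of this rotation acting on the four classes $M,F,S,FS$ of generic signatures, using Lemma~\ref{L:rotate} to see that the $\pi$-rotation fixes $S$-type diagrams and hence that the order is of the form $2p$ with $p\mid d$; (4) use the $2d$ $Q$-pieces (Lemma asserting there are $2d$ of them) together with the $4$ exterior $NC_d$ structures to account for all $4d$ Weyl-Coxeter chambers and to check that no further relations are needed, so that the abstract group with the given presentation acts simply transitively on the chambers; (5) conclude invariance of the whole elementa stratification, since every stratum is obtained from a chamber representative by a sequence of Whitehead moves compatible with these reflections.

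\textbf{Main obstacle.} The delicate point is Step~(3)--(4): proving that the relations $(rs_j)^{2p}=\mathrm{Id}$ with $p\mid d$ are \emph{exactly} the relations, i.e.\ that the group generated by $r,s_1,\dots,s_{2d}$ acting on the stratification is neither larger nor smaller than the presented $W$. One must show completeness of the relations (no hidden identities among the $s_j$ beyond those forced by the cyclic structure of Figure~\ref{F:Q}) and that the divisibility $p\mid d$ captures every fixed-point order of $rs_j$ on signatures, rather than just producing some such $p$. This requires carefully tracking, for each signature type and each period $p\mid d$, which $S$-symmetry of Lemma~\ref{L:rotate} applies, and verifying via the Galois covering of Lemma~\ref{Pr:G} that the fiber cardinality $d$ matches the rotational periods. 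The reflection-relation bookkeeping, linking the combinatorial matrix shifts modulo $4d$ to genuine Coxeter braid relations, is where the real work lies; everything else is an application of the already-established lemmas.
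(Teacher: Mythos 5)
Your realization of the generators is the paper's own: the paper takes $r$ to be the mirror symmetry of a $Q$-piece (Lemma~\ref{L:O2}), obtains a reflection hyperplane between each pair of adjacent $Q$-pieces from the copy-paste and copy-paste-and-rotate maps of Definition~\ref{D:Q} together with Lemma~\ref{L:consec}, counts $2d$ such hyperplanes plus the single vertical mirror, and appeals to Lemma~\ref{Pr:G} for the cyclic symmetry. Your steps (1), (2), (3) and (5) therefore reproduce the paper's proof, and in places more carefully: the paper never actually verifies the exponent relations $(rs_j)^{2p}=Id$ beyond the periodicity remark following the glued-$Q$-piece table of Figure~\ref{F:Q}, whereas you track the matrix shift $L\mapsto L-1 \bmod 4d$ via Corollary~\ref{C:cons} and Lemma~\ref{L:rotateM} and use Lemma~\ref{L:rotate} to collapse the period, which is the correct mechanism.

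The genuine gap is your step (4), and it is a gap you introduced rather than one the theorem demands. The statement asserts only that the stratification is \emph{invariant} under the presented group $W$; it does not assert that the presentation is exact, nor that $W$ acts simply transitively on chambers. Your claim that ``the abstract group with the given presentation acts simply transitively on the chambers'' would fail: the presentation imposes no relation $s_j^2=Id$ and no relation at all between distinct generators $s_i$ and $s_j$, so $W$ is infinite (the subgroup generated by $s_1,s_2$ is free), and an infinite group cannot act simply transitively on the finite set of $4d$ chambers --- at best the action factors through a finite quotient. Likewise the exhaustiveness claim (``these exhaust the relations'') is neither provable from the cited lemmas nor needed. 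A smaller slip: the $4d$ chambers arise from halving each of the $2d$ $Q$-pieces along its mirror (the paper's corollary states one $Q$-piece is the union of two chambers), not from adjoining the four exterior $NC_d$ structures, which contribute strata but no additional chambers. If you delete step (4) and the simple-transitivity assertion, what remains of your proposal is precisely the paper's argument for invariance, at the paper's level of rigor.
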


\begin{proof}
Let us recall from lemma~\ref{L:O2} that each $Q$-piece is invariant under an automorphism group of order 2. Moreover, it follows from the proposition~\ref{Pr:G} that the stratification is invariant under a cyclic group of order $2d$. 
We show that the stratification is invariant under the Coxeter group $W=\langle r,s_1,...,s_{2d} | r^2=Id, (rs_j)^{2p}=Id, p|d\rangle$, using the $Q$-procedure described previously.

Between any pair of adjacent $Q$-pieces there exists a reflection hyperplane.  Let us consider the pair of $i$-th and $(i+1)$ adjacent $Q$-pieces, where $i\in \{0,...,2d-1\}$. Denote this pair by $(Q_{i},Q_{i+1}$).

The the set of blue (resp. red) monochromatic diagrams are bijectively mapped, by the Identity, to the set of blue (resp. red) monochromatic diagrams in the $Q_{i+1}$-piece: this is the copy and paste step. As for the set of red (resp. blue) monochromatic diagrams, they are bijectively mapped to the set of red (resp. blue) monochromatic diagrams in the $Q_{i+1}$-piece as follows: after making a copy and paste step, red (resp. blue) diagrams are rotated by $\pi$ about the center of the diagram. In other words, each red monochromatic diagram in the $Q_{i+1}$-piece is symmetric to its pre-image in the $Q_{i}$-piece, about the vertical reflection axis of the blue (resp. red) monochromatic diagram in the $Q$-diagram. 

Therefore, we have a bijection which is order preserving between from the set of signatures in the $Q_i$-piece to the set of signatures in the  $Q_{i+1}$-piece: the incidence relations in both $Q$-pieces are preserved.


From the definition~\ref{D:Q} of adjacent $Q$-pieces it follows that the adjacent $Q$-pieces are symmetric about a reflection hyperplane.

The next $Q$-piece, indexed $(i+2)$, is obtained by making a copy and paste step from the set of signatures in the $i$-th $Q$-piece and by rotating those signatures by $\frac{\pi}{2d}$ about the center of the signature. 
Using the same argument as previously, we have thus, $2d$ reflection hyperplanes.
to conclude, this implies that there exist $2d$ horizontal reflection hyperplanes and one vertical reflection hyperplane.
\end{proof}

\begin{corollary} 
For $d>2$, there exist $4d$ chambers in this stratification, in the sense of Weyl-Coxeter. One $Q$-piece is the union of two chambers.
\end{corollary}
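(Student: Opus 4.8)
The plan is to read off the chamber count directly from two facts already established about $Q$-pieces: their number and their internal reflection symmetry. First I would recall that, by the lemma counting $Q$-pieces, the decomposition of $\dpol_d$ consists of exactly $2d$ $Q$-pieces, arranged cyclically as in Figure~\ref{F:Q} (the top and bottom rows being glued along the long horizontal arrows). Then, invoking Lemma~\ref{L:O2}, each $Q$-piece is invariant under the order-two group $\langle r \mid r^2 = Id\rangle$, where $r$ is the reflection across the mirror line $\mathcal{L}$ parallel to the long diagonals of the underlying $Q$-diagram. This mirror separates each $Q$-piece into its LHS and RHS, and $r$ interchanges them, so the two halves are isometric copies of one another.

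Next I would identify each half of a $Q$-piece with a single Weyl-Coxeter chamber. By definition a chamber is a fundamental domain of the reflection group cut out by the reflection hyperplanes, that is, a connected component of the complement of the hyperplane arrangement. From the proof of Theorem~\ref{Th:mob}, the reflection hyperplanes meeting a fixed $Q$-piece are precisely its own vertical mirror $\mathcal{L}$ together with the two horizontal hyperplanes bounding it against its neighbouring $Q$-pieces. Consequently the LHS (resp. RHS) of the $Q$-piece is one full connected region of the complement, and $r$ maps the LHS onto the RHS, identifying the two halves across $\mathcal{L}$; this exhibits each half as a fundamental domain for the local $\langle r\rangle$-action, hence as a chamber.

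Finally, the count is immediate: $2d$ $Q$-pieces, each split by its mirror into two chambers, yield $2\cdot 2d = 4d$ chambers, and by construction the LHS and RHS of a single $Q$-piece together reconstitute that $Q$-piece, so one $Q$-piece is the union of exactly two chambers. The step I expect to require the most care is verifying that no reflection hyperplane other than $\mathcal{L}$ crosses the interior of a half $Q$-piece, so that each half is genuinely a single connected component of the complement rather than being further subdivided or identified with itself. This follows from the explicit $Q$-piece construction used in Lemma~\ref{L:O2} and from the list of reflection hyperplanes produced in the proof of Theorem~\ref{Th:mob}; it is the one point where one must check that the topological stratification in elementa interacts cleanly with the reflection arrangement, and it can be corroborated by Lemma~\ref{Pr:G}, whose degree-$d$ Galois covering of a pair of adjacent $Q$-pieces independently forces the total of $2d$ $Q$-pieces and hence the $4d$ chambers.
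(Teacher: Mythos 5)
Your proof is correct and takes essentially the same route as the paper: the corollary is meant to be read off from Theorem~\ref{Th:mob} exactly as you do, combining the lemma giving $2d$ $Q$-pieces with the order-two reflection symmetry of Lemma~\ref{L:O2}, so that each $Q$-piece is split by its mirror $\mathcal{L}$ into two chambers and the total is $2\cdot 2d = 4d$. The paper states the corollary without a separate proof, and the one point you flag as delicate --- that no reflection hyperplane other than $\mathcal{L}$ crosses the interior of a half $Q$-piece --- is precisely the implicit content of the hyperplane inventory ($2d$ horizontal hyperplanes between adjacent $Q$-pieces plus the vertical one) established in the proof of Theorem~\ref{Th:mob}.
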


\begin{corollary}\label{Th:S} 
Let $\mathcal{W}$ be the inclusion diagram. Then $\mathcal{W}$ is invariant under the Klein group  $\mathbb{Z}_{2}\rtimes\mathbb{Z}_{2}$.
\end{corollary}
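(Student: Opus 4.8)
The plan is to realize the Klein four-group explicitly as a subgroup of the Coxeter group $W$ of Theorem~\ref{Th:mob} and to check that its two generators act on the cell complex $\mathcal{W}$ by order-preserving cellular automorphisms. First I would recall that, by Theorem~\ref{Th:mob}, the whole stratification of $\dpol_d$ in elementa is $W$-invariant; since $\mathcal{W}$ is by Definition~\ref{D:dual} the nerve built from this stratification (its $k$-faces being in bijection with the codimension-$k$ signatures, with incidence given by $\prec$), any symmetry of the disc $\mathbb{D}$ that permutes the $4d$ marked points, preserves the blue/red partition of Definition~\ref{De:1}, and commutes with the half-Whitehead moves automatically induces a cellular automorphism of $\mathcal{W}$. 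So the whole task reduces to producing two commuting involutions of this kind.

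Next I would single out two such symmetries. Let $r$ be the vertical reflection of a $Q$-piece (Lemma~\ref{L:O2}), whose axis is the mirror line $\mathcal{L}$ parallel to the long diagonals; on the labels it acts by $i \mapsto c - i \pmod{4d}$ with $c$ even, hence it preserves parity and therefore the two colours (this is exactly the mirror symmetry of the monochromatic diagrams used in Lemma~\ref{L:O2}). Let $\zeta$ denote the central symmetry $z \mapsto -z$ of $\mathbb{D}$, i.e. the rotation by $\pi$ acting by $i \mapsto i + 2d \pmod{4d}$; as $2d$ is even this again preserves parity, and $\zeta$ lies in $W$ as the order-two rotation $(rs_j)^{p}$ produced (for a suitable $j$) by the relation $(rs_j)^{2p} = Id$, since a product of two reflections is a rotation. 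Because $\zeta = -\mathrm{Id}$ is central in $O(2)$ it commutes with $r$, and both are involutions, so $r\zeta = \zeta r$ is a third involution — geometrically the horizontal reflection through the $M$-signatures. Thus $\langle r, \zeta\rangle = \{\mathrm{Id}, r, \zeta, r\zeta\}$, and since $\mathrm{Aut}(\mathbb{Z}_2)$ is trivial the only group $\mathbb{Z}_2 \rtimes \mathbb{Z}_2$ is the direct product, namely the Klein four-group.

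Then I would verify the two requirements of the first paragraph for $r$ and $\zeta$. For codimension invariance I would invoke Definition~\ref{D:cod}: the real codimension of $A_\sigma$ is $\sum (2\deg(\bar{\bar v}) - 3)$ over the interior multi-vertices, and an isometry of $\mathbb{D}$ preserves vertex degrees, hence sends a codimension-$k$ signature to one of codimension $k$. For order preservation I would use that $\prec$ is generated by contracting half-Whitehead moves (Definition~\ref{D:2.18}): a colour-preserving isometry $g \in \{r, \zeta\}$ conjugates such a move to another such move, so $\sigma \prec \tau$ if and only if $g\sigma \prec g\tau$. Together these show $r$ and $\zeta$ map $k$-faces to $k$-faces respecting incidence, i.e. act cellularly on $\mathcal{W}$, so $\mathcal{W}$ is invariant under $\langle r,\zeta\rangle$. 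I would close by matching this to the exterior structure of Figure~\ref{F:NC}: $r$ and $r\zeta$ are exactly the vertical and horizontal reflection axes (the bridge lines) swapping the left/right and top/bottom $NC_d$ beads, $\zeta$ is the central symmetry, and the four group elements permute $\{M_1, M_2, M_3, M_4\}$ accordingly.

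The main obstacle I anticipate is not the group theory but making precise that one gets the Klein four-group and not the full dihedral group $D_4$ of the square in Figure~\ref{F:NC}. The point to emphasise is that a diagonal reflection of that square sends a blue $NC_d$ bead to a red one, i.e. it acts on labels by $i \mapsto c - i$ with $c$ odd, exchanging the two colours and therefore violating the blue/red structure of Definition~\ref{De:1}; such a map does not send signatures to signatures and is excluded by the criterion of the first paragraph. Hence only the two colour-preserving reflections and their product survive, which is precisely why the symmetry realized on $\mathcal{W}$ is the Klein group rather than $D_4$.
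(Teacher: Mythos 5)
Your proof is correct and follows essentially the route the paper intends: the corollary is stated without a proof of its own, as a consequence of Theorem~\ref{Th:mob}, and your two generators --- the vertical reflection of a $Q$-piece from Lemma~\ref{L:O2} and the central symmetry $i\mapsto i+2d \bmod 4d$ (cf.\ Lemma~\ref{L:rotate} and the $2d$ horizontal reflection hyperplanes in the proof of Theorem~\ref{Th:mob}) --- are exactly the symmetries produced there. Your supplementary verifications (colour preservation forcing the reflection centre $c$ to be even, codimension preservation via Definition~\ref{D:cod}, equivariance of half-Whitehead moves hence of $\prec$, and the commutation $c-i-2d\equiv c-i+2d \bmod 4d$), as well as your remark excluding the colour-swapping diagonal reflections, simply make explicit what the paper leaves implicit and correctly explain why the group acting on $\mathcal{W}$ is the Klein four-group rather than the full dihedral group of the square in Figure~\ref{F:NC}.
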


\subsection{Braid towers}
We have shown the existence of a stratification of $\dpol_d$, which forms a decomposition of this space, which is invariant under a Coxeter group. 
Since $\dpol_d$ is isomorphic to the configuration space of $d$ marked points on the complex plane, denoted $Conf(\mathbb{C})_d$, we have therefore a stratification of this configuration space which is invariant under this polyhedral Coxeter group. From the other side, configuration spaces are related to the theory of braids. By a result of Fadell, Fox, Neuwirth~\cite{FaN62, FoN62} it is well known that the braid group with $d$ strands is the fundamental group of the configuration space $Conf(\mathbb{C})_d$. It is interesting to explain our geometric approach to braid theory.
\begin{theorem}
Any braid relations following from the natural inclusion of braids $i^{\star}: B_d  \hookrightarrow B_{d+1}$ can be described using the decomposition in $Q$-pieces. 
\end{theorem}
\begin{proof}
\hspace{1cm}(i) The first application of the decomposition in $Q$-pieces appears in the case of the embedding $\dpol_{d}\hookrightarrow  \dpol_{d+1}$ (table~\ref{T:embed}). More concretely, this $Q$-decomposition lists and enumerates all the possible intertwinings of the roots in $\dpol_{d}$ in an explicit and geometric way. Indeed, starting with a given configuration of polynomial roots (corresponding to the one in a $Q$-diagram), we obtain all the possible intertwinings that may happen between the roots.
\small{\begin{table}[h]
\begin{center}
\renewcommand{\arraystretch}{1.5}
\begin{tabular}{|c|c|c|c|c|c|c|c|}\hline
d &3&4&5&6&7  \\ \hline
mod&12&16&20&24&28  \\
&$0\sim 12$&$0\sim 16$&$0\sim 20$&$0\sim 24$&$0\sim 28$  \\ \hline

      &&&&&$\left[\begin{smallmatrix} 21,15&23,13&25,11&27,9\,&1,7\, \\20,14&22,12&24,10&26,8&28,6\,\end{smallmatrix}\right]$  \\
      
    &&&&&$\left[\begin{smallmatrix} 20,14&22,12&24,10&26,8\,&28,6\,\\19,13&21,11&23,9\,&25,7\,&27,5\,\end{smallmatrix}\right]$  \\\cline{5-5}
  
       &&&&$\left[\begin{smallmatrix} 19,13&21,11&23,13&\,1,7\ \\18,12&20,10&22,12&24,6\end{smallmatrix}\right]$&$\left[\begin{smallmatrix} 19,13&21,11&23,9\,&25,7\,&27,5\,\\18,12&20,10&22,8\,&24,6\,&26,4\,\end{smallmatrix}\right]$\\ 
       
        &&&&$\left[\begin{smallmatrix} 18,12&20,10&22,12&24,6\,\\17,11&19,9\, &21,7\, &23,5\,\end{smallmatrix}\right]$&$\left[\begin{smallmatrix} 18,12&20,10&22,8\,&24,6\,&26,4\,\\17,11&19,9\ &21,7\,&23,5\, &25,3\,\end{smallmatrix}\right]$  \\  \cline{4-4}
  
 &&&$\left[\begin{smallmatrix} 17,11&19,9\ &\,1,7\,\\16,10&18,8\ &20,6\,\end{smallmatrix}\right]$ &$\left[\begin{smallmatrix} 17,11&19,9\,&21,7\,&23,5\,\\16,10&18,8\,&20,6\,&22,4\,\end{smallmatrix}\right]$&$\left[\begin{smallmatrix} 17,11&19,9\,&21,7\ ,&23,5\,&25,3\,\\16,10&18,8\,&20,6\ &22,4\ &24,2\,\end{smallmatrix}\right]$   \\
 
   &&&$\left[\begin{smallmatrix} 16,10\,&18,8\,&20,6\,\\15,9\,&17,7\,&19,5\, \end{smallmatrix}\right]$&$\left[\begin{smallmatrix} 16,10&18,8\,&20,6\,&22,4\,\\15,9\,&17,7\,&19,5\,&21,3\,\end{smallmatrix}\right]$&$\left[\begin{smallmatrix} 16,10&18,8\,&20,6\,&22,4\, &24,2\,\\15,9&17,7\,&19,5\ &21,3\ &23,1\,\end{smallmatrix}\right]$   \\  \cline{3-3}
   
 &&$\left[\begin{smallmatrix}15,9\,&\,1,7\,\\14,8\,&16,6\,\end{smallmatrix}\right]$&$\left[\begin{smallmatrix} 15,9\ &17,7\,&19,5\,\\14,8\,Ê&16,6\,Ê&18,4\,\end{smallmatrix}\right]$&$\left[\begin{smallmatrix} 15,9\,&17,7\,&19,5\,&21,3\,\\14,8\,&16,6\,&18,4\,&20,2\,\end{smallmatrix}\right]$
 &$\left[\begin{smallmatrix} 15,9\,&17,7\,\ &19,5\,&21,3\ &23,1\,\\14,8\,&16,6\ &18,4\,&20,2\ &22,28\,\end{smallmatrix}\right]$ \\ 
 
&&$\left[\begin{smallmatrix}14,8\,&16,6\,\\13,7\,&15,5\,\end{smallmatrix}\right]$&$\left[\begin{smallmatrix} 14,8\,&16,6\,&18,4\,\\13,7\,&15,5\,&17,3\,\end{smallmatrix}\right]$&$\left[\begin{smallmatrix} 14,8\,&16,6\,&18,4\,&20,2\,\\13,7\,&15,5\,&17,3\,&19,1\,\end{smallmatrix}\right]$&$\left[\begin{smallmatrix} 14,8\,&16,6\ &18,4\,&20,2\ &22,28\,\\13,7\,&15,5\ &17,3\,&19,1\ &21,27\end{smallmatrix}\right]$\\ \cline{2-2}

 &$\left[\begin{smallmatrix}\,1,7\,\\12,6\,\end{smallmatrix}\right]$&$\left[\begin{smallmatrix}13,7\,&15,5\,\\12,6\,&14,4\,\end{smallmatrix}\right]$&$\left[\begin{smallmatrix} 13,7\,&15,5\,&17,3\,\\12,6\,&14,4\,&16,2\,\end{smallmatrix}\right]$&$\left[\begin{smallmatrix} 13,7\,&15,5\,&17,3\,&19,1\,\\12,6\,&14,4\,&16,2\,&18,24\end{smallmatrix}\right]$&$\left[\begin{smallmatrix} 13,7\,&15,5\,&17,3\,&19,1\, &21,27\\12,6\,&14,4\,&16,2\,&18,28\ &20,26\end{smallmatrix}\right]$ \\ 
 
 &$\left[\begin{smallmatrix}12,6\\11,5\\\end{smallmatrix}\right]$&$\left[\begin{smallmatrix}12,6\,&14,4\,\\11,5\,&13,3\,\\\end{smallmatrix}\right]$&$\left[\begin{smallmatrix} 12,6\,&14,4\,&16,2\,\\11,5\,\,&13,3\,&15,1\,\end{smallmatrix}\right]$&$\left[\begin{smallmatrix} 12,6\,&14,4\,&16,2\,&18,24\\11,5\,&13,3\,&15,1\,&17,23\end{smallmatrix}\right]$&$\left[\begin{smallmatrix} 12,6\,&14,4\, &16,2\,&18,28\ &20,26\\11,5\,&13,3\,&15,1\,&17,27\ &19,25\end{smallmatrix}\right]$\\ 
 
 &$\left[\begin{smallmatrix}11,5\,\\10,4\,\\\end{smallmatrix}\right]$&$\left[\begin{smallmatrix}11,5\,&13,3\,\\10,4\,&12,2\,\end{smallmatrix}\right]$&$\left[\begin{smallmatrix} 11,5\,&13,5\,&15,1\,\\10,4\,&12,2\,&14,20\\\end{smallmatrix}\right]$&$\left[\begin{smallmatrix} 11,5\,&13,3\,&15,1\,&17,23\\10,4\,&12,2\,&14,24&16,22\end{smallmatrix}\right]$&$\left[\begin{smallmatrix} 11,5\,&13,3\,&15,1\,&17,27\ &19,25\\10,4\,&12,2\,&14,28\,&16,26\ &18,24\end{smallmatrix}\right]$\\ 
 
 &$\left[\begin{smallmatrix}10,4\,\\\,9,3\,\end{smallmatrix}\right]$&$\left[\begin{smallmatrix}10,4&12,2\\9,3&11,1\end{smallmatrix}\right]$&$\left[\begin{smallmatrix} 10,4\,&12,4\,&14,20\\9,3\,&11,1\,&13,19\\\end{smallmatrix}\right]$&$\left[\begin{smallmatrix} 10,4\,&12,2\,&14,24&16,22\\9,3\,&11,1\,&13,23&15,21\end{smallmatrix}\right]$&$\left[\begin{smallmatrix} 10,4\ &12,2\, &14,28&16,26&18,24\\ \,9,3\,&11,1\, &13,27&15,25&17,23\end{smallmatrix}\right]$\\ 
 
 &$\left[\begin{smallmatrix}  9,3\,&\\ \, 8,2\,\end{smallmatrix}\right]$&$\left[\begin{smallmatrix} \,9,3\,&11,1\,\\ \,8,2\,&10,16\,\end{smallmatrix}\right]$&$\left[\begin{smallmatrix} 9,3\,&11,1\,&13,19\\8,2\,&10,20&12,18\\\end{smallmatrix}\right]$&$\left[\begin{smallmatrix} 9,3\,&11,1\,&13,23&15,21\\ 8,2\,&10,24&12,22&14,20\end{smallmatrix}\right]$&$\left[\begin{smallmatrix} \,\,9,3\,&11,1\, &13,27&15,25&17,23 \\ \,8,2\,&\,10,28\, &12,26&14,24&16,22\end{smallmatrix}\right]$\\ 
 
 &$\left[\begin{smallmatrix}\, 8,2\,\\  \,7,1\,\end{smallmatrix}\right]$&$\left[\begin{smallmatrix}\, 8,2\,&10,16\,\\\,7,1\,&\,9,15\end{smallmatrix}\right]$&$\left[\begin{smallmatrix} 8,2\,\,&10,20&12,18\\7,1&9,19&11,17\end{smallmatrix}\right]$&$\left[\begin{smallmatrix} \,8,2\,&10,24&12,22&14,20\\ \,7,1\,&\,9,23&11,21&13,19\end{smallmatrix}\right]$&$\left[\begin{smallmatrix} \,8,2\,&\,10,28&12,26&14,24&16,22\\ \,7,1\,&\,9,27&11,25&13,23&15,21\end{smallmatrix}\right]$\\ 
\hline
\end{tabular}
\end{center}
\vspace{5pt}
\caption{Table of inclusions}\label{T:embed}
\end{table}
}

More precisely, a pair of intertwining roots corresponds to a set of 4 generic signatures - connected between each other by 4 codimension 1 signatures, forming a quadrangle, in the nerve (see figure~\ref{F:lego4}, for $d=4$ ). 

The natural embedding $Q(d) \hookrightarrow Q(d+1)$  gives an inductive and explicit  method of the embedding $\dpol_{d}\hookrightarrow  \dpol_{d+1}$. Indeed, the  procedure of  $Q(d) \hookrightarrow Q(d+1)$ is equivalent to adding a tree in the  $Q(d)$ in trigonometric way. 

As a remark, note that the embedding $\dpol_{d} \hookrightarrow \dpol_{d+1}$
extends to the embedding $ j: \mathbb{C}^{d} \hookrightarrow \mathbb{C}^{d+1}$, defined as follows. 
Take a polynomial $P$ in $\mathbb{C}^{d}$ with $d$ roots (possibly with some multiplicities), $ z_1,\dots,z_d$. The  roots of the  polynomial $j\circ P$ are given by $ z_1,\dots,z_d,z_{d+1}$ where $ z_1,\dots,z_d$ are roots of $P$ and $z_{d+1}= z_0+\max_{1 \leq i < d}\,|z_i-z_0| +1$, with $z_0$ is the arithmetic mean of the roots $z_1,\dots,z_d$, for more details see~\cite{Ar70}(page 44).

\vspace{3pt} 

\hspace{1cm}(ii)This application of the construction has implications for the injective homomorphism of groups (reciprocally for the projection map $B_{d+1} \hookrightarrow B_{d}$). 
Indeed, each quadrangle is an intertwining of roots, therefore it corresponds to an elementary braid. Therefore, the algorithmic method and the induction explained above illustrates the inclusion 
$i^{\star}: B_d  \hookrightarrow B_{d+1}$. 

\vspace{3pt} 
\hspace{1cm}(iii) Finally, using this construction we have a geometric interpretation of the increasing chain of braid groups. 
 
More precisely,
 there exist a natural embedding $Q(d) \hookrightarrow Q(d+1)$ between $Q(d)$-diagrams of polynomials of degree $d$ and $Q(d+1)$-diagrams of polynomials of degree $d+1$ as shown in  table~\ref{T:embed}. The construction gives an explicit presentation of the  embedding $\dpol_{d} \hookrightarrow \dpol_{d+1}$. 
This property holds in a more general framework: for $Q(d) \hookrightarrow Q(d+k)$, where $k\geq1$. 

The decomposition formed from the union of all $Q$-pieces presents, in an explicit way, all the possible intertwining of the roots. Thus, considering a decomposition for the $d+1$ marked points, we have  an explicit illustration, of the embedding $\dpol_{d}\hookrightarrow \dpol_{d+1}$ and thus for $j: \mathbb{C}^{d} \hookrightarrow \mathbb{C}^{d+1}$.

Since, the intertwining of the roots corresponds to an elementary braid, this construction shows the natural inclusion of braids.  This inclusion is an injective homomorphism of groups, such that:
$ i^{\star}: B_{d} \to B_{d+1}$, with $i^{\star}( \sigma^{(d)}_i) = \sigma^{(d+1)}_{i}$ for $i$ in the set from $1$ to $d-1$. So, that  there exists
\begin{itemize}
\item  an embedding of $\dpol_{d}\hookrightarrow  \dpol_{d+1}$ given explicitly by $Q$-pieces;
\item an increasing chain of braid groups  $B_1 \subset B_2\subset B_3\subset\dots B_d \subset B_{d+1}$. associated to $Q$-pieces.
\end{itemize}

In $\pi_1(\dpol_{d})$ there exists  paths  given by a ``quadrangles''  forming a loop~\cite{C0},  corresponding to, adjacent two by two, four elementas, which are generic and of codimemsion 1.
Therefore, the $Q$-pieces allow a description of the braid relations. In particular, an explicit description of the inclusion of braids is this given.
\end{proof}

\begin{corollary}
Any braid relations following from the projection $B_{d+1}\rightarrow B_{d}$ can be described using $Q$-pieces. 
 \end{corollary}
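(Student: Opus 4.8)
The plan is to run the construction of the preceding theorem in reverse. That theorem realizes the inclusion $i^{\star}:B_d\hookrightarrow B_{d+1}$ through the explicit geometric embedding $Q(d)\hookrightarrow Q(d+1)$, which adjoins a single tree to the $Q(d)$-diagram in a trigonometric fashion and is induced by the polynomial embedding $j:\C^{d}\hookrightarrow\C^{d+1}$ with the controlled extra root $z_{d+1}$. The projection $B_{d+1}\to B_d$ is the operation dual to this: geometrically it corresponds to deleting the tree that the embedding adjoins, equivalently to forgetting the extra root $z_{d+1}$ introduced by $j$. Thus the whole content of the corollary is that the combinatorial data of the $Q$-piece decomposition, read backwards, describes this deletion.

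First I would make the deletion precise at the level of $Q$-diagrams. By Lemma~\ref{L:consec} and Corollary~\ref{C:cons}, a $Q(d+1)$-diagram is a matrix $\left[\begin{smallmatrix}L_{0}\\L_{1}\end{smallmatrix}\right]$ whose columns are its $d-1$ $S$-trees; undoing the trigonometric step amounts to deleting the column corresponding to the adjoined tree, yielding a $Q(d)$-diagram with $d-2$ trees. Running this over all $2(d+1)$ $Q$-pieces, one obtains a retraction of the decomposition of $\dpol_{d+1}$ onto the decomposition of $\dpol_{d}$, compatible with the gluing of adjacent $Q$-pieces described in Figure~\ref{F:Q}.

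Next I would track the effect on generators. As recalled in the proof of the theorem, each elementary braid $\sigma_i^{(d+1)}$ is encoded by a quadrangle in the nerve — four generic signatures joined pairwise by four codimension-$1$ signatures. Under the column deletion, a quadrangle whose intertwining does not involve the deleted strand is carried to the corresponding quadrangle of the $\dpol_d$ decomposition, so $\sigma_i^{(d+1)}\mapsto\sigma_i^{(d)}$ for the appropriate indices $i\in\{1,\dots,d-1\}$, while the quadrangle supported on the deleted strand collapses. Because the braid relations are precisely the loops of $\pi_1(\dpol_{d+1})$ assembled from these quadrangles, and the deletion is induced by a semi-algebraic retraction of the stratification, every such loop is carried to a loop in $\pi_1(\dpol_d)\cong B_d$; hence the relations are preserved and the projection is described entirely by the $Q$-pieces.

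The main obstacle will be the second step: verifying that deleting the adjoined tree is genuinely compatible with the stratification, i.e.\ that it descends to a well-defined map of $Q$-piece decompositions sending quadrangles to quadrangles (or to the trivial loop) coherently across adjacent $Q$-pieces, rather than merely on a single diagram. This coherence is exactly the reverse of the compatibility built into the $Q(d)\hookrightarrow Q(d+1)$ embedding of the theorem, together with Lemma~\ref{L:rotateM} controlling how the labels shift under rotation; once it is granted, the corollary follows as the reverse reading of that construction.
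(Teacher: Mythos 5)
Your proposal is correct and takes essentially the same route as the paper: the paper gives the corollary no separate proof, deriving it directly from part (ii) of the preceding theorem's proof, where the quadrangle/$Q$-piece construction for $i^{\star}:B_d\hookrightarrow B_{d+1}$ is asserted to work ``reciprocally'' for the projection, i.e.\ by reading the embedding $Q(d)\hookrightarrow Q(d+1)$ (adjoining a tree, equivalently the extra root $z_{d+1}$) backwards. Your version is in fact more explicit than the paper's, spelling out the column deletion in the $Q$-diagram matrices and the coherence across adjacent $Q$-pieces via Lemma~\ref{L:consec} and Lemma~\ref{L:rotateM} — just note that deleting a column also requires relabelling the remaining terminal vertices modulo $4d$ rather than $4(d+1)$, a point the paper likewise glosses over.
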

 
 \begin{corollary}
The braid operad can be described using $Q$-pieces. 
 \end{corollary}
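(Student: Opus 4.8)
The plan is to assemble the operadic structure from the three ingredients already in hand: the $Q$-piece description of each braid group $B_d$, the realization of the inclusions $i^{\star}\colon B_d\hookrightarrow B_{d+1}$ by the embeddings $Q(d)\hookrightarrow Q(d+1)$, and the geometric cabling composition on configuration spaces. First I would recall that by Fadell--Fox--Neuwirth $B_d=\pi_1(\dpol_d)$ through the identification $\dpol_d\cong \mathrm{Conf}(\mathbb{C})_d$, so that the preceding theorem furnishes, for every $d$, a complete $Q$-piece description of $B_d$: the generators are the elementary braids carried by quadrangles (a pair of intertwining roots, i.e.\ four generic signatures joined by four codimension-$1$ signatures), and the defining relations are read off from the galleries between adjacent $Q$-pieces provided by Theorem~\ref{Th:mob}. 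Hence the underlying collection $\{B_d\}_{d\ge 1}$ of the braid operad is itself described $Q$-piece-wise.

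Next I would describe the operadic composition. The braid operad's composition is the cabling map $B_n\times B_{i_1}\times\dots\times B_{i_n}\to B_i$ with $i=\sum_{k=1}^n i_k$, which geometrically replaces the $k$-th marked point of a configuration of $n$ points by a small disc carrying a configuration of $i_k$ points. In the polynomial/$Q$-diagram picture this becomes the operation of inserting, into a small neighbourhood of a terminal vertex of the degree-$n$ $Q$-diagram, the entire degree-$i_k$ $Q$-diagram, rescaled into the asymptotic sector attached to that vertex. Since the embedding $Q(d)\hookrightarrow Q(d+k)$ is already shown to correspond to the root-adjunction $j\colon\mathbb{C}^d\hookrightarrow\mathbb{C}^{d+k}$, and since adjoining a cluster of roots near an existing root is exactly the localized version of this embedding (see~\cite{Ar70}), the cabling map is realized by nesting the $Q$-pieces of the inputs inside the $Q$-pieces of the output. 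The quadrangles of distinct input factors occupy disjoint sub-sectors of the target, so their mutual commutation, together with the ambient target relations, reproduces precisely the cabled composite.

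Finally I would check that this nesting satisfies the operad axioms. Associativity holds because the asymptotic sectors form a nested family: inserting a $Q$-diagram into a sub-sector and then inserting a further one into a sub-sector of that one yields the same decorated $Q$-diagram independently of the grouping. Equivariance under the symmetric groups corresponds to permuting terminal vertices (equivalently the unordered roots), which is compatible by construction since the stratification treats the marked points as unordered; the unit is the degree-$1$ $Q$-piece. Invoking~\cite{KSV}, that monoidal functors preserve operads, one then obtains that the homology of this $Q$-piece description is again an operad in graded modules, giving the stated operadic refinement.

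The main obstacle will be making the insertion/nesting precise enough that it genuinely computes the cabling composite \emph{at the level of braids}, not merely at the level of configuration spaces: one must verify that an elementary braid of an input factor, transported into its sub-sector of the target $Q$-diagram, maps to the correct cabled generator of $B_i$, and that the galleries, hence the braid relations, are preserved under nesting. Concretely this amounts to showing the insertion is a morphism of stratified spaces compatible with the Coxeter--gallery structure of Theorem~\ref{Th:mob}, which is exactly where the careful bookkeeping of asymptotic sectors and of the codimension-$1$ signatures is required.
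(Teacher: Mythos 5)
Your overall strategy is the one the paper intends: the corollary is stated there with no separate proof, as a direct consequence of the preceding theorem, so the implicit argument is exactly your first paragraph (quadrangles carry the elementary braids, galleries carry the relations, $Q(d)\hookrightarrow Q(d+1)$ carries the inclusions $i^{\star}$), with the cabling composition from the introduction taken as given. Your attempt to go further and actually \emph{construct} the cabling on $Q$-diagrams is where a genuine gap appears, and it is a concrete one: you insert the degree-$i_k$ $Q$-diagram ``into a small neighbourhood of a terminal vertex'' of the degree-$n$ diagram. But in this formalism the terminal vertices $v$ on $\partial\mathbb{D}$ are not the marked points; they are the $4d$ asymptotic directions of the drawing at infinity. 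The marked points (the roots of $P$) correspond to the \emph{interior} vertices $\bar v$ of valency $4$, where one blue chord crosses one red chord. Cabling replaces a root by a cluster of $i_k$ roots, so the insertion must happen at a vertex $\bar v$, not at a boundary vertex, and an insertion performed at a terminal vertex does not model the cabling map at all.

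Even after relocating the insertion to $\bar v$, the step you flag as ``bookkeeping'' is the actual mathematical content and is not routine: a degree-$i_k$ drawing has $4i_k$ asymptotic rays, while the local picture at $\bar v$ has only $4$ branches (blue, red, blue, red). One must specify how the $4i_k$ rays of the inserted diagram connect to the ambient diagram so that the result is a legitimate degree-$\left(\sum_k i_k\right)$ signature in the sense of Definition~\ref{De:1} (correct parity of terminal vertices, each blue chord meeting exactly one red chord, forest condition), and then verify that this connection map sends quadrangles of the inputs to quadrangles of the composite and preserves the incidence order, i.e.\ is compatible with the chamber-and-gallery structure of Theorem~\ref{Th:mob}. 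Note also that the only root-adjunction mechanism actually established in the paper is Arnold's $j\colon\mathbb{C}^d\hookrightarrow\mathbb{C}^{d+1}$, which places the new root \emph{far outside} the existing configuration ($z_{d+1}=z_0+\max_i|z_i-z_0|+1$); cabling requires the opposite regime, a cluster inside a small disc, so you cannot borrow the paper's embedding lemmas unmodified. Your axiom checks (associativity of nested sectors, unordered points giving the symmetric compatibility, the degree-$1$ unit) are the right scaffolding, but they are vacuous until the insertion itself is defined at the level of signatures rather than asserted.
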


\section{Concluding remark}
In this paper, we have investigated a new cell decomposition of $\mathcal{M}_{0,[d]}$, using its natural relation with the space of complex monic degree $d>0$ polynomials in one variable with simple roots, which the $d$-th unordered configuration space of the complex plane space.
The decomposition of $\dpol_d$ has been done considering {\it drawings of polynomials} - objects reminiscent of Grothendieck's {\it dessin's d'enfant}. These objects are decorated graphs, properly embedded in the complex plane, obtained by taking the inverse image under a  polynomial $P$ of the real and imaginary axes. A detailed study of those objects has been given in section 2. 
A stratum, (called {\it elementa}) of the decomposition is defined as a set of polynomials having drawings belonging to the same isotopy class, relatively to the asymptotic directions and which is described as a signature (for simplicity signatures are represented as circular diagrams).
An important step towards the stratification is that the topological closure of a stratum indexed by the signature $\sigma$ is given by the combinatorial closure of the signature $\sigma$~\cite{C0}, which motivated our investigations in this paper concerning the combinatorial closure of an elementa.
The study of this stratification lead to the introduction of the notion of an inclusion diagram which is a precious tool describing geometrically not only the neighborhood of each stratum but the entire stratification itself. It is also known that this inclusion diagram is the nerve in the sense of \v Cech, of the cover in~\cite{C0,C1}. We showed that the stratification for $d>2$ is invariant under a Coxeter group with $4d$ chambers, $2d$ horizontal reflection hyperplanes and one vertical reflection hyperplane. 
To conclude, the paths and loops in $\dpol_d$ have been defined in a new way, using diagrams. Moreover, from the deep relation $\pi_1(\dpol_d)=B_d$ (\cite{Bri71,FaN62,FoN62}) we showed that a braid can be defined, using a sequence of signatures, obtained one from another via Whitehead move. From the result, showed in this paper, a braid can be defined as a path (loop) in one chamber. This result is also very advantageous for the calculation of the \v Cech cohomology of this space, since it reduces significantly the complexity. 
On the other side, it allows an alternative description to the braid operad. 

\vfill\eject

\appendix
\section{Inclusion diagram for $d=3$}~\label{A: Incldiag3}
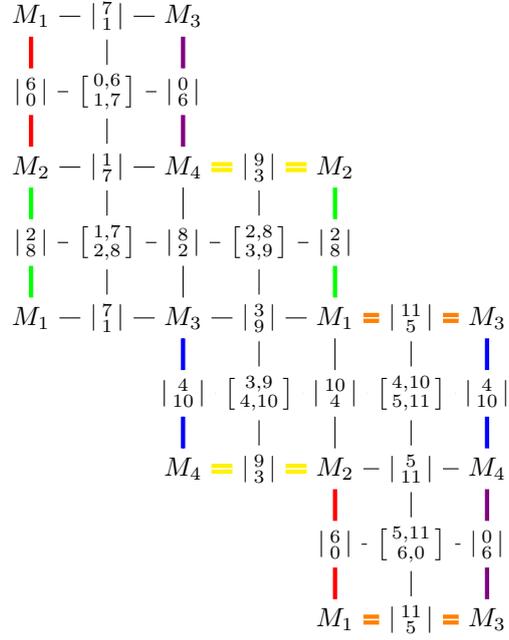
\begin{figure}[h]
\begin{center}
\begin{tikzpicture}[scale=.5]
  \node (a1) at (-6,8) {$M_{1}$};
   \node (a2) at (-4,8) {$\left|\begin{smallmatrix}7\\1\end{smallmatrix}\right|$};
     \node (a3) at (-2,8) {$M_{3}$};
  
  \node (b1) at (-6,6) {$\left|\begin{smallmatrix}6\\0\end{smallmatrix}\right|$};
    \node (b2) at (-4,6) {$\left[\begin{smallmatrix}0,6\\1,7\end{smallmatrix}\right]$};
      \node (b3) at (-2,6) {$\left|\begin{smallmatrix}0\\6\end{smallmatrix}\right|$};
    
  \node (c1) at (-6,4) {$M_{2}$};
     \node (c2) at (-4,4) {$\left|\begin{smallmatrix}1\\7\end{smallmatrix}\right|$}; 
       \node (c3) at (-2,4) {$M_{4}$};
        \node (c4) at (0,4) {$\left|\begin{smallmatrix}9\\3\end{smallmatrix}\right|$};  
         \node (c5) at (2,4) {$M_{2}$};
    
     \node (d1) at (-6,2) {$\left|\begin{smallmatrix}2\\8\end{smallmatrix}\right|$};
     \node (d2) at (-4,2) {$\left[\begin{smallmatrix}1,7\\2,8\end{smallmatrix}\right]$}; 
       \node (d3) at (-2,2) {$\left|\begin{smallmatrix}8\\2\end{smallmatrix}\right|$};
        \node (d4) at (0,2) {$\left[\begin{smallmatrix}2,8\\3,9\end{smallmatrix}\right]$};  
         \node (d5) at (2,2) {$\left|\begin{smallmatrix}2\\8\end{smallmatrix}\right|$};
   
    \node (e1) at (-6,0) {$M_{1}$};
     \node (e2) at (-4,0) {$\left|\begin{smallmatrix}7\\1\end{smallmatrix}\right|$}; 
       \node (e3) at (-2,0) {$M_{3}$};
        \node (e4) at (0,0) {$\left|\begin{smallmatrix}3\\9\end{smallmatrix}\right|$};
          \node (e5) at (2,0) {$M_{1}$}; 
         \node (e6) at (4,0) {$\left|\begin{smallmatrix}11\\5\end{smallmatrix}\right|$};
                \node (e7) at (6,0) {$M_{3}$};
                
      \node (f3) at (-2,-2) {$\left|\begin{smallmatrix}4\\10\end{smallmatrix}\right|$};
        \node (f4) at (0,-2) {$\left[\begin{smallmatrix}3,9\\4,10\end{smallmatrix}\right]$};
          \node (f5) at (2,-2) {$\left|\begin{smallmatrix}10\\4\end{smallmatrix}\right|$}; 
         \node (f6) at (4,-2) {$\left[\begin{smallmatrix}4,10\\5,11\end{smallmatrix}\right]$};
                \node (f7) at (6,-2) {$\left|\begin{smallmatrix}4\\10\end{smallmatrix}\right|$};

      \node (g3) at (-2,-4) {$M_{4}$};
        \node (g4) at (0,-4) {$\left|\begin{smallmatrix}9\\3\end{smallmatrix}\right|$};
          \node (g5) at (2,-4) {$M_{2}$}; 
         \node (g6) at (4,-4) {$\left|\begin{smallmatrix}5\\11\end{smallmatrix}\right|$};
                \node (g7) at (6,-4) {$M_{4}$};
                
       \node (h5) at (2,-6) {$\left|\begin{smallmatrix}6\\0\end{smallmatrix}\right|$}; 
         \node (h6) at (4,-6) {$\left[\begin{smallmatrix}5,11\\6,0\end{smallmatrix}\right]$};
                \node (h7) at (6,-6) {$\left|\begin{smallmatrix}0\\6\end{smallmatrix}\right|$};  
                
    \node (i5) at (2,-8) {$M_{1}$}; 
         \node (i6) at (4,-8) {$\left|\begin{smallmatrix}11\\5\end{smallmatrix}\right|$};
                \node (i7) at (6,-8) {$M_{3}$};               
  \draw (a1) -- (a2) -- (a3);
   \draw (b1) -- (b2) -- (b3);
   \draw (c1) -- (c2) -- (c3);
    \draw [yellow,ultra thick,double](c3) -- (c4) -- (c5);
    \draw (d1) -- (d2) -- (d3) -- (d4) -- (d5);
      \draw (e1) -- (e2) -- (e3) -- (e4) --(e5);
       \draw[orange,ultra thick,double] (e5)-- (e6) -- (e7);
       \draw (f3) -- (f4) -- (f5)-- (f6) -- (f7);
        \draw  [yellow,ultra thick,double](g3) -- (g4) -- (g5);
        \draw(g5)-- (g6) -- (g7);
        \draw (h5)-- (h6) -- (h7);
      \draw[orange,ultra thick,double]  (i5)-- (i6) -- (i7);
    \draw[red,ultra thick] (a1) -- (b1)-- (c1);
        \draw[green,ultra thick] (c1)-- (d1)-- (e1);
   
     \draw (a2) -- (b2) -- (c2)-- (d2)--(e2);
     \draw [ violet,ultra thick](a3) -- (b3) -- (c3);
      \draw (c3)-- (d3)--(e3)--(f3)--(g3);
      \draw  [ blue,ultra thick](e3)--(f3)--(g3);
      \draw (c4) -- (d4)-- (e4)--(f4)--(g4) ;
            \draw[green,ultra thick] (c5) -- (d5)-- (e5);
           \draw (e5) --(f5)--(g5);%
             \draw [red,ultra thick](g5)--(h5)--(i5);
\draw (e6) -- (f6)--(g6)--(h6)--(i6);
\draw  [ blue,ultra thick](e7) -- (f7)--(g7);
\draw[violet,ultra thick](g7)--(h7)--(i7);
  \end{tikzpicture}
\end{center}
\vspace{-10pt}
\caption{The 2-faces of the inclusion diagram for $d=3$}\label{F:2Fincldiag}
\end{figure}

\begin{figure}[h]
\begin{center}
\begin{tikzpicture}[scale=0.6]
  \node (A1) at (-3.9,5.2) {$M_{4}$};
  \node (A2) at (-3.9,3.7) {$M_{2}$};
 \node (A3) at (-3.9,2.25) {$M_{1}$};
  \node (A4) at (-3.9,0.8) {$M_{3}$};
  \node (A5) at (-3.9,-0.65) {$M_{4}$};
  \node (A6) at (-3.9,-2.2) {$M_{2}$};
 \node (A7) at (-3.9,-3.7) {$M_{1}$};
  \node (A8) at (-3.9,-5.2) {$M_{3}$};
 
  \node[blue,thick] (a1) at (-2,4.5) {$\left|\begin{smallmatrix}1\\7\end{smallmatrix}\right|$};
   \node (a2) at (-2,3) {$\left|\begin{smallmatrix}2\\8\end{smallmatrix}\right|$};
     \node (a3) at (-2,1.5) {$\left|\begin{smallmatrix}3\\9\end{smallmatrix}\right|$};
       \node (a4) at (-2,0) {$\left|\begin{smallmatrix}4\\10\end{smallmatrix}\right|$};
               \node (a5) at (-2,-1.5) {$\left|\begin{smallmatrix}5\\11\end{smallmatrix}\right|$};
		       \node (a6) at (-2,-3) {$\left|\begin{smallmatrix}6\\0\end{smallmatrix}\right|$};
			       \node[red,thick]  (a7) at (-2,-4.5) {$\left|\begin{smallmatrix}7\\1\end{smallmatrix}\right|$};

    \draw (A1) -- (a1);
      \draw[green,thick] (A2) -- (a1);     
      \draw[green,thick] (A2) -- (a2); 
       \draw[red,thick] (A3) -- (a2);   
       \draw[red,thick] (A3) -- (a3); 
        \draw[brown,thick]  (A4) -- (a3); 
        \draw [brown,thick]  (A4) -- (a4); 
        \draw[blue, thick] (A5) -- (a4);
        \draw[blue, thick] (A5) -- (a5);
      \draw[green,thick] (A6) -- (a5);     
      \draw[green,thick] (A6) -- (a6); 
       \draw[red,thick] (A7) -- (a6);   
       \draw [red,thick](A7) -- (a7); 
       \draw (A8) -- (a7);

\node (b1) at (0,3.75) {$\left[\begin{smallmatrix}1,7\\2,8\end{smallmatrix}\right]$};
   \node (b2) at (0,2.25) {$\left[\begin{smallmatrix}2,8\\3,9\end{smallmatrix}\right]$};
     \node (b3) at (0,0.75) {$\left[\begin{smallmatrix}3,9\\4,10\end{smallmatrix}\right]$};
       \node (b4) at (0,-0.75) {$\left[\begin{smallmatrix}4,10\\5,11\end{smallmatrix}\right]$};
         \node (b5) at (0,-2.25) {$\left[\begin{smallmatrix}5,11\\6,0\end{smallmatrix}\right]$};
          \node (b6) at (0,-3.75) {$\left[\begin{smallmatrix}6,0\\7,1\end{smallmatrix}\right]$};

  \node (C1) at (3.9,5.2) {$M_{1}$};
  \node (C2) at (3.9,3.7) {$M_{3}$};
 \node (C3) at (3.9,2.25) {$M_{4}$};
  \node (C4) at (3.9,0.8) {$M_{2}$};
  \node (C5) at (3.9,-0.65) {$M_{1}$};
  \node (C6) at (3.9,-2.2) {$M_{3}$};
 \node (C7) at (3.9,-3.7) {$M_{4}$};
  \node (C8) at (3.9,-5.3) {$M_{2}$};
          
  \node[red,thick]  (c1) at (2,4.5) {$\left|\begin{smallmatrix}7\\1\end{smallmatrix}\right|$};
   \node (c2) at (2,3) {$\left|\begin{smallmatrix}8\\2\end{smallmatrix}\right|$};
     \node (c3) at (2,1.5) {$\left|\begin{smallmatrix}9\\3\end{smallmatrix}\right|$};
       \node (c4) at (2,0) {$\left|\begin{smallmatrix}10\\4\end{smallmatrix}\right|$};
               \node (c5) at (2,-1.5) {$\left|\begin{smallmatrix}11\\5\end{smallmatrix}\right|$};
		       \node (c6) at (2,-3) {$\left|\begin{smallmatrix}0\\6\end{smallmatrix}\right|$};
			       \node[blue,thick]  (c7) at (2,-4.5) {$\left|\begin{smallmatrix}1\\7\end{smallmatrix}\right|$};

          \draw (C1) -- (c1);
      \draw[brown,thick] (C2) -- (c1);     
      \draw[brown,thick]  (C2) -- (c2); 
       \draw[blue, thick] (C3) -- (c2);   
       \draw[blue, thick] (C3) -- (c3); 
        \draw [green,thick](C4) -- (c3); 
        \draw[green,thick] (C4) -- (c4); 
        \draw[red,thick] (C5) -- (c4);
        \draw[red,thick] (C5) -- (c5);
      \draw[brown,thick]  (C6) -- (c5);     
      \draw[brown,thick]  (C6) -- (c6); 
       \draw[blue, thick](C7) -- (c6); 
              \draw[blue, thick] (C7) -- (c7);  
                \draw (C8) -- (c7);  

 \draw[blue,thick] (a1) -- (b1);
  \draw[red,thick] (a2) -- (b1);
    \draw[green,thick] (a2) -- (b2);
     \draw[brown,thick]  (a3) -- (b2);
      \draw[red,thick] (a3) -- (b3);
      \draw[blue, thick] (a4) -- (b3);
      \draw [brown,thick] (a4) -- (b4);
      \draw[green,thick] (a5) -- (b4);
      \draw[blue, thick] (a5) -- (b5);
      \draw[red,thick] (a6) -- (b5);
      \draw[green,thick] (a6) -- (b6);
      \draw [brown,thick] (a7) -- (b6);

 \draw[red,thick](c1) -- (b1);
  \draw[blue, thick]  (c2) -- (b1);
    \draw[brown,thick]  (c2) -- (b2);
     \draw[green,thick] (c3) -- (b2);
      \draw[blue, thick](c3) -- (b3);
      \draw[red,thick] (c4) -- (b3);
      \draw[green,thick] (c4) -- (b4);
      \draw[brown,thick]  (c5) -- (b4);
      \draw[red,thick] (c5) -- (b5);
      \draw[blue, thick] (c6) -- (b5);
      \draw[brown,thick]  (c6) -- (b6);
      \draw[green,thick] (c7) -- (b6);
      
  \draw[red,very thick,dotted] (2.2,4.5)-- (4.5,4.5) -- (4.5,-5.8) -- (-2,-5.8) --(-2,-4.7);
\draw[blue,very thick,dotted] (-2.2,4.5)-- (-4.5,4.5) -- (-4.5,-6.2) -- (2,-6.2) --(2,-4.7);
 \end{tikzpicture}
\end{center}
\caption{Inclusion diagram for $d=3$}~\label{F: Incldiag3}
\end{figure}

\section{The adjacent $Q$-pieces for $d=4$}~\label{A:tower4}
\input{Tower4.tex}

\vfill\eject

\end{document}